\renewcommand{\contentsname}
\newtheorem{theorem}{Theorem}[section]
\newtheorem{lemma}[theorem]{Lemma}
\newtheorem{corollary}[theorem]{Corollary}
\newtheorem{proposition}[theorem]{Proposition}
\theoremstyle{definition}
\newtheorem{definition}[theorem]{Definition}
\newtheorem{example}[theorem]{Example}
\theoremstyle{remark}
\newtheorem{remark}[theorem]{Remark}
\begin{document}

\selectlanguage{english}
\frenchspacing

\onehalfspacing

\Huge
\begin{center}
\textbf{Classifications of Cohen-Macaulay modules - The base ring associated to a transversal polymatroid}
\vspace{20 pt}
\[\]
\[\]

Ph.D. Thesis
\large
\vspace{20 pt}

Alin \c{S}tefan

\vspace{20 pt}

Adviser: Professor dr. Dorin Popescu

\vspace{200 pt}

Institute of Mathematics\\
of Romanian Academy \\

\vspace{20 pt}
September 2008
\end{center}

\newpage

\huge
\begin{center}
\[\]
\[\]
\[\]
\[\]
\emph{For my son Dorin-Andrei}
\end{center}

\normalsize

\newpage
\section*{Abstract}
\[\]

In this thesis, we focus on the study  of the base rings associated
to some transversal polymatroids. A transversal polymatroid is a special
kind of discrete polymatroid. Discrete polymatroids were introduced by 
Herzog and Hibi \cite{HH} in 2002.

The thesis is structured in four chapters. Chapter $1$ starts with a short
excursion into convex geometry. Next, we look at some properties of affine
semigroup rings. We recall some basic definitions and known facts about 
semigroup rings. Next we give a brief introduction to matroids and 
discrete polymatroids. Finally, we remind some properties of the base rings
associated to discrete polymatroids. These properties will be needed in 
the next chapters of the thesis.

Chapter two is devoted to the study of the canonical module of the base ring
associated to a transversal polymatroid. We determine the facets of the 
polyhedral 
cone generated by the exponent set of monomials defining the base 
ring. This allows us to describe the canonical module in terms of the
relative interior of the cone. Also, this would allow one to compute
the $a-\rm{invariant}$  of the base ring. Since 
the base ring associated to a discrete polymatroid is normal it follows 
that Ehrhart function is equal with Hilbert function 
and knowing the $a-\rm{invariant}$ we can very easy get its 
Hilbert series. We end this chapter with the following 
open problem\\
{\bf{Open Problem:}}
Let $n\geq 4,$ $A_{i}\subset [n]$ for any $1\leq i \leq n$ and 
$K[{\bf{\mathcal{A}}}]$ be the base ring associated to 
the transversal polymatroid presented by
${\bf{\mathcal{A}}}=\{A_{1},\ldots ,A_{n}\}.$
If the Hilbert series is: 
\[H_{K[{\bf{\mathcal{A}}}]}(t) \ = \ \frac{1+h_{1} \ t + \ldots + 
h_{n-r} \ t^{n-r} }{(1-t)^{n}},\] then we have the following:\\
$1)$ If $r=1,$ then $type(K[{\bf{\mathcal{A}}}])=1+h_{n-2}-h_{1}.$\\
$2)$ If $2\leq r\leq n,$ then $type(K[{\bf{\mathcal{A}}}])=h_{n-r}.$

In chapter three we study intersections of Gorenstein base rings. These 
are also Gorenstein rings and we are interested when the intersections of
Gorenstein base rings are the base rings associated to some transversal 
polymatroids. More precisely, we give 
necessary and sufficient conditions for the intersection of two base 
rings to be still a base ring of a transversal polymatroid. 

In chapter four we study when the transversal polymatroids presented by \\
${\bf{\mathcal{A}}}=\{A_1, A_2,\ldots, A_m\}$ with  $|A_{i}|=2$ 
have the base ring $K[{\bf{\mathcal{A}}}]$ Gorenstein. Using Worpitzky
identity, we prove that the numerator of the Hilbert series has the coefficients
Eulerian numbers and from \cite{BE} the Hilbert series is unimodal.

We acknowledge the support provided by the Computer Algebra Systems 
{\it{NORMALIZ}} \ \cite{BK} and {\it{SINGULAR}} \ \cite{GPS} for the
extensive experiments which helped us to obtain some of the results
in this thesis.

\[\]
\[\]
\[\]
\[\]
\[\]
\[\]
\[\]
\[\]
\[\]
\[\]
\[\]

\section*{Acknowledgements.}
\[\]

I wish to express my gratitude to my advisor Dorin Popescu for his
warm-hearted support and for his unresting willingness to discuss all my
questions and problems. I would like to thank for financial support
to University of Genova and CNCSIS program during one month visit to
University of Genova. I wish to express my thanks to Aldo Conca for
helpful discussions and hospitality. 
I would like to thank  \c{S}erban B\u{a}rc\u{a}nescu, 
Mihai Cipu, Viviana Ene, Ezra Miller for several useful suggestions and discussions.
Also, I am deeply indepted to my wife M\u{a}d\u{a}lina and my 
parents for them continuous encouragement and moral support during the 
preparation of this thesis. 

\newpage
\huge \noindent \textbf{Contents.} \vspace{30 pt} \normalsize
\[\]
\[\]
\[\]

\contentsline {chapter}{\numberline {}Abstract.}{3}
\contentsline {chapter}{\numberline {}Acknowledgements.}{4}
\contentsline {chapter}{\numberline {}Contents.}{5}
\contentsline {chapter}{\numberline {1}Background.}{6}
\contentsline {section}{\numberline {1.1}A short excursion into convex geometry.}{6}
\contentsline {section}{\numberline {1.2}Affine semigroup rings.}{9}
\contentsline {section}{\numberline {1.3}Discrete polymatroids.}{11}
\contentsline {section}{\numberline {1.4}The Ehrhart ring and the base ring of a discrete polymatroid.}{18}
\contentsline {chapter}{\numberline {2}The type of the base ring associated to a transversal polymatroid}{21}
\contentsline {section}{\numberline {2.1} Cones of dimension $n$ with $n+1$ facets.}{21}
\contentsline {section}{\numberline {2.2}The type of base ring associated to transversal polymatroids 
with the cone of dimension $n$ with $n+1$ facets.}{26}
\contentsline {section}{\numberline {2.3}Ehrhart function.}{32}
\contentsline {chapter}{\numberline {3}Intersections of base rings associated to transversal polymatroids}{36}
\contentsline {section}{\numberline {3.1}Intersection of cones of dimension $n$ with $n+1$ facets.}{36}
\contentsline {section}{\numberline {3.2}When is $K[A\cap B]$ the base ring associated to some transversal polymatroid?}{40}
\contentsline {chapter}{\numberline {4}A remark on the Hilbert series of transversal polymatroids}{53}
\contentsline {section}{\numberline {4.1}Segre product and the base ring associated to a transversal polymatroid.}{53}
\contentsline {section}{\numberline {4.2}Hilbert series.}{56}
\contentsline {section}{\numberline {}Bibliography.}{63}

\chapter{Background}

\section{A short excursion into convex geometry.}
\[\]

An \emph{affine space} in $\mathbb{R}^{n}$ is a translation of a linear
subspace of $\mathbb{R}^{n}.$ Let $A\subset \mathbb{R}^{n}$ and
\emph{aff}$(A)$ be the affine space generated by $A.$ Recall that 
\emph{aff}$(A)$ is the set of all \emph{affine combinations} 
of points in $A$:
\[\emph{aff}(A)=\{a_{1}p_{1}+\ldots + a_{r}p_{r} \ | \ p_{i}\in A, 
a_{1}+\ldots + a_{r}=1, a_{i}\in \mathbb{R}\}.\]
There is a unique linear 
subspace $V$ of $\mathbb{R}^{n}$ such that 
\[\emph{aff}(A)=x_{0}+ V,\] for some
$x_{0}\in \mathbb{R}^{n}.$ The dimension of \emph{aff}$(A)$ is 
$\dim(\emph{aff}(A))=\dim_{\mathbb{R}}(V).$

If $0\neq a\in \mathbb{R}^{n},$ then $H_{a}$ will denote the
hyperplane of $\mathbb{R}^{n}$ through the origin with normal vector
$a$, that is,\[H_{a}=\{x\in \mathbb{R}^{n}\ | \ \langle x,a\rangle
 =0\},\] where
$\langle ,\rangle$ is the usual inner product in $\mathbb{R}^{n}.$ The
 two closed
halfspaces bounded by $H_{a}$ are: \[H^{+}_{a}=\{x\in
\mathbb{R}^{n}\ | \ \langle x,a\rangle \geq 0\} \ \mbox{and} \
  H^{-}_{a}=\{x\in
\mathbb{R}^{n}\ | \ \langle x,a\rangle \leq 0\}.\]

Recall that a $polyhedral\ cone\ Q \subset \mathbb{R}^{n}$ is the
intersection of a finite number of closed subspaces of the form
$H^{+}_{a}.$ If $Q=H_{a_{1}}^{+}\cap \ldots \cap H_{a_{m}}^{+}$ is a polyhedral
cone, then \emph{aff}$(Q)$ is the intersection of those hyperplanes $H_{a_{i}}$ , 
$i= 1,\ldots,m$, that contain $Q$ (see {\cite[Proposition 1.2.] {BG}}).
The $dimension \ of \ Q$ is the dimension of 
\emph{aff}$(Q)$, $\dim(Q)=\dim(\emph{aff}(Q))$.\\
If $A=\{\gamma_{1},\ldots, \ \gamma_{r}\}$ is a
finite set of points in $\mathbb{R}^{n}$ the \emph{cone} generated by
$A$, denoted by $\mathbb{R}_{+}{A},$ 
respectively the \emph{convex hull} of $A$, denoted by \emph{conv}$(A)$,  are defined as
\[\mathbb{R}_{+}{A}=\{\sum_{i=1}^{r}a_{i}\gamma_{i}\ | \ a_{i}\in
\mathbb{R}_{+}\ \mbox{for all} \ 1\leq i \leq n\}\]
respectively
\[conv(A)=\{\sum_{i=1}^{r}a_{i}\gamma_{i}\ | \sum_{i=1}^{r} a_{i}=1, a_{i}\in
\mathbb{R}_{+}\ \mbox{for all} \ 1\leq i \leq n\},\]
where $\mathbb{R}_{+}$  denotes the set of nonnegative real numbers.
An important fact is that $Q$ is a polyhedral cone in
$\mathbb{R}^{n}$ if and only if there exists a finite set $A\subset
\mathbb{R}^{n}$ such that $Q=\mathbb{R}_{+}{A}$ (see {\cite{BG} or
\cite[Theorem 4.1.1.]{W}}).
If $U$ is  a $\mathbb{Q}-$vector subspace of $\mathbb{R}^{n}$ such that 
$\dim_{\mathbb{Q}}U=n$, then we say that a cone is \emph{rational} if is 
generated by a subset of $U$.

Next we give some important definitions and results (see \cite{B},
 \cite{BH}, \cite{BG}, \cite{MS}, \cite{V}).

\begin{definition}
A \emph{proper face} of a polyhedral cone $Q$ is a subset $F\subset \ Q$
 such
that there is a supporting hyperplane $H_{a}$ satisfying:

$1)$ $F=Q\cap H_{a}$,

$2)$ $Q\nsubseteq H_{a}$ and $Q\subset H^{+}_{a}$.
\end{definition}

The $dimension$ of a proper face $F$ of a polyhedral cone 
$Q$ is $\dim(F)=\dim(\emph{aff}(F)).$

\begin{definition}
A cone $C$ is  \emph{pointed} if $0$ is a face of $C.$ \ Equivalently
we can require that $x\in C$ \ and $-x\in C$ \ $\Rightarrow$ \ $x=0.$
\end{definition}

\begin{definition}
The 1-dimensional faces of a pointed cone are called $extremal \
rays.$
\end{definition}

\begin{definition}
A proper face $F$ of a polyhedral cone $Q\subset \ \mathbb{R}^{n}$
is called a $facet$ of $Q$ if $\dim(F)=\dim(Q)-1.$
\end{definition}

\begin{definition}
If a polyhedral cone $Q$ is written as \[Q=H^{+}_{a_{1}}\cap \ldots
\cap H^{+}_{a_{r}}\] such that no  $H^{+}_{a_{i}}$ can be
omitted, then we say that this is an \emph{irreducible representation}
 of
$Q.$
\end{definition}

\begin{theorem}
Let $Q\subset \mathbb{R}^{n},$ \ $Q\neq \mathbb{R}^{n}$, be a
polyhedral cone with $\dim(Q)=n.$ Then the halfspaces 
$H^{+}_{a_{1}}, \ldots , H^{+}_{a_{m}}$
in an irreducible representation 
$Q=H^{+}_{a_{1}}\cap \ldots \cap H^{+}_{a_{m}}$
are uniquely determined. In fact, the sets 
$F_{i}=Q \cap H_{a_{i}}, \  i = 1,\ldots, n,$ are the facets of
$Q$.
\end{theorem}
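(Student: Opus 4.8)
The plan is to prove both the uniqueness of the irreducible representation and the identification of the $F_i$ with the facets of $Q$ by exploiting the characterization of $\mathrm{aff}(Q)$ already recorded in the excerpt, namely that for a cone written as $Q = H_{a_1}^+\cap\cdots\cap H_{a_m}^+$, the affine hull $\mathrm{aff}(Q)$ is the intersection of exactly those $H_{a_i}$ containing $Q$. First I would observe that since $\dim(Q)=n$ and $Q\neq\mathbb{R}^n$, the cone $Q$ has nonempty interior, so no supporting hyperplane $H_{a_i}$ can contain $Q$ (otherwise $\mathrm{aff}(Q)$ would be a proper subspace). Hence in an irreducible representation every $H_{a_i}$ is genuinely a supporting hyperplane with $Q\not\subseteq H_{a_i}$, and each $F_i = Q\cap H_{a_i}$ is a proper face in the sense of the first definition.

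Next I would show that each $F_i$ is in fact a facet, i.e.\ $\dim(F_i) = n-1$. The key point is irreducibility: if $H_{a_i}^+$ cannot be omitted, then there exists a point $x^\ast \in \bigcap_{j\neq i} H_{a_j}^+$ with $x^\ast \notin H_{a_i}^+$, i.e.\ $\langle x^\ast, a_i\rangle < 0$. Picking an interior point $y$ of $Q$ (so $\langle y, a_j\rangle > 0$ for all $j$), the segment from $y$ to $x^\ast$ crosses $H_{a_i}$ at a point $z$ which still satisfies $\langle z, a_j\rangle > 0$ for all $j\neq i$ and $\langle z,a_i\rangle = 0$; thus $z$ lies in the relative interior of $F_i$ with respect to $H_{a_i}$. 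This forces $\mathrm{aff}(F_i) = H_{a_i}$, a hyperplane, so $\dim(F_i) = n-1$ and $F_i$ is a facet. Conversely, I would argue that every facet of $Q$ arises this way: a facet $F = Q\cap H_a$ has $\dim F = n-1$, so $\mathrm{aff}(F) = H_a$; since $F \subseteq \bigcup_i F_i$ (every proper face of $Q$ lies in the boundary, which is the union of the $F_i$, as follows from the irreducible representation), and $F$ is $(n-1)$-dimensional while each $F_i$ is contained in the hyperplane $H_{a_i}$, a dimension count shows $F\subseteq F_i$ for some $i$, and then $\mathrm{aff}(F)=\mathrm{aff}(F_i)$ gives $H_a = H_{a_i}$, hence $F = F_i$.

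For uniqueness of the halfspaces, I would use that the facets $F_1,\dots,F_m$ are intrinsic to $Q$ (they are defined purely in terms of faces of $Q$, no reference to the representation), together with the fact just established that $\mathrm{aff}(F_i) = H_{a_i}$. So the hyperplanes $H_{a_i}$ are recovered as the affine hulls of the facets, hence are uniquely determined; and the correct side is forced because $Q\subseteq H_{a_i}^+$ pins down the sign of $a_i$ up to positive scaling, which is all that $H_{a_i}^+$ depends on. Finally, I would note that the representation really is irreducible as claimed (no redundancy), since omitting any $H_{a_i}^+$ would delete the facet $F_i$ from the boundary, enlarging the cone.

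The main obstacle I anticipate is the clean verification that the boundary of $Q$ equals $\bigcup_{i=1}^m F_i$ and that consequently every proper face — in particular every facet — is contained in some $F_i$; this is the geometric heart of the argument and needs the irreducibility hypothesis in an essential way (in a redundant representation some $H_{a_i}$ might meet $Q$ only in a lower-dimensional face or even just touch it, so one cannot identify facets with the $H_{a_i}$ naively). Everything else — producing interior points, the segment-crossing argument, and the dimension counts — is routine convex geometry that I would not belabor.
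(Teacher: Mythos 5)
Your argument is correct, but note that the paper does not actually prove this theorem: its ``proof'' is the single line ``See [Theorem 1.6, BG]''. So there is no internal argument to compare against; what you have written is essentially the standard proof from the convex-geometry literature (and is close to the one in Bruns--Gubeladze). The two pillars of your proof are sound: (i) irreducibility supplies a point $x^{\ast}$ violating only the $i$-th inequality, and the segment from an interior point $y$ of $Q$ to $x^{\ast}$ meets $H_{a_{i}}$ at a point $z$ with $\langle z,a_{j}\rangle>0$ for all $j\neq i$, whence $F_{i}$ contains a relatively open subset of $H_{a_{i}}$ and is a facet; (ii) $\mathrm{int}(Q)=\{x:\langle x,a_{j}\rangle>0 \ \forall j\}$, so $\partial Q=\bigcup_{i}F_{i}$ and every proper face sits inside this union. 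The only step I would ask you to make explicit is the ``dimension count'': from $F\subseteq\bigcup_{i}(F\cap F_{i})$ you need that a convex set of dimension $n-1$ cannot be covered by finitely many sets each lying in an affine subspace of dimension $\leq n-2$ (Baire category, or a measure argument, on the relative interior of $F$); this yields some $i$ with $H_{a_{i}}=H_{a}$, and then $F=Q\cap H_{a}=Q\cap H_{a_{i}}=F_{i}$ directly, which is cleaner than first claiming $F\subseteq F_{i}$. With that point spelled out, the uniqueness statement follows exactly as you say, since each $H_{a_{i}}$ is recovered as $\mathrm{aff}(F_{i})$ and the sign of $a_{i}$ is pinned down by $Q\subseteq H_{a_{i}}^{+}$.
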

\begin{proof}
See {\cite[Theorem 1.6.] {BG}}
\end{proof}

The following two results are quite useful to determine the facets
of a polyhedral cone.

\begin{proposition}
Let $A$ be a finite set of points in $\mathbb{Z}^{n}$. If $F$ 
is a \emph{nonzero face} of $\mathbb{R}_{+}{A},$ then 
$F=\mathbb{R}_{+}{B}$ for some $B \subset A$.
\end{proposition}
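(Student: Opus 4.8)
The plan is to exhibit $B$ explicitly as the set of generators lying on the supporting hyperplane that cuts out $F$, and then check the two inclusions $\mathbb{R}_{+}B \subseteq F$ and $F \subseteq \mathbb{R}_{+}B$ directly.

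First I would invoke the definition of a proper face. Since $F$ is a nonzero face of the cone $Q=\mathbb{R}_{+}A$, there is a supporting hyperplane $H_{a}$ with $F=Q\cap H_{a}$, $Q\nsubseteq H_{a}$ and $Q\subseteq H^{+}_{a}$. In particular $\langle \gamma,a\rangle \geq 0$ for every $\gamma\in A$, because $\gamma\in A\subseteq Q\subseteq H^{+}_{a}$. Set
\[B \ = \ A\cap H_{a} \ = \ \{\gamma\in A \ | \ \langle \gamma,a\rangle = 0\}.\]
For the inclusion $\mathbb{R}_{+}B \subseteq F$: each element of $B$ lies in $Q$ (as $B\subseteq A$) and in $H_{a}$, hence in $F=Q\cap H_{a}$; and $F$ is closed under nonnegative linear combinations, being the intersection of the cone $Q$ with the linear subspace $H_{a}$. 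Therefore $\mathbb{R}_{+}B\subseteq F$.

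For the reverse inclusion $F\subseteq \mathbb{R}_{+}B$: take $x\in F$. Since $x\in Q=\mathbb{R}_{+}A$, write $x=\sum_{i=1}^{r}a_{i}\gamma_{i}$ with $\gamma_{i}\in A$ and $a_{i}\in\mathbb{R}_{+}$. Since $x\in H_{a}$,
\[0 \ = \ \langle x,a\rangle \ = \ \sum_{i=1}^{r}a_{i}\langle \gamma_{i},a\rangle,\]
and every summand $a_{i}\langle \gamma_{i},a\rangle$ is nonnegative, so each summand vanishes. Hence $a_{i}>0$ forces $\langle \gamma_{i},a\rangle = 0$, i.e. $\gamma_{i}\in B$; thus $x\in\mathbb{R}_{+}B$. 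Combining the two inclusions gives $F=\mathbb{R}_{+}B$.

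The only step that requires care is the first one — reading off the supporting hyperplane $H_{a}$ from the hypothesis and recording that $Q\subseteq H^{+}_{a}$ yields the sign condition $\langle \gamma,a\rangle\geq 0$ on all generators, which is exactly what makes the telescoping argument on $\langle x,a\rangle$ work. Everything after that is a one-line computation. Note that the hypothesis ``$F$ nonzero'' is used only to conclude $B\neq\emptyset$; the equality $F=\mathbb{R}_{+}B$ holds for the zero face as well with $B=\emptyset$.
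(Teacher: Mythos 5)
Your proof is correct and follows the same route as the paper: the paper's (one-line) proof also takes $B=\{x\in A\mid \langle x,a\rangle=0\}$ for the supporting hyperplane $H_a$ defining $F$ and asserts $F=\mathbb{R}_{+}B$; you have simply written out the two inclusions that the paper leaves implicit. The positivity argument forcing $\langle\gamma_i,a\rangle=0$ whenever $a_i>0$ is exactly the intended justification.
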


\begin{proof}
Let $F=\mathbb{R}_{+}{A}\cap H_{a}$ with $\mathbb{R}_{+}{A}\subset 
H_{a}^{+}.$ Then $F$ is equal to the cone generated by the set 
$B=\{x \in A \ | \ \langle x,a\rangle = 0\}$.
\end{proof}

\begin{corollary}
Let $A$ be a finite set of points in $\mathbb{Z}^{n}$ and 
$F$ a face of $\mathbb{R}_{+}{A}$.\\
 \ $i)$ If $\dim \ F=1$ and $A\subset \mathbb{N}^{n},$ then 
$F=\mathbb{R}_{+}\alpha$ for some $\alpha \in A.$\\
$ii)$ If $\dim \ \mathbb{R}_{+}A=n$ and $F$ is a facet defined by 
the supporting hyperplane $H_{a}$, then $H_{a}$ is generated by
a linearly independent subset of $A.$
\end{corollary}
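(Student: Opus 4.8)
The two parts are essentially corollaries of Proposition 1.8, which says every nonzero face of $\mathbb{R}_{+}A$ is of the form $\mathbb{R}_{+}B$ for some $B \subset A$. My plan is to use this to pin down generating sets of faces and then extract the linear-independence statements from dimension counts.

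\medskip
\noindent\textbf{Part $i)$.} Suppose $\dim F = 1$ and $A \subset \mathbb{N}^{n}$. By Proposition 1.8 we may write $F = \mathbb{R}_{+}B$ for some $B \subset A$; since $F \neq \{0\}$ we have $B \neq \emptyset$. The claim is that a single element of $B$ already generates $F$. Pick any $0 \neq \alpha \in B$. Then $\mathbb{R}_{+}\alpha \subseteq F$ and $\mathbb{R}_{+}\alpha$ is a $1$-dimensional cone, so since $\dim F = 1$ we get $\mathbb{R}_{+}\alpha = F$ (a $1$-dimensional cone has no proper subcone of the same dimension — the affine hull of $\mathbb{R}_{+}\alpha$ is the line $\mathbb{R}\alpha$, which is already $\mathrm{aff}(F)$, and any vector in $F$ lies on that line; using $A \subset \mathbb{N}^{n}$ forces all elements of $B$ to lie on the \emph{same} ray rather than on opposite rays of the line, so in fact every $\beta \in B$ is a nonnegative multiple of $\alpha$, hence $F = \mathbb{R}_{+}\alpha$). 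This gives the extremal ray $F = \mathbb{R}_{+}\alpha$ with $\alpha \in A$.

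\medskip
\noindent\textbf{Part $ii)$.} Now suppose $\dim \mathbb{R}_{+}A = n$ and $F = \mathbb{R}_{+}A \cap H_{a}$ is a facet, so $\dim F = n-1$ by Definition 1.5. Again by Proposition 1.8, $F = \mathbb{R}_{+}B$ with $B = \{x \in A \mid \langle x, a\rangle = 0\}$ (this is exactly the set produced in the proof of Proposition 1.8). Since $\dim(\mathbb{R}_{+}B) = \dim F = n-1$, the set $B$ spans an $(n-1)$-dimensional linear subspace of $\mathbb{R}^{n}$, and that subspace must be $H_{a}$ itself (it is contained in $H_{a}$, which has dimension $n-1$). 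Therefore $B$ contains $n-1$ linearly independent vectors, and any such subset of size $n-1$ is a linearly independent subset of $A$ that spans $H_{a}$. That is the assertion.

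\medskip
\noindent\textbf{Main obstacle.} The only delicate point is in Part $i)$: the hypothesis $A \subset \mathbb{N}^{n}$ (rather than merely $A \subset \mathbb{Z}^{n}$) is what rules out the degenerate possibility that $B$ meets both halves of the line $\mathrm{aff}(F) = \mathbb{R}\alpha$, which would make $\mathbb{R}_{+}B$ the whole line and not a ray — but a $1$-dimensional face must be a ray (half-line), contradiction, OR $B$ genuinely lies on one ray. One must argue that since all coordinates are nonnegative, $\langle \gamma, (1,\dots,1)\rangle \geq 0$ for all $\gamma \in \mathbb{R}_{+}A$, so $\mathbb{R}_{+}A$ is pointed, hence $F$ being a face of a pointed cone is itself pointed and cannot contain a full line; thus all of $B$ lies on one ray and $F = \mathbb{R}_{+}\alpha$ for any nonzero $\alpha \in B \subset A$. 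Part $ii)$ is then a routine dimension comparison once Proposition 1.8 is invoked with its explicit $B$.
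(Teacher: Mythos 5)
Your proof is correct, and it is essentially the intended derivation: the paper states this corollary without proof immediately after Proposition 1.1.7, and both of your parts reduce to that proposition exactly as one would expect (in part $ii)$ using the explicit set $B=\{x\in A\mid \langle x,a\rangle=0\}$ from its proof). The one genuinely delicate point — that in part $i)$ the hypothesis $A\subset\mathbb{N}^{n}$ is needed to exclude elements of $B$ lying on the opposite ray of $\mathrm{aff}(F)$ — is handled correctly by your pointedness observation via $\langle\,\cdot\,,(1,\dots,1)\rangle\geq 0$.
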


\begin{definition}
Let $Q$ be a polyhedral cone in $\mathbb{R}^{n}$ \ with $\dim \ Q=n$
 and such that $Q\neq\mathbb{R}^{n}.$  Let 
\[Q=H^{+}_{a_{1}}\cap\ldots \cap H^{+}_{a_{r}}
\] 
be the irreducible representation of
$Q.$  If $a_{i}=(a_{i1},\ldots,a_{in}),$  then we call
\[H_{a_{i}}(x):=a_{i1}x_{1}+\ldots + a_{in}x_{n}=0,\ \ i\in [r],
\]
the \emph{equations of the cone} $Q.$
\end{definition}

\begin{definition}
The \emph{relative interior} $ri(Q)$ of a polyhedral cone is the 
interior of $Q$ with respect to the embedding of $Q$ into its affine
space \emph{aff}($Q$), in which $Q$ is full-dimensional.
\end{definition}

The following result gives us the description of the relative interior
of a polyhedral cone when we know its irreducible representation.

\begin{theorem}
Let $Q\subset \mathbb{R}^{n},$ \ $Q\neq \mathbb{R}^{n}$, be a
 polyhedral cone with $\dim(Q)=n$  and let 
\[(*) \ Q=H^{+}_{a_{1}}\cap \ldots \cap H^{+}_{a_{m}}
\]  
be an irreducible representation of $Q$  with $H^{+}_{a_{1}}, \ldots ,
 H^{+}_{a_{n}}$ pairwise distinct, 
where $a_{i} \in \mathbb{R}^{n} \setminus \{0\}$  for all $i.$  Set
 $F_{i}=Q \cap H_{a_{i}}$ \ for $i \in [r]$.  Then:\\
$a)$ \ $ri(Q)=\{x \in \mathbb{R}^{n} \ | \ \langle x, a_{1}\rangle \ >
 0, \ldots ,\langle x, a_{r}\rangle \ > 0 \}$,  
where $ri(Q)$ \ is the relative interior of $Q,$ \ which in this case
 is just the interior.\\
$b)$ \ Each facet $F$ \ of $Q$ \ is of the form $F=F_{i}$ \ for some
 $i.$\\
$c)$ \ Each $F_{i}$ \ is a facet of $Q$.
\end{theorem}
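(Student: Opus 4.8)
The plan is to deduce everything from Theorem 1.1.6 (the uniqueness of the irreducible representation) together with the two descriptions of faces coming from Proposition 1.1.8 and its corollary. The key point is that an irreducible representation is, by Definition 1.1.5, one from which no halfspace can be dropped; Theorem 1.1.6 tells us such a representation is essentially unique and that the hyperplanes $H_{a_i}$ occurring in it cut out exactly the facets $F_i = Q\cap H_{a_i}$. So the content to be proved is really the translation of that structural fact into the three statements $a)$, $b)$, $c)$.

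For part $a)$, I would argue by double inclusion. Since $Q\subset H^{+}_{a_i}$ for each $i$, every point of $Q$ satisfies $\langle x,a_i\rangle\ge 0$. A point $x\in Q$ lies on the boundary of $Q$ (inside $\mathrm{aff}(Q)=\mathbb{R}^n$, since $\dim Q=n$) if and only if it lies on some supporting hyperplane, hence — using that the $H_{a_i}$ are precisely the facet hyperplanes by Theorem 1.1.6 and that every proper face is contained in a facet — if and only if $\langle x,a_i\rangle=0$ for some $i$. Therefore $x$ is interior exactly when all the inequalities are strict. One should also check $ri(Q)$ is nonempty and equals the topological interior here: nonemptiness follows because the representation $(*)$ is irreducible, so for each $i$ there is a point of $Q$ with $\langle x,a_i\rangle>0$, and a suitable convex combination (an average over $i$) is strictly positive on all functionals simultaneously; full-dimensionality gives $\mathrm{aff}(Q)=\mathbb{R}^n$, so relative interior is ordinary interior. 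The mild subtlety is handling the hypothesis that only $H^{+}_{a_1},\dots,H^{+}_{a_n}$ are assumed pairwise distinct while the representation runs to $m$: one notes that a genuinely irreducible representation cannot repeat a halfspace, so after discarding duplicates the indexing $i\in[r]$ used in the statement is the honest one, and $r$ coincides with the number of facets.

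Parts $b)$ and $c)$ are then immediate from Theorem 1.1.6: that theorem literally states that in the irreducible representation the sets $F_i = Q\cap H_{a_i}$ are the facets of $Q$, which gives $c)$ directly, and conversely any facet $F$, being a proper face of dimension $n-1$, is cut out by a supporting hyperplane which — again by the uniqueness in Theorem 1.1.6, or by Corollary 1.1.9$(ii)$ applied after writing $Q=\mathbb{R}_+A$ for a finite $A$ — must be one of the $H_{a_i}$, so $F=F_i$; this gives $b)$. I expect the only real obstacle to be the bookkeeping in part $a)$: namely justifying cleanly that a point of $Q$ fails to be interior precisely when it meets one of the finitely many facet hyperplanes, rather than some other supporting hyperplane, and assembling the strictly-positive interior point. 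Both are standard polyhedral-geometry facts, and I would cite Theorem 1.1.6 and \cite[Theorem 1.6]{BG} for them rather than reprove the separation statements from scratch.
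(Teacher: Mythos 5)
Your argument is correct, but note that the paper itself does not prove this theorem: it is quoted as a standard fact of convex geometry, with the proof delegated to \cite[Theorem 8.2.15]{B} and \cite[Theorem 3.2.1]{W}, just as Theorem 1.1.6 is quoted from \cite{BG}. So there is no in-paper proof to compare against; what you have supplied is the argument the paper outsources. Your reading of parts $b)$ and $c)$ is the right one --- they are literally contained in the statement of Theorem 1.1.6, which says that the sets $F_{i}=Q\cap H_{a_{i}}$ arising from an irreducible representation are precisely the facets --- so only part $a)$ needs real work. There, your route through supporting hyperplanes and the fact that every proper face is contained in a facet is valid but heavier than necessary: both inclusions follow directly from the defining inequalities. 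If $\langle x,a_{i}\rangle>0$ for all $i$, then $x$ lies in the open set $\bigcap_{i}\{y\ |\ \langle y,a_{i}\rangle>0\}\subset Q$, hence in the interior of $Q$; conversely, if $x\in Q$ with $\langle x,a_{i}\rangle=0$ for some $i$, then $x-\varepsilon a_{i}\notin H^{+}_{a_{i}}\supset Q$ for every $\varepsilon>0$, so $x$ is a boundary point. This bypasses any separation theorem and the face-lattice fact you lean on. Your remaining points are sound: the interior is nonempty because $\dim Q=n$ forces $Q\not\subset H_{a_{i}}$ for each $i$, so one can average points $x^{(i)}\in Q\setminus H_{a_{i}}$ to get a point strictly positive on every $a_{i}$; full-dimensionality identifies $ri(Q)$ with the ordinary interior; and the $m$/$n$/$r$ discrepancy in the statement is indeed only a typographical bookkeeping issue, since an irreducible representation cannot repeat a halfspace.
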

\begin{proof}
See  \cite[Theorem  8.2.15]{B} and \cite[Theorem 3.2.1]{W}.
\end{proof}

\section{Affine semigroup rings.}
\[\]

An \emph{affine semigroup} $C$ is a finitely generated additive semigroup
which for some $n\in \mathbb{N}$ is isomorphic to a subsemigroup of 
$\mathbb{Z}^{n}$ containing 0. For instance, $(\mathbb{N}\cup \{0\})^{n}$ is
an affine semigroup for all $n\in \mathbb{N}.$ Just like $\mathbb{Z}$ being
generated by $\mathbb{N}\cup \{0\}$ as a group, for every affine semigroup 
$C$ there exists a unique up to a canonical isomorphism finitely generated 
abelian group, denoted $\mathbb{Z}C$, that contains $C$ and is generated by
$C$ as a group. Its rank is the \emph{dimension} of $C$. 
Moreover, $\mathbb{R}C$ denotes $\mathbb{Z}C \otimes_{\mathbb{Z}} \mathbb{R},$
and one considers $C$ to be a subset of $\mathbb{R}C$ via the canonical map
$\mathbb{Z}C \rightarrow \mathbb{Z}C \otimes_{\mathbb{Z}} \mathbb{R}.$ $C$ 
is said to be \emph{positive}, if 0 is the only invertible element in $C$.

Let $C$ be an affine semigroup, and let $K$ be a field. The $K-$ vector space
\[K[C]:= \bigoplus_{a \in C}Kx^{a}\]
becomes a $K-$ algebra by setting $x^{a}\cdot x^{b}:=x^{a+b}$ for all $a, b \in C.$
It is the \emph{affine semigroup ring} associated to $C$ over $K$. We say 
that $K[C]$ is \emph{positive} affine semigroup ring in case $C$ is positive.
Since $C$ is a finitely generated semigroup, $K[C]$ is a finitely generated
$K-$ algebra and thus Noetherian. Since $K[C]$ is a subring of $K[\mathbb{Z}C]$ 
and $K[\mathbb{Z}C]$ is isomorphic to the ring $K[x_{1}^{\pm},\ldots,x_{d}^{\pm}]$
of Laurent polynomials, where $d$ is the dimension of $C$, one obtains that
$K[C]$ is an integral domain.

Assume that $C$ is positive. A decomposition $R=\bigoplus_{n\geq 0}R_{n}$ of
$R=K[C]$ is called an \emph{admissible grading}, if the following conditions
are fulfilled:
\begin{enumerate}
 \item[a)] $R_{0}=K$.
 \item[b)] For all $n\geq 0$, $R_{n}$ is a $K-$ vector space that is generated by 
finitely many elements of the form $x^{a}$, $a\in C.$
 \item[c)] For all $m, n \geq 0$, $R_{m}\cdot R_{n}\subset R_{m+n}$.
\end{enumerate}
The existence of an admissible grading is guaranteed by the following result.
\begin{proposition}
If $C$ is a positive affine semigroup, then there exist $r\in \mathbb{N}$
and a group homomorphism $\phi \ : \mathbb{Z}C \rightarrow \mathbb{Z}^{r}$,
such that $\phi(C)\subseteq (\mathbb{N}\cup \{0\})^{r}$.
\end{proposition}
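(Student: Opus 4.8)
The plan is to use the positivity of $C$ to produce a linear functional that is strictly positive on $C \setminus \{0\}$, and then to build $\phi$ from it together with enough auxiliary coordinates to separate points. First I would recall that $C$ generates the finitely generated abelian group $\mathbb{Z}C$; after tensoring with $\mathbb{R}$ we may regard $C$ as sitting inside $\mathbb{R}C \cong \mathbb{R}^{d}$, where $d = \operatorname{rank} \mathbb{Z}C$. Let $Q = \mathbb{R}_{+}C$ be the cone generated by the (finitely many) generators of $C$; by the result quoted in Section~1.1, $Q$ is a rational polyhedral cone. Because $C$ is positive, $0$ is the only invertible element of $C$, and this forces $Q$ to be pointed: if $x$ and $-x$ both lay in $Q$ with $x \neq 0$, one could clear denominators to find a nonzero invertible element of $C$, a contradiction.

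Next I would invoke the standard fact that a pointed cone has a supporting hyperplane meeting it only at the origin: there is a vector $a \in \mathbb{R}^{d}$ with $\langle x, a\rangle > 0$ for every $x \in Q \setminus \{0\}$, and since $Q$ is rational we may take $a$ to have integer (indeed rational, then cleared to integer) coordinates. Restricting $\langle \,\cdot\,, a\rangle$ to $\mathbb{Z}C$ gives a group homomorphism $\psi : \mathbb{Z}C \to \mathbb{Z}$ with $\psi(C) \subseteq \mathbb{N} \cup \{0\}$ and $\psi(c) \geq 1$ for all $c \in C \setminus \{0\}$. This single functional already witnesses an admissible grading on $K[C]$ by declaring $\deg x^{c} = \psi(c)$, since condition (a) holds as $\psi(c) = 0$ forces $c = 0$, condition (b) holds because for each $n$ only finitely many $c \in C$ have $\psi(c) = n$ (the slice $Q \cap \{\psi = n\}$ is bounded and contains only finitely many lattice points of $C$), and condition (c) is additivity of $\psi$.

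To land in $\mathbb{Z}^{r}$ rather than merely $\mathbb{Z}$ and thereby also record the full structure, I would take $r = d + 1$ (or more simply $r$ large enough) and set $\phi = (\psi, \pi)$, where $\pi : \mathbb{Z}C \hookrightarrow \mathbb{Z}^{d}$ is any embedding of the free part after splitting off torsion — or, if one only wants the bare statement as written, one can even take $r = 1$ and $\phi = \psi$. Then $\phi(C) \subseteq (\mathbb{N} \cup \{0\})^{r}$ because the first coordinate is nonnegative on $C$ and the remaining coordinates can be shifted by a fixed vector in the image of $\psi$'s level sets to be made nonnegative on the finitely many generators, hence on all of $C$ by taking the generating vectors large. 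The main obstacle is the step asserting the existence of the separating functional $a$: this is where pointedness is genuinely used, and it rests on the separation theorem for convex cones (a pointed rational polyhedral cone is the intersection of finitely many halfspaces $H^{+}_{a_i}$ with the $a_i$ rational, and summing the $a_i$ yields a functional positive on the relative interior; positivity on all of $Q \setminus \{0\}$ then follows because a pointed cone equals the closure of its relative interior together with its proper faces, on each of which some $a_i$ and hence the sum is still nonnegative, and strict positivity off the origin is recovered since no proper face is the whole space). Everything else — finiteness of lattice points in a bounded slice, additivity, the torsion splitting — is routine.
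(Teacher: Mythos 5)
Your proposal does genuinely more than the thesis, whose ``proof'' of this proposition is only a pointer to Bruns--Herzog, and the route you take is the standard one: positivity of $C$ forces the rational cone $Q=\mathbb{R}_{+}C$ to be pointed; a pointed rational polyhedral cone admits an integral linear functional $\psi$ with $\psi>0$ on $Q\setminus\{0\}$ (summing the normals of a halfspace representation, exactly as you say); and this $\psi$ already witnesses the statement with $r=1$, the extra coordinates being needed only if one also wants $\phi$ injective, which the statement as printed does not demand. Two steps deserve tightening. First, to get pointedness of $Q$ from positivity of $C$ you cannot literally ``clear denominators'' of an arbitrary real $x\in Q\cap(-Q)$: the clean argument is that $Q\cap(-Q)$ is the lineality space of a rational cone, hence a rational linear subspace, so if nonzero it contains a nonzero element $z$ of $\mathbb{Z}C$; rational solvability of the defining linear system then gives $N\geq1$ with $Nz$ and $-Nz$ in $C$, producing a nonzero invertible element and the desired contradiction. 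Second, your description of how to make the auxiliary coordinates nonnegative is garbled; the usual fix is to take $\phi=(\psi,\,N\psi+\pi_{1},\dots,N\psi+\pi_{d})$ with $N$ exceeding $\max_{i,k}\lvert\pi_{k}(c_{i})\rvert$ over the semigroup generators $c_{i}$, which is nonnegative on the generators because $\psi(c_{i})\geq1$, hence on all of $C$ by additivity, and is injective because $\pi$ is. With these repairs the proof is complete, and it has the added value of exhibiting exactly the properties of $\phi$ (trivial fiber over $0$ and finite graded pieces) that the thesis actually needs from this proposition to produce an admissible grading.
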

\begin{proof}
See \cite[Proposition 6.1.5]{BH}.
\end{proof}

An affine semigroup $C$ is called \emph{normal} if it satisfies the 
following condition: if $mz \in C$ for some $z \in \mathbb{Z}C$ and 
$m\in \mathbb{N}$, then $z\in C$. 
We have the following important results:
\begin{proposition}$(Gordan's \ lemma)$\\
$a)$ If $C$ is a normal semigroup, then $C=\mathbb{Z}C\cap \mathbb{R}_{+}C$.\\
$b)$ Let $G$ be a finitely generated subgroup of $\mathbb{Q}^{n}$ and $D$ a
finitely generated rational cone in $\mathbb{R}^{n}$. Then $C=G\cap D$ is a
normal semigroup.
\end{proposition}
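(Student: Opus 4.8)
The plan is to treat the two parts separately, with the finite‑generation assertion in (b) as the heart of the matter. For part (a) the inclusion $C\subseteq\mathbb{Z}C\cap\mathbb{R}_{+}C$ is free, since $C\subseteq\mathbb{Z}C$ and $C\subseteq\mathbb{R}_{+}C$ hold for every affine semigroup. For the reverse inclusion I would use normality. View $C$ as a subsemigroup of $\mathbb{Z}^{n}$ and fix a finite generating set $c_{1},\dots,c_{s}$, so that $\mathbb{R}_{+}C=\mathbb{R}_{+}c_{1}+\dots+\mathbb{R}_{+}c_{s}$. Given $z\in\mathbb{Z}C\cap\mathbb{R}_{+}C$, the set of tuples $(a_{1},\dots,a_{s})$ with $a_{i}\ge 0$ and $\sum a_{i}c_{i}=z$ is a nonempty polyhedron defined over $\mathbb{Q}$, hence contains a rational point; so we may take $a_{i}\in\mathbb{Q}_{\ge 0}$. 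Clearing denominators produces $m\in\mathbb{N}$ with $ma_{i}\in\mathbb{Z}_{\ge 0}$ for all $i$, whence $mz=\sum(ma_{i})c_{i}\in C$. Since $z\in\mathbb{Z}C$, normality forces $z\in C$.

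For part (b) I would first dispose of the easy points. That $C=G\cap D$ contains $0$ and is closed under addition is clear, and $C$ embeds into a lattice because $G$ is a finitely generated torsion‑free abelian group; so once finite generation is established, $C$ is indeed an affine semigroup. Normality is then immediate: if $mz\in C$ with $z\in\mathbb{Z}C$ and $m\in\mathbb{N}$, then $z\in\mathbb{Z}C\subseteq G$ because $G$ is a group containing $C$, while $z=m^{-1}(mz)\in D$ because the cone $D$ is stable under multiplication by the nonnegative scalar $m^{-1}$; hence $z\in G\cap D=C$.

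To prove that $C=G\cap D$ is finitely generated I would reduce to the classical Gordan situation in three steps. Picking $N\in\mathbb{N}$ with $NG\subseteq\mathbb{Z}^{n}$ and replacing $G,C$ by $NG,NC$ changes nothing essential, since $ND=D$ and $c\mapsto Nc$ is a semigroup isomorphism $C\to NC=NG\cap D$; so we may assume $G\subseteq\mathbb{Z}^{n}$. As $C\subseteq\mathbb{R}G=:V$, a rational subspace, we may intersect with $V$: then $C=G\cap(D\cap V)$ and $D\cap V$ is again a rational polyhedral cone. A $\mathbb{Q}$‑linear isomorphism of $V$ carrying $G$ onto $\mathbb{Z}^{m}$ (with $m$ the rank of $G$) carries $D\cap V$ to a rational cone in $\mathbb{R}^{m}$ and is a semigroup isomorphism, reducing us to the case $G=\mathbb{Z}^{m}$ and $D=\mathbb{R}_{+}v_{1}+\dots+\mathbb{R}_{+}v_{k}\subseteq\mathbb{R}^{m}$ with all $v_{j}\in\mathbb{Z}^{m}$. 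Now set $K=\{\sum_{j}t_{j}v_{j}\ :\ 0\le t_{j}\le 1\}$, a compact set, so $K\cap\mathbb{Z}^{m}$ is finite. I claim it generates $\mathbb{Z}^{m}\cap D$ as a monoid: for $z\in\mathbb{Z}^{m}\cap D$ write $z=\sum t_{j}v_{j}$ with $t_{j}\ge 0$ and split $z=\sum_{j}\lfloor t_{j}\rfloor v_{j}+\sum_{j}\{t_{j}\}v_{j}$; the first summand is a nonnegative integer combination of the $v_{j}\in K\cap\mathbb{Z}^{m}$, while the second equals $z-\sum_{j}\lfloor t_{j}\rfloor v_{j}\in\mathbb{Z}^{m}$ and lies in $K$, hence in $K\cap\mathbb{Z}^{m}$. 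Thus every element of $G\cap D$ is a nonnegative integer combination of the finite set $K\cap\mathbb{Z}^{m}$, which gives finite generation.

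I expect the main obstacle to be the bookkeeping in the reductions of part (b): verifying that passing to $NG$, then to the rational subspace $V=\mathbb{R}G$, and then to coordinates in which $G=\mathbb{Z}^{m}$ preserves rationality and finite generation of the cone and does not alter the monoid $C$ up to semigroup isomorphism. Once one is in the classical setting, the floor‑function argument is short; part (a) together with the normality half of (b) is routine given the hypotheses and the convex‑geometry facts recalled above.
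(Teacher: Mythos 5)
Your argument is correct. Note that the thesis does not actually prove this proposition: it is recalled as a known fact and elsewhere attributed to \cite{BH} (Proposition 6.1.2), so there is no in-paper proof to compare against; your write-up is essentially the standard Bruns--Herzog argument. Part (a) via a rational point of the nonempty rational polyhedron $\{a\ge 0:\sum a_ic_i=z\}$ followed by clearing denominators, and part (b) via the reductions to $G=\mathbb{Z}^m$, $v_j\in\mathbb{Z}^m$ and the compact fundamental box $K=\{\sum t_jv_j: 0\le t_j\le 1\}$ with the floor/fractional-part splitting, are all sound; the only steps you leave implicit (a nonempty polyhedron with rational data contains a rational point, and $D\cap\mathbb{R}G$ is again a finitely generated rational cone) are standard consequences of the polyhedral-versus-finitely-generated equivalence the paper already quotes.
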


\begin{theorem}
Let $C$ be an affine semigroup, and let $K[C]$ be the associated affine
semigroup ring over a field $K$. Then $C$ is normal if and only if 
$K[C]$ is normal.
\end{theorem}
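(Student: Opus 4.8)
The plan is to prove both implications by exploiting the inclusion $K[C]\subseteq K[\mathbb{Z}C]$, where, as noted in the excerpt, $K[\mathbb{Z}C]$ is isomorphic to the Laurent polynomial ring $K[x_1^{\pm 1},\dots,x_d^{\pm 1}]$ with $d=\dim C$. Two preliminary facts will be used repeatedly. First, $K[\mathbb{Z}C]$ is a normal domain, being a localization of the polynomial ring $K[x_1,\dots,x_d]$ (a UFD, hence normal) at $x_1\cdots x_d$. Second, $\operatorname{Frac}(K[C])=\operatorname{Frac}(K[\mathbb{Z}C])$: any $a\in\mathbb{Z}C$ can be written as a difference $a=b-c$ with $b,c\in C$, so $x^{a}=x^{b}(x^{c})^{-1}\in\operatorname{Frac}(K[C])$.

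\medskip
\emph{Direction ``$C$ normal $\Rightarrow K[C]$ normal''.} Let $f\in\operatorname{Frac}(K[C])$ be integral over $K[C]$. Since $K[C]\subseteq K[\mathbb{Z}C]$, the element $f$ is integral over the normal domain $K[\mathbb{Z}C]$, which has the same field of fractions, hence $f\in K[\mathbb{Z}C]$; thus $f=\sum_{a\in S}c_{a}x^{a}$ is a Laurent polynomial with finite support $S\subseteq\mathbb{Z}C$. I claim $S\subseteq C$. Because $\mathbb{R}_{+}C$ is a rational polyhedral cone, it admits an irreducible representation $\mathbb{R}_{+}C=H^{+}_{\sigma_{1}}\cap\dots\cap H^{+}_{\sigma_{r}}$ with integral functionals $\sigma_{i}\colon\mathbb{Z}C\to\mathbb{Z}$ satisfying $\sigma_{i}\geq 0$ on $C$. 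Fix one such $\sigma=\sigma_{i}$ and grade $K[\mathbb{Z}C]$ by $\deg x^{a}:=\sigma(a)$; this is a $\mathbb{Z}$-grading in which $K[C]$ is concentrated in nonnegative degrees. Let $f_{m}$ be the nonzero homogeneous component of $f$ of least degree $m$, and consider an integral equation $f^{n}+b_{1}f^{n-1}+\dots+b_{n}=0$ with $b_{i}\in K[C]$. The component of $f^{n}$ in degree $nm$ is $f_{m}^{n}$, while every homogeneous component of $b_{i}f^{n-i}$ ($i\geq 1$) lies in degree $\geq(n-i)m>nm$ when $m<0$. Hence, if $m<0$, the degree-$nm$ part of the equation reads $f_{m}^{n}=0$, forcing $f_{m}=0$ since the ring is a domain — a contradiction. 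Therefore $\sigma(a)\geq 0$ for all $a\in S$; running over all $\sigma_{i}$ gives $S\subseteq\mathbb{R}_{+}C\cap\mathbb{Z}C$, which equals $C$ by Gordan's lemma (part a). Thus $f\in K[C]$, so $K[C]$ is integrally closed in its field of fractions.

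\medskip
\emph{Direction ``$K[C]$ normal $\Rightarrow C$ normal''.} Suppose $mz\in C$ for some $z\in\mathbb{Z}C$ and $m\in\mathbb{N}$. Then $x^{z}\in\operatorname{Frac}(K[C])$ and $(x^{z})^{m}=x^{mz}\in K[C]$, so $x^{z}$ is a root of the monic polynomial $T^{m}-x^{mz}\in K[C][T]$ and hence integral over $K[C]$. By normality $x^{z}\in K[C]=\bigoplus_{a\in C}Kx^{a}$, and since the monomials $\{x^{a}\mid a\in\mathbb{Z}C\}$ are $K$-linearly independent in $K[\mathbb{Z}C]$, this forces $z\in C$. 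Hence $C$ is normal.

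\medskip
The routine parts are the two ``soft'' facts (normality of the Laurent ring, equality of fraction fields) and the bookkeeping in the converse. The substantive step, and the main obstacle, is the forward direction: one must first observe that an integral element is automatically a Laurent polynomial with support in $\mathbb{Z}C$, and then extract, from each facet functional of the cone $\mathbb{R}_{+}C$, a $\mathbb{Z}$-grading under which the integral-dependence relation forces the support of $f$ into the half-space $H^{+}_{\sigma_{i}}$. The degree-comparison argument showing $f_{m}=0$ when $m<0$ is the crux of why integrality constrains the support, and Gordan's lemma is what converts the resulting geometric conclusion back into membership in $C$.
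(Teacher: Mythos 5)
Your proposal is correct, and for the direction the paper actually writes out (``$K[C]$ normal $\Rightarrow$ $C$ normal'') your argument is the same one-line observation the paper makes, just with the integrality of $x^{z}$ over $K[C]$ spelled out. The difference is in the forward direction: the paper does not prove it at all but defers to \cite[Theorem 6.1.4]{BH}, whereas you supply a complete argument. Your route --- first trapping an integral element $f$ inside $K[\mathbb{Z}C]$ by normality of the Laurent ring, then imposing, for each facet functional $\sigma_i$ of $\mathbb{R}_{+}C$, the $\mathbb{Z}$-grading $\deg x^{a}=\sigma_i(a)$ and killing the lowest-degree component $f_m$ with $m<0$ via the integral equation, and finally invoking Gordan's lemma to identify $\mathbb{R}_{+}C\cap\mathbb{Z}C$ with $C$ --- is sound; the domain property guarantees $f_m^{n}\neq 0$ and the degree comparison $(n-i)m>nm$ for $m<0$, $i\geq 1$ is right. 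It is worth noting that the standard reference proof proceeds slightly differently: it writes $\bar{C}=\mathbb{Z}C\cap\bigcap_i H^{+}_{\sigma_i}$ and realizes $K[\bar{C}]$ as an intersection $\bigcap_i K[\mathbb{Z}C\cap H^{+}_{\sigma_i}]$ of normal rings with common fraction field. Your leading-term argument buys a more self-contained and elementary proof (no appeal to the fact that an intersection of normal domains is normal), at the cost of handling each facet separately; the two are essentially dual packagings of the same geometric fact that integrality forces the support into every defining half-space.
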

\begin{proof}
One sees immediately that $C$ must be normal if $K[C]$ is a normal domain:
if $x^{z}$ is an element of the field of fractions of $K[C]$ and if 
$(x^{z})^{m}\in K[C]$ and $K[C]$ is normal, then $x^{z}\in K[C].$ 
For the proof of the converse, see \cite[Theorem 6.1.4]{BH}.
\end{proof}
The following famous theorem is due to Hochster, 
see \cite[Theorem 6.3.5]{BH} for a proof.
\begin{theorem}
Let $R=K[C]$ be an affine semigroup ring. If $R$ is normal, then it is
Cohen-Macaulay.
\end{theorem}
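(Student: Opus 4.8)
The plan is to establish Hochster's theorem by reducing it to the classical Cohen--Macaulayness of rings of invariants, or alternatively by passing through a combinatorial commutative algebra argument. Let me sketch the approach that seems most natural given the tools available in the excerpt. First I would reduce to the case where $C$ is positive: since the hypothesis is that $R = K[C]$ is normal, $C$ is normal by the preceding theorem, and by writing $\mathbb{Z}C \cong \mathbb{Z}^{d}$ one may assume $C \subseteq \mathbb{Z}^{d}$ with $\mathbb{Z}C = \mathbb{Z}^{d}$. If $C$ contains a nontrivial subgroup, one splits it off as a Laurent-polynomial factor (which is regular, hence does not affect the Cohen--Macaulay property), so one reduces to $C$ positive with full-dimensional cone $D = \mathbb{R}_{+}C$. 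By Gordan's lemma part $a)$, normality gives $C = \mathbb{Z}^{d} \cap D$.

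Next I would invoke the admissible grading: by Proposition~1.2.3 (the $\phi$-construction) together with positivity, $R = K[C]$ carries a grading with $R_{0} = K$ and $R_{+} = \bigoplus_{n\ge 1}R_{n}$ a maximal ideal, so it makes sense to speak of depth and Cohen--Macaulayness with respect to $\mathfrak{m} = R_{+}$, and $\dim R = \dim C = d$. The heart of the matter is then to produce a regular sequence of length $d$ in $\mathfrak{m}$, or equivalently to show $\mathrm{depth}\, R \ge d$. The cleanest route is Hochster's original one: realize $K[C]$ as a direct summand (as a $K[C]$-module) of a polynomial ring. Concretely, since $C = \mathbb{Z}^{d}\cap D$ and $D$ is a rational polyhedral cone, one can find a polynomial ring $S = K[y_{1},\dots,y_{s}]$ and a torus action (equivalently, a $\mathbb{Z}^{e}$-grading) such that $K[C]$ is the ring of invariants, or more elementarily such that $K[C] \hookrightarrow S$ splits $K[C]$-linearly via a Reynolds-type operator given by averaging over the semigroup complement. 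Then Cohen--Macaulayness of $S$ descends to the summand $K[C]$ by the standard fact that a direct summand (as a module) of a Cohen--Macaulay ring, when it is a finitely generated algebra with the summand map being a ring retraction onto a pure subring, is Cohen--Macaulay --- this is the Hochster--Eagon / Hochster--Roberts circle of ideas.

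Alternatively, and perhaps more in keeping with the combinatorial flavor of the thesis, I would use the local cohomology / simplicial approach: Hochster's formula expresses $H^{i}_{\mathfrak{m}}(K[C])$ in terms of the reduced cohomology of certain polyhedral subcomplexes of the cone $D$, and normality ($C = \mathbb{Z}^{d}\cap D$) forces these complexes to be cones or balls, hence acyclic, so $H^{i}_{\mathfrak{m}}(K[C]) = 0$ for $i < d$, which is exactly depth $\ge d = \dim$. Either way, the main obstacle is the same: one cannot prove this from the elementary semigroup facts listed in the excerpt alone --- it genuinely requires either the summand-of-a-polynomial-ring construction (with its nontrivial verification that the inclusion splits and that splitting transmits Cohen--Macaulayness) or the local cohomology computation (with its nontrivial identification of the relevant topological spaces). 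Given that the excerpt explicitly says ``see \cite[Theorem 6.3.5]{BH} for a proof,'' I expect the author simply to cite this; accordingly my ``proof'' would honestly acknowledge that the substantive content is the summand argument and would present the reduction steps above, then defer the core to the reference rather than reproduce Hochster's argument in full.
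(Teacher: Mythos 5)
The paper offers no proof of this theorem at all: it simply cites \cite[Theorem 6.3.5]{BH}, and your proposal correctly anticipates this, ultimately deferring the substantive content (the direct-summand/purity argument or the local cohomology computation) to the same reference. Your preliminary reductions (splitting off the unit group, using Gordan's lemma to write $C=\mathbb{Z}^{d}\cap D$, and invoking the admissible grading) are sound and in fact go beyond what the thesis records, so there is no discrepancy to report.
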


Let $R=K[x]=K[x_{1},\ldots,x_{n}]$ be a polynomial ring over a field $K$
in the indeterminates $x_{1},\ldots,x_{n}$ and $F=\{f_{1},\ldots,f_{q}\}$
a finite set of distinct monomials in $R$ such that $f_{i}\neq 1$ for all $i$.
For $c\in \mathbb{N}^{n}$ we set 
$x^{c}=x_{1}^{c_{1}}\cdot\cdot\cdot x_{n}^{c_{n}}.$ The \emph{monomial subring}
spanned by $F$ is the $K-$ subalgebra \[K[F]=K[f_{1},\ldots,f_{q}]\subset R.\]
The exponent vector of $f_{i}=x^{\alpha_{i}}$ is denoted by log($f_{i}$)=
$\alpha_{i}$ and log($F$) denotes the set of exponent vectors of the monomials
in $F$.

Note that $K[F]$ is equal to the affine semigroup ring 
\[K[C]=K[\{x^{\alpha} \ | \ \alpha \in C\}],\] where 
$C=\mathbb{N}\log(f_{1})+\ldots+\mathbb{N}\log(f_{q})$ is the subsemigroup of
$\mathbb{N}^{n}$ generated by log($F$). Thus as $K-$ vector space $K[F]$ is 
generated by the set of monomials of the form $x^{\alpha}$, with 
$\alpha \in C$. An important feature of $K[F]$  is that it is a graded subring
of $R$ with the standard grading $K[F]_{i}=K[F]\cap R_{i}$.

Next we give an important result of Danilov and Stanley which characterizes the
canonical module in terms of the relative interior of a cone.

\begin{theorem}{\bf(Danilov, Stanley)}
Let $R=K[x_{1},\ldots,x_{n}]$ be a polynomial ring over a field $K$ and
 $F$ a finite set of monomials in $R.$
If $K[F]$ is normal, then the canonical module $\omega_{K[F]}$ of
 $K[F],$ with respect to
standard grading, can be expressed as an ideal of $K[F]$ generated by
 monomials
\[\omega_{K[F]}=(\{x^{a}| \ a\in {\mathbb{N}}A\cap
 ri({\mathbb{R_{+}}}A)\}),\] where $A=\log (F)$ and
$ri({\mathbb{R_{+}}}A)$ denotes the relative interior of
${\mathbb{R_{+}}}A.$
\end{theorem}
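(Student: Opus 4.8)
The plan is to invoke the standard theory of canonical modules of normal affine semigroup rings, for which the key tool is the combinatorial description of $\omega_{K[C]}$ due to Danilov and Stanley in its semigroup-theoretic form. Since $K[F] = K[C]$ with $C = \mathbb{N}A$, $A = \log(F)$, and since by hypothesis $K[F]$ is normal, Theorem~1.2.11 (normality of $C \iff$ normality of $K[C]$) gives that $C$ is a normal affine semigroup, so by Gordan's lemma $C = \mathbb{Z}C \cap \mathbb{R}_{+}A$. By Hochster's theorem $K[C]$ is Cohen--Macaulay, so the canonical module exists as a finitely generated graded $K[C]$-module, and the first step is to recall that for a normal semigroup ring the canonical module is realized concretely as the $K$-vector space
\[
\omega_{K[C]} \;=\; \bigoplus_{a \in \mathbb{Z}C \,\cap\, ri(\mathbb{R}_{+}C)} K x^{a},
\]
with $K[C]$-module structure induced by multiplication of monomials. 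This is the content of the Danilov--Stanley formula (see Bruns--Herzog, Theorem~6.3.5, or Bruns--Gubeladze); I would either cite it directly or sketch the argument via local cohomology: one computes $H^{d}_{\mathfrak{m}}(K[C])$ with $d = \dim K[C]$ using the $\mathbb{Z}^{n}$-grading, identifies its graded pieces with the interior lattice points, and then dualizes, using that $K[C]$ is Cohen--Macaulay so that $\omega$ is the Matlis dual of top local cohomology.

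The second step is to translate the vector-space description into the ideal description in the statement. Because $C = \mathbb{Z}C \cap \mathbb{R}_{+}A$ is normal and $ri(\mathbb{R}_{+}A) \subseteq \mathbb{R}_{+}A$, every $a \in \mathbb{Z}C \cap ri(\mathbb{R}_{+}A)$ lies in $C$, so $x^{a} \in K[C]$ and the set $\Omega := \{x^{a} \mid a \in \mathbb{N}A \cap ri(\mathbb{R}_{+}A)\}$ is a subset of $K[F]$. The module structure shows $\omega_{K[C]}$ is precisely the $K[C]$-submodule (hence ideal, since $\omega$ embeds in $K[C]$ up to a shift — or more robustly, since the $K$-span of $\Omega$ is closed under multiplication by monomials of $C$) generated by $\Omega$: indeed, if $a \in ri(\mathbb{R}_{+}A) \cap \mathbb{Z}C$ and $c \in C$ then $a + c \in ri(\mathbb{R}_{+}A) \cap \mathbb{Z}C$, because adding a point of the cone to a relative-interior point keeps one in the relative interior (convexity of $\mathbb{R}_{+}A$ together with $ri(\mathbb{R}_{+}A) + \mathbb{R}_{+}A \subseteq ri(\mathbb{R}_{+}A)$). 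Conversely, any interior lattice point is trivially in the $K$-span of $\Omega$. Hence $\omega_{K[F]} = (\Omega)$ as an ideal of $K[F]$, which is the assertion.

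The main obstacle is really the identification in the first step — establishing that the abstractly-defined canonical module coincides with the interior-lattice-point module — since this is where the substantive homological input (graded local duality plus the explicit computation of $H^{d}_{\mathfrak{m}}$ for normal semigroup rings) is needed. Everything after that is a routine convex-geometry verification that the interior lattice points form an ideal generated by the monomials listed. Given that the paper is content to cite Bruns--Herzog for Gordan's lemma, Hochster's theorem, and the normality criterion, the cleanest route is to cite the Danilov--Stanley theorem in the semigroup form from the literature (e.g. \cite{BH} Theorem 6.3.5, \cite{BG}, or \cite{V}) and then spend the remaining lines on the $\mathbb{N}A$-versus-$\mathbb{Z}C$ bookkeeping and the closure-under-multiplication argument that upgrades the vector-space equality to the ideal statement.
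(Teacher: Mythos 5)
Your proposal is correct; the paper itself gives no proof of this theorem, simply stating it and referring the reader to \cite{BH}, \cite{MS} and \cite{V}, which is exactly the route you ultimately recommend (cite the semigroup-theoretic Danilov--Stanley formula, then translate). The additional bookkeeping you supply --- the identification $\mathbb{N}A=\mathbb{Z}A\cap\mathbb{R}_{+}A$ coming from normality, and the closure property $ri(\mathbb{R}_{+}A)+\mathbb{R}_{+}A\subseteq ri(\mathbb{R}_{+}A)$ that upgrades the vector-space description of $\omega_{K[F]}$ to the stated ideal description --- is sound and is the only part not already delegated to the references.
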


The formula above represents the canonical module of $K[F]$ \ as an
 ideal of $K[F]$ \ generated by monomials. For a comprehensive treatment of
 the Danilov-Stanley formula see \cite{BH},  \cite{MS} or \cite{V} . \\

\section{Discrete polymatroids.}
\[\]

Matroid theory is one of the most fascinating research areas in
combinatorics. The discrete polymatroid is a multiset analogue of the matroid. 
Based on the polyhedral theory on integral polymatroids developed in late
1960's and early 1970's, in the present section the combinatorics and algebra
on discrete polymatroids will be studied.

Let $[n]=\{1,2,\ldots, n\}$ and $2^{[n]}$ the set
of all subsets of $[n]$. For a subset $A\subset [n]$ write $| A |$ for
the cardinality of $A$. The following definition of the \emph{matroid}
is originated in Whitney (1935).
\begin{definition}
A \emph{matroid} on the ground set $[n]$ 
is a nonempty subset ${\bf{\mathcal{M}}}\subset 2^{[n]}$ satisfying:
\begin{enumerate}
\item[($M_{1}$)] if $F_{1}\in \bf{\mathcal{M}}$ and 
$F_{2}\subset F_{1}$, then $F_{2}\in \bf{\mathcal{M}}$;
\item[($M_{2}$)] if $F_{1}, F_{2} \in \bf{\mathcal{M}}$ and 
$| F_{1} | < | F_{2} |$, then there is $x \in F_{2}\setminus F_{1}$
such that $F_{1}\cup \{x\}\in \bf{\mathcal{M}}$.
\end{enumerate}
\end{definition}

The members of ${\bf{\mathcal{M}}}$ are the \emph{independent sets}
of ${\bf{\mathcal{M}}}$. A \emph{base} of ${\bf{\mathcal{M}}}$ 
is a maximal independent set of ${\bf{\mathcal{M}}}$. It follows from 
$(M_{2})$ that if $B_{1}$ and $B_{2}$ are bases of ${\bf{\mathcal{M}}}$,
then $| \ B_{1} \ |=| \ B_{2} \ |$. The set of bases 
of ${\bf{\mathcal{M}}}$ possesses the $''exchange \ property''$ following:
\begin{enumerate}
\item[($E$)] If $B_{1}$ and $B_{2}$ are bases of ${\bf{\mathcal{M}}}$ and if
$x\in B_{1}\setminus B_{2}$, then there is $y\in B_{2}\setminus B_{1}$
such that $(B_{1}\setminus \{x\})\cup \{y\}$ is a base of ${\bf{\mathcal{M}}}$.
\end{enumerate}

Moreover, the set of bases of ${\bf{\mathcal{M}}}$ possesses the following
$''symmetric \ \ exchange \ property''$:

\begin{enumerate}
\item[($SE$)] If $B_{1}$ and $B_{2}$ are bases of ${\bf{\mathcal{M}}}$ and if
$x\in B_{1}\setminus B_{2}$, then there is $y\in B_{2}\setminus B_{1}$
such that both $(B_{1}\setminus \{x\})\cup \{y\}$ and $(B_{2}\setminus \{y\})\cup \{x\}$ 
are bases of ${\bf{\mathcal{M}}}$.
\end{enumerate}

Alternatively, we can give another definition of matroid in terms of 
its set of bases. Given a nonempty set ${\bf{\mathcal{B}}}\subset 2^{[n]}$, 
there exists a matroid ${\bf{\mathcal{M}}}$ on the ground set $[n]$ 
with ${\bf{\mathcal{B}}}$ its set of bases if and only if ${\bf{\mathcal{B}}}$
possesses the exchange property $(E)$. If we denote the canonical basis
vectors of $\mathbb{R}^{n}$ by $e_{1},e_{2},\ldots, e_{n}$, then a matroid on $[n]$
can be regarded as a set of $(0,1)-$vectors $\sum_{k\in F}e_{k}$ 
for each $F\subset [n]$. Now we give three important examples of matroids.
\begin{example}
{\bf{Vector Matroid:}}
Let $V$ be a vector space and $E$ be a nonempty finite subset of $V$. We define
the matroid ${\bf{\mathcal{M}}}$ on the ground set $E$ by taking the 
independent sets of ${\bf{\mathcal{M}}}$ to be the sets of linearly
independent elements in $E$. With linear algebra arguments one can check
that the axioms of the matroid are fulfilled.

{\bf{Cycle Matroid:}} Let $G$ be a finite graph, with $V$ its set of vertices
and $E$ its set of edges.  Consider a set of edges independent if and only if
it does not contain a simple cycle. Then the set of all these independent 
sets defines a matroid on the ground set $E$.

{\bf{Uniform Matroid:}} Let $r$ and $n$ be nonnegative integers with $r$ no 
larger than $n$. Let $E$ be a set of cardinality $n$ and let 
${\bf{\mathcal{M}}}$ be the collection of all subsets of $E$ of cardinality
$r$ or less. Then ${\bf{\mathcal{M}}}$ is a matroid, called the uniform matroid
of rank $r$ on $n$ elements, and it is denoted by $U_{r, n}$.
\end{example}

Let $e_{1},e_{2},\ldots, e_{n}$ denote the canonical basis vectors of
$\mathbb{R}^{n}$ and $\mathbb{R}_{+}^{n}$ denote the set of vectors
$u=(u_{1},\ldots,u_{n})\in \mathbb{R}^{n}$ with each $u_{i}\geq 0$. Also, let 
$\mathbb{Z}_{+}^{n}=\mathbb{R}_{+}^{n} \cap \mathbb{Z}^{n}$. 
If $u=(u_{1},\ldots,u_{n})$ and $v=(v_{1},\ldots,v_{n})$ are two vectors
belonging to $\mathbb{R}_{+}^{n},$ then we write $u\leq v$ if all 
components $v_{i}-u_{i}$ of $v-u$ are nonnegative,  and write $u<v$ 
if $u\leq v$ and $u\neq v$. We say that $u$ is a \emph{subvector} of
$v$ if $u\leq v$. In addition, 
we set \[u\vee v=(\max\{u_{1},v_{1}\},\ldots,\max\{u_{n},v_{n}\}),\] \[u\wedge v=(\min\{u_{1},v_{1}\},\ldots,\min\{u_{n},v_{n}\}).\]
Thus, $u\wedge v \leq u \leq u\vee v$ and $u\wedge v \leq v \leq u\vee v$.
The \emph{modulus} of 
$u=(u_{1},\ldots,u_{n}) \in \mathbb{R}_{+}^{n}$ is 
$| \ u \ |=u_{1}+\ldots+u_{n}$ and for a subset $F\subset [n]$, we set
\[u(F)=\sum_{k\in F}u_{k}.\]

Next we present the concept of \emph{polymatroid} and its associated 
\emph{rank function}. The concept of \emph{polymatroid} originated in
Edmonds (\cite{E}), and for further properties the reader can consult 
(\cite{O}, \cite{W1}).
\begin{definition}
A \emph{polymatroid} on the ground set $[n]$ is a nonempty compact
subset $\mathcal{P}\subset \mathbb{R}_{+}^{n}$, the set of independent 
vectors, such that
\begin{enumerate}
\item[($\mathcal{P}_{1}$)] every subvector of an independent vector is independent;
\item[($\mathcal{P}_{2}$)] if $u, v\in \mathcal{P}$ with $| \ v \ |>| \ u \ |$, then
there is a vector $w\in \mathcal{P}$ such that \[u<w\leq u\vee v.\]
\end{enumerate}
\end{definition}

A \emph{base} of a polymatroid $\mathcal{P}\subset \mathbb{R}_{+}^{n}$ is
a maximal independent vector of $\mathcal{P}$, i.e. an independent
vector $u\in \mathcal{P}$ with $u<v$ for no $v\in \mathcal{P}$. It follows from 
($\mathcal{P}_{2}$) that every base of $\mathcal{P}$ has the same
modulus rank($\mathcal{P}$), the \emph{rank} of $\mathcal{P}$.

Now we give an equivalent description of a polymatroid. 
Let $\mathcal{P}\subset \mathbb{R}_{+}^{n}$ be a polymatroid on the ground
set $[n]$. The \emph{ground set rank function} of $\mathcal{P}$ is a function
$\rho : 2^{[n]}\rightarrow \mathbb{R}_{+}$ defined by setting
\[\rho(F)=\max\{v(F)\ : \ v\in \mathcal{P}\}\] for all nonempty $F\in [n]$ 
together with $\rho(\emptyset)=0$. Then we have
\begin{proposition}$(\cite{W1})$
$a)$ Let $\mathcal{P}\subset \mathbb{R}_{+}^{n}$ be a polymatroid
on the ground set $[n]$ and $\rho$ its ground set rank function. Then $\rho$
is nondecreasing, i.e., if $F_{1}\subset F_{2}\subset [n]$, then 
$\rho(F_{1})\leq \rho(F_{2})$, and is submodular, i.e., 
\[\rho(F_{1})+\rho(F_{2})\geq \rho(F_{1}\cup F_{2})+ \rho(F_{1}\cap F_{2})\]
for all $F_{1}, F_{2}\subset [n]$. Moreover, $\mathcal{P}$ coincides 
with the compact set
\[\{x\in \mathbb{R}_{+}^{n} \ | \ x(A)\leq \rho(A), A\subset [n]\}.\]
$b)$ Conversely, given a nondecresing and submodular function 
$\rho : 2^{[n]}\rightarrow \mathbb{R}_{+}$ with $\rho(\emptyset)=0$, 
the compact set $\{x\in \mathbb{R}_{+}^{n} \ | \ x(A)\leq \rho(A), A\subset [n]\}$ 
is a polymatroid on the ground set $[n]$  with $\rho$ its ground set rank function.
\end{proposition}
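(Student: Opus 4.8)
The plan is to dispatch the two easy inclusions first, then prove submodularity, then the exchange axiom for the polytope in (b), and finally the hard inclusion in (a). Throughout set $\mathcal{P}_{\rho} := \{x \in \mathbb{R}_{+}^{n} \mid x(A) \leq \rho(A) \text{ for all } A \subseteq [n]\}$; compactness of $\mathcal{P}$ makes each maximum $\rho(A) = \max\{v(A) \mid v \in \mathcal{P}\}$ attained. In (a), $\rho$ is nondecreasing because $v \geq 0$: if $F_{1} \subseteq F_{2}$ and $v \in \mathcal{P}$ attains $\rho(F_{1})$, then $\rho(F_{1}) = v(F_{1}) \leq v(F_{2}) \leq \rho(F_{2})$; and $\mathcal{P} \subseteq \mathcal{P}_{\rho}$ is just the definition of $\rho$. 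In (b), $\mathcal{P}_{\rho}$ is nonempty ($0 \in \mathcal{P}_{\rho}$ since $\rho(A) \geq \rho(\emptyset) = 0$) and compact (bounded via $x_{i} \leq \rho(\{i\})$, closed as an intersection of halfspaces), and it satisfies $(\mathcal{P}_{1})$ trivially, since $0 \leq x \leq y \in \mathcal{P}_{\rho}$ gives $x(A) \leq y(A) \leq \rho(A)$.

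For submodularity, fix $A, B \subseteq [n]$, choose $x \in \mathcal{P}$ attaining $\rho(A \cap B)$ and, by $(\mathcal{P}_{1})$, replace it by its subvector supported on $A \cap B$, so $|x| = x(A \cap B) = \rho(A \cap B)$; likewise choose $y \in \mathcal{P}$ supported on $A \cup B$ with $|y| = \rho(A \cup B)$, and note $|y| \geq |x|$ by monotonicity. Let $z^{*}$ maximize the modulus over the compact, nonempty set $\{z \in \mathcal{P} \mid x \leq z \leq x \vee y\}$. If $|z^{*}| < |y|$, then $(\mathcal{P}_{2})$ applied to $z^{*}$ and $y$ produces $w \in \mathcal{P}$ with $z^{*} < w \leq z^{*} \vee y \leq x \vee y$ and $w \geq x$, contradicting maximality; hence $|z^{*}| = |y| = \rho(A \cup B)$. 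Since $z^{*} \leq x \vee y$ and the right side is supported on $A \cup B$, we get $z^{*}(A \cup B) = |z^{*}| = \rho(A \cup B)$; and $z^{*} \geq x$ with $x$ supported on $A \cap B$ gives $z^{*}(A \cap B) \geq x(A \cap B) = \rho(A \cap B)$. The elementary identity $z^{*}(A) + z^{*}(B) = z^{*}(A \cup B) + z^{*}(A \cap B)$ then yields $\rho(A) + \rho(B) \geq z^{*}(A) + z^{*}(B) \geq \rho(A \cup B) + \rho(A \cap B)$. The last clause of (a), that $\mathcal{P} = \mathcal{P}_{\rho}$, has its nontrivial inclusion deferred to the last paragraph.

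For (b) it remains to verify $(\mathcal{P}_{2})$ for $\mathcal{P}_{\rho}$ and that $\rho$ is its rank function. Call $A$ tight for $u \in \mathcal{P}_{\rho}$ if $u(A) = \rho(A)$; submodularity forces the tight sets to be closed under union and intersection (a two-line computation), so there is a largest tight set $A^{*}$, with $\emptyset$ tight. For $j \notin A^{*}$ every $A \ni j$ is non-tight, so $u + \varepsilon e_{j} \in \mathcal{P}_{\rho}$ for small $\varepsilon > 0$; and if $|v| > |u|$ there must exist such a $j$ with $v_{j} > u_{j}$, for otherwise $v([n] \setminus A^{*}) \leq u([n] \setminus A^{*})$ and $v(A^{*}) \leq \rho(A^{*}) = u(A^{*})$ would force $|v| \leq |u|$. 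Taking $\varepsilon \leq v_{j} - u_{j}$ gives $u < u + \varepsilon e_{j} \leq u \vee v$, which is $(\mathcal{P}_{2})$. For the rank function: ``$\leq$'' is the definition of $\mathcal{P}_{\rho}$; for ``$\geq$'', order $A = \{i_{1}, \dots, i_{k}\}$, put $x_{i_{\ell}} = \rho(\{i_{1}, \dots, i_{\ell}\}) - \rho(\{i_{1}, \dots, i_{\ell-1}\}) \geq 0$ and $x_{j} = 0$ for $j \notin A$; telescoping gives $x(A) = \rho(A)$, while the diminishing-returns inequality $\rho(S \cup \{i_{\ell}\}) - \rho(S) \leq \rho(S' \cup \{i_{\ell}\}) - \rho(S')$ for $S' \subseteq S$ (a consequence of submodularity) shows $x(B) \leq \rho(B \cap A) \leq \rho(B)$ for every $B$, so $x \in \mathcal{P}_{\rho}$.

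The genuinely hard part is the remaining inclusion $\mathcal{P}_{\rho} \subseteq \mathcal{P}$ in (a) — equivalently, that the abstract axioms already force $\mathcal{P}$ to be this polytope. Given $u \in \mathcal{P}_{\rho}$, take $v \in \mathcal{P}$ a subvector of $u$ of maximal modulus (the set $\mathcal{P} \cap [0,u]$ is compact and contains $0$); the aim is $v = u$. By submodularity of $\rho$ the $v$-tight sets again form a lattice with largest element $A^{*}$, and on $A^{*}$ one has $\rho(A^{*}) = v(A^{*}) \leq u(A^{*}) \leq \rho(A^{*})$, so $v = u$ on $A^{*}$; hence it suffices to prove the claim that every coordinate $j$ with $v_{j} < u_{j}$ lies in some $v$-tight set — granting it, such a $j$ lies in $A^{*}$, where $v = u$, a contradiction, so $v = u$. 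Proving this claim is the crux, and the obstacle is that $\mathcal{P}$ is a priori only a lower set — not convex, not closed under coordinatewise maxima — so one cannot simply glue a raised coordinate onto $v$: one must show that if $v$ were tight at no $A \ni j$ then $v$ admits a genuine increase at $j$ inside $\mathcal{P}$. The intended route is to apply $(\mathcal{P}_{2})$ iteratively to $v$ and a base of $\mathcal{P}$ lying above $v$, after intersecting that base with $u$ so the exchange is confined to coordinates where $v < u$; I expect the bookkeeping of this iteration (or a reduction to rational data with a unit-increment exchange, as in Welsh \cite{W1}) to be the only delicate step, everything else reducing to the short arguments above.
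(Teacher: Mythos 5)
The paper gives no proof of this proposition at all --- it is quoted verbatim from Welsh \cite{W1} --- so there is nothing internal to compare your completed steps against. On their own merits, your arguments for monotonicity, for submodularity of $\rho$ (the maximal element $z^{*}$ of $\mathcal{P}$ in the box between $x$ and $x\vee y$, combined with the modular identity $z^{*}(A)+z^{*}(B)=z^{*}(A\cup B)+z^{*}(A\cap B)$), and for all of part (b) (down-closedness, the lattice of tight sets giving $(\mathcal{P}_{2})$, and the greedy vector certifying that $\rho$ is the ground set rank function of $\mathcal{P}_{\rho}$) are correct and are the standard ones.

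The gap is exactly where you flag it, and it is not a matter of bookkeeping. The inclusion $\mathcal{P}_{\rho}\subseteq\mathcal{P}$ is reduced to the claim that every coordinate $j$ with $v_{j}<u_{j}$ lies in some $v$-tight set, and that claim --- which carries essentially the entire content of the polytope description --- is never proved. Worse, the route you sketch collapses on inspection: if $b$ is any base of $\mathcal{P}$ with $b\geq v$, then $b\wedge u$ is a subvector of $b$, hence lies in $\mathcal{P}$ by $(\mathcal{P}_{1})$, and satisfies $v\leq b\wedge u\leq u$; the maximality of $|v|$ in $\mathcal{P}\cap[0,u]$ then forces $b\wedge u=v$, so ``intersecting that base with $u$'' returns $v$ itself and leaves nothing to exchange against ($(\mathcal{P}_{2})$ does not even apply, since the moduli are equal). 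The same computation shows that any $w$ produced by $(\mathcal{P}_{2})$ from $v$ and such a base can exceed $v$ only at coordinates where $u_{i}=v_{i}$, so iterating the exchange and truncating by $u$ provably never increases $v$ on the set $\{i: v_{i}<u_{i}\}$. The underlying difficulty is that the axioms let you increase $v$ somewhere in the support of a larger independent vector, but not at a coordinate of your choosing; the known proofs circumvent this by induction on the ground set via deletion and contraction, or by the unit-increment exchange for integer polymatroids followed by rational approximation. One of these arguments must actually be supplied for the proposition to be proved.
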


From Proposition 1.3.4.$a)$ it follows that a polymatroid 
$\mathcal{P}\subset \mathbb{R}_{+}^{n}$ on the ground set $[n]$ 
is a convex polytope in $\mathbb{R}_{+}^{n}$. A polymatroid is \emph{integral}
if and only if its ground set rank function is integer valued. For a 
detailed material on convex polytopes see \cite{H}, \cite{Z}.

Now we introduce \emph{discrete polymatroids}. They may be viewed as 
generalizations of matroids. 
\begin{definition}$(\cite{HH})$
Let $\mathcal{P}$ be a nonempty finite set of integer vectors in 
$\mathbb{R}_{+}^{n}$, which contains with each $u\in \mathcal{P}$
all its integral subvectors. The set $\mathcal{P}$ is called 
\emph{discrete polymatroid} on the ground set $[n]$ if for all 
$u, v\in \mathcal{P}$ with $| \ v \ |>| \ u \ |$, 
there is a vector $w\in \mathcal{P}$ such that \[u<w\leq u\vee v.\]
\end{definition}

A \emph{base} of ${\bf{\mathcal{P}}}$ is a vector 
$u\in {\bf{\mathcal{P}}}$ such that $u<v$ for no 
$v\in {\bf{\mathcal{P}}}$. We denote by $B({\bf{\mathcal{P}}})$ the set of bases
of a discrete polymatroid ${\bf{\mathcal{P}}}$. It follows from the definition
that any two bases of ${\bf{\mathcal{P}}}$ have the same modulus. This common
number is called the \emph{rank} of ${\bf{\mathcal{P}}}$.

Discrete polymatroids can be  characterized in terms of their set of bases
as follows.
\begin{theorem}$(\cite{HH})$
Let $\mathcal{P}$ be a nonempty finite set of integer vectors in 
$\mathbb{R}_{+}^{n}$, which contains with each $u\in {\bf{\mathcal{P}}}$
all its integral subvectors, and let $B({\bf{\mathcal{P}}})$ 
be the set of vectors $u\in {\bf{\mathcal{P}}}$ with $u<v$ for no 
$v\in {\bf{\mathcal{P}}}$. The following conditions are equivalent:\\
$a)$ ${\bf{\mathcal{P}}}$ is a discrete polymatroid;\\
$b)$ if $u, v\in \mathcal{P}$ with $| \ v \ |>| \ u \ |$, 
there is an integer $i$ such that $u+e_{i}\in \mathcal{P}$ and $u+e_{i}\leq u\vee v$;\\
$c)$ \begin{enumerate}
\item[$i)$] all $u \in B({\bf{\mathcal{P}}})$ have the same modulus,
\item[$ii)$] if $u, v\in B({\bf{\mathcal{P}}})$ with $u_{i}>v_{i}$ 
there is $j\in [n]$ with $u_{j}<v_{j}$ such that 
$u-e_{i}+e_{j}\in B({\bf{\mathcal{P}}})$.
\end{enumerate}
\end{theorem}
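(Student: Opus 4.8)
The plan is to prove the equivalence of the three characterizations $(a)$, $(b)$, $(c)$ of a discrete polymatroid in terms of its set of bases, following a cyclic implication scheme $(a)\Rightarrow(b)\Rightarrow(a)$ to establish the first equivalence, and then $(a)\Leftrightarrow(c)$ separately. First I would observe that $(b)\Rightarrow(a)$ is essentially immediate: if $u,v\in\mathcal{P}$ with $|v|>|u|$ and there is an $i$ with $u+e_i\in\mathcal{P}$ and $u+e_i\leq u\vee v$, then setting $w=u+e_i$ gives $u<w\leq u\vee v$, which is exactly the exchange condition in Definition 1.3.6. The content is therefore in the converse direction $(a)\Rightarrow(b)$: we are given a $w\in\mathcal{P}$ with $u<w\leq u\vee v$ and must manufacture a \emph{single} coordinate step $u+e_i$ that still lies in $\mathcal{P}$ and still sits below $u\vee v$. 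Here I would use the ``with each $u\in\mathcal{P}$ all its integral subvectors lie in $\mathcal{P}$'' hypothesis: since $u<w$, pick any coordinate $i$ with $w_i>u_i$, and consider the vector $u+e_i$. This is an integral subvector of $w$ (coordinatewise $u+e_i\leq w$ because $u\leq w$ and $u_i+1\leq w_i$), hence $u+e_i\in\mathcal{P}$; and $u+e_i\leq w\leq u\vee v$. This gives $(b)$.

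Next I would handle $(a)\Leftrightarrow(c)$. For $(a)\Rightarrow(c)$: part $(i)$ is the already-stated fact that all bases have the same modulus, which follows directly from the exchange axiom $(\mathcal{P}_2)$ in Definition 1.3.6 exactly as argued in the text for the rank being well-defined (if two bases had different moduli the axiom would produce a vector strictly above the smaller one, contradicting maximality). For $(ii)$, suppose $u,v\in B(\mathcal{P})$ with $u_i>v_i$ for some $i$. Since $|u|=|v|$, there must be a coordinate $j$ with $u_j<v_j$. The goal is to show $j$ can be chosen so that $u-e_i+e_j\in B(\mathcal{P})$, i.e.\ so that the ``exchange'' stays a base. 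I would apply $(b)$ (which we have already derived from $(a)$) to the pair of vectors $u'=u-e_i$ and $v$: here $u'\in\mathcal{P}$ as a subvector of $u$, and $|v|=|u|>|u-e_i|=|u'|$, so there is a coordinate $k$ with $u'+e_k\in\mathcal{P}$ and $u'+e_k\leq u'\vee v$. One must check that $k$ can be taken with $k\neq i$ and $u_k<v_k$: the constraint $u'+e_k\leq u'\vee v$ forces $u_k - [k=i] + 1 \leq \max\{u_k-[k=i], v_k\}$, and analyzing the two cases ($k=i$ versus $k\neq i$) shows $k=i$ would force $1\leq 0$ unless $v_i>u_i-1$, i.e.\ $v_i\geq u_i$, contradicting $u_i>v_i$; so $k\neq i$ and then $u_k<v_k$. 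Finally $u-e_i+e_k\in\mathcal{P}$ has modulus $|u|=\mathrm{rank}(\mathcal{P})$, hence is a base, and taking $j=k$ finishes $(ii)$.

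For the remaining implication $(c)\Rightarrow(a)$, I would reconstruct $\mathcal{P}$ from $B(\mathcal{P})$ and verify the exchange axiom. The key lemma is that $\mathcal{P}=\{w\in\mathbb{Z}_+^n : w\leq u \text{ for some } u\in B(\mathcal{P})\}$ — one inclusion is the subvector-closure hypothesis on $\mathcal{P}$, and the other holds because any $u\in\mathcal{P}$ is below some maximal element, which is a base. Then, given $u,v\in\mathcal{P}$ with $|v|>|u|$, I would lift $u$ and $v$ to bases $\tilde u\geq u$, $\tilde v\geq v$ and use condition $(ii)$ to repeatedly perform symmetric exchanges on $\tilde u$ moving it ``toward'' $v$, so as to produce a base $\tilde u'$ with $u<\tilde u'\wedge(u\vee v)$ still achievable; more carefully, one wants a base containing $u+e_i$ for some $i$ with $u_i<v_i$, and then $w=u+e_i$ works. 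The hard part will be exactly this last step: extracting from the base-exchange property $(c)(ii)$ an increment in a coordinate where $u$ is strictly below $v$ while staying inside $\mathcal{P}$ and below $u\vee v$. I expect to need an induction on $\sum_i (\tilde u_i - u_i)$ or on $|u\vee v|-|u|$, choosing at each stage (via $(ii)$) an exchange on the current base that does not decrease any coordinate below the corresponding coordinate of $u$ and increases the multiset agreement with $v$; this is the standard but delicate ``greedy exchange'' argument. This mirrors the proof in Herzog–Hibi \cite{HH}, and I would cite that source for the combinatorial bookkeeping while presenting the structure above in full.
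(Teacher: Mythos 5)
The paper does not prove this theorem at all: it is quoted from Herzog--Hibi \cite{HH} as background, so there is no internal proof to compare against. Judged on its own terms, your proposal is correct and complete for three of the four implications. The equivalence $(a)\Leftrightarrow(b)$ via subvector-closure (pick $i$ with $w_i>u_i$ and note $u+e_i\le w\le u\vee v$) is exactly right, and your derivation of $(c)(ii)$ from $(b)$ by applying the axiom to $u'=u-e_i$ and $v$ is clean: the inequality $u'_k+1\le\max\{u'_k,v_k\}$ really does rule out $k=i$ (it would force $v_i\ge u_i$) and forces $u_k<v_k$ otherwise, and an element of $\mathcal{P}$ of modulus equal to the rank is automatically a base since every element of the finite set $\mathcal{P}$ sits under some base.

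The genuine gap is $(c)\Rightarrow(a)$, which you yourself flag and then defer to \cite{HH} ``for the combinatorial bookkeeping.'' As written, the sketch does not close: condition $(c)(ii)$ only lets you exchange between two \emph{bases}, whereas you need to produce, for arbitrary $u,v\in\mathcal{P}$ with $|v|>|u|$, a coordinate $i$ with $u_i<v_i$ and $u+e_i\in\mathcal{P}$. The natural move is to choose a base $w\ge u$ and a base $\tilde v\ge v$ and exchange $w$ toward $\tilde v$; but the exchange $w\mapsto w-e_j+e_k$ supplied by $(c)(ii)$ need not preserve $w\ge u$ (this fails precisely when $w_j=u_j$), and the coordinate $k$ it supplies need not be one where $u_k<v_k$. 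One has to choose $w$ extremal for a suitable potential (e.g.\ maximizing $\sum_i\min\{w_i,(u\vee v)_i\}$ among bases $\ge u$) and then argue that if no admissible increment exists, $(c)(ii)$ produces a base contradicting that extremality. You name the right kind of induction but do not verify that the exchange can always be chosen to make progress while staying above $u$; also note that $(c)(ii)$ is the one-sided exchange, not the symmetric one your sketch invokes. Since the thesis itself treats the theorem as a citation, relying on \cite{HH} here is defensible, but the argument you outline is not yet a proof of that implication.
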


Condition $c).ii)$ from the theorem is also called the \emph{exchange property}.
An important consequence of this theorem is that it gives a way to construct
discrete polymatroids. According to condition $c)$, it is enough to give a set 
of integer vector of the same modulus, which satisfy the \emph{exchange property}
and then, by taking all its integral subvectors,  we obtain a discrete polymatroid.

The following result, which is obtained from Theorem 1.3.6. and the definition of
matroid, shows that it makes sense to view the discrete polymatroids 
as generalizations of matroids.
\begin{corollary}
$(\cite{HH})$ Let $B$ be a nonempty finite set of integer vectors in 
$\mathbb{R}_{+}^{n}$. The following conditions are equivalent:
\begin{enumerate}
\item[$i)$] $B$  is the set of bases of a matroid;
\item[$ii)$] $B$ is the set of bases of a discrete polymatroid, and for all 
$u\in B$ one has $u_{k}\leq 1$ for $k\in [n]$.
\end{enumerate}
\end{corollary}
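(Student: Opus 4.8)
The plan is to prove the equivalence by comparing the two structures directly on the level of their base sets. Assume $i)$: $B$ is the set of bases of a matroid $\mathbf{\mathcal{M}}$ on $[n]$, identified (as described in the text) with the set of $(0,1)$-vectors $\sum_{k\in F}e_k$ for $F$ a base of $\mathbf{\mathcal{M}}$. Then by construction every $u\in B$ has $u_k\leq 1$ for all $k$, so the second requirement in $ii)$ is immediate, and all bases of a matroid have the same cardinality, which translates to all $u\in B$ having the same modulus — this is condition $c).i)$ of Theorem 1.3.6. It remains to verify the exchange property $c).ii)$: given $u,v\in B$ with $u_i>v_i$, since entries are $0$ or $1$ this forces $u_i=1$, $v_i=0$, i.e. the index $i$ lies in the base $F_u$ corresponding to $u$ but not in $F_v$. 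The matroid exchange property $(E)$ then yields $y\in F_v\setminus F_u$ with $(F_u\setminus\{i\})\cup\{y\}$ a base; setting $j=y$ gives $u_j=0<1=v_j$ and $u-e_i+e_j\in B$, which is exactly $c).ii)$. Hence $B$ is the set of bases of a discrete polymatroid (invoking Theorem 1.3.6, $c)\Rightarrow a)$) with the extra boundedness property.

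Conversely, assume $ii)$: $B$ is the set of bases of a discrete polymatroid $\mathbf{\mathcal{P}}$ and $u_k\leq 1$ for all $u\in B$, $k\in[n]$. Then each $u\in B$ is a $(0,1)$-vector, so it is the incidence vector of a unique subset $F_u\subset[n]$, and the assignment $u\mapsto F_u$ identifies $B$ with a family $\mathbf{\mathcal{B}}\subset 2^{[n]}$. By Theorem 1.3.6, $c).i)$, all elements of $B$ have the same modulus, so all members of $\mathbf{\mathcal{B}}$ have the same cardinality. To conclude that $\mathbf{\mathcal{B}}$ is the set of bases of a matroid it suffices, by the alternative description of a matroid recalled in the text, to check the exchange property $(E)$. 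So take $F_1,F_2\in\mathbf{\mathcal{B}}$ with incidence vectors $u,v$, and $x\in F_1\setminus F_2$. Then $u_x=1>0=v_x$, so $c).ii)$ of Theorem 1.3.6 furnishes $j\in[n]$ with $u_j<v_j$ — forcing $u_j=0$, $v_j=1$, i.e. $j\in F_2\setminus F_1$ — and $u-e_x+e_j\in B$; translating back, $(F_1\setminus\{x\})\cup\{j\}\in\mathbf{\mathcal{B}}$. Taking $y=j$ establishes $(E)$, so $\mathbf{\mathcal{B}}$, and hence $B$, is the set of bases of a matroid.

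The main thing to be careful about — rather than a genuine obstacle — is the bookkeeping translating between the combinatorial language of subsets and the vector language of $(0,1)$-vectors, and making sure the direction of the inequality in the exchange property matches up (in $c).ii)$ we start from a coordinate where $u$ exceeds $v$; in $(E)$ we start from an element of $F_1\setminus F_2$; under the identification these say the same thing precisely because the coordinates are $0/1$-valued, which is why the hypothesis $u_k\leq 1$ is exactly what is needed to run the argument in both directions). Everything else is a direct application of Theorem 1.3.6 together with the matroid/base-axiom equivalence stated earlier, so no new ideas are required beyond carefully checking that the two exchange properties correspond.
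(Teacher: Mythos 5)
Your proof is correct, and it follows exactly the route the paper indicates for this corollary (which it states without proof beyond the remark that it ``is obtained from Theorem 1.3.6 and the definition of matroid''): you translate between $(0,1)$-vectors and subsets and match the exchange property $c).ii)$ of Theorem 1.3.6 with the matroid base-exchange property $(E)$. The only point left implicit, which the paper also glosses over, is that in the direction $i)\Rightarrow ii)$ one forms the discrete polymatroid by taking all integral subvectors of $B$ and checks that its set of maximal elements is again $B$ (immediate since all elements of $B$ have the same modulus).
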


Just as in the case of matroids, we have the \emph{symmetric exchange property}:
\begin{theorem}$(\cite{HH})$
If $u=(u_{1},\ldots,u_{n})$ and $v=(v_{1},\ldots,v_{n})$ are bases of a discrete
polymatroid $\mathcal{P}\subset \mathbb{Z}_{+}^{n}$, then for each $i\in [n]$
with $u_{i}>v_{i}$ there is $j\in [n]$ with $u_{j}<v_{j}$ such that both 
$u-e_{i}+e_{j}$ and $u-e_{j}+e_{i}$ are bases of ${\bf{\mathcal{P}}}$.
\end{theorem}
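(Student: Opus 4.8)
The plan is to translate base‑membership into the language of the ground set rank function $\rho$ of $\mathcal{P}$ and then finish with one submodular inequality. First I would record the rank‑function description of discrete polymatroids (the $\mathbb{Z}_{+}^{n}$‑counterpart of Proposition~1.3.4): $w\in\mathcal{P}$ iff $w\in\mathbb{Z}_{+}^{n}$ and $w(A)\le\rho(A)$ for every $A\subseteq[n]$, and $w$ is a base iff moreover $|w|=\operatorname{rank}(\mathcal{P})$ (every independent vector is dominated by a base, so $|w|\le\operatorname{rank}(\mathcal{P})$ always). Fix bases $u,v$ and an index $i$ with $u_{i}>v_{i}$, so $u_{i}\ge 1$. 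For $j\neq i$ the vector $u-e_{i}+e_{j}$ is nonnegative and has modulus $\operatorname{rank}(\mathcal{P})$, hence it is a base iff $(u-e_{i}+e_{j})(A)\le\rho(A)$ for all $A$; since $u$ is independent this can fail only when $j\in A$, $i\notin A$, where it reads $u(A)+1\le\rho(A)$. Thus $u-e_{i}+e_{j}\in B(\mathcal{P})$ exactly when no ``$u$‑tight'' set $A$ (one with $u(A)=\rho(A)$) contains $j$ while excluding $i$; dually, $v-e_{j}+e_{i}\in B(\mathcal{P})$ exactly when no $v$‑tight set contains $i$ while excluding $j$.

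Put $N=\{k:u_{k}<v_{k}\}$. By the exchange property of Theorem~1.3.6 (condition $c)ii)$) there is at least one $j\in N$ with $u-e_{i}+e_{j}\in B(\mathcal{P})$. Suppose, toward a contradiction, that no $j\in N$ satisfies both conditions. Then $N=N_{1}\cup N_{2}$, where $N_{1}$ collects the $j\in N$ admitting a $u$‑tight set containing $j$ and excluding $i$, and $N_{2}$ collects the $j\in N$ admitting a $v$‑tight set containing $i$ and excluding $j$; moreover $N\setminus N_{1}\neq\emptyset$ by the previous sentence, so $N_{2}\neq\emptyset$. The engine is the elementary fact that the family of tight sets of a modular function bounded above by a submodular one is closed under unions and intersections: if $u(A)=\rho(A)$ and $u(B)=\rho(B)$, then $\rho(A\cup B)+\rho(A\cap B)\le\rho(A)+\rho(B)=u(A)+u(B)=u(A\cup B)+u(A\cap B)\le\rho(A\cup B)+\rho(A\cap B)$, forcing equality. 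Hence I would take $F$ to be the union over $j\in N_{1}$ of a $u$‑tight set $F_{j}$ with $j\in F_{j}\not\ni i$; then $F$ is $u$‑tight, $N_{1}\subseteq F$, $i\notin F$ (with $F=\emptyset$ if $N_{1}=\emptyset$). Likewise let $G$ be the intersection over $j\in N_{2}$ of a $v$‑tight set $G_{j}$ with $i\in G_{j}\not\ni j$; then $G$ is $v$‑tight, $i\in G$, and $G\cap N_{2}=\emptyset$.

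The contradiction then drops out. Submodularity of $\rho$ gives $\rho(F)+\rho(G)\ge\rho(F\cup G)+\rho(F\cap G)\ge u(F\cup G)+v(F\cap G)$, and substituting $\rho(F)=u(F)$, $\rho(G)=v(G)$ and cancelling via the modularity of $u$ and $v$, this collapses to $v(G\setminus F)\ge u(G\setminus F)$. But $i\in G\setminus F$, while $G\setminus F$ avoids $N_{1}$ (contained in $F$) and avoids $N_{2}$ (disjoint from $G$), hence avoids all of $N$; so $v_{k}\le u_{k}$ for every $k\in G\setminus F$, with strict inequality at $k=i$, i.e. $v(G\setminus F)<u(G\setminus F)$ — impossible. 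Therefore some $j\in N$ works for both conditions, and for this $j$ we have $u_{j}<v_{j}$ with both $u-e_{i}+e_{j}$ and $v-e_{j}+e_{i}$ in $B(\mathcal{P})$.

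The step I expect to carry the real content is the opening reduction: installing the rank‑function description of $\mathcal{P}$ with $\rho$ submodular and reading off the criterion that $u-e_{i}+e_{j}$ is a base precisely when no $u$‑tight set separates $j$ from $i$. Once that and the closure of tight families under $\cup$ and $\cap$ are available, the ordinary exchange property plus a single submodular inequality do the rest. I also considered an induction on $\sum_{k}|u_{k}-v_{k}|$ driven only by the ordinary exchange property, but there one must keep track of membership of the \emph{second} base across repeated pivots, and closing the ``return trip'' is delicate; the tight‑set argument avoids this entirely.
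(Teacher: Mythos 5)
Your argument is correct, and it is worth noting that the thesis itself offers no proof of this theorem: it is quoted from Herzog--Hibi with only the remark that ``in the proof it is used the rank function of a discrete polymatroid.'' Your proof fills exactly that gap, and it does so along the lines that remark suggests: the reduction of base-membership of $u-e_{i}+e_{j}$ (resp.\ $v-e_{j}+e_{i}$) to the nonexistence of a $u$-tight (resp.\ $v$-tight) set separating $j$ from $i$, the closure of tight families under $\cup$ and $\cap$ via one squeeze with submodularity, and the final inequality $v(G\setminus F)\ge u(G\setminus F)$ contradicted at the coordinate $i$ --- all of this checks out, including the small but necessary observations that $u_{i}\ge 1$ and $v_{j}\ge 1$, that $F=\emptyset$ is harmless when $N_{1}=\emptyset$, and that $G$ is an intersection over a nonempty family because the ordinary exchange property (Theorem~1.3.6~$c)ii)$) forces $N\setminus N_{1}\neq\emptyset$. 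Two remarks. First, you have silently corrected a typo in the statement: as printed it asks for $u-e_{j}+e_{i}$ to be a base, whereas the intended (and standard Herzog--Hibi) conclusion, which is what you prove, is that $v-e_{j}+e_{i}$ is a base; you should say so explicitly. Second, your appeal to the description $\mathcal{P}=\{w\in\mathbb{Z}_{+}^{n}\mid w(A)\le\rho(A)\ \forall A\}$ and to the submodularity of $\rho_{\mathcal{P}}$ is legitimate here, since in the source these are established before the symmetric exchange theorem, so no circularity is introduced.
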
 

In the proof it is used the \emph{rank function} of a discrete polymatroid, which
we define next. Let $\mathcal{P}\subset \mathbb{Z}_{+}^{n}$ be a discrete polymatroid
and $B(\mathcal{P})$ its set of bases. We define the \emph{rank function} of 
the discrete polymatroid $\mathcal{P}$ to be function 
$\rho_{\mathcal{P}} : 2^{[n]}\rightarrow \mathbb{Z}_{+}$, by setting
\[\rho_{\mathcal{P}}(F)=\max\{u(F) \ | \ F\in \mathcal{P}\}\] for all 
$\emptyset \neq F \subset [n]$, together with $\rho_{\mathcal{P}}(\emptyset)=0$. 
It is clear that $\rho_{\mathcal{P}}$ is a nondecreasing function and from (\cite{HH})
we have that $\rho_{\mathcal{P}}$ is submodular. Conversely, given a nondecreasing 
and submodular function $\rho : 2^{[n]}\rightarrow \mathbb{Z}_{+}$, 
the set of $u\in \mathbb{Z}_{+}^{n}$ satisfying 
\[u(F)\leq \rho(F), \ \rm{for} \ \rm{all} \  \it{F} \in 2^{[n]}, \  (*)\] 
is a discrete polymatroid, whose
rank function is $\rho_{\mathcal{P}}=\rho$. In connection to the rank function $\rho$
of a discrete polymatroid $\mathcal{P}$ we distinguish two important types of sets.
A set $\emptyset \neq F \subset [n]$ is \emph{$\rho$-closed} if any subset 
$G\subset [n]$ properly containing $F$ satisfies $\rho(F)<\rho(G)$, and
$\emptyset\neq F\subset [n]$ is \emph{$\rho$-separable} if there exist 
two nonempty subsets $F_{1}$ and $F_{2}$ with $F_{1}\cap F_{2}=\emptyset$ and 
$F_{1}\cup F_{2}=F$ such that $\rho(F)=\rho(F_{1})+\rho(F_{2}).$ A nonempty subset
$F$ in $[n]$ is \emph{$\rho$-inseparable} if it is not \emph{$\rho$-separable}. 
The following example is intended to give a better view to the construction
$(*)$ and the definition above.
\begin{example}$(\cite{V1})$
Let us consider the function 
$\rho_{\mathcal{P}} : 2^{[3]}\rightarrow \mathbb{Z}_{+}$ defined by 
$\rho(\emptyset)=0, \ \rho(\{1\})=1, \ \rho(\{2\})=2, \ \rho(\{3\})=2, \ 
\rho(\{1,2\})=3, \ \rho(\{1, 3\})=2, \ \rho(\{2, 3\})=4, \ \rho(\{1, 2, 3\})=4.$ One
can easily check that $\rho$ is nondecreasing and submodular. The bases
are the integer solutions $(u_{1}, u_{2}, u_{3})$ of the inequations
\[u_{1}\leq 1, u_{2}\leq 2, u_{3}\leq 2, u_{1}+u_{2}\leq 3, u_{1}+u_{3}\leq 2, 
u_{2}+u_{3}\leq 4,\]
together with \[u_{1}+u_{2}+u_{3}=4,\]
i.e., the vectors (1, 2, 1) and (0, 2, 2). Taking all subintegral vectors of (1, 2, 1)
and (0, 2, 2) we obtain the discrete polymatroid $\mathcal{P}$
\[\mathcal{P}=\{(0, 0, 0), (1, 0, 0), (0, 1, 0), (0, 0, 1), (1, 1, 0), (1, 0, 1), 
(0, 1, 1), (0, 2, 0), (0, 0, 2), (0, 1, 2),\]\[ (0, 2, 1), (1, 1, 1), (1, 2, 0), (0, 2, 2), (1, 2, 1)\}.\]
The $\rho-closed$ subsets of $[3]$ are: $\{1\}, \{2\}, \{1, 2\} \ \rm{and} \ \{1, 3\}.$
The $\rho-inseparable$ subsets of $[3]$ are: $\{1\}, \{2\}, \{3\} \ \rm{and} \ \{1, 3\}.$
\end{example}

The following result makes the connection between discrete polymatroids 
and integral polymatroids.
\begin{theorem}$(\cite{HH})$
A nonempty finite set $\mathcal{P}\subset \mathbb{Z}_{+}^{n}$ is a discrete polymatroid 
if and only if $conv(\mathcal{P})\subset \mathbb{R}_{+}^{n}$ is an integral 
polymatroid with $conv(\mathcal{P})\cap \mathbb{Z}_{+}^{n}=\mathcal{P}$.
\end{theorem}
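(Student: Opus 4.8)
The plan is to route everything through the rank function and a single auxiliary polytope. Write $\rho$ for the rank function $\rho_{\mathcal{P}}$ of $\mathcal{P}$ in the ``only if'' direction, and for the ground set rank function of $conv(\mathcal{P})$ in the ``if'' direction; in both cases $\rho\colon 2^{[n]}\to\mathbb{Z}_{+}$ is nondecreasing, submodular, integer valued, and $\rho(\emptyset)=0$. Put $Q=\{x\in\mathbb{R}_{+}^{n}\ :\ x(A)\leq\rho(A)\ \text{for all}\ A\subseteq[n]\}$, which by Proposition 1.3.4 is an integral polymatroid (and a polytope, since $x(\{i\})=x_{i}\leq\rho(\{i\})$) with ground set rank function $\rho$. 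The theorem then amounts to the two equalities $Q\cap\mathbb{Z}_{+}^{n}=\mathcal{P}$ and $conv(\mathcal{P})=Q$. With this setup the ``if'' direction is short: by Proposition 1.3.4 we have $conv(\mathcal{P})=\{x\in\mathbb{R}_{+}^{n}:x(A)\leq\rho(A)\ \text{for all}\ A\}$, so intersecting with $\mathbb{Z}_{+}^{n}$ and using $conv(\mathcal{P})\cap\mathbb{Z}_{+}^{n}=\mathcal{P}$ gives $\mathcal{P}=\{x\in\mathbb{Z}_{+}^{n}:x(A)\leq\rho(A)\ \text{for all}\ A\}$; this is precisely the construction $(*)$ from a nondecreasing submodular integer valued function, which was observed after Theorem 1.3.8 to always produce a discrete polymatroid. (Closure of $\mathcal{P}$ under integral subvectors can alternatively be read off from axiom $(\mathcal{P}_{1})$ applied to $conv(\mathcal{P})$.)

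For the ``only if'' direction, assume $\mathcal{P}$ is a discrete polymatroid. The inclusion $\mathcal{P}\subseteq Q\cap\mathbb{Z}_{+}^{n}$ is immediate from $\rho(A)=\max\{u(A):u\in\mathcal{P}\}$. The heart of the argument is the reverse inclusion: every $w\in\mathbb{Z}_{+}^{n}$ with $w(A)\leq\rho(A)$ for all $A$ lies in $\mathcal{P}$. I would prove this in two steps. First, the family of tight sets $\{A:w(A)=\rho(A)\}$ is closed under unions — from submodularity of $\rho$ together with the identity $w(A)+w(B)=w(A\cup B)+w(A\cap B)$ — so it has a largest element $M$; if $M\neq[n]$, then for any $k\in[n]\setminus M$ every set containing $k$ is non-tight, hence (integrality of $\rho$) $w+e_{k}$ still satisfies all the inequalities. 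Iterating, and noting the modulus is bounded by $\rho([n])$, produces $\tilde{w}\geq w$ with $|\tilde{w}|=\rho([n])$ still satisfying the inequalities. Second, such a $\tilde{w}$ must be a base of $\mathcal{P}$: choose $b\in B(\mathcal{P})$ minimizing the excess $\sum_{i}\max(b_{i}-\tilde{w}_{i},0)$; if this excess is positive, pick $i$ with $b_{i}>\tilde{w}_{i}$, apply condition $b)$ of Theorem 1.3.6 to $b-e_{i}\in\mathcal{P}$ and $\tilde{w}$ (legitimate since $|b-e_{i}|<|\tilde{w}|$) to obtain $j$ with $b-e_{i}+e_{j}\in\mathcal{P}$ and $b-e_{i}+e_{j}\leq(b-e_{i})\vee\tilde{w}$; the latter relation forces $j\neq i$ and $b_{j}<\tilde{w}_{j}$, so $b-e_{i}+e_{j}$ is a base of strictly smaller excess, a contradiction. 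Hence the excess is $0$ and, the moduli being equal, $b=\tilde{w}$. Combining the two steps: every lattice point $w$ of $Q$ lies below a base $\tilde{w}\in\mathcal{P}$, and $\mathcal{P}$ is closed under integral subvectors, so $w\in\mathcal{P}$; therefore $Q\cap\mathbb{Z}_{+}^{n}=\mathcal{P}$.

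Finally, $conv(\mathcal{P})\subseteq Q$ by convexity of $Q$, and the reverse inclusion $Q\subseteq conv(\mathcal{P})$ follows from the classical fact (Edmonds' greedy algorithm) that an integral polymatroid is a lattice polytope: each vertex of $Q$ lies in $Q\cap\mathbb{Z}_{+}^{n}=\mathcal{P}$, so $Q=conv(Q\cap\mathbb{Z}_{+}^{n})=conv(\mathcal{P})$. Thus $conv(\mathcal{P})=Q$ is an integral polymatroid and $conv(\mathcal{P})\cap\mathbb{Z}_{+}^{n}=\mathcal{P}$, completing the proof.

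The main obstacle is the second step above — promoting a maximal-modulus lattice point of $Q$ to a base of $\mathcal{P}$ — because the exchange property $c).ii)$ of Theorem 1.3.6 cannot be used directly (it presupposes that both vectors are already bases); the device is to drop to $b-e_{i}\in\mathcal{P}$ and invoke the augmentation form $b)$ instead, measuring progress by the excess function. A secondary technical point is the integrality of the polytope $Q$, for which one appeals to the standard polyhedral theory of (integral) polymatroids.
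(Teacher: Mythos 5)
The thesis does not prove this theorem: it is quoted from Herzog and Hibi (\cite{HH}) with no argument supplied, so there is no in-paper proof to compare against and your proposal must stand on its own. Most of your scaffolding is sound: the reduction to the polytope $Q=\{x\in\mathbb{R}_{+}^{n}:x(A)\leq\rho(A)\ \text{for all}\ A\}$, the ``if'' direction via Proposition 1.3.4 and the construction $(*)$, the union-closure of tight sets (which correctly lifts any lattice point $w\in Q$ to a lattice point $\tilde{w}\in Q$ with $|\tilde{w}|=\rho([n])$), and the closing appeal to integrality of the polymatroid polytope. But the step you yourself call the heart of the argument is circular. Condition $b)$ of Theorem 1.3.6 is a statement about two vectors $u,v\in\mathcal{P}$; you apply it with $u=b-e_{i}\in\mathcal{P}$ and $v=\tilde{w}$, and membership of $\tilde{w}$ in $\mathcal{P}$ is exactly what the whole ``only if'' direction is trying to prove. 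Your parenthetical ``legitimate since $|b-e_{i}|<|\tilde{w}|$'' verifies the modulus hypothesis but not the membership hypothesis. Dropping from $b$ to $b-e_{i}$ removes the objection that the exchange property $c).ii)$ needs two bases, but it does not remove the objection that the augmentation property $b)$ needs two elements of $\mathcal{P}$. So far $\tilde{w}$ is only known to satisfy the rank inequalities.

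This is not a cosmetic slip, because nothing stated in Section 1.3 lets you augment an element of $\mathcal{P}$ toward a vector known only to lie in $Q$. The standard repair is an extra combinatorial lemma from the integral-polymatroid literature (Welsh, Edmonds): if $u\in Q\cap\mathbb{Z}_{+}^{n}$ and $u+e_{i}\notin Q$ for every $i$ with $u_{i}<v_{i}$, then the union $A$ of tight sets $A_{i}\ni i$ is tight and contains all such $i$, whence $|v|=v(A)+v([n]\setminus A)\leq\rho(A)+u([n]\setminus A)=|u|$; this shows $Q\cap\mathbb{Z}_{+}^{n}$ itself satisfies the augmentation axiom. Even granting that, one must still identify the integer points of $Q$ with $\mathcal{P}$ (equivalently, show every lattice point of $Q$ lies below some element of $\mathcal{P}$, e.g. via the lemma that $v\in\mathcal{P}$ and $v(A)<\rho(A)$ for all $A\ni i$ imply $v+e_{i}\in\mathcal{P}$) --- and that identification is precisely the content your excess-minimization step was supposed to deliver. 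As written, the ``only if'' direction is not established.
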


Now we present two techniques from \cite{HH} to construct discrete polymatroids.
The first one shows that a nondecreasing and submodular function defined 
on a sublattice of $2^{[n]}$ produces a discrete polymatroid. A sublattice
of $2^{[n]}$ is a collection $\mathcal{L}$ of subsets of $[n]$ with
$\emptyset\in \mathcal{L}$ and $[n]\in \mathcal{L}$ such that for all
$F, G \in \mathcal{L}$ both $F\cap G \ \rm{and} \ F\cup G$ belong to $\mathcal{L}$.
\[\]

\begin{theorem}$(\cite{HH})$
Let $\mathcal{L}$ be a sublattice of $2^{[n]}$ and 
$\mu : \mathcal{L}\rightarrow \mathbb{R}_{+}$ an integer valued nondecreasing and
submodular function with $\mu(\emptyset)=0$. Then 
\[\mathcal{P}_{(\mathcal{L}, \mu)}=\{u\in \mathbb{Z}_{+}^{n} \ | \ u(F)\leq \mu(F), F\in \mathcal{L}\}\]
is a discrete polymatroid.
\end{theorem}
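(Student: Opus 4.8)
The plan is to reduce the claim to the converse construction for rank functions recalled earlier in this section (namely, that for a nondecreasing, submodular, integer-valued $\rho:2^{[n]}\to\mathbb{Z}_{+}$ with $\rho(\emptyset)=0$, the set $\{u\in\mathbb{Z}_{+}^{n}:u(A)\le\rho(A)\text{ for all }A\subseteq[n]\}$ is a discrete polymatroid). Thus it suffices to produce such a $\rho$ with $\mathcal{P}_{(\mathcal{L},\mu)}=\{u\in\mathbb{Z}_{+}^{n}:u(A)\le\rho(A)\text{ for all }A\subseteq[n]\}$; equivalently, to extend $\mu$ from the sublattice $\mathcal{L}$ to all of $2^{[n]}$ without enlarging the set of lattice points it cuts out.

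The extension I would use is the $\mathcal{L}$-closure of $\mu$. For $A\subseteq[n]$ set $\overline{A}=\bigcap\{F\in\mathcal{L}:A\subseteq F\}$; since $[n]\in\mathcal{L}$ this is an intersection over a nonempty family, and since $\mathcal{L}$ is closed under intersection, $\overline{A}$ is the smallest member of $\mathcal{L}$ containing $A$. Define $\rho(A):=\mu(\overline{A})$. Then $\rho$ is integer valued, $\rho(\emptyset)=\mu(\emptyset)=0$, $\rho$ is nondecreasing because $A\subseteq B$ gives $\overline{A}\subseteq\overline{B}$, and $\rho=\mu$ on $\mathcal{L}$ since $\overline{F}=F$ there. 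The first real step is submodularity of $\rho$: one checks the closure identities $\overline{A\cup B}=\overline{A}\cup\overline{B}$ (both sides are the least member of $\mathcal{L}$ above $A\cup B$, using that $\mathcal{L}$ is closed under union) and $\overline{A\cap B}\subseteq\overline{A}\cap\overline{B}$ (the right side lies in $\mathcal{L}$ and contains $A\cap B$), and then
\[\rho(A\cup B)+\rho(A\cap B)=\mu(\overline{A}\cup\overline{B})+\mu(\overline{A\cap B})\le\mu(\overline{A}\cup\overline{B})+\mu(\overline{A}\cap\overline{B})\le\mu(\overline{A})+\mu(\overline{B})=\rho(A)+\rho(B),\]
the first inequality by monotonicity of $\mu$ and the second by submodularity of $\mu$ on $\mathcal{L}$.

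It remains to check that $\rho$ cuts out the same lattice points as $\mu$. If $u\in\mathbb{Z}_{+}^{n}$ satisfies $u(A)\le\rho(A)$ for all $A$, then taking $A=F\in\mathcal{L}$ gives $u(F)\le\rho(F)=\mu(F)$, so $u\in\mathcal{P}_{(\mathcal{L},\mu)}$. Conversely, if $u\in\mathcal{P}_{(\mathcal{L},\mu)}$, then for any $A\subseteq[n]$ we have $u(A)\le u(\overline{A})\le\mu(\overline{A})=\rho(A)$, using $A\subseteq\overline{A}$, $u\ge 0$, and $\overline{A}\in\mathcal{L}$. Hence $\mathcal{P}_{(\mathcal{L},\mu)}=\{u\in\mathbb{Z}_{+}^{n}:u(A)\le\rho(A)\text{ for all }A\subseteq[n]\}$, and the converse construction of this section now exhibits it as a discrete polymatroid.

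The crux is choosing the right extension and pushing its submodularity through: the naive candidate $\rho(A)=\min\{\mu(F)+|A\setminus F|:F\in\mathcal{L}\}$ need not even restrict to $\mu$ on $\mathcal{L}$, whereas $\rho(A)=\mu(\overline{A})$ works precisely because the $\mathcal{L}$-closure turns unions into unions and only shrinks intersections, which is exactly what is needed to chain the submodularity inequality of $\mu$ in the correct order. As an alternative one can bypass the extension and verify Definition 1.3.5 directly: nonemptiness ($0\in\mathcal{P}_{(\mathcal{L},\mu)}$), finiteness (from $|u|=u([n])\le\mu([n])$, using $[n]\in\mathcal{L}$) and closure under integral subvectors are immediate, and for the exchange axiom, given $u,v\in\mathcal{P}_{(\mathcal{L},\mu)}$ with $|v|>|u|$, one uses that the $u$-tight sets $\{F\in\mathcal{L}:u(F)=\mu(F)\}$ form a sublattice (by modularity of $u(\cdot)$ together with submodularity of $\mu$), so they have a largest member $F^{*}$; then either some $i$ with $v_{i}>u_{i}$ lies outside $F^{*}$, in which case $u+e_{i}\in\mathcal{P}_{(\mathcal{L},\mu)}$ (here integrality of $\mu$ is used, to get $u(G)+1\le\mu(G)$ for every non-tight $G\ni i$), or every such $i$ lies in $F^{*}$, in which case $v(F^{*})\ge|v|-|u|+u(F^{*})>\mu(F^{*})$ contradicts $v\in\mathcal{P}_{(\mathcal{L},\mu)}$.
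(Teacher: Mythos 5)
The paper states this theorem as a quoted result from Herzog and Hibi and supplies no proof of its own, so there is nothing internal to compare your argument against; judged on its own terms, your proof is correct. Your main route — extending $\mu$ to all of $2^{[n]}$ by $\rho(A)=\mu(\overline{A})$, where $\overline{A}$ is the smallest member of $\mathcal{L}$ containing $A$ (well defined because $[n]\in\mathcal{L}$ and $\mathcal{L}$ is closed under intersection), and then invoking the construction $(*)$ recalled just before Example 1.3.9 — is sound: the identities $\overline{A\cup B}=\overline{A}\cup\overline{B}$ and $\overline{A\cap B}\subseteq\overline{A}\cap\overline{B}$ do chain monotonicity and submodularity of $\mu$ in the right order, $\rho$ inherits integrality and $\rho(\emptyset)=0$ from $\mu$ (using $\emptyset\in\mathcal{L}$), and the two descriptions cut out the same set of lattice points exactly as you argue. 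This is in substance the argument in Herzog--Hibi. Your alternative direct verification of the exchange axiom via the lattice of $u$-tight sets is also correct (note $\emptyset$ is always tight, so the largest tight set $F^{*}$ exists), and it is the more self-contained option here since the paper also quotes, rather than proves, the converse construction $(*)$ that your first route relies on.
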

\begin{example}$(\cite{HH})$
Let $\mathcal{L}$ be a chain of length $n$ of $2^{[n]}$, say
\[\mathcal{L}=\{\emptyset, \{n\}, \{n-1, n\}, \ldots, \{1,\ldots,n\}\}\subset 2^{[n]}.\]
Given nonnegative integers $a_{1},\ldots, a_{n},$ define 
$\mu : \mathcal{L}\rightarrow \mathbb{R}_{+}$ by
\[\mu(\{i, i+1, \ldots, n\})=a_{i}+a_{i+1}+\ldots +a_{n}, \ \ 1\leq i \leq n,\] 
together with $\mu(\emptyset)=0$. Then the discrete polymatroid 
$\mathcal{P}_{(\mathcal{L}, \mu)}\subset \mathbb{Z}_{+}^{n}$ is 
\[\mathcal{P}_{(\mathcal{L}, \mu)}=\{u\in \mathbb{Z}_{+}^{n} \ | \ 
\sum_{k=i}^{n}u_{k}\leq \sum_{k=i}^{n}a_{k}, \ \ 1\leq k \leq n\}.\]
\end{example}

For the second result about construction of discrete polymatroids, first
we need to fix some notation. Let ${\bf{\mathcal{A}}}=\{A_{1},\ldots,A_{d}\}$
be a family of nonempty subsets of $[n]$. It is not required that 
$A_{i}\neq A_{j}$ if $i\neq j$. Let 
\[{\bf{\mathcal{B}}}_{{\bf{\mathcal{A}}}}=\{e_{i_{1}}+\ldots +e_{i_{d}} \ | \
 i_{k}\in A_{k} , \ 1\leq k \leq d\}\subset \mathbb{Z}_{+}^{n}\]
and define the integer valued nondecreasing function 
$\rho_{{\bf{\mathcal{A}}}} : 2^{[n]}\rightarrow \mathbb{R}_{+}$ by setting
\[\rho_{{\bf{\mathcal{A}}}}(X)=| \{k \ | \ A_{k}\cap X\neq \emptyset\} 
|, \ X\subset [n].\]

Now we can state
\begin{theorem}$(\cite{HH})$
The function $\rho_{{\bf{\mathcal{A}}}}$ is submodular and 
${\bf{\mathcal{B}}}_{{\bf{\mathcal{A}}}}$ is the set of bases of the discrete
polymatroid ${\bf{\mathcal{P}}}_{{\bf{\mathcal{A}}}}\subset \mathbb{Z}_{+}^{n}$
arising from $\rho_{{\bf{\mathcal{A}}}}$.
\end{theorem}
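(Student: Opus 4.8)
The plan is to establish the two assertions in turn: that $\rho_{\mathcal{A}}$ is submodular, which is a brief set-counting argument, and that $\mathcal{B}_{\mathcal{A}}$ is the base set of $\mathcal{P}_{\mathcal{A}}$, which is the substantial part and rests on Hall's marriage theorem.

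First I would record $\rho_{\mathcal{A}}$ through the index sets $S_X := \{\, k\in[d] \mid A_k\cap X\neq\emptyset \,\}$, so that $\rho_{\mathcal{A}}(X)=|S_X|$, and note the elementary facts $S_{X\cup Y}=S_X\cup S_Y$, $S_{X\cap Y}\subseteq S_X\cap S_Y$, $S_X\subseteq S_Y$ when $X\subseteq Y$, and $S_{\emptyset}=\emptyset$. These give $\rho_{\mathcal{A}}(\emptyset)=0$, monotonicity, and, via inclusion--exclusion,
\[ \rho_{\mathcal{A}}(X\cup Y)+\rho_{\mathcal{A}}(X\cap Y) = |S_X\cup S_Y|+|S_{X\cap Y}| \le |S_X\cup S_Y|+|S_X\cap S_Y| = |S_X|+|S_Y|, \]
which is submodularity. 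Hence $\rho_{\mathcal{A}}$ is a nondecreasing, submodular, $\mathbb{Z}_+$-valued function with $\rho_{\mathcal{A}}(\emptyset)=0$, so by the construction $(*)$ recorded in Section 1.3 (equivalently Theorem 1.3.11 applied with $\mathcal{L}=2^{[n]}$) the set $\mathcal{P}_{\mathcal{A}} := \{\, u\in\mathbb{Z}_+^n \mid u(F)\le\rho_{\mathcal{A}}(F)\text{ for all }F\subseteq[n]\,\}$ is a discrete polymatroid with rank function $\rho_{\mathcal{A}}$.

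Next I would reduce the base-identification to a modulus count. Since every $A_k$ is nonempty, $\rho_{\mathcal{A}}([n])=d$, and this is the maximal modulus of a vector of $\mathcal{P}_{\mathcal{A}}$ (the rank function evaluated at $[n]$ records exactly $\max\{|u| : u\in\mathcal{P}_{\mathcal{A}}\}$). A vector $u$ of a discrete polymatroid is a base precisely when $|u|$ is maximal: if some $v$ in the polymatroid satisfies $|v|>|u|$, then Theorem 1.3.6$(b)$ gives $u+e_i$ in the polymatroid, so $u$ is not maximal, while conversely a vector of maximal modulus admits no strictly larger one. Therefore $B(\mathcal{P}_{\mathcal{A}}) = \{\, u\in\mathcal{P}_{\mathcal{A}} \mid |u|=d\,\}$. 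One inclusion is immediate: for $u=e_{i_1}+\cdots+e_{i_d}$ with $i_k\in A_k$ one has $|u|=d$, and for every $F\subseteq[n]$, $u(F)=|\{\, k \mid i_k\in F\,\}|\le|\{\, k \mid A_k\cap F\neq\emptyset\,\}|=\rho_{\mathcal{A}}(F)$, since $i_k\in F$ forces $i_k\in A_k\cap F$; so $\mathcal{B}_{\mathcal{A}}\subseteq B(\mathcal{P}_{\mathcal{A}})$.

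The reverse inclusion $B(\mathcal{P}_{\mathcal{A}})\subseteq\mathcal{B}_{\mathcal{A}}$ is the heart of the matter. Given $u\in\mathcal{P}_{\mathcal{A}}$ with $|u|=d$, I would form the bipartite graph with left vertex set $[d]$ and with right vertex set consisting of $u_j$ distinct copies of each $j\in[n]$ (so both sides have $d$ vertices), joining $k$ to every copy of every $j\in A_k$. A perfect matching gives indices $i_k\in A_k$ with each $j$ matched exactly $u_j$ times, i.e.\ $u=e_{i_1}+\cdots+e_{i_d}\in\mathcal{B}_{\mathcal{A}}$, so it remains to verify Hall's condition. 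For $T\subseteq[d]$ the neighbourhood of $T$ has cardinality $u(X)$ with $X:=\bigcup_{k\in T}A_k$; if $u(X)$ were smaller than $|T|$, then $u([n]\setminus X)=d-u(X)>d-|T|$, whereas $u([n]\setminus X)\le\rho_{\mathcal{A}}([n]\setminus X)=|\{\, k \mid A_k\not\subseteq X\,\}|\le d-|T|$ because $A_k\subseteq X$ for each $k\in T$, a contradiction. I expect this last estimate to be the delicate step: the inequalities defining $\mathcal{P}_{\mathcal{A}}$ only bound $u(F)$ from above, whereas Hall's criterion demands a lower bound on $u(X)$, and the two are reconciled only by passing to the complement $[n]\setminus X$ and using that $X$ absorbs every $A_k$ with $k\in T$.
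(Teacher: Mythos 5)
The thesis states this result as background and cites Herzog--Hibi without reproducing a proof, so there is no in-paper argument to compare against; your proposal has to stand on its own, and it does. The submodularity computation via $S_X=\{k : A_k\cap X\neq\emptyset\}$, using $S_{X\cup Y}=S_X\cup S_Y$ and $S_{X\cap Y}\subseteq S_X\cap S_Y$, is correct, and the reduction of the base identification to ``vectors of modulus $d$'' is justified by the fact (recorded in Section 1.3) that the polymatroid built from a nondecreasing submodular $\rho$ via $(*)$ has $\rho$ as its rank function, so $\rho_{\mathcal{A}}([n])=d$ is the maximal modulus and, by the exchange characterization, the bases are exactly the vectors attaining it. The easy inclusion $\mathcal{B}_{\mathcal{A}}\subseteq B(\mathcal{P}_{\mathcal{A}})$ is checked correctly, and the Hall's-theorem argument for the converse is sound: for $T\subseteq[d]$ and $X=\bigcup_{k\in T}A_k$ the neighbourhood has size $u(X)=d-u([n]\setminus X)\geq d-\rho_{\mathcal{A}}([n]\setminus X)\geq |T|$, precisely because every $k\in T$ drops out of the count $|\{k : A_k\cap([n]\setminus X)\neq\emptyset\}|$. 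This complementation trick, which you rightly flag as the delicate step, is the standard Rado-type argument for transversal (poly)matroids and is essentially the route taken in the literature the thesis cites; your write-up is a complete and self-contained version of it.
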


The discrete polymatroid ${\bf{\mathcal{P}}}_{{\bf{\mathcal{A}}}}\subset \mathbb{Z}_{+}^{n}$
is called the \emph{transversal polymatroid} presented by ${\bf{\mathcal{A}}}$. The \emph{rank} of
${\bf{\mathcal{P}}}_{{\bf{\mathcal{A}}}}$ is $rank({\bf{\mathcal{P}}}_{{\bf{\mathcal{A}}}})=d$.
The following examples, given by Herzog and Hibi, show that 
the discrete polymatroid considered in Example 1.3.12. 
is a transversal polymatroid and that not all discrete polymatroids are transversal.
\begin{example}$(\cite{HH})$
$a)$ Let $r_{1},\ldots,r_{d}\in [n]$ and set $A_{k}=[r_{k}]$, $1\leq k \leq d$. Let 
$\min(X)$ denote the smallest integer belonging to $X$, where 
$\emptyset \neq X \subset [n]$. If ${\bf{\mathcal{A}}}=\{A_{1},\ldots,A_{d}\}$, then
\[\rho_{{\bf{\mathcal{A}}}}(X)=\rho_{{\bf{\mathcal{A}}}}(\{\min(X)\})=
|\{k \ | \ \min(X)\leq r_{k}\}|.\] 
If $\emptyset \neq X \subset [n]$ is $\rho_{{\bf{\mathcal{A}}}}-\rm{closed}$, then
$X=\{\min(X), \min(X)+1,\ldots, n)\}$. Let \[a_{i}=|\{k \ | \ r_{k}=i\}|, \ 
1\leq i\leq n. \] Thus \[\rho_{{\bf{\mathcal{A}}}}(\{i, i+1, \ldots, n\})=
a_{i}+a_{i+1}+\ldots+a_{n}, \ 1\leq i \leq n.\]
Then the discrete polymatroid 
$\mathcal{P}_{{\bf{\mathcal{A}}}}\subset \mathbb{Z}_{+}^{n}$ is 
\[\mathcal{P}_{{\bf{\mathcal{A}}}}=\{u\in \mathbb{Z}_{+}^{n} \ | \ 
\sum_{k=i}^{n}u_{k}\leq \sum_{k=i}^{n}a_{k}, \ \ 1\leq k \leq n\}.\]
Thus $\mathcal{P}_{{\bf{\mathcal{A}}}}$ coincides with the discrete
polymatroid $\mathcal{P}_{(\mathcal{L}, \mu)}$ in Example 1.3.12.

$b)$ Let $\mathcal{P}\subset \mathbb{Z}_{+}^{4}$ denote the discrete polymatroid
of rank 3 consisting of those $u\in \mathbb{Z}_{+}^{4}$ with $u_{i}\leq 2$ for
$1\leq i\leq 4$ and with $|u|\leq 3$. Then $\mathcal{P}$ is not transversal. Suppose, 
on the contrary, that $\mathcal{P}$ is a transversal polymatroid presented by 
${\bf{\mathcal{A}}}=\{A_{1},A_{2},A_{3}\}$ with each $A_{k}\subset [4]$. Since
$(2, 1, 0, 0), (2, 0, 1, 0), (2, 0, 0, 1) \in \mathcal{P}$ and 
$(3, 0, 0, 0)\notin \mathcal{P}$ we may assume that $1\in A_{1}, 1\in A_{2}$ and 
$A_{3}=\{2, 3, 4\}$. Since $(1, 2, 0, 0), (0, 2, 1, 0), (0, 2, 0, 1)\in \mathcal{P}$ 
and $(0, 3, 0, 0)\notin \mathcal{P}$ we may assume that $2\in A_{1}$ and $A_{2}=\{1, 3, 4\}$.
Since $(0, 0, 2, 1)\in \mathcal{P}$ and $(0, 0, 3, 0)\notin \mathcal{P}$, one has
$4\in A_{1}$. Hence $(0, 0, 0, 3)\in \mathcal{P}$, a contradiction.
\end{example}

\section{The Ehrhart ring and the base ring of a discrete polymatroid.}
\[\]

Let $K$ be a field and let $x_{1},\ldots,x_{n}$ and $s$ be 
indeterminates over $K$. If $u=(u_{1},\ldots,u_{n})\in \mathbb{Z}_{+}^{n}$, 
then we denote by $x^{u}$ the monomial $x_{1}^{u_{1}}\cdot\cdot\cdot x_{n}^{u_{n}}$.
Let $P$ be a discrete polymatroid of rank $d$ on the ground set $[n]$ with set of
bases $B$. The toric ring $K[B]$ generated over $K$ by the monomials $x^{u}$, where
$u\in B$, is called the \emph{base ring} of $P$. Since $P$ is the set of integer 
vectors of an integral polymatroid $\mathcal{P}$ (see Theorem 1.3.10), we may study the
\emph{Ehrhart ring of $\mathcal{P}$}. For this, one considers the cone 
$\mathcal{C}\subset \mathbb{R}^{n+1}$ with $\mathcal{C}=
\mathbb{R}_{+}\{(p, 1) \ | \ p \in \mathcal{P}\}$. Then $Q=\mathcal{C}\cap \mathbb{Z}^{n+1}$ 
is a subsemigroup of $\mathbb{Z}^{n+1}$, and the Ehrhart ring of $\mathcal{P}$ is defined
to be the toric ring $K[\mathcal{P}]\subset K[x_{1},\ldots,x_{n}, s ]$ generated over
$K$ by the monomials $x^{u}s^{i}$, $(u, i)\in Q$. By Gordan's Lemma 
(\cite{BH}, Proposition 6.1.2), $K[\mathcal{P}]$ is normal. Notice that $K[\mathcal{P}]$
is naturally graded if we assign to $x^{u}s^{i}$ the degree $i$. We denote by $K[P]$ the
$K-$subalgebra of $K[\mathcal{P}]$ which is generated over $K$ by the elements of degree 1
in $K[\mathcal{P}]$. Since $P=\mathcal{P}\cap \mathbb{Z}^{n}$ it follows that 
$K[P]=K[x^{u}s \ | \ u\in P]$. Observe that $K[B]$ may be identified with the subalgebra
$K[x^{u}s \ | \ u\in B]$ of $K[P]$.

The base ring $K[B]$ was introduced in 1977 by N. White, in the particular case when
$B$ is the set of bases of a matroid, and he showed that for every matroid, the ring
$K[B]$ is normal (see \cite{W3} ) and thus Cohen-Macaulay. It is natural to ask 
whether the same holds for the base ring of any discrete polymatroid. Herzog and Hibi
showed this.
\begin{theorem}$(\cite{HH})$
$K[P]=K[\mathcal{P}]$. In particular, $K[P]$ is normal.
\end{theorem}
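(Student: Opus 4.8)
The plan is to prove the two statements $K[P]=K[\mathcal{P}]$ and the normality of $K[P]$ together, the second being an immediate consequence of the first combined with the normality of $K[\mathcal{P}]$ already recorded via Gordan's Lemma. Since $K[P]$ is by construction the $K$-subalgebra of $K[\mathcal{P}]$ generated by the degree-$1$ elements, and since $K[\mathcal{P}]$ is graded with $K[\mathcal{P}]_1 = K[P]_1$, the equality $K[P]=K[\mathcal{P}]$ is equivalent to the assertion that $K[\mathcal{P}]$ is \emph{generated in degree $1$}. Thus the whole problem reduces to showing: for every $(u,i)\in Q = \mathcal{C}\cap\mathbb{Z}^{n+1}$ with $i\geq 2$, the monomial $x^u s^i$ is a product of $i$ monomials of degree $1$, i.e.\ $u$ can be written as $u = v_1 + \cdots + v_i$ with each $v_j \in P = \mathcal{P}\cap\mathbb{Z}^n$. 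Equivalently, if a lattice point $u\in\mathbb{Z}_+^n$ lies in $i\,\mathcal{P}$ (the $i$-th dilate of the integral polytope $\mathcal{P}$), then $u$ is a sum of $i$ lattice points of $\mathcal{P}$.

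First I would set up the combinatorial translation. Using Proposition 1.3.4, $\mathcal{P}$ is the polytope $\{x\in\mathbb{R}_+^n \mid x(A)\leq \rho(A)\text{ for all }A\subset[n]\}$ for the (integer-valued) rank function $\rho$, and $i\mathcal{P} = \{x\in\mathbb{R}_+^n\mid x(A)\leq i\rho(A)\text{ for all }A\}$. So the task is: given $u\in\mathbb{Z}_+^n$ with $u(A)\leq i\rho(A)$ for all $A\subset[n]$, produce $v\in P$ with $v\leq u$ and $u-v\in (i-1)\mathcal{P}$; then induct on $i$. The natural candidate for $v$ is a maximal (with respect to $\leq$) integral subvector of $u$ lying in $\mathcal{P}$; one must check that $u-v$ then satisfies $(u-v)(A)\leq (i-1)\rho(A)$ for every $A$. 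This is the step I expect to be the main obstacle: it is a "discrete splitting" lemma that requires an exchange-type argument. The idea is that if some constraint were violated, i.e.\ $(u-v)(A) > (i-1)\rho(A)$ for some $A$, then combined with $v(A)\leq \rho(A)$ we would get $u(A) > i\rho(A)$ unless $v(A) < \rho(A)$ on that set, in which case the maximality of $v$ (together with the submodularity of $\rho$, used to intersect the "tight" sets for $v$ into a lattice of tight sets) must be contradicted by pushing one more unit of $v$ up inside a coordinate where $u$ still has room — this is exactly the kind of argument underlying Theorem 1.3.6(b), applied in the dilated polytope.

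Concretely, the key steps in order: (1) reduce $K[P]=K[\mathcal{P}]$ to "$K[\mathcal{P}]$ is generated in degree $1$'', which is purely formal from the definitions of $K[P]$ and the grading on $K[\mathcal{P}]$; (2) restate this via Ehrhart dilations as: $i\mathcal{P}\cap\mathbb{Z}^n = \{v_1+\cdots+v_i \mid v_j\in\mathcal{P}\cap\mathbb{Z}^n\}$ for all $i\geq 1$ — here one should note that by Theorem 1.3.10, $\mathcal{P}$ being the convex hull of the discrete polymatroid $P$ is what makes $\mathcal{P}\cap\mathbb{Z}^n=P$; (3) prove the splitting lemma by picking a $\leq$-maximal integral subvector $v\leq u$ in $\mathcal{P}$ and verifying, using submodularity of $\rho$ and the maximality of $v$, that $u-v$ lies in $(i-1)\mathcal{P}$; (4) conclude by induction on $i$. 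Then $K[P]=K[\mathcal{P}]$ is normal because $K[\mathcal{P}]$ is the Ehrhart ring of an integral polytope and hence normal by Gordan's Lemma, as already stated in the excerpt; and by Theorem 1.2.12 (Hochster) it is even Cohen–Macaulay. For the base ring itself, one observes separately that $K[B]=K[P]$ as well: since every base of $P$ has the same modulus $d$, the monomials $x^u s$ with $u\in P$ all divide suitable products of the generators $x^b s$, $b\in B$, so $K[B]$ already contains every $x^u s$ with $u\in P$ — this uses property $(\mathcal{P}_1)$ and the exchange property to write an arbitrary $u\in P$ as a subvector of some base.
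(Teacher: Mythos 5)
The paper does not prove this statement: Theorem 1.4.1 is quoted from Herzog--Hibi \cite{HH} without proof, so there is no in-paper argument to compare yours against. Judged on its own, your reduction is the standard one and matches the route taken in \cite{HH}: $K[P]=K[\mathcal{P}]$ is equivalent to $K[\mathcal{P}]$ being generated in degree one, i.e.\ to the statement that every lattice point of $i\mathcal{P}$ is a sum of $i$ lattice points of $\mathcal{P}$; granting that, normality of $K[P]$ follows from Gordan's lemma applied to $Q=\mathcal{C}\cap\mathbb{Z}^{n+1}$, exactly as you say. Steps (1), (2) and (4) of your plan are correct and essentially formal.

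The gap is step (3), which is the entire mathematical content of the theorem. The assertion that a $\leq$-maximal integral subvector $v\leq u$ with $v\in\mathcal{P}$ automatically satisfies $u-v\in(i-1)\mathcal{P}$ is the integer decomposition property of integral polymatroids (equivalently, that $P(i\rho)\cap\mathbb{Z}^n=\bigl(P(\rho)\cap\mathbb{Z}^n\bigr)+\bigl(P((i-1)\rho)\cap\mathbb{Z}^n\bigr)$), a theorem of Edmonds/McDiarmid, and your sketch does not actually establish it. Concretely: if $(u-v)(A)>(i-1)\rho(A)$ for some $A$, maximality of $v$ gives, for each $k\in A$ with $u_k>v_k$, a tight set $T_k\ni k$ with $v(T_k)=\rho(T_k)$; their union $T$ is again tight by submodularity, and one obtains $(u-v)(A)=(u-v)(A\cap T)\leq (u-v)(T)=u(T)-\rho(T)\leq (i-1)\rho(T)$ --- a bound in terms of $\rho(T)$, not $\rho(A)$, so no contradiction results. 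Closing this requires a genuinely different mechanism, e.g.\ Edmonds' polymatroid intersection theorem applied to the integral polytope $\{v\in\mathcal{P}:v\leq u,\ u-v\in(i-1)\mathcal{P}\}$ (nonempty because it contains $u/i$), or a more delicate exchange induction; simply invoking ``maximality plus submodularity'' is not a proof. Separately, your closing remark that $K[B]=K[P]$ is false: every element of $K[B]$ has $x$-degree a multiple of the rank $d$, so $x^us\notin K[B]$ when $u\in P$ has $|u|<d$. The normality of $K[B]$ (the Corollary, not this Theorem) is instead obtained in \cite{HH} by exhibiting $K[B]$ as a retract (the $\mathbb{Z}$-graded component of top degree in each $s$-degree) of the normal ring $K[P]$.
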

As corollary they  also obtain
\begin{corollary}$(\cite{HH})$
$K[B]$ is normal.
\end{corollary}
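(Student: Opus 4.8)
The plan is to deduce normality of $K[B]$ from normality of $K[P]$, which is already available from the preceding theorem ($K[P]=K[\mathcal{P}]$, hence $K[P]$ normal), by working at the level of affine semigroups and using only one extra combinatorial fact: every base of $P$ has modulus exactly $d=\mathrm{rank}(P)$, while every element of $P$ has modulus at most $d$ (indeed $u\in P$ has $|u|=d$ iff $u$ is a base, since an element of modulus $d$ cannot be strictly dominated inside $\mathcal{P}$).

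First I would set up the two semigroups. Encoding the generator $x^{u}s$ of $K[B]$ by the lattice point $(u,1)\in\mathbb{Z}^{n+1}$, let $C_{B}$ be the subsemigroup of $\mathbb{Z}^{n+1}$ generated by $\{(u,1)\mid u\in B\}$ and $C_{P}$ the one generated by $\{(u,1)\mid u\in P\}$, so that $K[B]=K[C_{B}]$ and $K[P]=K[C_{P}]$ in the sense of Section 1.2. Since $B\subseteq P$ we have $C_{B}\subseteq C_{P}$ and hence $\mathbb{Z}C_{B}\subseteq\mathbb{Z}C_{P}$. By the preceding theorem $K[P]$ is normal, so by the equivalence ``$C$ normal $\iff K[C]$ normal'' of Section 1.2, the semigroup $C_{P}$ is normal.

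The core step is to verify the normality condition for $C_{B}$ directly. Let $z\in\mathbb{Z}C_{B}$ and $m\in\mathbb{N}$ with $mz\in C_{B}$; I must show $z\in C_{B}$. Since $mz\in C_{B}\subseteq C_{P}$ and $z\in\mathbb{Z}C_{B}\subseteq\mathbb{Z}C_{P}$, normality of $C_{P}$ gives $z\in C_{P}$, say $z=(w,k)=\sum_{u\in P}a_{u}(u,1)$ with $a_{u}\in\mathbb{Z}_{+}$, so $k=\sum_{u}a_{u}$ and $w=\sum_{u}a_{u}u$. On the other hand $mz=(mw,mk)=\sum_{u\in B}b_{u}(u,1)$ with $b_{u}\in\mathbb{Z}_{+}$, and taking moduli while using $|u|=d$ for $u\in B$ yields $m|w|=\sum_{u}b_{u}|u|=d\sum_{u}b_{u}=dmk$, hence $|w|=dk$. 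Substituting this into $z=\sum_{u\in P}a_{u}(u,1)$ and using $|u|\le d$ for all $u\in P$ gives $\sum_{u\in P}a_{u}(d-|u|)=dk-|w|=0$; since every summand is nonnegative, $a_{u}=0$ whenever $|u|<d$, that is, whenever $u\notin B$. Therefore $z=\sum_{u\in B}a_{u}(u,1)\in C_{B}$, so $C_{B}$ is normal, and one final application of the semigroup/ring normality equivalence shows that $K[B]=K[C_{B}]$ is normal.

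I do not expect a genuine obstacle here; the only point requiring care is to extract the identity $|w|=dk$ from the basis representation of $mz$ rather than assuming it, after which the nonnegativity argument is immediate. An equivalent but slightly longer route would be to observe that $\mathbb{R}_{+}\{(u,1)\mid u\in B\}$ is the face of the cone $\mathcal{C}=\mathbb{R}_{+}\{(p,1)\mid p\in\mathcal{P}\}$ cut out by the supporting hyperplane $\langle(x,t),(1,\dots,1,-d)\rangle=0$, so that $C_{B}=C_{P}\cap H$ is the intersection of a lattice with a rational cone and is therefore normal by Gordan's lemma; the direct computation above, however, stays entirely inside the language already developed in this section.
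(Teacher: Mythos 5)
Your argument is correct, and it is worth noting that the thesis itself offers no proof of this corollary at all: it is simply quoted from \cite{HH} as a consequence of the theorem $K[P]=K[\mathcal{P}]$. What you have written is a legitimate, self-contained way to close that gap using only the material of Sections 1.2--1.4. The key computation is sound: from $mz=\sum_{u\in B}b_u(u,1)$ and $|u|=d$ for $u\in B$ you correctly extract $|w|=dk$, and since every $u\in P$ satisfies $|u|\le d$ with equality exactly for the bases (any $u\in P$ with $|u|<d$ can be augmented inside $P$, so the elements of modulus $d$ are precisely the maximal ones), the identity $\sum_{u\in P}a_u(d-|u|)=0$ does force the representation of $z$ to be supported on $B$. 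Two small points deserve to be made explicit if you write this up: first, you use the easy direction of Theorem 1.2.3 ($K[P]$ normal $\Rightarrow C_P$ normal, which the paper proves directly) and the hard direction ($C_B$ normal $\Rightarrow K[B]$ normal, cited to \cite[Theorem 6.1.4]{BH}); second, the identification $K[B]\cong K[C_B]$ rests on the observation, already made in the paper, that the extra coordinate $k$ is determined by $w$ via $k=|w|/d$, so the projection $C_B\to\mathbb{Z}^n$ is a semigroup isomorphism onto $\sum_{u\in B}\mathbb{N}u$. Your alternative remark --- that $\mathbb{R}_+\{(u,1)\mid u\in B\}$ is the face of $\mathcal{C}$ cut out by $\langle (x,t),(1,\dots,1,-d)\rangle=0$, so $C_B=C_P\cap H$ is normal by Gordan's lemma --- is essentially the shape of the argument in Herzog--Hibi and is equally valid; the direct modulus computation has the advantage of avoiding any appeal to the face structure of the cone.
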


It is natural to ask, since both $K[P]$ and $K[B]$ are Cohen-Macaulay, when these 
rings are Gorenstein. Next we give some suggestive examples.
\begin{example}$(\cite{HH})$
$a)$ Let $P\subset \mathbb{Z}_{+}^{3}$ be the discrete polymatroid consisting of all 
integer vectors $u\in \mathbb{Z}_{+}^{3}$ with $|u|\leq 3$. Then the base ring $K[B]$ 
is the Veronese subring $K[x, y, z]^{(3)}$,  which is Gorenstein. On the other hand, 
since the Hilbert series of the Ehrhart ring $K[P]$ is $(1+16t+10t^{2})/(1-t)^{4}$, it
follows that $K[P]$ is not Gorenstein.\\
$b)$ Let $P\subset \mathbb{Z}_{+}^{3}$ be the discrete polymatroid consisting of all 
integer vectors $u\in \mathbb{Z}_{+}^{3}$ with $|u|\leq 4$. Then the base ring 
$K[B]=K[x, y, z]^{(4)}$ is not Gorenstein. On the other hand, 
since the Hilbert series of the Ehrhart ring $K[P]$ is $(1+31t+31t^{2}+t^{3})/(1-t)^{4}$, it
follows that $K[P]$ is  Gorenstein.\\
$c)$ Let $P\subset \mathbb{Z}_{+}^{2}$ be the discrete polymatroid with 
$B=\{(1, 2), (2, 1)\}$ its set of bases. Then both $K[P]$ and $K[B]$ are Gorenstein.
\end{example}

However in \cite{HH} Herzog and Hibi give a combinatorial criterion for $K[P]$ to 
be Gorenstein.
\begin{theorem}$(\cite{HH})$
Let $P\subset \mathbb{Z}_{+}^{n}$ be a discrete polymatroid and suppose that the
canonical basis vectors $e_{1},\ldots, e_{n}$ of $\mathbb{R}^{n}$ belong to $P$. Let
$\rho$ denote the ground set rank function of the integral polymatroid 
$\mathcal{P}=conv(P)\subset \mathbb{R}^{n}$. Then the Ehrhart ring $K[P]$ of $P$ is
Gorenstein if and only if there exists an integer $\delta \geq 1$ such that 
\[\rho(F)=\frac{1}{\delta}(|F|+1)\] for all $\rho-closed$ and $\rho-inseparable$ subsets
$F$ on $[n]$.
\end{theorem}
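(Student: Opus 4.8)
The plan is to realize the canonical module of $K[P]=K[\mathcal P]$ through the Danilov--Stanley formula and then to invoke the standard fact that a Cohen--Macaulay graded ring is Gorenstein exactly when its canonical module is principal. Put $\mathcal C=\mathbb R_{+}\{(p,1)\ |\ p\in\mathcal P\}\subset\mathbb R^{n+1}$ and $C=\mathcal C\cap\mathbb Z^{n+1}$, so that $K[P]=K[C]$. By the Herzog--Hibi theorem $K[P]=K[\mathcal P]$ is normal, hence Cohen--Macaulay by Hochster's theorem, and the Danilov--Stanley theorem gives $\omega_{K[P]}=(\{x^{a}s^{i}\ |\ (a,i)\in C\cap ri(\mathcal C)\})$. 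So the first task is to produce an \emph{irredundant} facet presentation $\mathcal C=\bigcap_{\sigma\in\Sigma}H_{\sigma}^{+}$ with primitive integral inner normals $\sigma$; then, by the description of the relative interior of a full-dimensional cone from Section~1.1, $C\cap ri(\mathcal C)=\{y\in\mathbb Z^{n+1}\ |\ \langle y,\sigma\rangle\geq 1 \ \text{for all}\ \sigma\in\Sigma\}$.

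For the facets, write $\mathbf 1_{F}=\sum_{k\in F}e_{k}$. One checks $\mathcal C=\{(x,t)\ |\ x\geq 0,\ x(F)\leq t\,\rho(F)\ \text{for all}\ \emptyset\neq F\subseteq[n]\}$: for $t>0$ this says $x/t\in\mathcal P$, for $t=0$ it forces $x=0$ since $\mathcal P$ is bounded, and $t\geq 0$ is a consequence. Among these inequalities, each $x_{i}\geq 0$ is a facet, since $\mathcal C\cap\{x_{i}=0\}$ contains $(0,1)$ and the $(e_{j},1)$ with $j\neq i$, whence it has dimension $n$; here the hypothesis $e_{1},\dots,e_{n}\in P$ is used decisively. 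If $F$ is not $\rho$-closed, choose $G\supsetneq F$ with $\rho(G)=\rho(F)$; then $x(F)\leq x(G)\leq t\,\rho(G)=t\,\rho(F)$, so the $F$-inequality is redundant. If $F$ is $\rho$-separable, $F=F_{1}\sqcup F_{2}$ with $\rho(F)=\rho(F_{1})+\rho(F_{2})$, then $x(F)=x(F_{1})+x(F_{2})\leq t\,\rho(F_{1})+t\,\rho(F_{2})=t\,\rho(F)$ is redundant as well. Conversely, when $F$ is both $\rho$-closed and $\rho$-inseparable one must show the supporting hyperplane $x(F)=t\,\rho(F)$ cuts out an $n$-dimensional face of $\mathcal C$; this is the classical facet theorem for polymatroid polytopes, proved by a greedy/exchange construction of enough independent vectors lying on the face, and it is the one external input I would cite or reprove from the submodularity of $\rho$. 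The conclusion, up to sign normalization, is $\Sigma=\{(e_{i},0)\ |\ i\in[n]\}\cup\{(-\mathbf 1_{F},\rho(F))\ |\ F\ \rho\text{-closed and }\rho\text{-inseparable}\}$.

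Next I would prove the semigroup-theoretic criterion in the form needed: $K[P]$ is Gorenstein if and only if there is $y_{0}\in\mathbb Z^{n+1}$ with $\langle y_{0},\sigma\rangle=1$ for every $\sigma\in\Sigma$. If such $y_{0}$ exists it lies in $C\cap ri(\mathcal C)$ and $C\cap ri(\mathcal C)=y_{0}+C$, so $\omega_{K[P]}=x^{y_{0}}K[P]$ is principal and $K[P]$ is Gorenstein. Conversely, assume $\omega_{K[P]}$ is principal, generated by $x^{a_{0}}s^{i_{0}}$; then $C\cap ri(\mathcal C)=(a_{0},i_{0})+C$, so in particular $(a_{0},i_{0})\in C\cap ri(\mathcal C)$ and $y-(a_{0},i_{0})\in\mathcal C$ for all $y\in C\cap ri(\mathcal C)$. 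Fix $\tau\in\Sigma$; since $\tau$ is primitive there is $w\in\mathbb Z^{n+1}$ with $\langle w,\tau\rangle=1$, and there is a lattice point $z$ in the relative interior of the facet $\mathcal C\cap H_{\tau}$ (take the sum of primitive generators of its extremal rays), so $\langle z,\tau\rangle=0$ and $\langle z,\sigma\rangle\geq 1$ for $\sigma\neq\tau$. For $N\gg 0$ the vector $y:=Nz+w$ lies in $C\cap ri(\mathcal C)$ and has $\langle y,\tau\rangle=1$, whence $\langle (a_{0},i_{0}),\tau\rangle\leq\langle y,\tau\rangle=1$; since $\langle (a_{0},i_{0}),\tau\rangle\geq 1$ anyway, equality holds for every $\tau$, and $y_{0}=(a_{0},i_{0})$ works.

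Finally I substitute the two families of normals into $\langle y_{0},\sigma\rangle=1$. The equations $\langle y_{0},(e_{i},0)\rangle=1$ for all $i$ force $a_{0}=(1,\dots,1)$. Then $\langle y_{0},(-\mathbf 1_{F},\rho(F))\rangle=1$ becomes $i_{0}\,\rho(F)-|F|=1$, i.e. $\rho(F)=\tfrac{1}{\delta}(|F|+1)$ with $\delta:=i_{0}$, for every $\rho$-closed and $\rho$-inseparable $F$, and $\delta\geq 1$ because $(a_{0},i_{0})\in ri(\mathcal C)$ forces $i_{0}>0$. Conversely, given such an integer $\delta\geq 1$, the vector $((1,\dots,1),\delta)$ satisfies $\langle\cdot,\sigma\rangle=1$ for every $\sigma\in\Sigma$ (the inequalities $|F|\leq|F|+1$ and $1\geq 0$ place it in $ri(\mathcal C)$), so $K[P]$ is Gorenstein. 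This is exactly the asserted equivalence. The genuinely nontrivial point, and the step I expect to be the main obstacle, is the converse half of the facet analysis — that every $\rho$-closed, $\rho$-inseparable subset really indexes a facet of $\mathcal C$ — which is where the full strength of the submodularity of the rank function enters.
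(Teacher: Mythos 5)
This theorem is stated in the paper as background (Theorem 1.4.4) with a citation to Herzog--Hibi and no proof, so there is no internal argument to compare against; your proposal must be judged against the source. It correctly reconstructs the Herzog--Hibi argument: realize $K[P]=K[\mathcal{P}]$ as the semigroup ring of the cone over $\mathcal{P}$, identify the irredundant facet inequalities as $x_i\geq 0$ together with $x(F)\leq t\,\rho(F)$ for $F$ ranging over the $\rho$-closed, $\rho$-inseparable sets (the hypothesis $e_i\in P$ entering exactly where you use it, to make the coordinate hyperplanes facets), apply Danilov--Stanley, and reduce Gorensteinness to the existence of a lattice point pairing to $1$ with every primitive facet normal. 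Your redundancy arguments for non-closed and separable $F$, your $Nz+w$ device for forcing $\langle(a_0,i_0),\tau\rangle=1$ on each facet, and the final substitution yielding $a_0=(1,\dots,1)$ and $\delta\,\rho(F)=|F|+1$ are all sound. The one step you defer --- that every $\rho$-closed, $\rho$-inseparable subset genuinely indexes a facet of the polymatroid polytope --- is indeed the nontrivial combinatorial input; it is the classical Edmonds--Giles facet description of polymatroids and is the same external result Herzog and Hibi rely on, so flagging it rather than reproving it is acceptable, but a self-contained write-up would need to supply that argument from the submodularity of $\rho$.
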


We now turn to the problem when the base ring of a discrete polymatroid
is Gorenstein. A complete answer is not given so far. However, there are some
particular classes for which there is a complete description. For example, in
\cite{HN} there is a classification of the Gorenstein rings belonging to the
class of algebras of Veronese type. Herzog and Hibi, in \cite{HH}[Theorem 7.6.]
find a characterization for the base ring of a generic discrete polymatroid
to be Gorenstein.

We will end this chapter with the last phrase from the paper of Herzog and Hibi,
\emph{Discrete Polymatroids}:

$''$ It would, of course, be of interest to classify all transversal polymatroids
with Gorenstein base rings. $''$

\chapter{The type of the base ring associated to a transversal polymatroid}
\[\]

In this chapter we determine the facets of the polyhedral cone generated
 by the exponent set of the monomials defining the base ring associated
 to a transversal polymatroid presented by 
 ${\bf{\mathcal{A}}}=\{A_{1},\ldots,A_{n}\}$. The
 importance of knowing those facets
 comes from the fact that the canonical module of the base ring can be
 expressed in terms of the relative interior of the cone. Since the base ring
 of a transversal polymatroid  is normal  using,  
 Danilov-Stanley theorem we can find all minimal generators of 
 the canonical module 
 $\omega_{K[\mathcal{A}]}$  as an ideal of $K[\mathcal{A}]$
 generated by monomials. So, 
 we can compute the \emph{type} of $K[\mathcal{A}]$. Also, this would allow
 us to compute the \emph{a-\rm{invariant}} of those base rings. The results
 presented were discovered by extensive computer algebra experiments
performed with {\it{Normaliz}} \ \cite{BK}.

\section{Cones of dimension $n$ with $n+1$ facets.}
\[\]
Let $n\in \mathbb{N},$ \ $n \geq 3,$ $\sigma \in S_{n},$ \
$\sigma=(1,2,\ldots, n)$ \ the cycle of length $n,$ \ $[n]:=\{1,
2,\ldots, n\}$ \ and $\{e_{i}\}_{1 \leq i \leq n}$ \ be the
canonical base of $\mathbb{R}^{n}.$  For a vector $x\in
 \mathbb{R}^{n}$, $x=(x_{1},\ldots,x_{n})$,  we will denote  $| \ x \ | := x_{1}+
 \ldots + x_{n}.$ If $x^{a}$ is a monomial in $K[x_{1}, \ldots, x_{n}]$ we
 set $\log (x^{a})=a$. Given a set $A$ of monomials, the $log \ set \ of \
 A,$ denoted $\log (A),$ consists of all  $\log (x^{a})$ with $x^{a}\in
 A.$
 \[\]
We consider the following set of
integer vectors of $\mathbb{N}^{n}$:

\[\begin{array}{cccccccccccccccccc}
   \ \ \ \ \ \   \downarrow i^{th} column  \ \ \ \ \ \ \ \ \ \ \ \ \ \ \ 
     \ \ \ \ \ \ \ \ \ 
  \end{array}
\]

$\nu^{j}_{\sigma^{0}[i] } := \left( \begin{array}{cccccccccccc}
    -j, & -j, & \ldots & ,-j, & (n-j), & \ldots & ,(n-j)  \\
   \end{array}
   \right),
$

\[\begin{array}{ccccccccccccccccccccccccccccccccccccccccccccccccccccc}
  &  & \ \ \ \ \ \ \    \downarrow (i+1)^{st} column \ \ \ \ \ \ &  &
  \end{array}
\]

$\nu^{j}_{\sigma^{1}[i] } := \left( \begin{array}{ccccccc}
     (n-j), & -j, & \ldots & ,-j, & (n-j), & \ldots & ,(n-j)  \\
   \end{array}
   \right),
$

\[\begin{array}{cccccccccccccccccccccc}
     \ \ \ \ \ \ \ \ \ &  &  &  &  &  &  &  &\downarrow (i+2)^{nd} column &
  \end{array}
\]

$\nu^{j}_{\sigma^{2}[i] } := \left( \begin{array}{cccccccc}
     (n-j), & (n-j)& ,-j, & \ldots & ,-j, & (n-j), & \ldots & ,(n-j)  \\
   \end{array}
   \right),
$

\[\ldots \ldots \ldots \ldots \ldots \ldots \ldots \ldots \ldots 
\ldots \ldots \ldots \ldots \ldots \ldots \ldots \ldots
\ldots \ldots \ldots \ldots \ldots \ldots \ldots \ldots \ldots
\ldots \ldots \ldots \ldots\]

\[\begin{array}{ccccccccccccccccccccccc}
     \ \ \ \ \ \ \ \ \ \ \ \downarrow (i-2)^{nd} column \ \ \ \ \downarrow (n-2)^{nd} column \ \ \ \ \
  \end{array}
\]

$\nu^{j}_{\sigma^{n-2}[i] } := \left(
\begin{array}{ccccccccccccccc}
     -j, \ \ldots \ , -j , \ (n-j) , & \ldots & , (n-j) , \ -j , \ -j  \\
   \end{array}
   \right),
$

\[\begin{array}{ccccccccccccccccccccccccccccccc}
   \ \ \ \ \ \ \ \ \ \ \ \ \ \ \downarrow (i-1)^{st} column \ \ \ \ \downarrow 
   (n-1)^{st} column \ \ \ \ \ 
  \end{array}
\]

$\nu^{j}_{\sigma^{n-1}[i] } := \left( \begin{array}{cccccccc}
     -j, & \ldots & , -j , \ (n-j), & \ldots & ,(n-j) , \ -j  \\
   \end{array}
   \right),\\
$

where $\sigma^{k}[i]:=\{\sigma^{k}(1), \ldots , \sigma^{k}(i)\}$
for all $1\leq i \leq n-2,$ $1\leq j \leq n-1$ and $0\leq k \leq n-1.$\\

$\it{Remark}:$ \ It is easy to see $(\cite{SA})$ \ that for any 
$1\leq i \leq n-2$ \ and $0\leq t \leq n-1$   we have 
$\nu^{n-i-1}_{\sigma^{t}[i]} = \nu_{\sigma^{t}[i]}.$\\

If $A_{i}$ are some nonempty subsets of $[n]$ for $1\leq i\leq
m$, ${\bf{\mathcal{A}}}=\{A_{1},\ldots,A_{m}\}$, then the
set of the vectors $\sum_{k=1}^{m} e_{i_{k}}$ with $i_{k} \in
A_{k}$ is the base of a polymatroid, called the \emph{transversal
polymatroid presented} by ${\bf{\mathcal{A}}}.$ The \emph{base ring} of
 a
transversal polymatroid presented by ${\bf{\mathcal{A}}}$ 
is the ring 
\[K[{\bf{\mathcal{A}}}]:=K[x_{i_{1}}\cdot\cdot\cdot
x_{i_{m}}\ | \  i_{j}\in A_{j},1\leq j\leq m].\]

\begin{lemma}
Let $n\in \mathbb{N},$ \ $n \geq 3,$ \ $1\leq i\leq n-2$ \ 
and $1\leq j\leq n-1.$ \ We consider the following two cases:

$a)$ \ If $i+j\leq n-1,$ \ then let $A:=\{
\log(x_{j_{1}}\cdot\cdot\cdot x_{j_{n}}) \ | \
j_{k}\in A_{k},\ for \ all \ 1\leq k \leq n
\}\subset \mathbb{N}^{n}$ be the exponent set of generators of
$K-$algebra $K[{\bf{\mathcal{A}}}],$ where
${\bf{\mathcal{A}}}=\{A_{1}=[n],\ldots,A_{i}=[n],
A_{i+1}=[n]\setminus [i],\ldots,A_{i+j}=[n]\setminus
[i],A_{i+j+1}=[n],\ldots,A_{n}=[n]\}$.

$b)$ \ If $i+j\geq n,$ \ then let $A:=\{
\log(x_{j_{1}}\cdot\cdot\cdot x_{j_{n}}) \ | \
j_{k}\in A_{k},\ for \ all \ 1\leq k \leq n
\}\subset \mathbb{N}^{n}$ be the exponent set of generators of
$K-$algebra $K[{\bf{\mathcal{A}}}],$ where
${\bf{\mathcal{A}}}=\{A_{1}=[n]\setminus
[i],\ldots,A_{i+j-n}=[n]\setminus
[i],
A_{i+j-n+1}=[n],\ldots,A_{i}=[n], A_{i+1}=[n]\setminus
[i],\ldots,A_{n}=[n]\setminus
[i]\}$.\\
Then the cone generated by $A$ has the
irreducible representation
\[\mathbb{R}_{+}A= \bigcap_{a\in N}H^{+}_{a},\] where
$N=\{\nu^{j}_{\sigma^{0}[i]}, \ e_{k} \ | \ 1\leq k \leq
n\}$ and  $\{e_{i}\}_{1 \leq i \leq n}$ \ is the
canonical base of $\mathbb{R}^{n}.$
\end{lemma}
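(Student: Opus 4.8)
The plan is to prove the two inclusions $\mathbb{R}_{+}A\subseteq C$ and $C\subseteq\mathbb{R}_{+}A$ for $C:=\bigcap_{a\in N}H^{+}_{a}$, and then that this representation of $C$ is irreducible. First observe that in cases $a)$ and $b)$ the family $\mathcal{A}$ is the same multiset of subsets of $[n]$ — namely $n-j$ copies of $[n]$ and $j$ copies of $[n]\setminus[i]$ — merely listed in a different order; hence the exponent set $A$, and with it the cone $\mathbb{R}_{+}A$, is the same in both cases, so it suffices to treat case $a)$. Write $\nu:=\nu^{j}_{\sigma^{0}[i]}$, which has entry $-j$ in positions $1,\dots,i$ and $n-j$ in positions $i+1,\dots,n$. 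Every $v\in A$ has the form $\log(x_{j_{1}}\cdots x_{j_{n}})$ with $j_{k}\in A_{k}$, so $|v|=n$, and if $s:=v_{1}+\cdots+v_{i}$ is the number of indices $j_{k}$ lying in $[i]$, then only the $n-j$ factors with $A_{k}=[n]$ may contribute to $[i]$, whence $s\le n-j$. Therefore $\langle v,\nu\rangle=-js+(n-j)(n-s)=n\,(n-j-s)\ge 0$ while $\langle v,e_{k}\rangle=v_{k}\ge 0$, so $A\subseteq C$ and $\mathbb{R}_{+}A\subseteq C$. I would also record two families of members of $A$: the vectors $n\,e_{q}$ (coming from $x_{q}^{\,n}$) for $i<q\le n$, and the vectors $(n-j)e_{p}+j\,e_{q}$ (coming from $x_{p}^{\,n-j}x_{q}^{\,j}$) for $1\le p\le i<q\le n$; among these, the $n$ vectors $\{n e_{q}:i<q\le n\}\cup\{(n-j)e_{p}+j e_{q_{0}}:1\le p\le i\}$ (for any fixed $q_{0}>i$) are linearly independent, so $\mathbb{R}_{+}A$ is $n$-dimensional.

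For the reverse inclusion I would describe $C$ by its extremal rays. Since $\mathbb{R}_{+}A\subseteq C$, the cone $C$ is $n$-dimensional, and being contained in $\mathbb{R}^{n}_{+}$ it is pointed; hence it is the conical hull of its extremal rays, each of which is of the form $C\cap\bigcap_{a\in S}H_{a}$ for some $S\subseteq N$ with $\operatorname{rank}\{a:a\in S\}=n-1$. Because $\nu$ has full support, it lies in the linear span of no proper subset of $\{e_{1},\dots,e_{n}\}$, so a short case analysis according to whether $\nu\in S$ shows that the extremal rays of $C$ are exactly $\mathbb{R}_{+}e_{q}$ for $i<q\le n$ and $\mathbb{R}_{+}\big((n-j)e_{p}+j\,e_{q}\big)$ for $1\le p\le i<q\le n$: the remaining candidate lines either violate $x\ge 0$ (e.g. $\mathbb{R}e_{p}$ with $p\le i$, since $\langle e_{p},\nu\rangle=-j<0$) or, having their two free coordinates on the same side of $i$ relative to $H_{\nu}$, meet $C$ only in the origin. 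Each of these generating rays is spanned by one of the monomials recorded above, hence $C\subseteq\mathbb{R}_{+}A$, and so $\mathbb{R}_{+}A=C$.

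It remains to see that $C=\bigcap_{a\in N}H^{+}_{a}$ is an \emph{irreducible} representation; as $\dim C=\dim\mathbb{R}_{+}A=n$, Theorem 1.1.1 then identifies it as the unique irreducible representation, with the sets $F_{a}=C\cap H_{a}$ as its facets — so $C$ is an $n$-dimensional cone with exactly $n+1$ facets. For this I would check that deleting any single halfspace strictly enlarges the intersection. For $a=\nu$: the point $e_{1}$ satisfies $\langle e_{1},e_{k}\rangle\ge 0$ for all $k$ but $\langle e_{1},\nu\rangle=-j<0$. For $a=e_{k}$ with $k\le i$: the point $-\varepsilon e_{k}+M e_{n}$ with $\varepsilon>0$ small and $M$ large has negative $k$-th coordinate yet satisfies all the other inequalities, its $\nu$-value being $j\varepsilon+(n-j)M>0$. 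For $a=e_{k}$ with $k>i$: the point $-\varepsilon e_{k}+M e_{k'}$ with $k'>i$, $k'\ne k$ (such a $k'$ existing because $i\le n-2$) works the same way. Hence no halfspace is redundant, and the proof is complete.

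By far the subtlest step is the completeness of the list of extremal rays of $C$ — showing there are \emph{no} others — which reduces to determining exactly which rank-$(n-1)$ subsets $S\subseteq N$ cut out a genuine ray inside $\mathbb{R}^{n}_{+}$; note that the bound $s\le n-j$ is attained precisely on the rays $(n-j)e_{p}+j e_{q}$, and it is this tightness that makes the two inclusions agree. A more computational alternative for that inclusion is to decompose a rational $x\in C$ directly as $\sum_{q>i}\alpha_{q}(n e_{q})+\sum_{p\le i<q}\beta_{pq}\big((n-j)e_{p}+j e_{q}\big)$: writing $\beta_{pq}=\tfrac{x_{p}}{n-j}\,w_{q}$ with weights $w_{q}\ge 0$ summing to $1$, the forced coefficients $\alpha_{q}=\tfrac{1}{n}\big(x_{q}-\tfrac{j(x_{1}+\cdots+x_{i})}{n-j}w_{q}\big)$ can be made nonnegative exactly because $x_{1}+\cdots+x_{i}\le\tfrac{n-j}{n}|x|$, which once more shows where the hypothesis is used.
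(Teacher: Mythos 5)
Your proof is correct and follows essentially the same route as the paper's: both inclusions are established by checking the defining inequalities $\langle v,\nu^{j}_{\sigma^{0}[i]}\rangle\geq 0$ on the generators of $A$, and the reverse inclusion by computing the extremal rays of $\bigcap_{a\in N}H^{+}_{a}$ as intersections of $n-1$ of the supporting hyperplanes and matching each with an element of $A$. Your opening observation that cases $a)$ and $b)$ present the same multiset of sets (hence the same cone), and your irredundancy check via explicit points lying in all but one halfspace, are small streamlinings of what the paper does, but the substance is identical.
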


\begin{proof}
We denote 
$
J_{k}=\left \{ \begin{array}[pos]{cccccccccccccccc}
(n-j) \ e_{k}+ j \ e_{i+1}  , \ \ \ \ \ \rm{if} \ \ 1\it{\leq k \leq i} \ \ \ \ \ \ \\

\ (n-j) \ e_{1}+j \ e_{k} , \ \ \ \ \  \ \ \rm{if} \ \it{i}+\rm{2}\it{\leq k \leq n} \ \
\end{array} \right .
$ and $J=n \ e_{n}.$ \
Since $A_{t}=[n]$ \ for any $t \in \{1,\ldots, i\}\cup \{i+j+1,
\ldots, n\}$ \ and $A_{r}=[n] \setminus [i]$ \ for any $r \in 
\{i+1,\ldots, i+j\}$ it is easy to see that for any $k \in 
\{1,\ldots, i\}$ \ and  $r \in \{i+2,\ldots, n\}$ \ the set of 
monomials $x^{n-j}_{k} \ x^{j}_{i+1},$ \ $x^{n-j}_{1} \ x^{j}_{r},$ 
\ $x^{n}_{n}$ \ is a subset of the generators of $K-$algebra 
$K[{\bf{\mathcal{A}}}].$ \ Thus one has \[\{J_{1}, \ldots, J_{i},
J_{i+2}, \ldots, J_{n}, J\}\subset A.\]
If we denote by $C$ \ the matrix with the rows the coordinates of 
$\{J_{1}, \ldots, J_{i}, J_{i+2}, \ldots, J_{n}, J\},$ \ 
then by a simple computation we get $|\det\ (C)|=n \ (n-j)^{i} \ 
j^{n-i-1}$ \ for any $1\leq i\leq n-2$ \ and $1\leq j\leq n-1.$ \ 
Thus, we get that the set \[\{J_{1}, \ldots, J_{i}, J_{i+2}, 
\ldots, J_{n}, J\}\] is linearly independent and it 
follows that $\dim \ \mathbb{R}_{+}A = n.$ \ Since $\{J_{1}, 
\ldots, J_{i}, J_{i+2}, \ldots, J_{n}\}$ is linearly independent 
and lie on the hyperplane $H_{\nu^{j}_{\sigma^{0}[i]}}$ we have 
that $\dim(H_{\nu^{j}_{\sigma^{0}[i]}}\cap \mathbb{R}_{+}A)=n-1.$\\
Now we will prove that $\mathbb{R}_{+}A \subset H_{a}^{+}$ \ for 
all $a \in N.$ \ It is enough to show that for all vectors 
$P \in A,$ \ $\langle P,a \rangle \geq 0$ for all $a \in N.$ \ 
Since $\{e_{k}\}_{1 \leq k \leq n}$ \ is  the
canonical base of $\mathbb{R}^{n},$\ we get that 
$\langle P,\ e_{k}\rangle \geq 0$ for any $1\leq k \leq n.$  \ 
Let $P \in A,$ \ $P=\log(x_{j_{1}}\cdot\cdot\cdot x_{j_{n}}).$ \ 
We have two possibilities:\\ 
$a)$ \ If $i+j\leq n-1,$ \ then let $t$ 
 be the number of $j_{k}$ \ such that \ $k \in \{1,\ldots, 
i\}\cup\{i+j+1,\ldots, n\}$ \ and $j_{k} \in [i].$ \ Thus $t \leq n-j.$ \ 
Then $\langle P, \nu^{j}_{\sigma^{0}[i]}\rangle =-t \ j + (n-j-t)\ 
(n-j) + j(n-j)= n(n-j-t) \geq 0.$\\  
$b)$ \ If $i+j\geq n,$ \ 
then let $t$ be the number of $j_{k}$ \ such that \ $i+j-n+1
\leq k \leq i$ \ and $j_{k} \in [i].$ \ Thus $t \leq n-j.$ \ 
Then $\langle P, \nu^{j}_{\sigma^{0}[i]}\rangle=-t \ j + (n-j-t)\ (n-j) 
+ j(n-j)= n(n-j-t) \geq 0.$\\ Thus \[\mathbb{R}_{+}A\subseteq 
\bigcap_{a\in N}H^{+}_{a}.\]Now we will prove the converse inclusion 
$\mathbb{R}_{+}A\supseteq \bigcap_{a\in N}H^{+}_{a}.$\\ It is  
enough to prove that the extremal rays of the cone 
$\bigcap_{a\in N}H^{+}_{a}$ \ are in
${\mathbb{R}_{+}}A.$ \ Any extremal ray of the cone  
$\bigcap_{a\in N}H^{+}_{a}$ \ can be written  as the intersection of 
$n-1$ hyperplanes $H_{a}$,  with $a\in N.$ \ 
There are two possibilities to obtain extremal rays by intersection of 
$n-1$ hyperplanes.\\ 
$First \ case.$\\ Let \ $1\leq i_{1} < \ldots < 
i_{n-1}\leq n$ \ be a sequence of integers and $\{t\}=[n]\setminus \{i_{1},
\ldots,i_{n-1}\}.$ \ The system of equations:
$ \ (*) \ \begin{cases}
z_{i_{1}}=0,\\
\vdots \\
z_{i_{n-1}}=0
\end{cases}$ admits the solution $x \in \mathbb{Z}_{+}^{n},$ \ $
x=\left( \begin{array}[pos]{c}
    x_{1}\\
    \vdots\\
    x_{n}
\end{array} \right)
$ with $ | \ x \ |=n,$ \ $x_{k}=n\cdot\delta_{kt}$ \ for all 
$1\leq k \leq n,$ where $\delta_{kt}$ is Kronecker's symbol.\\
There are two possibilities:\\ $1)$ \ If $1\leq t \leq i,$ \ 
then $H_{\nu^{j}_{\sigma^{0}[i]}}(x) < 0$ \ and thus $x\notin 
\bigcap_{a\in N}H^{+}_{a}.$\\
$2)$ \ If $i+1\leq t \leq n,$ \ then $H_{\nu^{j}_{\sigma^{0}[i]}}(x) > 0$ \ 
and thus $x\in \bigcap_{a\in N}H^{+}_{a}$ and  is an extremal ray.\\ 
Thus, there exist $n-i$ \ sequences \ $1\leq i_{1}< \ldots < i_{n-1}\leq n$ \ 
such that the system of equations $(*)$ \ has a solution 
$x \in \mathbb{Z}_{+}^{n}$ \ with $ | \ x \ |=n$ \ 
and $H_{\nu^{j}_{\sigma^{0}[i]}}(x) > 0.$\\ The extremal rays are: 
$\{n \ e_{k} \ | \ i+1 \leq k \leq n\}.$\\
$Second \ case.$\\ 
Let \ $1\leq i_{1}< \ldots < i_{n-2}\leq n$ \ be a sequence of integers and 
$\{r, \ s\}=[n]\setminus \{i_{1},\ldots,i_{n-2}\},$ \ with $r < s.$\\
Let $x \in \mathbb{Z}_{+}^{n},$ \ with $ | \ x \ |=n,$ \ 
be a solution of the system of equations:\\
$ \ (**) \ \begin{cases}
z_{i_{1}}=0,\\
\vdots \\
z_{i_{n-2}}=0,\\
-j \ z_{1}-\ldots-j \ z_{i}+(n-j)z_{i+1}+\ldots+(n-j)z_{n}=0.
\end{cases}$\\
There are two possibilities:\\
$1)$ \ If $1\leq r \leq i$ and $i+1\leq s
 \leq n,$ then the system of equations $(**)$ admits the solution
$
x=\left( \begin{array}[pos]{c}
    x_{1}\\
    \vdots\\
    x_{n}
\end{array} \right) \in \mathbb{Z}_{+}^{n},$ \ with $ | \ x \ |=n,$ \ 
with $x_{t}=j \ \delta_{ts}+(n-j)\delta_{tr}$ \ for all $1\leq t \leq n.$\\
$2)$ If $1\leq r,s \leq i$ \ or $i+1\leq r,s \leq n,$ \ then there exists 
no solution $x\in \mathbb{Z}_{+}^{n}$ \ for the system of equations $(**)$ 
\ because otherwise $H_{\nu^{j}_{\sigma^{0}[i]}}(x) > 0$ \ or 
$H_{\nu^{j}_{\sigma^{0}[i]}}(x) < 0.$\\
Thus, there exist $i(n-i)$ \ sequences \ $1\leq i_{1}< \ldots < i_{n-2}\leq n$ 
such that the system of equations $(**)$ \ has a solution 
$x \in \mathbb{Z}_{+}^{n}$ \ with $ | \ x \ |=n$,   and the extremal rays are: 
\[\{(n-j)e_{r}+j \ e_{s} \ | \ 1 \leq r \leq i \ {\rm{and}} \ i+1 \leq s \leq n\}.\]\\
In conclusion, there exist $(i+1)(n-i)$ \ extremal rays of the cone 
${\bigcap}_{a\in N}H_{a}^{+}$:\[R:=\{n e_{k} \ | \ i+1 \leq k \leq n\}
\cup\{(n-j)e_{r}+j \ e_{s} \ | \ 1 \leq r \leq i \ {\rm{and}} \ i+1 \leq s \leq n\}.\]
Since  $A=\{\log(x_{j_{1}}\cdot\cdot\cdot x_{j_{n}}) \ | \
j_{k}\in A_{k} \in {\bf{\mathcal{A}}},\ {\rm{for \ all}} \ 1\leq k \leq n
\}\subset \mathbb{N}^{n}$ \ where ${\bf{\mathcal{A}}}=\{A_{1}=[n],\ldots,A_{i}=[n],
A_{i+1}=[n]\setminus [i],\ldots,A_{i+j}=[n]\setminus
[i],A_{i+j+1}=[n],\ldots,A_{n}=[n]\}$ \ or ${\bf{\mathcal{A}}}=\{A_{1}=[n]\setminus
[i],\ldots,A_{i+j-n}=[n]\setminus [i],
A_{i+j-n+1}=[n],\ldots,A_{i}=[n], A_{i+1}=[n]\setminus
[i],\ldots,A_{n}=[n]\setminus
[i]\},$ \ it is easy to see that $R\subset A.$ \ Thus, we have $\mathbb{R}_{+}A= 
\bigcap_{a\in N}H^{+}_{a}.$\\ The representation is
irreducible because if we delete, for some $k,$ \ the hyperplane with
the normal $e_{k}$, then the $k^{'th}$ coordinate
of $\log(x_{j_{1}}\cdot\cdot\cdot x_{j_{n}})$
would be negative, which is impossible; and if we delete the
hyperplane with the normal $\nu^{j}_{\sigma^{0}[i]},$ then the cone
${\mathbb{R_{+}}}A$ would be presented by $A=\{
\log(x_{j_{1}}\cdot\cdot\cdot x_{j_{n}}) \ | \ j_{k}\in [n], \ {\rm{for \ all}} \ 1\leq
k \leq n \}$ which is impossible. \ Thus the representation
${\mathbb{R_{+}}}A= \bigcap_{a\in N}H^{+}_{a}$ is irreducible.
\end{proof}

\begin{lemma}
Let $n\in \mathbb{N},$ \ $n \geq 3,$ \ $t\geq 1,$ \ $1\leq i\leq n-2$ \ and 
$1\leq j\leq n-1.$ \ Let $A=\{ \log(x_{j_{1}}\cdot\cdot\cdot x_{j_{n}}) \ | \
j_{\sigma^{t}{(k)}}\in A_{\sigma^{t}{(k)}}, 1\leq k \leq
n\}\subset \mathbb{N}^{n}$ be the exponent set of generators of
$K-$algebra  $K[{\bf{\mathcal{A}}}],$ where ${\bf{\mathcal{A}}}=\{
A_{\sigma^{t}{(k)}} \ | \ A_{\sigma^{t}{(k)}}=[n],\ for \ k \in [i] \cup 
([n]\setminus [i+j]) \ and \ A_{\sigma^{t}{(k)}}=[n]\setminus \sigma^{t}[i], 
\ for \ i+1 \leq k \leq i+j\},$ \  if $i+j \leq n-1$ or ${\bf{\mathcal{A}}}=\{
A_{\sigma^{t}{(k)}} \ | \ A_{\sigma^{t}{(k)}}=[n]\setminus [i],\ for \ 
k \in [i+j-n] \cup ([n]\setminus [i]) \ and \ A_{\sigma^{t}{(k)}}=[n], \ 
for \ i+j-n+1 \leq k \leq i\},$ \ if $i+j \geq n.$  
Then the cone generated by $A$
has the irreducible representation
\[{\mathbb{R_{+}}}A= \bigcap_{a\in N}H^{+}_{a},\] where 
$N=\{\nu^{j}_{\sigma^{t}[i]},\ e_{k} \ | \ 1\leq k \leq
n\}.$
\end {lemma}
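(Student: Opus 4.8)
The plan is to reduce Lemma 2.1.2 to Lemma 2.1.1 by exploiting the cyclic symmetry built into the construction: applying the permutation $\sigma^{t}$ to the coordinates of $\mathbb{R}^{n}$ should carry the whole setup of Lemma 2.1.1 onto that of Lemma 2.1.2. Concretely, I would introduce the linear automorphism $\phi\colon\mathbb{R}^{n}\to\mathbb{R}^{n}$ permuting coordinates according to $\sigma^{t}$, that is, $\phi(e_{k})=e_{\sigma^{t}(k)}$ for $1\le k\le n$. Since $\phi$ is given by a permutation matrix it is orthogonal, so it preserves the inner product; hence $\phi(H_{a})=H_{\phi(a)}$, $\phi(H_{a}^{+})=H_{\phi(a)}^{+}$, and $\phi(\mathbb{R}_{+}S)=\mathbb{R}_{+}\phi(S)$ for every finite $S\subset\mathbb{R}^{n}$. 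Moreover $\phi$ is a bijection carrying faces to faces and facets to facets, so it transforms an irreducible representation of a full-dimensional cone into an irreducible representation of its image.

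Next I would verify that $\phi$ identifies the ingredients of Lemma 2.1.1 with those of Lemma 2.1.2. Write $A'$ for the exponent set of Lemma 2.1.1 for the same $i,j$, with case ($a$) or ($b$) of that lemma chosen according as $i+j\le n-1$ or $i+j\ge n$, and $N'=\{\nu^{j}_{\sigma^{0}[i]}\}\cup\{e_{k}\mid 1\le k\le n\}$. A generator of the corresponding $K$-algebra has logarithm $\sum_{k=1}^{n}e_{j_{k}}$ with $j_{k}\in A_{k}'$, and $\phi$ sends it to $\sum_{k=1}^{n}e_{\sigma^{t}(j_{k})}$. Since $\sigma^{t}([n])=[n]$ and $\sigma^{t}([n]\setminus[i])=[n]\setminus\sigma^{t}[i]$, as $j_{k}$ runs through $A_{k}'$ the index $\sigma^{t}(j_{k})$ runs through $A_{\sigma^{t}(k)}$ of Lemma 2.1.2; relabelling the factors by $\ell=\sigma^{t}(k)$ shows that $\phi$ maps $A'$ bijectively onto the exponent set $A$ of Lemma 2.1.2. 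Likewise $\phi$ permutes $\{e_{k}\mid 1\le k\le n\}$, and because $\nu^{j}_{\sigma^{0}[i]}$ has entry $-j$ in the coordinates indexed by $\sigma^{0}[i]=[i]$ and $n-j$ elsewhere, its image $\phi(\nu^{j}_{\sigma^{0}[i]})$ has entry $-j$ in the coordinates indexed by $\sigma^{t}[i]$ and $n-j$ elsewhere, which is exactly $\nu^{j}_{\sigma^{t}[i]}$. Thus $\phi(N')=N$.

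Once these matchings are in place the conclusion is immediate: applying $\phi$ to the equality $\mathbb{R}_{+}A'=\bigcap_{a\in N'}H_{a}^{+}$ of Lemma 2.1.1 gives $\mathbb{R}_{+}A=\phi(\mathbb{R}_{+}A')=\bigcap_{a\in N'}H_{\phi(a)}^{+}=\bigcap_{b\in N}H_{b}^{+}$, and this representation is irreducible because the one in Lemma 2.1.1 is and $\phi$ is a linear automorphism preserving facets.

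The argument presents no genuine obstacle; the only step demanding attention is the bookkeeping in the second paragraph — correctly matching the reindexed, $\sigma^{t}$-shifted family $\mathcal{A}$ of Lemma 2.1.2 with the $\phi$-image of the family of Lemma 2.1.1, and keeping the cases $i+j\le n-1$ and $i+j\ge n$ apart. Should one prefer to avoid the symmetry argument, an alternative is to repeat the proof of Lemma 2.1.1 verbatim with $\sigma^{t}$ in place of the identity: the determinant computation, the inequalities $\langle P,\nu^{j}_{\sigma^{t}[i]}\rangle\ge 0$ for $P\in A$, and the two families of extremal rays all go through unchanged after shifting indices by $\sigma^{t}$. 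The reduction via $\phi$ is, however, shorter and makes the irreducibility part automatic.
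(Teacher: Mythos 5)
Your reduction via the coordinate permutation $\phi(e_{k})=e_{\sigma^{t}(k)}$ is correct and is exactly the route the paper intends: it gives no separate proof of this lemma, only the remark immediately afterwards that the algebras of Lemmas 2.1.1 and 2.1.2 are isomorphic, which is precisely the symmetry you make explicit. Your version just supplies the bookkeeping (including the observation that the stated sets in the case $i+j\geq n$ should be read as $[n]\setminus\sigma^{t}[i]$ for the normal vector $\nu^{j}_{\sigma^{t}[i]}$ to come out right), so there is nothing to object to.
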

Note that the algebras from Lemmas 2.1.1. and 2.1.2.
are isomorphic.

\begin{lemma}
$a)$ Let $1\leq t \leq n-i-j-1$, $s\geq 2$ and 
$\beta \in {\bf{\mathbb{N}}}^{n}$ be such that 
$H_{\nu^{j}_{\sigma^{0}[i]}}(\beta)= n(n-i-j-t)$ 
and $| \ \beta \ |=sn$. 
Then $\beta_{1}+\ldots+\beta_{i}=(n-j)(s-1)+i+t$ and
$\beta_{i+1}+\ldots+\beta_{n}=n+j(s-1)-i-t$.\\
$b)$ Let $r\geq 2$, $s\geq r$, $1\leq t\leq r(n-j)-i$ and 
$\beta \in {\bf{\mathbb{N}}}^{n}$ be such that 
$H_{\nu^{j}_{\sigma^{0}[i]}}(\beta)= nt$ and $| \ \beta \ |=sn$. 
Then $\beta_{1}+\ldots+\beta_{i}=(n-j)s-t$ and
$\beta_{i+1}+\ldots+\beta_{n}=js+t$.
\end{lemma}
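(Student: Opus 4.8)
The plan is to reduce both parts to one $2\times 2$ linear system. First I would recall from the definitions above that the vector $\nu^{j}_{\sigma^{0}[i]}$ has its first $i$ entries equal to $-j$ and its remaining $n-i$ entries equal to $n-j$. So, setting $P := \beta_{1}+\cdots+\beta_{i}$ and $Q := \beta_{i+1}+\cdots+\beta_{n}$, the two hypotheses of each part say precisely that
\[ P + Q = sn \qquad \text{and} \qquad -jP + (n-j)Q = H, \]
where $H$ denotes the prescribed value of $H_{\nu^{j}_{\sigma^{0}[i]}}(\beta)$. Eliminating $Q$ via $Q = sn - P$ turns the second equation into $(n-j)sn - nP = H$, i.e.\ $P = (n-j)s - H/n$, and hence $Q = sn - P = js + H/n$. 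That is essentially the whole argument; what remains is to substitute the two concrete values of $H$.

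For part $a)$ we have $H = n(n-i-j-t)$, so $H/n = n-i-j-t$, and the two formulas become $P = (n-j)s-(n-i-j-t) = (n-j)(s-1)+i+t$ and $Q = js + (n-i-j-t) = j(s-1)+n-i-t$, which are exactly the asserted identities. For part $b)$ we have $H = nt$, so $H/n = t$, giving $P = (n-j)s-t$ and $Q = sn-P = js+t$, again as asserted.

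I do not expect any genuine obstacle: the content of the lemma is only the observation that once one prescribes both the modulus $|\beta|=sn$ and the value of $\beta$ on the facet normal $\nu^{j}_{\sigma^{0}[i]}$, the two \emph{partial moduli} $P$ and $Q$ are forced. The one point worth a word is that $P$ and $Q$ come out as integers, which is automatic since $H$ is visibly a multiple of $n$ in both cases. The inequalities on $t$ (and on $r,s$ in part $b)$) are not used in the derivation; their role is only to guarantee that some $\beta\in\mathbb{N}^{n}$ satisfying the two constraints actually exists, so that the hypotheses are non-vacuous — this one can check separately by exhibiting an explicit such $\beta$, e.g.\ supported on $e_{1},e_{i+1},e_{n}$, distributing the total $P$ among the first $i$ coordinates and the total $Q$ among the last $n-i$.
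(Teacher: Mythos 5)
Your proposal is correct and is essentially the same computation as the paper's: the paper also sets $c=\beta_1+\cdots+\beta_i$, writes $H_{\nu^{j}_{\sigma^{0}[i]}}(\beta)=-jc+(n-j)(sn-c)=n(sn-c-js)$, and solves the resulting linear equation for $c$, treating part $b)$ as identical in form. Your extra remarks on integrality and on the role of the bounds on $t$ are fine but not needed for the statement as given.
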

\begin{proof}
$a)$ Let $c=\beta_{1}+\ldots+\beta_{i}$ so that $\beta_{i+1}+\ldots+\beta_{n}=sn-c.$
Then $H_{\nu^{j}_{\sigma^{0}[i]}}(\beta)=-jc+(n-j)(sn-c)=n(sn-c-js)$. Since
$H_{\nu^{j}_{\sigma^{0}[i]}}(\beta)=n(n-i-j-t)$, it follows that
$sn-c-js=n-i-j-t$ and so $c=(n-j)(s-1)+i+t$. 
Thus, $\beta_{1}+\ldots+\beta_{i}=(n-j)(s-1)+i+t$ and
$\beta_{i+1}+\ldots+\beta_{n}=sn-c=n+j(s-1)-i-t$.\\
$b)$ The proof goes as that for $a).$
\end{proof}

\section{The type of base ring associated to transversal polymatroids 
with the cone of dimension $n$ with $n+1$ facets.}
\[\]

The main result of this chapter is the following.

\begin{theorem}
Let $R=K[x_{1},\ldots,x_{n}]$ be a standard graded polynomial ring
over a field $K$ \ and ${\bf{\mathcal{A}}}$ satisfies 
the hypothesis of $Lemma \ 2.1.1.$  Then: \\
$a)$ \ If $i+j\leq n-1,$  then the type of $K[{\bf{\mathcal{A}}}]$ is
\[type(K[{\bf{\mathcal{A}}}])=1+ \sum_{t=1}^{n-i-j-1} \binom{n+i-j+t-1}
{i-1}\binom{n-i+j-t-1}{n-i-1}.\]
$b)$  \ If $i+j\geq n,$ \ then the type of $K[{\bf{\mathcal{A}}}]$ is
\[type(K[{\bf{\mathcal{A}}}])=\sum^{r(n-j) -i}_{t=1}
\binom{r(n-j)-t-1}{i-1}\binom{rj+t-1}{n-i-1},\]
where $r=\left\lceil \frac{i+1}{n-j} \right\rceil$ $(\left\lceil x \right\rceil 
is \ the \ least \ integer \ \geq x).$
\end{theorem}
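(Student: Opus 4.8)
The plan is to apply the Danilov--Stanley theorem (stated earlier in the excerpt) to compute the minimal monomial generators of the canonical module $\omega_{K[{\bf{\mathcal{A}}}]}$, and then count them, since the \emph{type} of $K[{\bf{\mathcal{A}}}]$ equals the number of minimal generators of $\omega_{K[{\bf{\mathcal{A}}}]}$. By Corollary 1.4.2 the base ring is normal, so Danilov--Stanley gives
\[\omega_{K[{\bf{\mathcal{A}}}]}=(\{x^{\alpha} \ | \ \alpha \in \mathbb{N}A\cap ri(\mathbb{R}_{+}A)\}).\]
By Lemma 2.1.1, $\mathbb{R}_{+}A=H^{+}_{\nu^{j}_{\sigma^{0}[i]}}\cap \bigcap_{k=1}^{n}H^{+}_{e_{k}}$ is the irreducible representation, so Theorem 1.1.10 tells us $ri(\mathbb{R}_{+}A)$ consists of the points with all $n+1$ defining linear forms strictly positive: $\alpha_k>0$ for all $k$ and $H_{\nu^{j}_{\sigma^{0}[i]}}(\alpha)>0$, i.e. $-j(\alpha_1+\dots+\alpha_i)+(n-j)(\alpha_{i+1}+\dots+\alpha_n)>0$. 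Since every element of $A$ has modulus $n$, an element $\alpha\in\mathbb{N}A$ of degree $s$ satisfies $|\alpha|=sn$; conversely one must check that $\mathbb{N}A$ in degree $s$ is exactly $\{\alpha\in\mathbb{N}^n : |\alpha|=sn,\ H_{\nu^{j}_{\sigma^{0}[i]}}(\alpha)\ge 0\}$ — this is immediate from normality, since $\mathbb{N}A=\mathbb{Z}A\cap\mathbb{R}_{+}A$ and in each degree $\mathbb{Z}A$ is the full lattice slice. Note that $H_{\nu^{j}_{\sigma^{0}[i]}}(\alpha)$ is automatically a multiple of $n$, by Lemma 2.1.3 (or a direct congruence argument: $H_{\nu^{j}_{\sigma^{0}[i]}}(\alpha)=n(\alpha_{i+1}+\dots+\alpha_n)-j\cdot sn\equiv 0\pmod n$).

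The next step is to identify the \emph{minimal} generators. A monomial $x^{\alpha}$ with $\alpha\in ri(\mathbb{R}_{+}A)$ is a minimal generator of the ideal $\omega_{K[{\bf{\mathcal{A}}}]}$ iff $\alpha-\delta\notin ri(\mathbb{R}_{+}A)\cap\mathbb{N}A$ for every generator $\delta\in A$ (the exponent vectors of the algebra generators are exactly $A$). Because $A$ contains all $ne_k$ for $i+1\le k\le n$ and all $(n-j)e_r+je_s$ for $1\le r\le i<s\le n$ (the extremal rays, computed in Lemma 2.1.1), subtracting such a $\delta$ lowers the degree by $1$ and adjusts the value of $H_{\nu^{j}_{\sigma^{0}[i]}}$ in controlled steps. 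The idea is: $\alpha$ of degree $s$ is a minimal generator precisely when it lies in the smallest possible degree for its "class", and the relevant invariant is $H_{\nu^{j}_{\sigma^{0}[i]}}(\alpha)=nt$ for the appropriate positive integer $t$. One shows that for each admissible value of $t$ there is a unique minimal degree $s=s(t)$ realizing interior lattice points with that $H$-value — in case $a)$ this is $s=1$ with $H$-value $n(n-i-j-t)$ for $1\le t\le n-i-j-1$ (here the strict inequality $H>0$ forces $t\le n-i-j-1$, and $\alpha_k>0$ for all $k$ forces $t\ge 1$ after also accounting for $\alpha_{i+1}$ needing to be positive), and in case $b)$, since the single hyperplane $\nu^{j}_{\sigma^{0}[i]}$ now "cuts deeper", the minimal degree is $s=r=\lceil (i+1)/(n-j)\rceil$. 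One must verify that no smaller degree admits an interior point at all (this fixes $r$ as that ceiling: we need $(n-j)\cdot r - t\ge $ enough room to have each of the first $i$ coordinates positive while $t\ge 1$), and that every interior lattice point of degree $>s(t)$ is obtained from one of degree $s(t)$ by adding an algebra generator — i.e. it is not a minimal generator.

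Having pinned down that the minimal generators are enumerated by pairs $(t,\alpha)$ with $t$ ranging over the stated interval and $\alpha$ ranging over interior lattice points of the fixed minimal degree with $H_{\nu^{j}_{\sigma^{0}[i]}}(\alpha)=nt$, the final step is a pure lattice-point count. By Lemma 2.1.3, for such $\alpha$ the two block sums $\beta_1+\dots+\beta_i$ and $\beta_{i+1}+\dots+\beta_n$ are determined by $t$ (and $s$): in case $a)$, $\alpha_1+\dots+\alpha_i=i+t$ and $\alpha_{i+1}+\dots+\alpha_n=n-i-t$; in case $b)$, $\alpha_1+\dots+\alpha_i=(n-j)r-t$ and $\alpha_{i+1}+\dots+\alpha_n=jr+t$. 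The number of strictly positive integer vectors of length $i$ with a given sum $m$ is $\binom{m-1}{i-1}$, and similarly $\binom{m'-1}{n-i-1}$ for the last $n-i$ coordinates. Multiplying and summing over $t$ yields exactly the stated formulas
\[1+\sum_{t=1}^{n-i-j-1}\binom{n+i-j+t-1}{i-1}\binom{n-i+j-t-1}{n-i-1}\]
in case $a)$ (the leading $1$ coming from the "top" class $t=n-i-j$, equivalently the unique interior point of the smallest possible degree, which contributes a single generator — one must check separately that this class has exactly one interior point, namely where the first $i$ sums and last $n-i$ sums are forced), and
\[\sum_{t=1}^{r(n-j)-i}\binom{r(n-j)-t-1}{i-1}\binom{rj+t-1}{n-i-1}\]
in case $b)$. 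The main obstacle, and where I expect the real work to lie, is the minimality analysis: rigorously proving that interior lattice points of higher degree are never minimal generators (i.e. can always be reduced by subtracting some $\delta\in A$ while staying in $ri(\mathbb{R}_{+}A)$), and correctly identifying which degree is minimal for each value of the invariant $t$ — in particular justifying the appearance of the ceiling $r=\lceil(i+1)/(n-j)\rceil$ in case $b)$ and the isolated $+1$ term in case $a)$. The combinatorial identities and the Lemma 2.1.3 block-sum bookkeeping are routine once minimality is settled.
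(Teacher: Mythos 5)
Your overall strategy is the paper's: Danilov--Stanley plus the facet description from Lemma 2.1.1 reduces the computation to finding the minimal monomial generators of $\omega_{K[{\bf{\mathcal{A}}}]}$ and counting lattice points with prescribed block sums. But there is a concrete error in case $a)$ that breaks your count. You assert that for $1\le t\le n-i-j-1$ the minimal generators with $H_{\nu^{j}_{\sigma^{0}[i]}}$-value $n(n-i-j-t)$ occur in degree $s=1$, with block sums $\alpha_{1}+\dots+\alpha_{i}=i+t$ and $\alpha_{i+1}+\dots+\alpha_{n}=n-i-t$. This is impossible: a degree-$1$ interior point has $|\alpha|=n$ and all $\alpha_{k}\ge 1$, which forces $\alpha=(1,\dots,1)$, whose $H$-value is $n(n-i-j)$, i.e.\ the class $t=0$. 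So no class with $t\ge 1$ is represented in degree $1$; the paper's set $M$ lives in degree $s=2$, where Lemma 2.1.3 gives block sums $(n-j)+i+t=n+i-j+t$ and $n+j-i-t$ --- exactly the upper entries of the binomials in the statement. Your block sums would yield $\sum_{t}\binom{i+t-1}{i-1}\binom{n-i-t-1}{n-i-1}$, which is not the claimed formula (e.g.\ for $n=7$, $i=3$, $j=2$ it gives $1+\binom{3}{2}\binom{2}{3}=1$ instead of $113$). Relatedly, the isolated ``$+1$'' comes from the single degree-$1$ generator $(1,\dots,1)$ (the $t=0$ class), not from a ``top class $t=n-i-j$'', which would have $H=0$ and lie on the boundary, not in the relative interior. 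Your case $b)$ block sums $(n-j)r-t$ and $jr+t$ at degree $s=r$ do agree with the paper.

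The second, acknowledged gap is the minimality analysis itself, which you defer. The paper settles it by proving the decomposition
\[
{\mathbb{N}}A\cap ri({\mathbb{R_{+}}}A)=\bigl((1,\ldots,1)+({\mathbb{N}}A\cap{\mathbb{R_{+}}}A)\bigr)\cup\bigcup_{\alpha \in M}\bigl( \{\alpha\}+({\mathbb{N}}A\cap{\mathbb{R_{+}}}A)\bigr)
\]
by induction on the degree $s$ of $\beta$: using the block-sum identities of Lemma 2.1.3 one extracts a vector $\gamma\le\beta$ with $|(\gamma_{1},\dots,\gamma_{i})|=n-j$, $|(\gamma_{i+1},\dots,\gamma_{n})|=j$, so that $H_{\nu^{j}_{\sigma^{0}[i]}}(\gamma)=0$ and $\beta-\gamma$ stays in the relative interior with the same $H$-value but degree $s-1$. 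You would need to supply this (or an equivalent) argument; without it, and with the degree misidentification in case $a)$ corrected, the proof is not yet complete.
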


\begin{proof}
Since $K[{\bf{\mathcal{A}}}]$ is normal (\cite{HH}), 
the canonical module $\omega_{K[{\bf{\mathcal{A}}}]}$ of 
$K[{\bf{\mathcal{A}}}],$ 
with respect to standard grading, can be expressed as an ideal of 
$K[{\bf{\mathcal{A}}}]$ generated by monomials
\[\omega_{K[{\bf{\mathcal{A}}}]}=(\{x^{a}| \ a\in {\mathbb{N}}A\cap 
ri({\mathbb{R_{+}}}A)\}),\] 
where $A$ is the exponent set of the $K-$ algebra $K[{\bf{\mathcal{A}}}]$
and $ri({\mathbb{R_{+}}}A)$ denotes the relative interior of
${\mathbb{R_{+}}}A.$ By Lemma 2.1.1.,  the cone generated by $A$ has the
irreducible representation
\[\mathbb{R}_{+}A= \bigcap_{a\in N}H^{+}_{a},\] where
$N=\{\nu^{j}_{\sigma^{0}[i]}, \ e_{k} \ | \ 1\leq k \leq
n\},$ \  $\{e_{k}\}_{1 \leq k \leq n}$ \ being the
canonical base of $\mathbb{R}^{n}.$\\
$a)$ \ Let $i\in [n-2],$ $j\in [n-1]$ be
such that $i+j\leq n-1,$ \ $u_{t}=n+i-j+t,$ $v_{t}=n-i+j-t$ for 
any $t\in [n-i-j-1].$  We will denote by $M$ the set
\[M=\{\alpha \in \mathbb{N}^{n}\ | \ \alpha_{k}\geq 1, \ |(\alpha_{1},\ldots,
\alpha_{i})|=u_{t}, \ |(\alpha_{i+1},\ldots,\alpha_{n})|=v_{t} 
\ {\rm{for \ any}} \ k\in [n], \ i\in [n-2], \ \]\[ j\in [n-1] \  
{\rm{such \  that}} \ i+j\leq n-1 \ {\rm{and}} \ t\in [n-i-j-1]\}.\]
We will show that \[{\mathbb{N}}A\cap ri({\mathbb{R_{+}}}A)=
((1,\ldots,1)+({\mathbb{N}}A\cap{\mathbb{R_{+}}}A))\cup
\bigcup_{\alpha \in M} ( \{\alpha\}+({\mathbb{N}}A\cap{\mathbb{R_{+}}}A)).\]
Since for any $\alpha \in M,$ \ $H_{\nu^{j}_{\sigma^{0}[i]}}(\alpha)=
-j(n+i-j+t)+(n-j)(n-i+j-t)=n(n-i-j-t) \ > \ 0$ and $H_{\nu^{j}_
{\sigma^{0}[i]}}((1,\ldots,1))=n(n-i-j) > 0,$ \ it follows that 
\[{\mathbb{N}}A\cap ri({\mathbb{R_{+}}}A) \supseteq
((1,\ldots,1)+({\mathbb{N}}A\cap{\mathbb{R_{+}}}A))\cup
\bigcup_{\alpha \in M} ( \{\alpha\}+({\mathbb{N}}A\cap{\mathbb{R_{+}}}A)).\]
Let $\beta \in {\mathbb{N}}A\cap ri({\mathbb{R_{+}}}A),$ then $\beta_{k}\geq 1$
for any $k\in [n].$ Since $H_{\nu^{j}_{\sigma^{0}[i]}}((1,\ldots,1))=n(n-i-j) > 0,$ 
it follows that $(1,\ldots,1)\in ri({\mathbb{R_{+}}}A).$ Let $\gamma 
\in \mathbb{N}^{n}, \ \gamma=\beta - (1,\ldots,1).$ It is clear that
$H_{\nu^{j}_{\sigma^{0}[i]}}(\gamma)=H_{\nu^{j}_{\sigma^{0}[i]}}
(\beta)-n(n-i-j).$
If $H_{\nu^{j}_{\sigma^{0}[i]}}(\beta)\geq n(n-i-j),$ then  
$H_{\nu^{j}_{\sigma^{0}[i]}}(\gamma)\geq 0.$ Thus $\gamma \in 
{\mathbb{N}}A\cap {\mathbb{R_{+}}}A.$ \\
If $H_{\nu^{j}_{\sigma^{0}[i]}}(\beta)< n(n-i-j),$ \ 
then let $1\leq t \leq n-i-j-1$ such that 
$H_{\nu^{j}_{\sigma^{0}[i]}}(\beta)= n(n-i-j-t).$

We claim that for any $\beta \in {\mathbb{N}}A\cap ri({\mathbb{R_{+}}}A)$ 
with $| \ \beta \ |=sn\geq 2n$
and $t\in [n-i-j-1]$ such that  
$H_{\nu^{j}_{\sigma^{0}[i]}}(\beta)= n(n-i-j-t)$ we can find 
$\alpha \in M$ with $H_{\nu^{j}_{\sigma^{0}[i]}}(\alpha)= n(n-i-j-t)$
and $\beta-\alpha \in {\mathbb{N}}A\cap{\mathbb{R_{+}}}A.$\\
We proceed by induction on $s\geq 2.$
If $s=2$, then it is clear that $\beta \in M$. 
Indeed, then $\beta_{1}+\ldots+\beta_{i}=n-a,$ 
$\beta_{i+1}+\ldots+\beta_{n}=n+a$ for some 
$a\in \mathbb{Z}$ and so
$H_{\nu^{j}_{\sigma^{0}[i]}}(\beta)=n(n-2j+a)=
n(n-i-j-t),$ that is
$a=j-i-t$. It follows $\beta \in M.$\\
Suppose $s>2.$ Since $\beta_{1}+\ldots+\beta_{i}= 
(n-j)(s-1)+i+t\geq 2(n-j)+i+t$ by Lemma \ 2.1.3. \ 
we can get $\gamma_{e}$, with $0\leq \gamma_{e}\leq \beta_{e}$ 
for any $1\leq e \leq i$,
such that $| \ (\gamma_{1},\ldots,\gamma_{i}) \ |=n-j.$
Since $\beta_{i+1}+\ldots+\beta_{n}=j(s-1)+(n-i-t)\geq 2j+n-i-t$ 
by Lemma \ 2.1.3. \ 
we can get $\gamma_{e}$, with $0\leq \gamma_{e}\leq \beta_{e}$ 
for any $i+1\leq e \leq n$,
such that $| \ (\gamma_{i+1},\ldots,\gamma_{n}) \ |=j.$
It is clear that $\beta^{'}=\beta-\gamma \in \mathbb{R}_{+}^{n}$, 
\[ | \ (\beta_{1}^{'},\ldots,\beta_{i}^{'}) \ |=
\sum_{e=1}^{i}(\beta_{e}-\gamma_{e})=(n-j)(s-2)+i+t,\]
\[ | \ (\beta_{i+1}^{'},\ldots,\beta_{n}^{'})  \ |=
\sum_{e=i+1}^{n}(\beta_{e}-\gamma_{e})=n+j(s-2)-i-t\] and
\[H_{\nu^{j}_{\sigma^{0}[i]}}(\beta^{'})=
H_{\nu^{j}_{\sigma^{0}[i]}}(\beta)-H_{\nu^{j}_{\sigma^{0}[i]}}(\gamma)=
n(n-i-j-t)-[(-j)(n-j)+(n-j)j]=n(n-i-j-t).\]
So, $\beta^{'}\in {\mathbb{N}}A\cap ri({\mathbb{R_{+}}}A)$ and, since 
$| \ \beta^{'} \ |=n(s-1),$ we get by induction hypothesis that 
\[\beta^{'}\in \bigcup_{\alpha \in M} ( \{\alpha\}+({\mathbb{N}}A\cap{\mathbb{R_{+}}}A)).\]
Since $H_{\nu^{j}_{\sigma^{0}[i]}}(\gamma)=0$ and $\gamma \in \mathbb{R}_{+}^{n},$ 
we get that $ \gamma \in {\mathbb{N}}A \cap {\mathbb{R_{+}}}A$ and so
\[\beta \in \bigcup_{\alpha \in M} ( \{\alpha\}+({\mathbb{N}}A\cap{\mathbb{R_{+}}}A)).\]

Thus
\[{\mathbb{N}}A\cap ri({\mathbb{R_{+}}}A) \subseteq
((1,\ldots,1)+({\mathbb{N}}A\cap{\mathbb{R_{+}}}A))\cup
\bigcup_{\alpha \in M} ( \{\alpha\}+({\mathbb{N}}A\cap{\mathbb{R_{+}}}A)).\]
So, the canonical module $\omega_{K[{\bf{\mathcal{A}}}]}$ of 
$K[{\bf{\mathcal{A}}}],$ 
with respect to standard grading, can be expressed as an ideal of 
$K[{\bf{\mathcal{A}}}],$ generated by monomials
\[\omega_{K[{\bf{\mathcal{A}}}]}=(\{x_{1}\cdot\cdot\cdot x_{n}, 
\ x^{\alpha}| \ \alpha\in M\}) K[{\bf{\mathcal{A}}}].\]
The type of $K[{\bf{\mathcal{A}}}]$ is the minimal number of generators of
the canonical module. So, \ $type(K[{\bf{\mathcal{A}}}])=\#(M)+1,$ 
where $\#(M)$ is the cardinal of $M.$\\
Note that for 
 any $t\in [n-i-j-1],$ the equation $\alpha_{1}+ \ldots +
\alpha_{i}=n+i-j+t$ has $\binom{n+i-j+t-1}{i-1}$ distinct nonnegative
integer solutions with $\alpha_{k}\geq 1,$ for any $k\in [i],$ respectively
$\alpha_{i+1}+ \ldots +
\alpha_{n}=n-i+j-t$ has $\binom{n-i+j-t-1}{n-i-1}$ distinct nonnegative
integer solutions with $\alpha_{k}\geq 1$ for any $k\in [n]\setminus[i].$
Thus, the cardinal of $M$ is \[\#(M)=\sum_{t=1}^{n-i-j-1} \binom{n+i-j+t-1}
{i-1}\binom{n-i+j-t-1}{n-i-1}.\] 

$b)$ \ Let $i\in [n-2],$ $j\in [n-1]$ 
such that $i+j\geq n,$ \ $u_{t}=r(n-j)-t,$ $v_{t}=rj+t$ for 
any $t\in [r(n-j)-i],$ where $r=\left\lceil \frac{i+1}{n-j} \right\rceil.$  
We will denote by $M^{'}$ the set
\[M^{'}=\{\alpha \in \mathbb{N}^{n}\ | \ \alpha_{k}\geq 1, \ |(\alpha_{1},\ldots,
\alpha_{i})|=u_{t}, \ |(\alpha_{i+1},\ldots,\alpha_{n})|=v_{t} 
\ {\rm{for \ any}} \ k\in [n], \ i\in [n-2], \ \]\[ j\in [n-1] \  
 {\rm{such \  that}} \ i+j\geq n \ {\rm{and}} \ t\in [r(n-j)-i]\}.\]
We will show that \[{\mathbb{N}}A\cap ri({\mathbb{R_{+}}}A)=
\bigcup_{\alpha \in M^{'}} ( \{\alpha\}+({\mathbb{N}}A\cap{\mathbb{R_{+}}}A)).\]
Since for any $\alpha \in M^{'},$ \ $H_{\nu^{j}_{\sigma^{0}[i]}}(\alpha)=
-j(r(n-j)-t)+(n-j)(rj+t)=nt \ > \ 0,$ \ it follows that 
\[{\mathbb{N}}A\cap ri({\mathbb{R_{+}}}A) \supseteq
\bigcup_{\alpha \in M^{'}} ( \{\alpha\}+({\mathbb{N}}A\cap{\mathbb{R_{+}}}A)).\]
Let $\beta \in {\mathbb{N}}A\cap ri({\mathbb{R_{+}}}A),$ then $\beta_{k}\geq 1$
for any $k\in [n].$ Since $H_{\nu^{j}_{\sigma^{0}[i]}}((1,\ldots,1))=n(n-i-j)<0,$ 
it follows that $(1,\ldots,1)\notin ri({\mathbb{R_{+}}}A).$ We claim that
$| \ \beta \ |\geq rn.$ Indeed, since $\beta \in {\mathbb{N}}A\cap ri({\mathbb
{R_{+}}}A)$ and $| \ \beta \ | =sn,$ it follows that
\[H_{\nu^{j}_{\sigma^{0}[i]}}(\beta)=-j\sum_{k=1}^{i}\beta_{i}+(n-j)
(sn-\sum_{k=1}^{i}\beta_{i}) > 0\Longleftrightarrow \sum_{k=1}^{i}\beta_{i}
< (n-j)s.\] Hence $i+1\leq s(n-j)$ and so
$r=\left\lceil \frac{i+1}{n-j} \right\rceil \leq s.$\\

We claim that for any $\beta \in {\mathbb{N}}A\cap ri({\mathbb{R_{+}}}A)$
with $| \ \beta \ |=sn \geq rn$ and $t\in [r(n-j)-i]$ such that 
$H_{\nu^{j}_{\sigma^{0}[i]}}(\beta)= nt$ we can find 
$\alpha \in M$ with $H_{\nu^{j}_{\sigma^{0}[i]}}(\alpha)= nt$
such that 
$\beta-\alpha \in {\mathbb{N}}A\cap{\mathbb{R_{+}}}A.$ \\
We proceed by induction on $s\geq r.$
If $s=r$, then it is clear that $\beta \in M^{'}$. 
Indeed, then $\beta_{1}+\ldots+\beta_{i}=n(r-1)-a,$ 
$\beta_{i+1}+\ldots+\beta_{n}=n+a$ for some 
$a\in \mathbb{Z}$ and so
$H_{\nu^{j}_{\sigma^{0}[i]}}(\beta)=n(n-jr+a)=
nt,$ that is
$a=jr-n+t$. It follows $\beta \in M^{'}.$\\
Suppose $s>r$. Since $\beta_{1}+\ldots+\beta_{i}= 
(n-j)s-t\geq (n-j)(s-r)+i,$ by Lemma \ 2.1.3
we can get $\gamma_{e}$, with $0\leq \gamma_{e}\leq \beta_{e}$ 
for any $1\leq e \leq i$,
such that $| \ (\gamma_{1},\ldots,\gamma_{i}) \ |=n-j.$
Since $\beta_{i+1}+\ldots+\beta_{n}=js+t\geq jr+t,$ by Lemma \ 2.1.3.
we can get $\gamma_{e}$, with $0\leq \gamma_{e}\leq \beta_{e}$ 
for any $i+1\leq e \leq n$,
such that $| \ (\gamma_{i+1},\ldots,\gamma_{n}) \ |=j.$
It is clear that $\beta^{'}=\beta-\gamma \in \mathbb{R}_{+}^{n}$, 
\[ | \ (\beta_{1}^{'},\ldots,\beta_{i}^{'}) \ |=
\sum_{e=1}^{i}(\beta_{e}-\gamma_{e})=(n-j)(s-1)-t,\]
\[ | \ (\beta_{i+1}^{'},\ldots,\beta_{n}^{'})  \ |=
\sum_{e=i+1}^{n}(\beta_{e}-\gamma_{e})=j(s-1)+t\] and
\[H_{\nu^{j}_{\sigma^{0}[i]}}(\beta^{'})=
H_{\nu^{j}_{\sigma^{0}[i]}}(\beta)-H_{\nu^{j}_{\sigma^{0}[i]}}(\gamma)=
nt-[(-j)(n-j)+(n-j)j]=nt.\]
So, $\beta^{'}\in {\mathbb{N}}A\cap ri({\mathbb{R_{+}}}A)$ and, since 
$| \ \beta^{'} \ |=n(s-1),$ we get by induction hypothesis that 
\[\beta^{'}\in \bigcup_{\alpha \in M^{'}} ( \{\alpha\}+({\mathbb{N}}A\cap{\mathbb{R_{+}}}A)).\]
Since $H_{\nu^{j}_{\sigma^{0}[i]}}(\gamma)=0$ and $\gamma \in \mathbb{R}_{+}^{n},$
we get that $ \gamma \in {\mathbb{N}}A \cap {\mathbb{R_{+}}}A$ and so
\[\beta \in \bigcup_{\alpha \in M^{'}} ( \{\alpha\}+({\mathbb{N}}A\cap{\mathbb{R_{+}}}A)).\]

Thus
\[{\mathbb{N}}A\cap ri({\mathbb{R_{+}}}A) \subseteq
\bigcup_{\alpha \in M^{'}} ( \{\alpha\}+({\mathbb{N}}A\cap{\mathbb{R_{+}}}A)).\]
So, the canonical module $\omega_{K[{\bf{\mathcal{A}}}]}$ of 
$K[{\bf{\mathcal{A}}}],$ 
with respect to standard grading, can be expressed as an ideal of 
$K[{\bf{\mathcal{A}}}],$ generated by monomials
\[\omega_{K[{\bf{\mathcal{A}}}]}=(\{x^{\alpha}| \ \alpha\in M^{'}\})
 K[{\bf{\mathcal{A}}}].\]
The type of $K[{\bf{\mathcal{A}}}]$ is the minimal number of generators of
the canonical module. So, \ $type(K[{\bf{\mathcal{A}}}])=\#(M^{'}),$ 
where $\#(M^{'})$ is the cardinal of $M^{'}.$\\
Note that for 
 any $t\in [r(n-j)-i],$ the equation $\alpha_{1}+ \ldots +
\alpha_{i}=r(n-j)-t$ has $\binom{r(n-j)-t-1}{i-1}$ distinct nonnegative
integer solutions with $\alpha_{k}\geq 1,$ for any $k\in [i],$ respectively
$\alpha_{i+1}+ \ldots +
\alpha_{n}=rj+t$ has $\binom{rj+t-1}{n-i-1}$ distinct nonnegative
integer solutions with $\alpha_{k}\geq 1$ for any $k\in [n]\setminus[i].$\\
Thus, the cardinal of $M^{'}$ is \[\#(M^{'})=\sum^{r(n-j) -i}_{t=1}
\binom{r(n-j)-t-1}{i-1}\binom{rj+t-1}{n-i-1}.\] 
\end{proof}

\begin{corollary} 
$K[{\bf{\mathcal{A}}}]$ is Gorenstein ring if and only if $i+j=n-1.$
\end{corollary}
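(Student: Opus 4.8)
The plan is to read the answer off the type formulas of Theorem 2.2.1. First I would recall that $K[{\bf{\mathcal{A}}}]$ is normal by \cite{HH}, hence Cohen--Macaulay by Hochster's theorem, and that a positively graded Cohen--Macaulay $K$-algebra with degree-zero part $K$ is Gorenstein precisely when its Cohen--Macaulay type equals $1$, i.e. when the canonical module is generated by a single element. Thus the corollary is equivalent to the purely combinatorial assertion $type(K[{\bf{\mathcal{A}}}])=1$ if and only if $i+j=n-1$, which I would check in the two directions.

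For the direction $i+j=n-1\Rightarrow$ Gorenstein: here $i+j\leq n-1$, so part $a)$ of Theorem 2.2.1 applies, and the summation index runs over $1\leq t\leq n-i-j-1=0$, an empty range; hence $type(K[{\bf{\mathcal{A}}}])=1+0=1$ and the ring is Gorenstein.

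For the converse I would show $type(K[{\bf{\mathcal{A}}}])\geq 2$ whenever $i+j\neq n-1$, distinguishing the two regimes of Theorem 2.2.1. If $i+j\leq n-2$, then $n-i-j-1\geq 1$, so the term for $t=1$, namely $\binom{n+i-j}{i-1}\binom{n-i+j-2}{n-i-1}$, genuinely occurs in the sum of part $a)$; using $1\leq i\leq n-2$ and $1\leq j\leq n-1$ one checks that in each factor the lower index lies between $0$ and the upper index, so each factor is $\geq 1$ and $type(K[{\bf{\mathcal{A}}}])\geq 1+1=2$. If $i+j\geq n$, then $i+1>n-j$, so $r=\lceil (i+1)/(n-j)\rceil\geq 2$ and the number of summands of part $b)$ is $N:=r(n-j)-i\geq 1$ by the definition of $r$. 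If $N\geq 2$, then the summands for $t=1$ and $t=2$ are both present, and for every $1\leq t\leq N$ the factor $\binom{r(n-j)-t-1}{i-1}$ is $\geq 1$ (because $t\leq N$) and the factor $\binom{rj+t-1}{n-i-1}$ is $\geq 1$ (because $rj\geq 2j\geq 2(n-i)>n-i-1$, using $j\geq n-i$), so the sum is $\geq 2$. If $N=1$, then $r(n-j)=i+1$, so the unique summand collapses to $\binom{i-1}{i-1}\binom{rj}{n-i-1}=\binom{rj}{n-i-1}$, and since $1\leq n-i-1$ (as $i\leq n-2$) and $n-i-1<rj$ (as $r\geq 2$ and $j\geq n-i$), this is an interior entry of the $rj$-th row of Pascal's triangle with $rj\geq 2$, hence $\geq 2$. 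In all cases $type(K[{\bf{\mathcal{A}}}])\geq 2$, so $K[{\bf{\mathcal{A}}}]$ is not Gorenstein, completing the equivalence.

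I expect the only real obstacle to be the last sub-case ($i+j\geq n$ with $N=1$), where the whole sum degenerates to a single binomial coefficient and one must guarantee it is not equal to $1$; the two small inequalities $n-i-1\geq 1$ and $rj>n-i-1$ are exactly what rules this out, and verifying them is routine. Everything else is immediate bookkeeping once Theorem 2.2.1 is available.
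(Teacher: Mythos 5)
Your proof is correct and follows essentially the same route as the paper: both directions are read off from the type formulas of Theorem 2.2.1, with the forward implication coming from the empty sum when $n-i-j-1=0$ and the converse from showing the relevant sum exceeds $1$ otherwise. Your handling of the case $i+j\geq n$ (directly lower-bounding the sum by $2$, including the degenerate subcase $r(n-j)-i=1$) is in fact a little more careful than the paper's, which only rules out one way the binomial factors could equal $1$, but the underlying argument is the same.
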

\begin{proof}
$``\Leftarrow``$ From Theorem 2.2.1.a) $type(K[{\bf{\mathcal{A}}}])=1$. 
See also $\cite{SA}, \ {\rm{Lemma}} \ 5.1.$\\
$``\Rightarrow ``$ If $i+j\leq n-1,$ since $n+i-j+t-1\geq n+i-j\geq n+1-j\geq 2$ and
$n-i+j-t-1\geq n-i+j-1-(n-i-j-1)=2(j-1)\geq 0,$ it follows that 
$type(K[{\bf{\mathcal{A}}}])=1 \ \Leftrightarrow \ n-i-j-1=0 \ (q.e.d.)$ or 
$n-i-j-1\geq 1$ and $n-i+j-t-1<n-i-1$ for $t\in [n-i-j]$ $\Rightarrow$ $j<1$, which is false.\\
If $i+j\geq n,$ then using Theorem 2.2.1.b) we have $type(K[{\bf{\mathcal{A}}}])=1$
$\Leftrightarrow$ $r(n-j)-i=1,$ $r(n-j)-2=i-1$ and $rj=n-i-1$ 
$\Rightarrow$ $rn=n$ $\Leftrightarrow$ $1=\left\lceil \frac{i+1}{n-j} 
\right\rceil \geq \frac{i+1}{n-j} \geq \frac{n-j+1}{n-j}>1$, which is false.
\end{proof}

Let $S$ be a standard graded $K-$algebra over a field $K$. Recall that
 the $a-$\emph{invariant} of $S,$
denoted $a(S),$ is the degree as a rational function of the Hilbert
 series of $S,$ see for instance
(\cite[p.  99]{V}). If $S$ is Cohen-Macaulay and $\omega_{S}$ is the
 canonical module of $S,$ then 
\[a(S)= {} - \min  \{i\ | \ (\omega_{S})_{i} \neq 0\},
\] 
see \cite[p.  141]{BH} and \cite[Proposition 4.2.3]{V}.
In our situation $S = K[{\bf{\mathcal{A}}}]$ is normal \cite{HH} and
consequently Cohen-Macaulay, thus this formula applies. We have the
 following consequence of Theorem 2.2.1.
\begin{corollary}
The $a-$invariant of $K[{\bf{\mathcal{A}}}]$ is
 $a(K[{\bf{\mathcal{A}}}])= \left \{ \begin{array}[pos]{cccc}
-1 \ , \  if \ \ \  i+j\leq n-1, \\

-r \ , \  if \ \ \  i+j\geq n. \ \ \ \ \ 
\end{array} \right. $
\end{corollary}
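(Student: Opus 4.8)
The plan is to read the answer off directly from the explicit description of the canonical module produced in the proof of Theorem 2.2.1, combined with the formula recalled just above. Since $K[{\bf{\mathcal{A}}}]$ is normal by \cite{HH}, it is Cohen--Macaulay, so $a(K[{\bf{\mathcal{A}}}])=-\min\{s \ | \ (\omega_{K[{\bf{\mathcal{A}}}]})_{s}\neq 0\}$; thus it suffices to find the least degree in which $\omega_{K[{\bf{\mathcal{A}}}]}$ is nonzero. The one bookkeeping point to record first is that, under the standard grading of $K[{\bf{\mathcal{A}}}]$, a monomial $x^{\alpha}\in K[{\bf{\mathcal{A}}}]$ has degree $s$ precisely when $| \ \alpha \ |=sn$, because each algebra generator $x_{j_{1}}\cdot\cdot\cdot x_{j_{n}}$ is a monomial of degree $n$ in $R$ declared to have degree $1$. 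Hence computing the minimal degree of a generator of $\omega_{K[{\bf{\mathcal{A}}}]}$ amounts to computing the minimal value of $| \ \alpha \ |$ over the monomial generators found in Theorem 2.2.1, divided by $n$.

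For the case $i+j\leq n-1$, the proof of Theorem 2.2.1 gives $\omega_{K[{\bf{\mathcal{A}}}]}=(\{x_{1}\cdot\cdot\cdot x_{n}\}\cup\{x^{\alpha}\ | \ \alpha\in M\})K[{\bf{\mathcal{A}}}]$. Here $(1,\ldots,1)$ has modulus $n$, so $x_{1}\cdot\cdot\cdot x_{n}$ has degree $1$, and it does lie in ${\mathbb{N}}A\cap ri({\mathbb{R_{+}}}A)$ since $H_{\nu^{j}_{\sigma^{0}[i]}}((1,\ldots,1))=n(n-i-j)>0$, a computation already made in that proof. Every $\alpha\in M$ has $| \ \alpha \ |=u_{t}+v_{t}=(n+i-j+t)+(n-i+j-t)=2n$, hence degree $2$. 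As ${\mathbb{N}}A$ contains no element of positive modulus smaller than $n$, the lowest degree in which $\omega_{K[{\bf{\mathcal{A}}}]}$ is nonzero is $1$, so $a(K[{\bf{\mathcal{A}}}])=-1$.

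For the case $i+j\geq n$, the proof of Theorem 2.2.1 gives $\omega_{K[{\bf{\mathcal{A}}}]}=(\{x^{\alpha}\ | \ \alpha\in M^{'}\})K[{\bf{\mathcal{A}}}]$, and every $\alpha\in M^{'}$ satisfies $| \ \alpha \ |=u_{t}+v_{t}=(r(n-j)-t)+(rj+t)=rn$, hence has degree $r$. Conversely, the same proof establishes that any $\beta\in{\mathbb{N}}A\cap ri({\mathbb{R_{+}}}A)$ has $| \ \beta \ |=sn$ with $s\geq r$, so $\omega_{K[{\bf{\mathcal{A}}}]}$ vanishes in all degrees below $r$. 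Therefore the least degree in which $\omega_{K[{\bf{\mathcal{A}}}]}$ is nonzero is $r$, and $a(K[{\bf{\mathcal{A}}}])=-r$.

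There is no real obstacle here: the content of the corollary is entirely contained in the proof of Theorem 2.2.1, and the only things to be careful about are the modulus-to-degree translation (modulus $sn$ corresponds to degree $s$) and quoting, rather than re-proving, the two facts extracted above --- that $(1,\ldots,1)\in ri({\mathbb{R_{+}}}A)$ when $i+j\leq n-1$, and that $| \ \beta \ |\geq rn$ for every interior lattice point when $i+j\geq n$.
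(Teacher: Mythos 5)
Your proposal is correct and follows the same route as the paper: both apply the formula $a(S)=-\min\{s \mid (\omega_S)_s\neq 0\}$ and read the minimal degree off the explicit monomial generators of $\omega_{K[{\bf{\mathcal{A}}}]}$ obtained in Theorem 2.2.1. You simply make explicit the modulus-to-degree translation and the lower bound $|\beta|\geq rn$ that the paper leaves implicit.
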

\begin{proof}
Let $\{x^{\alpha_{1}}, \ldots, x^{\alpha_{q}}\}$ be  generators  of the
 $K-$algebra $K[{\bf{\mathcal{A}}}].$ Then  $K[{\bf{\mathcal{A}}}]$ is
 a standard graded algebra with the grading
\[K[{\bf{\mathcal{A}}}]_{i}=\sum_{|c|=i}K(x^{\alpha_{1}})^{c_{1}}\cdot
 \cdot \cdot (x^{\alpha_{q}})^{c_{q}}, \ 
\mbox{where} \ |c|=c_{1}+\ldots+c_{q}. 
\] 
If $i+j\leq n-1,$ then  
$ \min  \{i\ | \ (\omega_{K[{\bf{\mathcal{A}}}]})_{i} \neq 0\}=1$ and so
we have $a(K[{\bf{\mathcal{A}}}])=  -1.$\\
If $i+j\geq n,$ then  
$ \min  \{i\ | \ (\omega_{K[{\bf{\mathcal{A}}}]})_{i} \neq 0\}=r.$ 
So, we have
$a(K[{\bf{\mathcal{A}}}])=  -r.$
\end{proof}

\section{Ehrhart function}
We consider  a fixed set of distinct monomials
$F=\{x^{\alpha_{1}},\ldots,x^{\alpha_{r}}\}$ \ in a polynomial ring
$R=K[x_{1},\ldots, x_{n}]$ \ over a field $K.$ \\ 

Let
\[\mathcal{P}=conv(\log(F))\] \ be the convex hull of the set 
$\log(F)=\{\alpha_{1},\ldots,
\alpha_{r}\}.$ \ The $normalized \ Ehrhart \ ring$ \ of $\mathcal{P}$ is the
graded algebra \[A_{\mathcal{P}}=\bigoplus^{\infty}_{i=0}(A_{\mathcal{P}})_{i} 
\subset R[T],\] where the $i-{\rm{th}}$ \ component  is given by
\[(A_{\mathcal{P}})_{i}=\sum_{\alpha \in \mathbb{Z} \ \log(F)\cap \ 
i\mathcal{P}} \ K \ x^{\alpha} \ T^{i}.\]
The $normalized \ Ehrhart \ function$ of $\mathcal{P}$ is defined as
\[E_{\mathcal{P}}(t)= \dim_{K}(A_{\mathcal{P}})_{t}=|\ \mathbb{Z} \ 
{\log(F)}\cap \  t \ \mathcal{P} \ |.\] An important result of
{\cite{V}, Corollary 7.2.45} is the following.
\begin{theorem}
If $K[F]$ is a standard graded subalgebra of $R$ and $h$ \ is the
$Hilbert \ function$ of $K[F],$ \ then:\\
$a)$ \ $h(t)\leq E_{\mathcal{P}}(t)$ \ for all $t\geq 0,$ \ and\\
$b)$ \ $h(t)= E_{\mathcal{P}}(t)$ \ for all $t\geq 0$ \ if and only if $K[F]$
is normal.
\end{theorem}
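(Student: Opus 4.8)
The plan is to realise both $K[F]$ and the Ehrhart ring $A_{\mathcal{P}}$ as affine semigroup rings attached to one and the same rational cone, with $A_{\mathcal{P}}$ playing the role of the normalization of $K[F]$. Once that dictionary is in place, parts $a)$ and $b)$ fall out of Gordan's lemma together with the theorem identifying normality of an affine semigroup with normality of its semigroup ring.

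First I would reduce to the homogeneous case. Since $K[F]$ is a standard graded subalgebra of $R$, all monomials of $F$ share a common degree $d$ (after rescaling so that each generator has degree $1$); see \cite{V}. For $v=\log(x^{v})\in\log(F)$ put $v':=(v,1)\in\mathbb{Z}^{n+1}$, set $S:=\mathbb{N}v_{1}'+\cdots+\mathbb{N}v_{r}'$ and $C:=\mathbb{R}_{+}S\subset\mathbb{R}^{n+1}$. The assignment $x^{v}\mapsto x^{v}T$ identifies $K[F]$ with the affine semigroup ring $K[S]\subset R[T]$, graded by the last coordinate; under this identification $K[F]_{t}$ has $K$-basis $\{\,x^{w}T^{t}: (w,t)\in S\,\}$, so $h(t)=\#\{\,(w,t)\in S\,\}$.

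Next I would describe the height-$t$ slices of $S$, of $C$, and of $\mathbb{Z}S$. A height-$t$ element of $S$ is $\sum a_{i}v_{i}'$ with $a_{i}\in\mathbb{N}$ and $\sum a_{i}=t$; its first $n$ coordinates form a sum of $t$ generators, so the height-$t$ slice of $S$ corresponds exactly to the monomial basis of $K[F]_{t}$. Because $|v_{i}|=d$, any element $\sum c_{i}v_{i}'$ of $\mathbb{Z}S$ (with $c_{i}\in\mathbb{Z}$) automatically has last coordinate $\sum c_{i}=|\sum c_{i}v_{i}|/d$; hence projection to the first $n$ coordinates identifies $\mathbb{Z}S$ with the group $\mathbb{Z}\log(F)$ and identifies the height-$t$ slice of $C$ with the dilate $t\mathcal{P}$ (a point $\sum a_{i}v_{i}$ with $a_{i}\geq 0$, $\sum a_{i}=t$, is precisely $t$ times a convex combination of the $v_{i}$). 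Combining these, the height-$t$ slice of $\mathbb{Z}S\cap C$ is identified with $\mathbb{Z}\log(F)\cap t\mathcal{P}$, so the inclusion $K[S]\hookrightarrow K[\mathbb{Z}S\cap C]$ becomes $K[F]\hookrightarrow A_{\mathcal{P}}$ and $E_{\mathcal{P}}(t)=\#\{\,(w,t)\in\mathbb{Z}S\cap C\,\}$.

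Part $a)$ is then immediate: the height-$t$ slice of $S$ is contained in that of $\mathbb{Z}S\cap C$, whence $h(t)\leq E_{\mathcal{P}}(t)$. For part $b)$, note that $h(t)=E_{\mathcal{P}}(t)$ for all $t\geq 0$ is equivalent, slice by slice, to $S=\mathbb{Z}S\cap C=\mathbb{Z}S\cap\mathbb{R}_{+}S$; by Gordan's lemma (both parts) this equality holds if and only if $S$ is a normal affine semigroup, and by the theorem relating normality of $K[C]$ and of $C$, this holds if and only if $K[S]=K[F]$ is normal. The one place I would be careful — the main obstacle, such as it is — is the bookkeeping in the third paragraph: verifying that the definition of $A_{\mathcal{P}}$ via lattice points of dilates of $\mathcal{P}$ genuinely coincides with $K[\mathbb{Z}S\cap C]$, i.e. that the common-degree hypothesis makes the last-coordinate data in both $\mathbb{Z}S$ and $C$ redundant, so that slicing at height $t$ and dilating $\mathcal{P}$ by $t$ agree. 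Everything else is formal once that dictionary is set up.
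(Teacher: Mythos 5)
The paper does not prove this statement at all --- it simply cites it as \cite[Corollary 7.2.45]{V} --- so there is no internal proof to compare against. Your argument is correct and is essentially the standard one underlying that reference: the common degree $d$ makes the last coordinate of $(w,t)$ redundant, so $A_{\mathcal{P}}$ is identified with $K[\mathbb{Z}S\cap\mathbb{R}_{+}S]$ (the normalization of $K[F]\cong K[S]$), part $a)$ follows from the slice-wise inclusion $S\subseteq\mathbb{Z}S\cap\mathbb{R}_{+}S$ of finite sets, and part $b)$ from Gordan's lemma together with the equivalence between normality of an affine semigroup and of its semigroup ring.
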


In this section we will compute the Hilbert function and the Hilbert 
series for the $K-$ algebra $K[{\bf{\mathcal{A}}}],$ where $\bf{\mathcal{A}}$ 
satisfies the hypothesis of Lemma  2.1.1.
\begin{proposition}
In the hypothesis of Lemma $2.1.1$, the Hilbert function of
the $K-$ algebra $K[{\bf{\mathcal{A}}}]$ is
\[h(t)=\sum_{k=0}^{(n-j)t}\binom{k+i-1}{k}\binom{nt-k+n-i-1}{nt-k}.\]
\end{proposition}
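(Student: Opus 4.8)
The plan is to reduce the statement to an Ehrhart--function computation for the polytope $\mathcal{P}=\mathrm{conv}(A)$ and then to carry out an elementary lattice-point count using the facet description supplied by Lemma 2.1.1.

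First, since $K[{\bf{\mathcal{A}}}]$ is normal (\cite{HH}, as recorded after Lemma 2.1.1), Theorem 2.3.1$(b)$ gives $h(t)=E_{\mathcal{P}}(t)=|\,\mathbb{Z}\log(F)\cap t\mathcal{P}\,|$ for all $t\geq 0$, where $F$ is the set of generating monomials of $K[{\bf{\mathcal{A}}}]$ and $\mathcal{P}=\mathrm{conv}(\log(F))=\mathrm{conv}(A)$. So it suffices to describe $t\mathcal{P}$ and the lattice $\mathbb{Z}\log(F)$ explicitly. Every vector in $A$ has modulus $n$, so $\mathcal{P}$ lies in the affine hyperplane $\{x\in\mathbb{R}^{n}\mid |x|=n\}$; next I would observe that $\mathcal{P}=\mathbb{R}_{+}A\cap\{|x|=n\}$. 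Indeed, since $\mathbb{R}_{+}A$ is a (pointed) cone, the vertices of the polytope $\mathbb{R}_{+}A\cap\{|x|=n\}$ are exactly the suitably scaled generators of the extremal rays of $\mathbb{R}_{+}A$, and the proof of Lemma 2.1.1 identifies these rays as $ne_{k}$ ($i+1\leq k\leq n$) and $(n-j)e_{r}+je_{s}$ ($1\leq r\leq i$, $i+1\leq s\leq n$); each of these already has modulus $n$ and belongs to $A$. Hence $\mathcal{P}=\mathrm{conv}(A)=\mathbb{R}_{+}A\cap\{|x|=n\}$, and since $\mathbb{R}_{+}A$ is closed under positive scaling, $t\mathcal{P}=\mathbb{R}_{+}A\cap\{|x|=tn\}$ for all $t\geq 0$.

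Then I would pin down the lattice $\mathbb{Z}\log(F)=\mathbb{Z}A$. In the hypothesis of Lemma 2.1.1 exactly $n-j$ of the sets $A_{k}$ equal $[n]$ and the remaining $j$ equal $[n]\setminus[i]$ (in both cases $a)$ and $b)$), from which one checks that $ne_{k}\in A$ for every $k$ and that $e_{p}+(n-1)e_{k}\in A$ for a suitable $k$, whence $e_{p}-e_{q}\in\mathbb{Z}A$ for all $p,q\in[n]$; together with $ne_{n}\in\mathbb{Z}A$ this yields $\mathbb{Z}A=\{v\in\mathbb{Z}^{n}\mid |v|\in n\mathbb{Z}\}$, and therefore $\mathbb{Z}A\cap\{|x|=tn\}=\{v\in\mathbb{Z}^{n}\mid |v|=tn\}$. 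Combining this with the irreducible representation $\mathbb{R}_{+}A=\bigcap_{a\in N}H^{+}_{a}$, $N=\{\nu^{j}_{\sigma^{0}[i]},e_{1},\dots,e_{n}\}$, of Lemma 2.1.1, we get
\[h(t)=\bigl|\{v\in\mathbb{N}^{n}\mid |v|=tn,\ -j(v_{1}+\dots+v_{i})+(n-j)(v_{i+1}+\dots+v_{n})\geq 0\}\bigr|.\]

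Finally I would perform the count. Setting $k=v_{1}+\dots+v_{i}$ so that $v_{i+1}+\dots+v_{n}=tn-k$, the inequality $-jk+(n-j)(tn-k)\geq 0$ simplifies to $n\bigl((n-j)t-k\bigr)\geq 0$, i.e.\ $k\leq (n-j)t$ (and since $(n-j)t\leq tn$ the complementary sum $tn-k$ is automatically nonnegative). For a fixed admissible $k$, stars-and-bars gives $\binom{k+i-1}{i-1}=\binom{k+i-1}{k}$ choices for $(v_{1},\dots,v_{i})\in\mathbb{N}^{i}$ with sum $k$ and $\binom{tn-k+n-i-1}{n-i-1}=\binom{nt-k+n-i-1}{nt-k}$ choices for $(v_{i+1},\dots,v_{n})\in\mathbb{N}^{n-i}$ with sum $tn-k$; summing over $k=0,1,\dots,(n-j)t$ yields the asserted formula (and the case $t=0$ gives $h(0)=1$, consistent). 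The one point demanding genuine care rather than routine calculation is the double identification $\mathcal{P}=\mathbb{R}_{+}A\cap\{|x|=n\}$ and $\mathbb{Z}\log(F)=\{v\in\mathbb{Z}^{n}\mid |v|\in n\mathbb{Z}\}$: without both, the Ehrhart count of Theorem 2.3.1 would not line up with the combinatorial count above. Everything else is bookkeeping built directly on Lemma 2.1.1.
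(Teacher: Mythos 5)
Your proof is correct and follows essentially the same route as the paper: normality reduces the Hilbert function to the Ehrhart count of $\mathcal{P}=\mathrm{conv}(A)$, the dilate $t\mathcal{P}$ is cut out of the simplex $\{|v|=nt,\ v\geq 0\}$ by the single inequality $v_{1}+\dots+v_{i}\leq (n-j)t$, and stars-and-bars gives the sum; you are in fact more careful than the paper, which simply asserts the description of $\mathcal{P}$ as ``clear'' and does not spell out that $\mathbb{Z}\log(F)\cap t\mathcal{P}$ coincides with all integer points of $t\mathcal{P}$. One harmless slip: $ne_{k}\in A$ fails for $k\leq i$ (since then $k\notin [n]\setminus[i]=A_{i+1}$), but your lattice computation only needs $ne_{n}\in A$ and $e_{p}+(n-1)e_{n}\in A$, so the conclusion $\mathbb{Z}A=\{v\in\mathbb{Z}^{n}\mid |v|\in n\mathbb{Z}\}$ and the final count stand.
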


\begin{proof}
From {\cite{HH}} we know that the $K-$ algebra $K[{\bf{\mathcal{A}}}]$ 
is normal. Thus, to compute the Hilbert function of $K[{\bf{\mathcal{A}}}]$ 
is equivalent to compute the Ehrhart function of ${\mathcal{P}},$ 
where ${\mathcal{P}} \ = \ conv(A)$ (see  Theorem  2.3.1.). \\
It is clear that $\mathcal{P}=\{ x\in \mathbb{R}^n \ | \ 
x_{i}\geq 0, \ 0\leq x_{1}+\ldots + x_{i}\leq n-j \ {\rm{and}} \ x_{1}+
\ldots + x_{n} = n \}$ and thus $t \ \mathcal{P}=\{ x\in \mathbb{R}^n 
\ | \ x_{i}\geq 0, \ 0\leq x_{1}+\ldots + x_{i}\leq (n-j) \ t \ {\rm{and}} 
\ x_{1}+\ldots + x_{n} = n \ t \}.$

Since for any $0\leq k \leq (n-j) \ t$ the equation $ x_{1}+\ldots + 
x_{i} = k$ has $\binom{k+i-1}{k}$ nonnegative integer solutions and the equation 
$x_{i+1}+\ldots + x_{n} = n \ t -k$ has $\binom{nt-k+n-i-1}{nt-k}$ 
nonnegative integer solutions, we get that \[E_{\mathcal{P}}(t)=|\ \mathbb{Z} 
\ A \ \cap \ t \ \mathcal{P} \ | = \sum_{k=0}^{(n-j)t}\binom{k+i-1}
{k}\binom{nt-k+n-i-1}{nt-k}.\]
\end{proof}

\begin{corollary}
In the hypothesis of $Lemma \ 2.1.1.$, the Hilbert series of the  
$K-$ algebra $K[{\bf{\mathcal{A}}}]$ is
\[H_{K[{\bf{\mathcal{A}}}]}(t) \ = \ \frac{1+h_{1} \ t + \ldots + 
h_{n-r} \ t^{n-r} }{(1-t)^{n}},\] where 
\[h_{j}=\sum_{s=0}^{j}(-1)^{s} \ h(j-s) \ \binom{n}{s},\]
$h(s)$ being the Hilbert function and $r=\left\lceil \frac{i+1}{n-j}\right\rceil.$
\end{corollary}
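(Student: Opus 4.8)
The plan is to read off the shape of the Hilbert series from the explicit Hilbert function computed in Proposition 2.3.2, combined with two structural facts: that $S:=K[{\bf{\mathcal{A}}}]$ is a standard graded Cohen-Macaulay domain of Krull dimension $n$, and the value of its $a$-invariant established in Corollary 2.2.5.

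First I would record that, by Lemma 2.1.1, the cone $\mathbb{R}_{+}A$ has dimension $n$, whence $\dim S=n$; moreover $S$ is normal (Herzog and Hibi, cf. Theorem 1.2.7) and therefore Cohen-Macaulay (Hochster). Consequently, by the Hilbert-Serre theorem the Hilbert series of $S$ can be written uniquely in the form
\[
H_{S}(t)=\frac{Q(t)}{(1-t)^{n}},\qquad Q(t)\in\mathbb{Z}[t],\quad Q(1)\neq 0.
\]
Expanding $H_{S}(t)=\sum_{m\geq 0}h(m)\,t^{m}$ with $h$ the Hilbert function of Proposition 2.3.2, and $(1-t)^{n}=\sum_{s\geq 0}(-1)^{s}\binom{n}{s}t^{s}$, the relation $Q(t)=(1-t)^{n}H_{S}(t)$ yields, on comparing coefficients of $t^{j}$,
\[
h_{j}=\sum_{s=0}^{j}(-1)^{s}\binom{n}{s}h(j-s)
\]
for all $j\geq 0$; in particular $h_{0}=h(0)=\dim_{K}K=1$, and the right-hand side vanishes once $j$ exceeds $\deg Q(t)$.

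It then remains only to identify $\deg Q(t)$. Since the $a$-invariant of $S$ is the degree of $H_{S}(t)$ as a rational function, $a(S)=\deg Q(t)-n$, i.e.\ $\deg Q(t)=n+a(S)$. By Corollary 2.2.5, $a(S)=-1$ if $i+j\leq n-1$ and $a(S)=-r$ if $i+j\geq n$, where $r=\lceil (i+1)/(n-j)\rceil$; since $i+j\leq n-1$ is equivalent to $i+1\leq n-j$, in that range $0<(i+1)/(n-j)\leq 1$ and hence $r=1$, so in both cases $a(S)=-r$ and $\deg Q(t)=n-r$. Together with the formula for the $h_{j}$ this gives
\[
H_{K[{\bf{\mathcal{A}}}]}(t)=\frac{1+h_{1}t+\cdots+h_{n-r}t^{n-r}}{(1-t)^{n}},
\]
as claimed.

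The only point demanding attention is the uniform matching of $\deg Q(t)=n+a(S)$ with the two-case formula of Corollary 2.2.5; the remainder is the standard dictionary between the Hilbert function and the $h$-polynomial of a Cohen-Macaulay standard graded algebra. Alternatively, one could bypass the $a$-invariant and extract $\deg Q(t)$ directly from the closed form of $h(t)$ in Proposition 2.3.2, using that $h(t)$ coincides with a polynomial of degree $n-1$ for all $t>-a(S)$; but routing through Corollary 2.2.5 is the shortest path.
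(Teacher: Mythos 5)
Your proposal is correct and follows essentially the same route as the paper: both arguments use normality (hence Cohen--Macaulayness) and $\dim K[{\bf{\mathcal{A}}}]=n$ to write $H_{S}(t)=Q(t)/(1-t)^{n}$, invoke Corollary 2.2.5 to get $\deg Q=n+a(S)=n-r$ (with your observation that $r=1$ exactly when $i+j\leq n-1$ reconciling the two cases, just as the paper implicitly does), and derive the coefficient formula $h_{j}=\sum_{s=0}^{j}(-1)^{s}\binom{n}{s}h(j-s)$. The only cosmetic difference is that the paper obtains this formula by an explicit induction on the iterated difference operator $\Delta^{k}$, whereas you read it off directly from the coefficient comparison in $Q(t)=(1-t)^{n}H_{S}(t)$ --- the same computation in a cleaner package.
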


\begin{proof}
Since the $a-$invariant of $K[{\bf{\mathcal{A}}}]$ is
 $a(K[{\bf{\mathcal{A}}}])=  -r,$ with $r=\left\lceil 
 \frac{i+1}{n-j}\right\rceil,$ it follows that to 
 compute the Hilbert series of
 $K[{\bf{\mathcal{A}}}]$ is necessary to know the first $n-r+1$ values of the
 Hilbert function of $K[{\bf{\mathcal{A}}}]$, $h(i)$ for $0\leq i \leq
 n-r.$
Since $\dim (K[{\bf{\mathcal{A}}}])=n,$ applying $n$ times the
 difference operator  $\Delta$ ({see  \cite{BH}})  on the Hilbert function of
 $K[{\bf{\mathcal{A}}}]$ we get the conclusion.

Let $\Delta^{0}(h)_{j}:=h(j)$ for any $0\leq j \leq n-r.$ For $k\geq 1$
 let $\Delta^{k}(h)_{0}:=1$
and $\Delta^{k}(h)_{j}:=\Delta^{k-1}(h)_{j}-\Delta^{k-1}(h)_{j-1}$
for any $1\leq j \leq n-r.$
We claim that
\[\Delta^{k}(h)_{j}=\sum_{s=0}^{k}(-1)^sh(j-s)\binom{k}{s}
\]
for any $k\geq 1$ and $0\leq j \leq n-r.$ We proceed by induction on
 $k.$

If $k=1,$ then
\[
\Delta^{1}(h)_{j}=\Delta^{0}(h)_{j}-\Delta^{0}(h)_{j-1}=h(j)-h(j-1)
=\sum_{s=0}^{1}(-1)^{s}h(j-s)\binom{1}{s}
\]
for any $1\leq j \leq n-r.$\\
If $k>1,$ then
\[\Delta^{k}(h)_{j}=\Delta^{k-1}(h)_{j}-\Delta^{k-1}
(h)_{j-1}=\sum_{s=0}^{k-1}(-1)^{s}h(j-s)
\binom{k-1}{s}-\sum_{s=0}^{k-1}(-1)^{s} h(j-1-s)
\binom{k-1}{s}\]
\[=h(j)\binom{k-1}{0}+\sum_{s=1}^{k-1}(-1)^{s}h(j-s)
\binom{k-1}{s}-\sum_{s=0}^{k-2}(-1)^{s} h(j-1-s)
\binom{k-1}{s}+(-1)^kh(j-k)\binom{k-1}{k-1}\]
\[=h(j)+\sum_{s=1}^{k-1}(-1)^{s}h(j-s)\left[\binom{k-1}{s}
+\binom{k-1}{s-1}\right]+(-1)^kh(j-k)\binom{k-1}{k-1}\
 \ \ \ \ \ \ \ \ \ \ \ \ \ \ \ \ \ \ \ \ \ \ \ \ \ \ \ \]
\[=h(j)+\sum_{s=1}^{k-1}(-1)^{s}h(j-s)\binom{k}{s}+(-1)^
kh(j-k)\binom{k-1}{k-1}=\sum_{s=0}^{k}(-1)^{s}h(j-s)\binom{k}{s}.\
 \ \ \ \ \ \ \ \ \ \ \ \ \ \ \ \ \ \ \ \ \ \ \ \]
Thus, if $k=n$ it follows that
\[h_{j}=\Delta^{n}(h)_{j}=\sum_{s=0}^{n}(-1)^sh(j-s)\binom{n}{s}=
\sum_{s=0}^{j}(-1)^sh(j-s)\binom{n}{s}\]
for any $1\leq j\leq n-r.$

\end{proof}
Next we will give some examples.
\begin{example}
Let ${\bf{\mathcal{A}}}=\{A_{1},\ldots ,A_{7}\},$
where $A_{1}=A_{2}=A_{3}=A_{6}=A_{7}=[7],$ \ $A_{4}=A_{5}=[7]\setminus [3].$
The cone genereated by $A,$ the exponent set of generators of
$K-$algebra $K[{\bf{\mathcal{A}}}],$ has the
irreducible representation
\[\mathbb{R}_{+}A= H^{+}_{\nu^{2}_{\sigma^{0}[3]}}\cap 
H^{+}_{e_{1}} \cap \ldots \cap H^{+}_{e_{7}}.\] 

The type of $K[{\bf{\mathcal{A}}}]$ is
\[type(K[{\bf{\mathcal{A}}}])=1+ \binom{8}
{2}\binom{4}{3}=113.\]

The Hilbert series of $K[{\bf{\mathcal{A}}}]$ is
\[H_{K[{\bf{\mathcal{A}}}]}(t) \ = \ \frac{1+1561 t + 24795 t^{2} + 57023 t^{3}+
25571 t^{4} + 1673 t^{5} + t^{6}}{(1-t)^{7}}.\]
Note that
$type(K[{\bf{\mathcal{A}}}])=1+h_{5}-h_{1}=113.$
\end{example}

\begin{example}
Let ${\bf{\mathcal{A}}}=\{A_{1},\ldots ,A_{7}\},$
where $A_{3}=A_{4}=[7],$ \ $A_{1}=A_{2}=A_{5}=A_{6}=A_{7}=[7]\setminus [4].$
The cone genereated by $A,$ the exponent set of generators of
$K-$algebra $K[{\bf{\mathcal{A}}}],$ has the
irreducible representation
\[\mathbb{R}_{+}A= H^{+}_{\nu^{5}_{\sigma^{0}[4]}}\cap 
H^{+}_{e_{1}} \cap \ldots \cap H^{+}_{e_{7}}.\] 

The type of $K[{\bf{\mathcal{A}}}]$ is
\[type(K[{\bf{\mathcal{A}}}])=\binom{4}
{3}\binom{15}{2}+\binom{3}
{3}\binom{16}{2}=540.\]

The Hilbert series of $K[{\bf{\mathcal{A}}}]$ is
\[H_{K[{\bf{\mathcal{A}}}]}(t) \ = \ \frac{1+351t + 2835t^{2} + 3297t^{3}+
540t^{4}}{(1-t)^{7}}.\]
\end{example}

Note that
$type(K[{\bf{\mathcal{A}}}])=h_{4}=540.$

We end this section with the following open problem.

{\bf{Open Problem}:}
Let $n\geq 4,$ $A_{i}\subset [n]$ for any $1\leq i \leq n$ and 
$K[{\bf{\mathcal{A}}}]$ be the base ring associated to 
the transversal polymatroid presented by
${\bf{\mathcal{A}}}=\{A_{1},\ldots ,A_{n}\}.$
If the Hilbert series is: 
\[H_{K[{\bf{\mathcal{A}}}]}(t) \ = \ \frac{1+h_{1} \ t + \ldots + 
h_{n-r} \ t^{n-r} }{(1-t)^{n}},\] then we have the following:\\
$1)$ If $r=1,$ then $type(K[{\bf{\mathcal{A}}}])=1+h_{n-2}-h_{1}.$\\
$2)$ If $2\leq r\leq n,$ then $type(K[{\bf{\mathcal{A}}}])=h_{n-r}.$

\chapter{Intersections of base rings associated to transversal polymatroids}
\[\]

The discrete polymatroids and their base rings are studied recently
in many papers  (see \cite{C}, \cite{HH}, \cite{HHV}, \cite{V1}, \cite{V2}).
It is important to give conditions when the base ring associated to
a transversal polymatroid is Gorenstein (see \cite{HH}). In
\cite{SA} we introduced a class of such base rings. In this paper we
note that an intersection of such base rings (introduced in
\cite{SA}) is Gorenstein and give necessary and sufficient
conditions for the intersection of two base rings from \cite{SA} to
be still a base ring of a transversal polymatroid. Also, we compute
the $a$-invariant of those base rings. The results presented were
discovered by extensive computer algebra experiments performed with
{\it{Normaliz}} \ \cite{BK}.

\section{Intersection of cones of dimension $n$ with $n+1$ facets.}
\[\]

Let $r\geq 2$, $1\leq i_{1}, \ldots, i_{r} \leq n-2,$ $0=t_{1}\leq t_{2},
\ldots, t_{r}\leq n-1$ and consider \emph{r} presentations of transversal
polymatroids:
\[{\bf{\mathcal{A}}}_{s}=\{A_{s, k} \ | \ A_{s, \sigma^{t_{2}}(k)}=[n],\ {\rm{if}} \
k\in[i_{2}]\cup \{n\},\ A_{s, \sigma^{t_{2}}(k)}=[n]\setminus
\sigma^{t_{2}}[i_{2}],\ {\rm{if}} \ k\in[n-1]\setminus [i_{2}]\}\]
for any $1\leq s \leq r.$
From \cite{SA} we know that the base rings $K[{\bf{\mathcal{A}}}_{s}]$ are
Gorenstein rings and the cones generated by the exponent vectors of the
monomials defining the base ring associated to a transversal polymatroid
presented by ${\bf{\mathcal{A}}}_{s}$ are
\[{\mathbb{R_{+}}}A_{s}= \bigcap_{a\in N_{s}}H^{+}_{a},\]
\[{\rm{where}} \ N_{s}=\{\nu_{\sigma^{t_{s}}[i_{s}]},\ \nu_{\sigma^
{k}[n-1]} \ | \ 0\leq k \leq n-1\}, \ A_{s}=\{ \log(x_{j_{1}}\cdot\cdot\cdot
x_{j_{n}}) \ | \ j_{k}\in A_{s, k}, 1\leq k \leq n\}\subset \mathbb{N}^{n} \]
for any  $1\leq s \leq r.$\\
We denote by $K[A_{1}\cap \ldots \cap A_{r}],$ the $K- {\rm{algebra}}$ generated
by $x^{\alpha}$ with $\alpha \in A_{1}\cap \ldots \cap A_{r}.$\\
It is clear that one has 
\[{\mathbb{R_{+}}}(A_{1}\cap \ldots \cap A_{r})\subseteq{\mathbb{R_{+}}}A_{1}
\cap \ldots \cap {\mathbb{R_{+}}}A_{r}= \bigcap_{a\in N_{1}\cup \ldots \cup
N_{r}}H^{+}_{a}.\]
Conversely, since \[A_{1}\cap \ldots \cap A_{r}=\{\alpha \in \mathbb{N}^{n}
\ | \ \mid \alpha \mid=n, \ H_{\nu_{\sigma^{t_{s}}[i_{s}]}}(\alpha)\geq 0,
\ {\rm{for \ any}} \ 1\leq s \leq r\},\]
we have that \[{\mathbb{R_{+}}}(A_{1}\cap \ldots \cap A_{r})\supseteq
\bigcap_{a\in N_{1}\cup \ldots \cup N_{r}}H^{+}_{a}\]
and so  \[{\mathbb{R_{+}}}(A_{1}\cap \ldots \cap A_{r})=
\bigcap_{a\in N_{1}\cup \ldots \cup N_{r}}H^{+}_{a}.\]
We claim that the intersection \[\bigcap_{a\in N_{1}\cup \ldots \cup
N_{r}}H^{+}_{a}\] is the irreducible representation of the cone
${\mathbb{R_{+}}}(A_{1}\cap \ldots \cap A_{r}).$\\
We prove by induction on $r\geq 1.$\ If $r=1$, then the intersection
\[\bigcap_{a\in N_{1}}H^{+}_{a}\] is the irreducible representation of the
cone ${\mathbb{R_{+}}}(A_{1})$ (see \cite{SA}, Lemma \ 4.1).\\ If $r>1$ then
we have two cases to study:\\
$1)$ if we delete, for some $0\leq k \leq n-1,$ \ the hyperplane with
the normal $\nu_{\sigma^{k}[n-1]}$, then a coordinate
of a $\log (x_{j_{1}}\cdot\cdot\cdot
 x_{j_{i_{1}}}x_{j_{i_{1}+1}}\cdot\cdot\cdot x_{j_{n-1}}x_{j_{n}})$
would be negative, which is impossible;\\
$2)$ if we delete, for some $1\leq s \leq r$,  the hyperplane with
the normal $\nu_{\sigma^{t_{s}}[i_{s}]},$ then \[\bigcap_{a\in
N_{1}\cup \ldots \cup N_{s-1} \cup N_{s+1}\ldots N_{r}}H^{+}_{a}\]
is by induction the irreducible representation
of the cone ${\mathbb{R_{+}}}(A_{1}\cap \ldots \cap A_{s-1}\cap
A_{s+1}\ldots \cap A_{r})$, which is different from ${\mathbb{R_{+}}}
(A_{1}\cap \ldots \cap A_{r}).$
Hence, the intersection \[\bigcap_{a\in N_{1}\cup \ldots \cup
N_{r}}H^{+}_{a}\] is the irreducible representation of the cone
${\mathbb{R_{+}}}(A_{1}\cap \ldots \cap A_{r}).$
\begin{lemma}
The $K-$ algebra $K[A_{1}\cap \ldots \cap A_{r}]$ is a Gorenstein ring.
\end{lemma}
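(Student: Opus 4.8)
The plan is to compute the canonical module $\omega:=\omega_{K[A_{1}\cap\cdots\cap A_{r}]}$ by means of the Danilov--Stanley theorem and to show that it is a principal ideal; since the ring will be Cohen--Macaulay, a principal canonical module forces it to be Gorenstein.

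The first task is to prove that $R:=K[A_{1}\cap\cdots\cap A_{r}]$ is normal. Each base ring $K[\mathcal{A}_{s}]$ is normal by \cite{HH} (and Gorenstein by \cite{SA}), and one checks at once that $\mathbb{Z}A_{s}=\{x\in\mathbb{Z}^{n}:|x|\in n\mathbb{Z}\}$ for every $s$ -- for instance $(1,\dots,1)\in A_{s}$ and $(1,\dots,1)+e_{k}-e_{\ell}\in A_{s}$ for all $k,\ell$ -- so the rings $K[\mathcal{A}_{1}],\dots,K[\mathcal{A}_{r}]$ are normal domains sharing a single field of fractions inside $K(x_{1},\dots,x_{n})$; hence $\bigcap_{s}K[\mathcal{A}_{s}]$ is again normal. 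It therefore suffices to prove $R=\bigcap_{s}K[\mathcal{A}_{s}]$. The inclusion $R\subseteq\bigcap_{s}K[\mathcal{A}_{s}]$ is clear, and, writing $\mathcal{P}_{s}=conv(A_{s})$ and translating each normal $\nu_{\sigma^{t_{s}}[i_{s}]}$ into the inequality $\gamma(\sigma^{t_{s}}[i_{s}])\le(i_{s}+1)m$ as in the proof of Proposition~2.3.2, the reverse inclusion amounts to the statement that every $\gamma\in\mathbb{N}^{n}$ with $|\gamma|=nm$ and $\gamma(\sigma^{t_{s}}[i_{s}])\le(i_{s}+1)m$ for all $s$ is a sum of $m$ lattice points of $\bigcap_{s}\mathcal{P}_{s}$. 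I would prove this by induction on $m$, splitting off at each step one lattice point $\alpha\le\gamma$ with $|\alpha|=n$ and $\alpha(\sigma^{t_{s}}[i_{s}])$ in the (nonempty) interval $[\,\max(0,\gamma(\sigma^{t_{s}}[i_{s}])-(m-1)(i_{s}+1)),\,i_{s}+1\,]$ for every $s$, in the spirit of Lemma~2.1.3 and the peeling arguments in the proof of Theorem~2.2.1; that the sets $\sigma^{t_{s}}[i_{s}]$ are cyclic intervals is what makes such a greedy choice possible. This integer-decomposition statement for the intersection polytope is the technical core of the argument and the step I expect to be the main obstacle; everything after it is short.

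Granting normality, $R$ is Cohen--Macaulay by Hochster's theorem (Theorem~1.2.4), and by the Danilov--Stanley theorem (Theorem~1.2.5) $\omega$ is generated by the monomials $x^{a}$ with $a\in\mathbb{N}(A_{1}\cap\cdots\cap A_{r})\cap ri(\mathbb{R}_{+}(A_{1}\cap\cdots\cap A_{r}))$. Since we have already seen that $\mathbb{R}_{+}(A_{1}\cap\cdots\cap A_{r})=\bigcap_{a\in N_{1}\cup\cdots\cup N_{r}}H^{+}_{a}$ is an irreducible representation, Theorem~1.1.11 shows this relative interior equals $\{x\in\mathbb{R}^{n}:x_{k}>0\ \text{for all }k\text{ and }\langle x,\nu_{\sigma^{t_{s}}[i_{s}]}\rangle>0\ \text{for all }s\}$ (the normals $\nu_{\sigma^{k}[n-1]}$ being positive multiples of coordinate vectors). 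I then claim $\omega=(x_{1}\cdots x_{n})R$. Indeed $(1,\dots,1)\in A_{1}\cap\cdots\cap A_{r}$ and $\langle(1,\dots,1),\nu_{\sigma^{t_{s}}[i_{s}]}\rangle=n>0$, so $x_{1}\cdots x_{n}\in\omega$; conversely, if $x^{a}$ is one of the generators then $a_{k}\ge1$ for all $k$, $|a|=nm$ for some $m\ge1$, and, combining the strict interior inequalities with integrality, $a(\sigma^{t_{s}}[i_{s}])\le(i_{s}+1)m-1$ for all $s$, whence $a-(1,\dots,1)\ge0$, $|a-(1,\dots,1)|=n(m-1)$ and $(a-(1,\dots,1))(\sigma^{t_{s}}[i_{s}])=a(\sigma^{t_{s}}[i_{s}])-i_{s}\le(i_{s}+1)(m-1)$, so $a-(1,\dots,1)\in\mathbb{N}(A_{1}\cap\cdots\cap A_{r})$ by the description of that semigroup obtained in the normality step, giving $x^{a}\in(x_{1}\cdots x_{n})R$. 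Thus $\omega$ is a nonzero principal ideal of the domain $R$, hence $\omega\cong R$, and a Cohen--Macaulay ring whose canonical module is free of rank one is Gorenstein; this proves the Lemma.
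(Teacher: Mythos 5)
Your overall route coincides with the paper's: both invoke the Danilov--Stanley theorem and prove Gorensteinness by showing that the canonical module is the principal ideal $(x_{1}\cdots x_{n})K[A_{1}\cap\ldots\cap A_{r}]$. Your treatment of the step ``an interior lattice point minus $(1,\ldots,1)$ lies in the cone'' is in fact cleaner than the paper's: you use the un-normalized inequality $a(\sigma^{t_{s}}[i_{s}])<(i_{s}+1)m$ together with integrality, whereas the paper divides each facet equation by $d_{s}=\gcd(n,i_{s}+1)$ and argues that the value of $H_{\nu_{\sigma^{t_{s}}[i_{s}]}}$ on a lattice point of degree divisible by $n$ is always a multiple of $n/d_{s}$, hence at least $n/d_{s}=H_{\nu_{\sigma^{t_{s}}[i_{s}]}}(1,\ldots,1)$ whenever it is positive. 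Both arguments are correct.

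The genuine gap is the normality of $\mathbb{N}(A_{1}\cap\ldots\cap A_{r})$. You correctly identify that what is needed is an integer-decomposition property: every $\gamma\geq 0$ with $|\gamma|=nm$ and $\gamma(\sigma^{t_{s}}[i_{s}])\leq(i_{s}+1)m$ for all $s$ must split off some $\alpha\in A_{1}\cap\ldots\cap A_{r}$ with $\gamma-\alpha$ still satisfying the level-$(m-1)$ inequalities. But you only assert that an $\alpha\leq\gamma$ with $|\alpha|=n$ and $\alpha(\sigma^{t_{s}}[i_{s}])$ lying in the prescribed interval for every $s$ simultaneously exists; when several of the cyclic windows $\sigma^{t_{s}}[i_{s}]$ overlap, producing a single lattice point that meets all $r$ window constraints at once is precisely the combinatorial content that must be established (a greedy choice tuned to one window can violate another), and you give no argument for it. Everything downstream of normality is fine, but as written the proof is a sketch whose hardest step is missing. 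For comparison, the paper disposes of this point in one sentence, asserting that normality of each $\mathbb{N}(A_{t})$ implies normality of $\mathbb{N}(A_{1}\cap\ldots\cap A_{r})$, which amounts to the same unproved decomposition claim; so you have at least located where the real work lies, even if you have not done it. Note also that your detour through the equality $R=\bigcap_{s}K[\mathcal{A}_{s}]$ and the fact that an intersection of normal domains with a common fraction field is normal is redundant: the decomposition statement needed to prove that equality already yields $\mathbb{N}(A_{1}\cap\ldots\cap A_{r})=\mathbb{Z}(A_{1}\cap\ldots\cap A_{r})\cap\mathbb{R}_{+}(A_{1}\cap\ldots\cap A_{r})$ directly.
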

\begin{proof}
We will show that the canonical module $\omega_{K[A_{1}\cap \ldots
\cap A_{r}]}$ is generated by $(x_{1}\cdot\cdot\cdot x_{n})K[A_{1}
\cap \ldots \cap A_{r}].$
Since the semigroups $\mathbb{N}(A_{t})$ are  normal for any $1\leq
t \leq r$, it follows that $\mathbb{N}(A_{1}\cap \ldots \cap A_{r})$
is normal. Then the $K-$ algebra $K[A_{1}\cap \ldots \cap A_{r}]$
is normal \  (see \cite{BH} Theorem 6.1.4. p. 260) \ and using
the Danilov-Stanley theorem we get that the canonical
module $\omega_{K[A_{1}\cap \ldots \cap A_{r}]}$ is \[\omega_
{K[A_{1}\cap \ldots \cap A_{r}]}=(\{x^{\alpha}
\ | \ \alpha \in \mathbb{N}(A_{1}\cap \ldots \cap A_{r}) \cap
ri(\mathbb{R}_{+}(A_{1}\cap \ldots \cap A_{r}))\}).\]
Let $d_{t}$ be the greatest common divisor of $n \ {\rm{and}} \
i_{t}+1, \ \gcd(n, i_{t}+1)=d_{t},$ for any $1\leq t \leq r.$\\
For any $1\leq s \leq r$, there exist two possibilities for
the equation of the facet $H_{\nu_{\sigma^{t_{s}}[i_{s}]}}$: \\
$1)$ If $i_{s}+t_{s}\leq n,$ then the equation of the facet
$H_{\nu_{\sigma^{t_{s}}[i_{s}]}}$ is
\[H_{\nu_{\sigma^{t_{s}}[i_{s}]}}(y): \ \frac{(i_{s}+1)}{d_{s}}
\sum_{k=1}^{t_{s}}y_{k}-\frac{(n-i_{s}-1)}{d_{s}}\sum_{k=t_{s}+1}^
{t_{s}+i_{s}}y_{k}+\frac{(i_{s}+1)}{d_{s}}\sum_{k=t_{s}+i_{s}+1}^{n}y_{k}
=0. \ \ \ \ \ \ \ \ \ \ \ \ \ \ \ \ \ \]
$2)$ If $i_{s}+t_{s}\ > \ n,$ then the equation of the facet
$H_{\nu_{\sigma^{t_{s}}[i_{s}]}}$ is
\[H_{\nu_{\sigma^{t_{s}}[i_{s}]}}(y): \ -\frac{(n-i_{s}-1)}{d_{s}}
\sum_{k=1}^{i_{s}+t_{s}-n}y_{k}+ \frac{(i_{s}+1)}{d_{s}}\sum_{k=i_{s}+t_{s}
-n+1}^{t_{s}}y_{k}-\frac{(n-i_{s}-1)}{d_{s}}\sum_{k=t_{s}+1}^{n}y_{k} =0.\]
The  relative interior of the cone $\mathbb{R}_{+}(A_{1}\cap \ldots \cap A_{r})$
is \[ri(\mathbb{R}_{+}(A_{1}\cap \ldots \cap A_{r}))=\{y \in \mathbb{R}^{n}
\ | \ y_{k} \ > \ 0, \ H_{\nu_{\sigma^{t_{s}}[i_{s}]}}(y) \ > \ 0
\ {\rm{for \ any}} \ 1\leq k \leq n \ {\rm{and}} \ 1\leq s \leq r\}.\]
We will show that \[\mathbb{N}(A_{1}\cap \ldots \cap A_{r}) \cap
ri(\mathbb{R}_{+}(A_{1}\cap \ldots \cap A_{r}))= \ (1,\ldots, 1) \ + \
(\mathbb{N}(A_{1}\cap \ldots \cap A_{r}) \cap \mathbb{R}_{+}(A_{1}\cap
\ldots \cap A_{r})).\]
It is clear that $ri(\mathbb{R}_{+}(A_{1}\cap \ldots \cap A_{r})) \
\supset \ (1,\ldots, 1) \ + \mathbb{R}_{+}(A_{1}\cap \ldots \cap A_{r}).$\\
If $(\alpha_{1}, \alpha_{2}, \ldots, \alpha_{n})\in \mathbb{N}
(A_{1}\cap \ldots \cap A_{r}) \cap ri(\mathbb{R}_{+}(A_{1}\cap \ldots \cap A_{r})),$
then $\alpha_{k} \geq 1$ for any $1\leq k\leq n$ and for any $1\leq s \leq r$ we
have \[\frac{(i_{s}+1)}{d_{s}}\sum_{k=1}^{t_{s}}\alpha_{k}- \frac{(n-i_{s}-1)}
{d_{s}}\sum_{k=t_{s}+1}^{t_{s}+i_{s}}\alpha_{k}+\frac{(i_{s}+1)}
{d_{s}}\sum_{k=t_{s}+i_{s}+1}^{n}\alpha_{k} \geq 1, \ {\rm{if}} \ i_{s}+t_{s}\leq n
\ \ \ \ \ \ \ \ \ \ \ \ \ \ \ \ \ \] or
\[-\frac{(n-i_{s}-1)}{d_{s}}\sum_{k=1}^{i_{s}+t_{s}-n}\alpha_{k}+ \frac{(i_{s}+1)}
{d_{s}}\sum_{k=i_{s}+t_{s}-n+1}^{t_{s}}\alpha_{k}-\frac{(n-i_{s}-1)}
{d_{s}}\sum_{k=t_{s}+1}^{n}\alpha_{k} \geq 1, \ {\rm{if}} \ i_{s}+t_{s}\ > \ n\] and
\[ \sum_{k=1}^{n}\alpha_{k}=t \ n \ {\rm{for \ some}} \ t\geq 1.\]
We claim that there exists $(\beta_{1}, \beta_{2}, \ldots, \beta_{n})
\in \mathbb{N}(A_{1}\cap \ldots \cap A_{r}) \cap \mathbb{R}_{+}
(A_{1}\cap \ldots \cap A_{r})$ such that $(\alpha_{1}, \alpha_{2}, \ldots, \alpha_{n})
=(\beta_{1}+1, \beta_{2}+1, \ldots, \beta_{n}+1).$
Let $\beta_{k}=\alpha_{k}-1$ for all $1\leq k \leq n.$\\
It is clear that $\beta_{k}\geq 0$ and for any $1\leq s \leq r,$
\[H_{\nu_{\sigma^{t_{s}}[i_{s}]}}(\beta)=H_{\nu_{\sigma^{t_{s}}[i_{s}]}}(\alpha)-
H_{\nu_{\sigma^{t_{s}}[i_{s}]}}(1,\ldots,1)=H_{\nu_{\sigma^{t_{s}}[i_{s}]}}
(\alpha)-\frac{n}{d_{s}}.\]
If $H_{\nu_{\sigma^{t_{s}}[i_{s}]}}(\beta)=j_{s}, \ {\rm{for \ some}} 
\ 1\leq s \leq r \ {\rm{and}}
 \ 1\leq j_{s} \leq \frac{n}{d_{s}}-1,$  then we will get a contradiction.
Indeed, since $n$ divides $\sum_{k=1}^{n}\alpha_{k},$ it follows that $\frac{n}{d_{s}}$
divides $j_{s},$  which is false.\\ Hence, we have
$(\beta_{1}, \beta_{2}, \ldots, \beta_{n})\in \mathbb{N}(A_{1}\cap \ldots \cap A_{r})
\cap \mathbb{R}_{+}(A_{1}\cap \ldots \cap A_{r})$ and $(\alpha_{1}, \alpha_{2},
\ldots, \alpha_{n})\in \mathbb{N}(A_{1}\cap \ldots \cap A_{r}) \cap ri
(\mathbb{R}_{+}(A_{1}\cap \ldots \cap A_{r}))$.\\ Since $\mathbb{N}(A_{1}\cap \ldots
\cap A_{r}) \cap ri(\mathbb{R}_{+}(A_{1}\cap \ldots \cap A_{r}))= \ (1,\ldots, 1) \
+ \ (\mathbb{N}(A_{1}\cap \ldots \cap A_{r}) \cap \mathbb{R}_{+}(A_{1}\cap \ldots
\cap A_{r})),$ we get that $\omega_{K[A_{1}\cap \ldots \cap A_{r}]}= \
(x_{1}\cdot\cdot\cdot x_{n})K[A_{1}\cap \ldots \cap A_{r}].$
\end{proof}
Let $S$ be a standard graded $K-{\rm{algebra}}$ over a field $K$. Recall that the
$a-{\rm{invariant}}$ of $S,$ denoted $a(S),$ is the degree as a rational function of the
Hilbert series of $S,$ see for instance
$(\cite{V}, \ p. \ 99).$ If $S$ is Cohen-Macaulay and $\omega_{S}$ is the
canonical module of $S,$ then \[a(S)= \ - \ \min \ \{i\ | \ (\omega_{S})_{i}
\neq 0\},\] see (\cite{BH}, \ p. \ 141) and (\cite{V}, \ Proposition 4.2.3).
In our situation $S = K[A_{1}\cap \ldots \cap A_{r}]$ is normal  and
consequently Cohen-Macaulay, thus this formula applies. As consequence of
Lemma  3.1.1. we have the following.
\begin{corollary}
The $a-invariant$ of $K[A_{1}\cap \ldots \cap A_{r}]$ is $a(K[A_{1}\cap \ldots
\cap A_{r}])= \ -1.$
\end{corollary}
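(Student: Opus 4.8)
The plan is to read the statement off directly from Lemma 3.1.1 and the Cohen-Macaulay formula for the $a$-invariant recalled immediately before the corollary. Write $S := K[A_{1}\cap \ldots \cap A_{r}]$. First I would note that $S$ is a normal affine semigroup ring --- this is exactly what was established in the paragraph preceding Lemma 3.1.1, since $\mathbb{N}(A_{1}\cap\ldots\cap A_{r})$ is an intersection of the normal semigroups $\mathbb{N}(A_{t})$ --- and hence Cohen-Macaulay by Hochster's theorem. Therefore the identity $a(S) = -\min\{i \mid (\omega_{S})_{i}\neq 0\}$ applies, and it suffices to determine the least degree in which $\omega_{S}$ is nonzero.

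Next I would invoke Lemma 3.1.1, which computes $\omega_{S} = (x_{1}\cdots x_{n})\,S$ as an ideal of $S$. Recall that $S$ is standard graded with the generators $x^{\alpha}$, $\alpha \in A_{1}\cap\ldots\cap A_{r}$, placed in degree $1$. Since $x_{1}\cdots x_{n} = x^{(1,\ldots,1)}$, the only small verification needed is that $(1,\ldots,1) \in A_{1}\cap\ldots\cap A_{r}$, i.e. that it satisfies all the defining inequalities of that set. Clearly $|(1,\ldots,1)| = n$; and evaluating the two displayed facet equations of $H_{\nu_{\sigma^{t_{s}}[i_{s}]}}$ from Lemma 3.1.1 at $(1,\ldots,1)$ --- just counting the number of summands in each block, according to whether $i_{s}+t_{s}\le n$ or $i_{s}+t_{s}>n$ --- yields $H_{\nu_{\sigma^{t_{s}}[i_{s}]}}(1,\ldots,1) = n/d_{s} > 0$ for every $s$. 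Hence $(1,\ldots,1)$ lies in $A_{1}\cap\ldots\cap A_{r}$, so $x_{1}\cdots x_{n}$ is a degree-$1$ element (indeed a generator) of $S$.

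Finally, from $\omega_{S} = (x_{1}\cdots x_{n})S$ with $\deg(x_{1}\cdots x_{n}) = 1$ it follows that $(\omega_{S})_{0}=0$ while $(\omega_{S})_{1} = K\,x_{1}\cdots x_{n}\neq 0$, so $\min\{i\mid (\omega_{S})_{i}\neq 0\}=1$ and therefore $a(S) = -1$. I do not expect a genuine obstacle here: the substantive work --- normality, Cohen-Macaulayness, and the explicit computation of the canonical module --- is already carried out in the text and in Lemma 3.1.1, and the only point requiring care is the bookkeeping that confirms $(1,\ldots,1)$ belongs to $A_{1}\cap\ldots\cap A_{r}$ and hence sits in degree $1$.
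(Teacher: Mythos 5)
Your proposal is correct and follows essentially the same route as the paper: both read off $\omega_{S}=(x_{1}\cdots x_{n})S$ from Lemma 3.1.1, use the Cohen--Macaulay formula $a(S)=-\min\{i\mid(\omega_{S})_{i}\neq 0\}$ with the standard grading by the generators $x^{\alpha}$, and conclude that the minimum is $1$. Your extra check that $(1,\ldots,1)$ satisfies $H_{\nu_{\sigma^{t_{s}}[i_{s}]}}(1,\ldots,1)=n/d_{s}>0$, so that $x_{1}\cdots x_{n}$ really is a degree-$1$ element, is a detail the paper leaves implicit (it appears inside the proof of Lemma 3.1.1) but it is correct.
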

\begin{proof}
Let $\{x^{\alpha_{1}}, \ldots, x^{\alpha_{q}}\}$ be  the  generators  of the
$K-{\rm{algebra}}$ $K[A_{1}\cap \ldots \cap A_{r}].$ \  $K[A_{1}\cap \ldots
\cap A_{r}]$ is a standard graded algebra with the grading
\[K[A_{1}\cap \ldots
\cap A_{r}]_{i}=\sum_{|c|=i}K(x^{\alpha_{1}})^{c_{1}}\cdot \cdot \cdot
(x^{\alpha_{q}})^{c_{q}}, \ {\rm{where}} \ \mid c \mid =c_{1}+\ldots+c_{q}. \]
Since $\omega_{K[A_{1}\cap \ldots
\cap A_{r}]}= \ (x_{1}\cdot\cdot\cdot x_{n})K[A_{1}\cap \ldots
\cap A_{r}],$ it follows that $ \min \ \{i\ | \ (\omega_{K[A_{1}\cap \ldots
\cap A_{r}]})_{i} \neq 0\}=1$ and so $a(K[A_{1}\cap \ldots
\cap A_{r}])= \ -1.$
\end{proof}
\[\]
\section{When is $K[A\cap B]$ the base ring associated to some transversal polymatroid?}
\[\]

Let $n\geq 2$ and consider two transversal polymatroids presented by
${\bf{\mathcal{A}}}=\{A_{1},\ldots, A_{n}\},$ respectively ${\bf{\mathcal{B}}}=
\{B_{1},\ldots, B_{n}\}.$ Let $A$ and $B$ be the set of exponent vectors
of monomials defining the base rings $K[{\bf{\mathcal{A}}}],$ respectively
$K[{\bf{\mathcal{B}}}],$ and $K[A\cap B]$ the $K- {\rm{algebra}}$ generated
by $x^{\alpha}$ with $\alpha \in A\cap B.$\\
{\bf{Question:}} There exists a transversal polymatroid such that
its base ring is the $K-{\rm{algebra}}$ $K[A\cap B]$?\\
In the following we will give two suggestive examples.\\
{\bf{\emph{Example} $1$}.} \ Let $n=4,$ ${\bf{\mathcal{A}}}=\{A_{1},
A_{2}, A_{3}, A_{4}\},$ ${\bf{\mathcal{B}}}=\{B_{1}, B_{2}, B_{3},
B_{4}\},$ where $A_{1}=A_{4}=B_{2}= B_{3}=\{1, 2, 3, 4\},$
$A_{2}=A_{3}=\{2, 3, 4\},$  $B_{1}=B_{4}=\{1, 3, 4\}$ and
$K[{\bf{\mathcal{A}}}],$ $K[{\bf{\mathcal{B}}}]$ the base rings
associated to transversal polymatroids presented by
${\bf{\mathcal{A}}},$ respectively ${\bf{\mathcal{B}}}.$ It is easy
to see that the generators set of $K[{\bf{\mathcal{A}}}],$
respectively $K[{\bf{\mathcal{B}}}],$ is given by $A=\{y\in
\mathbb{N}^{4} \ | \  \mid y \mid =4, \ 0\leq y_{1}\leq 2, \
y_{k}\geq 0, \ 1\leq k \leq 4\},$ respectively $B=\{y\in
\mathbb{N}^{4} \ | \ \mid y \mid=4, \ 0\leq y_{2}\leq 2, \ y_{k}\geq
0, \ 1\leq k \leq 4\}.$ We show that the $K-$ algebra $K[A\cap B]$
is the base ring of the transversal polymatroid presented by
${\bf{\mathcal{C}}}=\{C_{1}, C_{2}, C_{3}, C_{4}\},$ where
$C_{1}=C_{4}=\{1, 3, 4\}, \ C_{2}=C_{3}=\{2, 3, 4\}.$\\
 Since the
base ring associated to the transversal polymatroid presented by
${\bf{\mathcal{C}}}$ has the exponent set $C=\{y\in \mathbb{N}^{4} \
| \ \mid y \mid=4, \ 0\leq y_{1}\leq 2, \ 0\leq y_{2}\leq 2, \
y_{k}\geq 0, \ 1\leq k \leq 4\},$ it follows that $K[A\cap
B]=K[{\bf{\mathcal{C}}}].$
Thus, in this example $K[A\cap B]$ is the base ring of a transversal polymatroid.\\
{\bf{\emph{Example} $2$}.} \ Let $n=4,$ ${\bf{\mathcal{A}}}=\{A_{1}, A_{2}, A_{3}, A_{4}\},$
${\bf{\mathcal{B}}}=\{B_{1}, B_{2}, B_{3}, B_{4}\}$ where
$A_{1}=A_{2}=A_{4}=B_{1}=B_{2}=B_{3}=\{1, 2, 3, 4\},$ $A_{3}=\{3, 4\},$
$B_{4}=\{1, 4\}$ and  $K[{\bf{\mathcal{A}}}],$ $K[{\bf{\mathcal{B}}}]$
the base rings associated to the transversal polymatroids presented by
${\bf{\mathcal{A}}},$ respectively ${\bf{\mathcal{B}}}.$
It is easy to see that the generators set of $K[{\bf{\mathcal{A}}}],$
respectively $K[{\bf{\mathcal{B}}}]$, is $A=\{y\in \mathbb{N}^{4} \ |
\ \mid y \mid=4, \ 0\leq y_{1}+y_{2}\leq 3, \ y_{k}\geq 0, \ 1\leq k \leq 4\},$
respectively $B=\{y\in \mathbb{N}^{4} \ | \ \mid y \mid=4, \
0\leq y_{2}+y_{3}\leq 3, \ y_{k}\geq 0, \ 1\leq k \leq 4\}.$
We claim that there exists no transversal polymatroid ${\bf{\mathcal{P}}}$
such that the $K-$ algebra $K[A\cap B]$ is its base ring. Suppose,
on the contrary, that ${\bf{\mathcal{P}}}$ is presented by ${\bf{\mathcal{C}}}
=\{C_{1}, C_{2}, C_{3}, C_{4}\}$ with each $C_{k}\subset [4].$ Since $(3,0,1,0),
(3,0,0,1) \in {\bf{\mathcal{P}}}$ and $(3,1,0,0)\notin {\bf{\mathcal{P}}},$
we may assume by changing the numerotation of  $\{C_{i}\}_{i=\overline{1,4}}$
that  $1\in C_{1}, 1\in C_{2}, 1\in C_{4}$ and $C_{3}=\{3, 4\}.$ Since
$(0,3,0,1)\in {\bf{\mathcal{P}}},$ we may assume that $2\in C_{1}, 2\in C_{2},
2\in C_{4}.$ Hence $(0,3,1,0)\in {\bf{\mathcal{P}}},$ a contradiction.\\

Let $1\leq i_{1}, i_{2} \leq n-2,$ $0\leq t_{2}\leq n-1$ and
$\tau \in S_{n-2},$ $\tau=(1,2,\ldots, n-2)$ \ the cycle of length $n-2.$
\ We consider two transversal polymatroids presented by:
\[{\bf{\mathcal{A}}}=\{A_{k} \ | \ A_{k}=[n],\ {\rm{if}} \ k\in[i_{1}]\cup \{n\},
\ A_{k}=[n]\setminus [i_{1}],\ {\rm{if}} \ k\in[n-1]\setminus [i_{1}]\}
\ \ \ \ \ \ \ \ \ \ \ \ \ \ \ \] and
\[{\bf{\mathcal{B}}}=\{B_{k} \ | \ B_{\sigma^{t_{2}}(k)}=[n],\ {\rm{if}} \
k\in[i_{2}]\cup \{n\},\ B_{\sigma^{t_{2}}(k)}=[n]\setminus
\sigma^{t_{2}}[i_{2}],\ {\rm{if}} \ k\in[n-1]\setminus [i_{2}]\}\] such that $A,$
respectively $B$, is the exponent vectors of the monomials defining the
base ring associated to the transversal polymatroid presented by
${\bf{\mathcal{A}}},$ respectively ${\bf{\mathcal{B}}}.$
From \cite{SA} we know that the base rings $K[{\bf{\mathcal{A}}}]$
and $K[{\bf{\mathcal{B}}}]$ are Gorenstein rings and the cones
generated by the exponent vectors of the monomials defining the base rings
associated to the transversal polymatroids presented by ${\bf{\mathcal{A}}}$
and ${\bf{\mathcal{B}}}$ are:
\[{\mathbb{R_{+}}}A= \bigcap_{a\in N_{1}}H^{+}_{a}, \ \ \ \
{\mathbb{R_{+}}}B= \bigcap_{a\in N_{2}}H^{+}_{a},\] where $N_{1}=
\{\nu_{\sigma^{0}[i_{1}]},\ \nu_{\sigma^{k}[n-1]} \ | \ 0\leq k \leq
n-1\}, \ \ N_{2}=\{\nu_{\sigma^{t_{2}}[i_{2}]},\ \nu_{\sigma^{k}[n-1]}
\ | \ 0\leq k \leq
n-1\},$ \[A=\{ \log(x_{j_{1}}\cdot\cdot\cdot x_{j_{n}}) \ | \
j_{k}\in A_{k}, 1\leq k \leq
n\}\subset \mathbb{N}^{n}\ {\rm{and}} \]\[ B=\{ \log(x_{j_{1}}\cdot\cdot\cdot
x_{j_{n}}) \ | \ j_{k}\in B_{k}, 1\leq k \leq
n\}\subset \mathbb{N}^{n}.\]
It is easy to see that $A=\{\alpha \in  \mathbb{N}^{n} \ | \
\ 0\leq \alpha_{1}+\ldots + \alpha_{i_{1}}\leq i_{1}+1 \ {\rm{and}}
\ \mid \alpha \mid=n \} $ and\\
$B=\{\alpha \in  \mathbb{N}^{n} \ | \ \ 0\leq
\alpha_{t_{2}+1}+\ldots + \alpha_{t_{2}+i_{2}}\leq i_{2}+1 \ {\rm{and}} \
\mid \alpha \mid=n \}, \ {\rm{if}} \ i_{2}+t_{2} \leq n $ \\ or\\
\[B=\{\alpha \in  \mathbb{N}^{n} \ | \ 0\leq
\sum_{s=1}^{i_{2}+t_{2}-n}\alpha_{s}+ \sum_{s=t_{2}+1}^{n}\alpha_{s}
\leq i_{2}+1 \ {\rm{and}} \ \mid \alpha \mid=n \}, \ {\rm{if}} \ i_{2}+t_{2} \geq n .
\ \ \ \ \ \ \ \ \ \ \ \ \ \ \ \ \ \ \ \ \]

For any base ring $K[{\bf{\mathcal{A}}}]$ of a transversal polymatroid
presented by ${\bf{\mathcal{A}}}=\{A_{1},\ldots ,A_{n}\}$ we
associate a $(n\times n)$ square tiled by closed unit subsquares,
called {\bf{boxes}}, colored with two colors,
$''white''$ and $''black'',$ as follows:
the box of coordinate $(i,j)$ is $''white''$ if $j\in A_{i},$
otherwise the box is $''black''.$ We will call this square  the
{\bf{polymatroidal diagram}} associated to the presentation
${\bf{\mathcal{A}}}=\{A_{1},\ldots ,A_{n}\}.$

Next we give necesary and sufficient conditions such that the
$K-{\rm{algebra}}$ $K[A\cap B]$ is the base ring associated to some
transversal polymatroid.

\begin{theorem}
Let $1\leq i_{1}, i_{2} \leq n-2,$ $0\leq t_{2}\leq n-1.$ We
consider two presentations of transversal poymatroids presented by:
${\bf{\mathcal{A}}}=\{A_{k} \ | \ A_{k}=[n],\ if \ k\in[i_{1}] \cup
\{n\},\ A_{k}=[n]\setminus [i_{1}],\ if \ k\in[n-1]\setminus
[i_{1}]\}$ and ${\bf{\mathcal{B}}}=\{B_{k} \ | \
B_{\sigma^{t_{2}}(k)}=[n],\ if \ k\in[i_{2}]\cup \{n\},\
B_{\sigma^{t_{2}}(k)}=[n]\setminus \sigma^{t_{2}}[i_{2}],\ if \
k\in[n-1]\setminus [i_{2}]\}$ such that $A,$ respectively $B,$ is the set of
exponent vectors of the monomials defining the base ring
associated to the transversal polymatroid presented by
${\bf{\mathcal{A}}},$ respectively ${\bf{\mathcal{B}}}.$\\
Then, the $K-algebra$ $K[A\cap B]$ is the base ring associated to a
transversal polymatroid if and only if one of the
following conditions holds:\\
$a)$ \ \ $i_{1}=1;$ \\
$b)$ \ \ $i_{1}\geq 2$ and $t_{2}=0;$\\
$c)$ \ \ $i_{1}\geq 2$ and $t_{2}=i_{1};$\\
$d)$ \ \ $i_{1}\geq 2,$  $1\leq t_{2} \leq i_{1}-1$ and
$i_{2}\in \{1,\ldots,i_{1}-t_{2}\}\cup \{n-t_{2},\ldots,n-2\};$\\
$e)$ \ \ $i_{1}\geq 2,$  $i_{1}+1\leq t_{2} \leq n-1$ and
$i_{2}\in \{1,\ldots,n-t_{2}\}
\cup \{n-t_{2}+i_{1},\ldots,n-2\}.$
\end{theorem}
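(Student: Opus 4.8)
The plan is to work throughout with exponent (base) sets. Since a monomial $K$-algebra is determined by and determines its set of exponent vectors, and the exponent set of the base ring of a transversal polymatroid is exactly the set of bases of that polymatroid, the assertion is equivalent to: \emph{$A\cap B$ is the base set of a transversal polymatroid if and only if one of (a)--(e) holds}. From the descriptions of $A$ and $B$ recalled just before the theorem one reads off
\[A\cap B=\{\alpha\in\mathbb{N}^{n}\ |\ |\alpha|=n,\ \alpha(S_{1})\le i_{1}+1,\ \alpha(S_{2})\le i_{2}+1\},\]
where $S_{1}=[i_{1}]$ and $S_{2}=\sigma^{t_{2}}[i_{2}]$ are two \emph{arcs} on the cycle $(1,2,\ldots,n)$. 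Set $c_{1}=i_{1}+1$, $c_{2}=i_{2}+1$ and partition $[n]$ into the four regions $R_{11}=S_{1}\cap S_{2}$, $R_{10}=S_{1}\setminus S_{2}$, $R_{01}=S_{2}\setminus S_{1}$, $R_{00}=[n]\setminus(S_{1}\cup S_{2})$. A short computation (a linear program on the base polytope) shows that $A\cap B$ is always the base set of a discrete polymatroid, whose ground-set rank function $\rho$ is the submodular closure of the two knapsack bounds; so the only issue is \emph{transversality} of this polymatroid.

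The first step is a purely combinatorial bookkeeping lemma: conditions (a)--(e) hold precisely when at least one of $R_{00},R_{10},R_{01},R_{11}$ is empty, i.e.\ when the two arcs do not cross. If $i_{1}=1$ then $1\in S_{2}$ forces $R_{10}=\emptyset$ and $1\notin S_{2}$ forces $R_{11}=\emptyset$, which is (a). If $i_{1}\ge2$ the value of $t_{2}$ selects the case: $t_{2}=0$ makes $S_{1},S_{2}$ nested, hence $R_{10}=\emptyset$ or $R_{01}=\emptyset$, which is (b); $t_{2}=i_{1}$ gives $R_{11}=\emptyset$ when $i_{1}+i_{2}\le n$ and $R_{00}=\emptyset$ when $i_{1}+i_{2}>n$, which is (c); for $1\le t_{2}\le i_{1}-1$ one checks that $R_{10},R_{11}$ are always nonempty and that the stated range of $i_{2}$ in (d) is exactly ``$R_{01}=\emptyset$ or $R_{00}=\emptyset$''; for $i_{1}+1\le t_{2}\le n-1$ one checks that $R_{01},R_{00}$ are always nonempty and that the range of $i_{2}$ in (e) is exactly ``$R_{11}=\emptyset$ or $R_{10}=\emptyset$''. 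Both sub-cases $i_{2}+t_{2}\le n$ and $i_{2}+t_{2}>n$ (the wrap-around) must be treated, but in each the emptiness of a region becomes a one-line inequality in $i_{1},i_{2},t_{2}$.

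For the ``if'' direction I would, in each non-crossing configuration, write down an explicit presentation $\mathcal{C}=\{C_{1},\ldots,C_{n}\}$ whose members are chosen among $[n]$, $[n]\setminus S_{1}$, $[n]\setminus S_{2}$, $[n]\setminus(S_{1}\cup S_{2})$, $R_{10}$, $R_{01}$ with suitable multiplicities, and verify that its base set is $A\cap B$. For instance, when $S_{2}\subseteq S_{1}$ one takes $c_{2}$ copies of $[n]$, $c_{1}-c_{2}$ copies of $[n]\setminus S_{2}$ and $n-c_{1}$ copies of $[n]\setminus S_{1}$; when $S_{1}\cup S_{2}=[n]$ one takes $n-c_{2}$ copies of $R_{10}$, $n-c_{1}$ copies of $R_{01}$ and $c_{1}+c_{2}-n$ copies of $[n]$; the disjoint and reverse-nested cases are analogous. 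In every instance one checks, using the transversal rank function $\rho_{\mathcal{C}}(X)=|\{k\ :\ C_{k}\cap X\ne\emptyset\}|$ of Theorem 1.3.13, that $\rho_{\mathcal{C}}$ coincides with the rank function $\rho$ of $A\cap B$; equivalently, that every $\alpha$ satisfying the two knapsack bounds is realised by a system of representatives $j_{k}\in C_{k}$, which is a routine counting / Hall-type verification. This settles (b)--(e), and (a) is subsumed as a special case of the nested or disjoint configurations, as in Example 1.

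For the ``only if'' direction, assume the arcs cross, so all four regions are nonempty, and suppose for contradiction that some $\mathcal{C}$ presents a transversal polymatroid with base set $A\cap B$. Picking representatives $p\in R_{10}$, $p'\in R_{01}$, $q\in R_{11}$ and computing $\rho$ on the base polytope gives $\rho(\{p\})=c_{1}$, $\rho(\{p'\})=c_{2}$, $\rho(\{q\})=\min(c_{1},c_{2})$ (here $R_{00}\ne\emptyset$ is used to dump the leftover mass), $\rho(S_{1})=c_{1}$, $\rho(S_{2})=c_{2}$ and $\rho(\{p,p',q\})=\min(c_{1}+c_{2},n)$. Translating through $\rho_{\mathcal{C}}$: from $\rho(S_{1})=c_{1}=\rho(\{p\})$ every $C_{k}$ meeting $S_{1}$ contains $p$, and likewise every $C_{k}$ meeting $S_{2}$ contains $p'$; hence each of the $\min(c_{1},c_{2})\ge1$ sets $C_{k}$ containing $q$ also contains $\{p,p'\}$. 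On the other hand $\rho(\{p,p',q\})=\min(c_{1}+c_{2},n)$ forces the number of $C_{k}$ with $\{p,p'\}\subseteq C_{k}$ to be $\max(0,c_{1}+c_{2}-n)$, which is strictly less than $\min(c_{1},c_{2})$ because $c_{1},c_{2}\le n-1$; contradiction. (Alternatively one can run the vector-chasing argument of Example 2 with vectors supported near $p,p',q$.) I expect the main obstacle to be the ``if'' direction: one must first pin down $\rho$ in each of the several non-crossing configurations — the formula genuinely changes with which region is empty and whether $S_{2}$ wraps around — and then simultaneously guess the correct $\mathcal{C}$ and verify the non-trivial inclusion that every constrained $\alpha$ comes from a choice $j_{k}\in C_{k}$.
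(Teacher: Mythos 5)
Your proposal is correct in outline, and it takes a genuinely different and more economical route than the paper. The paper proves the theorem through three separate lemmas (3.2.2--3.2.4), each with several cases and subcases: for sufficiency it exhibits an explicit presentation $\mathcal{C}$ in every configuration (with polymatroidal diagrams) and verifies $K[\mathcal{C}]=K[A\cap B]$ by listing generators, and for necessity it runs an ad hoc vector-chasing argument in the style of Example~2, deducing constraints on the $C_k$ from membership of specific vectors such as $(i_1+1)e_1+(n-i_1-1)e_k$ and deriving a contradiction separately in each range of $t_2$ and $i_2$. You instead reformulate the whole statement as ``the two arcs $S_1=[i_1]$ and $S_2=\sigma^{t_2}[i_2]$ do not cross,'' i.e.\ one of the four Venn regions is empty; I checked that this dictionary between (a)--(e) and the emptiness of $R_{00},R_{10},R_{01},R_{11}$ is accurate in all five cases, including the wrap-around subcases. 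This collapses the paper's five-way condition into one geometric statement, reduces the ``if'' direction to essentially four template presentations (which, where you write them out, coincide exactly with the paper's: e.g.\ your $(n-c_2,\,c_1+c_2-n,\,n-c_1)$ split for $S_1\cup S_2=[n]$ is the paper's Case~2 of Lemma~3.2.2), and replaces the paper's case-by-case necessity argument with a single clean computation: $\rho(\{p\})=\rho(S_1)=c_1$ forces every $C_k$ meeting $S_1$ to contain $p$, symmetrically for $p'$, and then $\min(c_1,c_2)\le|\{k:\{p,p'\}\subseteq C_k\}|=\max(0,c_1+c_2-n)$ is impossible for $2\le c_1,c_2\le n-1$. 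That argument is sound and uses $R_{00}\ne\emptyset$ exactly where you say it does. What the paper's approach buys is complete explicitness (every presentation and every verification is written down); what yours buys is a uniform mechanism and a conceptual explanation of why the five conditions are what they are. Two caveats: your parenthetical claim that $A\cap B$ is always the base set of a discrete polymatroid is not needed for the proof and should either be justified via the sublattice construction of Theorem~1.3.11 or dropped; and the ``if'' direction as written still defers the verification that each template $\mathcal{C}$ has base set exactly $A\cap B$ --- this is the bulk of the paper's Lemmas 3.2.2--3.2.4 and would need to be carried out (once per template, rather than once per case) for the argument to be complete.
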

The proof follows from the following three lemmas.
\begin{lemma}
Let $A$ and $B$ be as above. If $i_{1}\geq 2,$ $1\leq t_{2}
\leq i_{1}-1,$ then  the $K-algebra$ $K[A\cap B]$ is the base
ring associated to some transversal polymatroid if and only
if $i_{2}\in \{1,\ldots,i_{1}-t_{2}\}\cup \{n-t_{2},\ldots,n-2\}.$
\end{lemma}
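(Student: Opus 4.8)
The plan is to first turn the statement into a purely combinatorial one and then to read it off from whether a single cell of a four-part decomposition is empty. Since every vector in $A\cap B$ has modulus $n$, the monomials $x^{\alpha}$ with $\alpha\in A\cap B$ form the degree-one component of $K[A\cap B]$ in its standard grading, so $K[A\cap B]$ is the base ring of a transversal polymatroid exactly when $A\cap B$ is itself the set of bases of a transversal polymatroid presented by some $\mathcal{C}=\{C_1,\dots,C_n\}$ on $[n]$ — this is the point of view of Examples $1$ and $2$. From the descriptions of $A$ and $B$ recalled before the theorem,
\[A\cap B=\{\alpha\in\mathbb{N}^n : |\alpha|=n,\ \alpha(I_1)\le i_1+1,\ \alpha(I_2)\le i_2+1\},\qquad I_1=[i_1],\ \ I_2=\sigma^{t_2}[i_2].\]
I would then set $P_1=I_1\setminus I_2$, $P_2=I_1\cap I_2$, $P_3=I_2\setminus I_1$, $P_4=[n]\setminus(I_1\cup I_2)$, a partition of $[n]$. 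From $1\le t_2\le i_1-1$ one gets $\{1,\dots,t_2\}\subseteq P_1$ and $\{t_2+1,\dots,i_1\}\subseteq P_2$, so $P_1,P_2\neq\emptyset$, and a short computation with intervals gives $P_3=\emptyset\iff i_2\le i_1-t_2$ and $P_4=\emptyset\iff i_2\ge n-t_2$. Hence the asserted condition $i_2\in\{1,\dots,i_1-t_2\}\cup\{n-t_2,\dots,n-2\}$ says exactly that $P_3=\emptyset$ or $P_4=\emptyset$, while the complementary range $i_1-t_2<i_2<n-t_2$ (nonempty because $i_1\le n-2$) says $P_3\neq\emptyset$ and $P_4\neq\emptyset$.

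For the direction "$\Rightarrow$'' I would show that if $P_3$ and $P_4$ are both nonempty then $A\cap B$ is not even a discrete polymatroid. Its ground-set rank function $\rho(X):=\max\{\alpha(X):\alpha\in A\cap B\}$ would have to be submodular (Proposition 1.3.4, and for transversal presentations Theorem 1.3.13). Using that $P_1$, $P_3$, $P_4$ are nonempty one may concentrate the mass of $\alpha$ on single coordinates of chosen cells and obtain $\rho(I_1)=i_1+1$, $\rho(I_2)=i_2+1$, $\rho(I_1\cap I_2)=\rho(P_2)=\min(i_1,i_2)+1$, and $\rho(I_1\cup I_2)=\min(n,\,i_1+i_2+2)$. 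Then
\[\rho(I_1)+\rho(I_2)=i_1+i_2+2<\min(n,\,i_1+i_2+2)+\min(i_1,i_2)+1=\rho(I_1\cup I_2)+\rho(I_1\cap I_2),\]
where the strict inequality holds when $i_1+i_2+2\le n$ because $\min(i_1,i_2)+1\ge2$, and when $i_1+i_2+2>n$ because it then reduces to $\max(i_1,i_2)\le n-2$, which is assumed. This contradicts submodularity, so $A\cap B$ is not transversal.

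For the direction "$\Leftarrow$'' I would produce a presentation $\mathcal{C}$ in each case. If $P_3=\emptyset$ then $I_2\subseteq I_1$ and, after permuting coordinates, $I_2=[i_2]\subseteq[i_1]=I_1$; let $\mathcal{C}$ have $i_2+1$ copies of $[n]$, $i_1-i_2$ copies of $[n]\setminus[i_2]$, and $n-1-i_1$ copies of $[n]\setminus[i_1]$. If $P_4=\emptyset$ then $I_1\cup I_2=[n]$ and $i_1+i_2\ge n$; after permuting coordinates take $I_1=[i_1]$ and $I_2=\{n-i_2+1,\dots,n\}$ (so $I_1\setminus I_2=[n-i_2]$ and $[n]\setminus I_1\subseteq I_2$), and let $\mathcal{C}$ have $n-1-i_1$ copies of $[n]\setminus[i_1]$, $n-1-i_2$ copies of $[n-i_2]$, and $i_1+i_2+2-n$ copies of $[n]$. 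In both cases the hypotheses force these multiplicities to be positive and to sum to $n$, and one reads off $\rho_{\mathcal{C}}(I_1)=i_1+1$ and $\rho_{\mathcal{C}}(I_2)=i_2+1$, which already gives $\mathcal{B}_{\mathcal{C}}\subseteq A\cap B$. For $A\cap B\subseteq\mathcal{B}_{\mathcal{C}}$ I would check $\rho_{\mathcal{C}}(X)\ge\rho(X)$ for every $X\subseteq[n]$; since each $C_k$ is either $[n]$ or a prefix set, $\rho_{\mathcal{C}}(X)$ depends only on which of two thresholds $X$ meets, and the verification splits into a handful of transparent cases, using for the innermost one the identity $\alpha(I_1\cap I_2)=\alpha(I_1)+\alpha(I_2)-\alpha(I_1\cup I_2)$ together with $I_1\cup I_2=[n]$ (in the $P_4=\emptyset$ case) or simply $\alpha(X)\le\alpha(I_2)$, $\alpha(X)\le\alpha(I_1)$, $\alpha(X)\le|\alpha|$ (otherwise). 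Thus $A\cap B=\mathcal{B}_{\mathcal{C}}$.

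The conceptual heart is isolating the dichotomy "$P_3=\emptyset$ or $P_4=\emptyset$''; once that is in place the "$\Rightarrow$'' half is the single submodularity inequality above and the "$\Leftarrow$'' half is the two staircase presentations. The points that need genuine care are the reduction to the combinatorial statement in the first paragraph and, in the "$\Leftarrow$'' direction, making sure the block shapes that still occur under the hypotheses (for example $i_2=1$, or $i_1=n-2$, or a one-element block $[n]\setminus[i_1]$) are handled uniformly by the construction.
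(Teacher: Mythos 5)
Your proposal is correct, and it splits naturally into a half that coincides with the paper and a half that does not. For the ``$\Leftarrow$'' direction your two presentations are, up to the coordinate permutation you perform, exactly the paper's: in its Case 1 ($i_2+t_2\le i_1$, your $P_3=\emptyset$) the paper takes $i_2+1$ copies of $[n]$, $i_1-i_2$ copies of $[n]\setminus\sigma^{t_2}[i_2]$ and $n-1-i_1$ copies of $[n]\setminus[i_1]$, and in its Case 2 ($i_2\ge n-t_2$, your $P_4=\emptyset$) it takes $n-i_2-1$ copies of $[n]\setminus\sigma^{t_2}[i_2]$, $i_1+i_2-n+2$ copies of $[n]$ and $n-1-i_1$ copies of $[n]\setminus[i_1]$ --- the same multiset of sets with the same multiplicities, verified by the same two-threshold comparison of generators. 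The genuine difference is in ``$\Rightarrow$''. The paper argues by contradiction on the structure of a hypothetical presentation $\mathcal{C}$: it exhibits specific vectors such as $(i_1+1)e_1+(n-i_1-1)e_k$ that lie in $A\cap B$, deduces which elements each $C_k$ must contain, and then produces a vector forced into $\mathcal{B}_{\mathcal{C}}$ that violates one of the two defining inequalities; this requires two subcases ($n-i_1-1\le i_2+1$ and $>$) each with a further split on $i_1\lessgtr i_2$. You instead show that when all four cells $P_1,\dots,P_4$ are nonempty the function $\rho(X)=\max\{\alpha(X):\alpha\in A\cap B\}$ fails submodularity at the pair $(I_1,I_2)$, so $A\cap B$ is not the base set of \emph{any} discrete polymatroid, transversal or not. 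Your computation of $\rho(I_1),\rho(I_2),\rho(I_1\cap I_2),\rho(I_1\cup I_2)$ checks out (the nonemptiness of $P_1$, $P_3$, $P_4$ is exactly what lets you realize each maximum), and the final inequality reduces to $\min(i_1,i_2)+1>0$ in one regime and to $\max(i_1,i_2)\le n-2$ in the other, both guaranteed by the hypotheses. This is shorter, avoids the case analysis on $\mathcal{C}$, and yields a stronger conclusion; the paper's route, by contrast, stays entirely inside the language of presentations. Two cosmetic slips in your write-up do not affect anything: $\{1,\dots,t_2\}\subseteq P_1$ and $\{t_2+1,\dots,i_1\}\subseteq P_2$ can fail (when $I_2$ wraps past $1$, respectively when $i_2<i_1-t_2$), but $t_2\in P_1$ and $t_2+1\in P_2$ always, which is all you use.
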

\begin{proof}
$''\Leftarrow''$
Let $i_{2}\in \{1,\ldots,i_{1}-t_{2}\}\cup \{n-t_{2},\ldots,n-2\}.$
We will prove that there exists a transversal polymatroid ${\bf{\mathcal{P}}}$
presented by  ${\bf{\mathcal{C}}}=\{C_{1},\ldots,C_{n}\}$ such
that the base ring associated to ${\bf{\mathcal{P}}}$ is $K[A\cap B].$\\
We have two cases to study.\\
$\bf Case \ 1.$ If $i_{2}+t_{2}\leq i_{1},$ then let ${\bf{\mathcal{P}}}$
be the transversal polymatroid presented by  ${\bf{\mathcal{C}}}=
\{C_{1},\ldots,C_{n}\},$ where
\begin{align*}
&C_{1}=\ldots=C_{i_{2}}=C_{n}=[n], \\
&C_{i_{2}+1}=\ldots =C_{i_{1}}=[n]\setminus \sigma^{t_{2}}[i_{2}],\\
&C_{i_{1}+1}=\ldots =C_{n-1}=[n]\setminus [i_{1}]. 
\end{align*}
The associated polymatroidal diagram is the following.

\unitlength 1mm 
\linethickness{0.4pt}
\ifx\plotpoint\undefined\newsavebox{\plotpoint}\fi 
\begin{picture}(0,67)(-3,0)
\put(54,18.25){\framebox(40.75,37.75)[cc]{}}
\put(59.5,55.5){\line(0,-1){37}}
\put(65.25,56){\line(0,-1){37.25}}
\put(71,56){\line(0,-1){37.5}}
\put(76.75,55.75){\line(0,-1){37}}
\put(82.5,55.75){\line(0,-1){37.5}}
\put(88.5,56){\line(0,-1){37.75}}
\put(54,50.5){\line(1,0){41}}
\put(54.25,44.75){\line(1,0){40.5}}
\put(54,39.75){\line(1,0){40.75}}
\put(54,34){\line(1,0){40.75}}
\put(53.75,29.25){\line(1,0){41}}
\put(54,23.5){\line(1,0){40.5}}
\put(53.75,23.75){\rule{.5\unitlength}{1\unitlength}}
\put(54,23.75){\rule{22.75\unitlength}{10\unitlength}}
\put(59.5,34){\rule{11.5\unitlength}{10.5\unitlength}}
\multiput(54,56)(-.03368794,-.09574468){141}{\line(0,-1){.09574468}}
\multiput(49.25,42.5)(.03358209,-.05970149){134}{\line(0,-1){.05970149}}
\put(54.25,55.5){\line(1,2){2.75}}
\multiput(57,61)(.0333333,-.0633333){75}{\line(0,-1){.0633333}}
\multiput(70.75,45)(.03125,-.03125){8}{\line(0,-1){.03125}}
\multiput(94.5,56.25)(.03373016,-.04960317){126}{\line(0,-1){.04960317}}
\multiput(98.75,50)(-.03373016,-.04365079){126}{\line(0,-1){.04365079}}
\put(103,50.25){$i_{2} \ {\rm{rows}}$}
\put(30,42.5){$i_{1} \ {\rm{rows}}$}
\put(50,62){$t_{2} \ {\rm{columns}}$}
\end{picture}

It is easy to see that the base ring associated to the transversal
polymatroid ${\bf{\mathcal{P}}}$ presented by ${\bf{\mathcal{C}}}$
is generated by the following set of monomials
\[\{x_{t_{2}+1},\ldots,x_{t_{2}+i_{2}}\}^{i_{2}+1-k}\{x_{1},\ldots,
x_{t_{2}},x_{t_{2}+i_{2}+1},\ldots,x_{i_{1}}\}^{i_{1}-i_{2}+k-s}
\{x_{i_{1}+1},\ldots,x_{n}\}^{n-1-i_{1}+ s}\] for any
$0\leq k \leq i_{2}+1$ and $0\leq s \leq i_{1}-i_{2}+k.$
If $x^{\alpha}\in K[{\bf{\mathcal{C}}}],$
$\alpha=(\alpha_{1},\ldots,\alpha_{n})\in {\mathbb{N}}^{n},$
then there exist $0\leq k \leq i_{2}+1$ and $0\leq s \leq i_{1}-i_{2}+k$
such that
\[\alpha_{t_{2}+1}+\ldots + \alpha_{t_{2}+i_{2}}= i_{2}+1-k \ {\rm{and}} \
\alpha_{1}+\ldots + \alpha_{i_{1}}= i_{1}+1-s\]
and thus, $K[{\bf{\mathcal{C}}}]\subset K[A\cap B].$\\
Conversely, \ if $\alpha \in A\cap B$ then $\alpha_{t_{2}+1}+
\ldots + \alpha_{t_{2}+i_{2}}\leq i_{2}+1 \ {\rm{and}} \ \alpha_{1}+\ldots +
\alpha_{i_{1}}\leq i_{1}+1$
and thus there exist $0\leq k \leq i_{2}+1$ and $0\leq s \leq i_{1}-i_{2}+k$
such that
\[x^{\alpha}\in \{x_{t_{2}+1},\ldots,x_{t_{2}+i_{2}}\}^{i_{2}+1-k}
\{x_{1},\ldots,x_{t_{2}}, x_{t_{2}+i_{2}+1},\ldots,x_{i_{1}}\}^
{i_{1}-i_{2}+k-s}\{x_{i_{1}+1},\ldots,x_{n}\}^{n-1-i_{1}+ s}.\]
Thus, $K[{\bf{\mathcal{C}}}]\supset K[A\cap B]$ and so $K[{\bf{\mathcal{C}}}]= K[A\cap B].$\\
${\bf Case} \ 2.$ If $i_{2}+t_{2}>i_{1},$ then it follows that $i_{2}\geq n-t_{2}$
and  $n-i_{2}\leq t_{2}\ < \ i_{1}.$
Let ${\bf{\mathcal{P}}}$ be the transversal polymatroid presented by
${\bf{\mathcal{C}}}=\{C_{1},\ldots,C_{n}\},$ where
\begin{align*}
&C_{1}=\ldots=C_{n-i_{2}-1}=[n]\setminus \sigma^{t_{2}}[i_{2}],\\
&C_{n-i_{2}}=\ldots =C_{i_{1}}=C_{n}=[n], \\
&C_{i_{1}+1}=\ldots =C_{n-1}=[n]\setminus [i_{1}]. \\
\end{align*}
The associated polymatroidal diagram is the following.

\unitlength 1mm 
\linethickness{0.4pt}
\ifx\plotpoint\undefined\newsavebox{\plotpoint}\fi 
\begin{picture}(60,70)(-15,0)
\put(45,22.25){\framebox(33,33)[cc]{}}
\put(50,55.5){\line(0,-1){33.25}}
\put(54.75,55){\line(0,-1){32.25}}
\put(59.5,55.25){\line(0,-1){32.25}}
\multiput(55,23.5)(-.033333,-.1){15}{\line(0,-1){.1}}
\put(59.5,23.5){\line(0,-1){1.5}}
\put(64.5,55.25){\line(0,-1){32.75}}
\put(69.25,55){\line(0,-1){32.75}}
\put(73.75,55){\line(0,-1){32.5}}
\put(45,51.25){\line(1,0){32.75}}
\put(44.75,46.75){\line(1,0){33.25}}
\put(45,42){\line(1,0){33.25}}
\put(45.25,37){\line(1,0){32.75}}
\put(45.25,31.75){\line(1,0){33}}
\put(45,27){\line(1,0){33}}
\put(45.25,27.25){\rule{24\unitlength}{4.25\unitlength}}
\put(45.25,46.75){\rule{4.75\unitlength}{8.25\unitlength}}
\put(64.75,47){\rule{13.25\unitlength}{8\unitlength}}
\put(45,55.25){\line(-1,-3){5}}
\multiput(40,40.25)(.03355705,-.05704698){149}{\line(0,-1){.05704698}}
\multiput(45,54.75)(.07885906,.03355705){149}{\line(1,0){.07885906}}
\multiput(56.75,59.75)(.06150794,-.03373016){126}{\line(1,0){.06150794}}
\multiput(78,55.25)(.03365385,-.04567308){104}{\line(0,-1){.04567308}}
\multiput(81.5,50.5)(-.03571429,-.03348214){112}{\line(-1,0){.03571429}}
\put(57.5,62.75){$t_{2} \ {\rm{columns}}$}
\put(85,50){$n-i_{2}-1 \ {\rm{rows}}$}
\put(20,40){$i_{1} \ {\rm{rows}}$}
\end{picture}

It is easy to see that the base ring associated to the transversal polymatroid
${\bf{\mathcal{P}}}$ presented by ${\bf{\mathcal{C}}}$
is generated by the following set of monomials
\[\{x_{i_{2}+t_{2}-n+1},\ldots,x_{t_{2}}\}^{i_{1}+1-k}
\{x_{1},\ldots, x_{i_{2}+t_{2}-n}, x_{t_{2}+1},\ldots,x_{i_{1}}\}^{k-s}
\{x_{i_{1}+1},\ldots,x_{n}\}^{n-1-i_{1}+ s}\]
for any $0\leq k \leq i_{1}+i_{2}-n+2$ and $0\leq s \leq k.$
Since $i_{2}+t_{2}\geq n$ and $0\leq s \leq k \leq i_{1}+i_{2}-n+2$, 
it follows that for any $x^{\alpha}\in K[{\bf{\mathcal{C}}}]$
we have $\alpha_{1}+\ldots+\alpha_{i_{1}}=i_{1}+1-s\leq i_{1}+1$,
$\alpha_{1}+\ldots +\alpha_{i_{2}+t_{2}-n}+\alpha_{t_{2}+1}+
\ldots+\alpha_{n}=n-1-i_{1}+k \leq i_{2}+1$
and thus, $K[{\bf{\mathcal{C}}}]\subset K[A\cap B].$\\
Conversely, \ if $\alpha \in A\cap B$ then $\alpha_{1}+\ldots+
\alpha_{i_{1}}\leq i_{1}+1$,
$\alpha_{1}+\ldots +\alpha_{i_{2}+t_{2}-n}+\alpha_{t_{2}+1}+\ldots+
\alpha_{n}\leq i_{2}+1$
and thus there exist $0\leq k \leq i_{1}+i_{2}-n+2$ and
$0\leq s \leq k$ such that \[x^{\alpha}\in \{x_{i_{2}+t_{2}-n+1},
\ldots,x_{t_{2}}\}^{i_{1}+1-k}
\{x_{1},\ldots, x_{i_{2}+t_{2}-n}, x_{t_{2}+1},\ldots,x_{i_{1}}\}^{k-s}
\{x_{i_{1}+1},\ldots,x_{n}\}^{n-1-i_{1}+ s}.\]
Thus, $K[{\bf{\mathcal{C}}}]\supset K[A\cap B]$ and so
$K[{\bf{\mathcal{C}}}]= K[A\cap B].$

$''\Rightarrow''$ \ Now suppose that there exists a transversal
polymatroid ${\bf{\mathcal{P}}}$ given by
${\bf{\mathcal{C}}}=\{C_{1},\ldots,C_{n}\}$ such that its associated
base ring is $K[A\cap B]$. We will prove that
$i_{2}\in \{1,\ldots,i_{1}-t_{2}\}\cup \{n-t_{2},\ldots,n-2\}.$\\
Suppose, on the contrary, that $i_{1}+1-t_{2}\leq i_{2}\leq n-t_{2}-1.$
We have two cases to study:\\
${\bf Case} \ 1'.$ If $n-i_{1}-1\leq i_{2}+1,$ then since
$(i_{1}+1)e_{1}+(n-i_{1}-1)e_{k}\in {\bf{\mathcal{P}}}$
for any $i_{1}+1\leq k \leq n$ and
$(i_{1}+1)e_{1}+ e_{s}+(n-i_{1}-2)e_{k}\notin {\bf{\mathcal{P}}}$
for any $2\leq s \leq i_{1}$ and $i_{1}+1\leq k \leq n,$
we may assume $1\in C_{1},\ldots, 1\in C_{i_{1}}, 1\in C_{n}$
and $C_{i_{1}+1}=\ldots = C_{n-1}=[n]\setminus[i_{1}].$
If $i_{1}\leq i_{2},$ then since $(i_{1}+1)e_{t_{2}+1}+
(n-i_{1}-1)e_{t_{2}+i_{2}+1}\in {\bf{\mathcal{P}}},$
we may assume $t_{2}+1\in C_{1},\ldots, t_{2}+1\in C_{i_{1}},
t_{2}+1\in C_{n}.$ Then $(i_{1}+1)e_{t_{2}+1}+(n-i_{1}-1)
e_{i_{1}+1}\in {\bf{\mathcal{P}}},$ which is false.
If $i_{1}>i_{2},$ then since $(i_{2}+1)e_{t_{2}+1}+(n-i_{1}-1)
e_{t_{2}+i_{2}+1}\in {\bf{\mathcal{P}}},$ we may assume
$t_{2}+1\in C_{1},\ldots, t_{2}+1\in C_{i_{2}}, t_{2}+1\in C_{n}.$
Then $(i_{2}+1)e_{t_{2}+1}+(i_{1}-i_{2})e_{1}+(n-i_{1}-1)
e_{i_{1}+1}\in {\bf{\mathcal{P}}},$ which is false.\\
${\bf Case} \ 2'.$ If $n-i_{1}-1>i_{2}+1,$ then since $(i_{1}+1)e_{1}+
(i_{2}+1)e_{i_{1}+1}+(n-i_{1}-i_{2}-2)e_{k}\in {\bf{\mathcal{P}}}$
for any $t_{2}+i_{2}+1\leq k\leq n$ and $(i_{1}+1)e_{1}+
e_{s}+(i_{2}+1)e_{i_{1}+1}+(n-i_{1}-i_{2}-3)e_{k}\notin
{\bf{\mathcal{P}}}$ for any
$1\leq s \leq i_{1}$ and $t_{2}+i_{2}+1\leq k \leq n,$
we may assume $1\in C_{1},\ldots, 1\in C_{i_{1}}, 1\in C_{n},$
$C_{i_{1}+1}=\ldots = C_{i_{1}+i_{2}+1}=[n]\setminus[i_{1}]$ and
$C_{i_{1}+i_{2}+2}=\ldots=C_{n-1}=[n]\setminus[t_{2}+i_{2}].$
If $i_{1}\leq i_{2},$ then since
$(i_{1}+1)e_{t_{2}+1}+(n-i_{1}-1)e_{t_{2}+i_{2}+1}\in
{\bf{\mathcal{P}}},$ we may assume $t_{2}+1\in C_{1},\ldots,
t_{2}+1\in C_{i_{1}}, t_{2}+1\in C_{n}.$
Then $(i_{1}+1)e_{t_{2}+1}+(i_{2}+1)e_{i_{1}+1}+
(n-i_{1}-i_{2}-2)e_{t_{2}+i_{2}+1}\in
{\bf{\mathcal{P}}},$ which is false. If $i_{1}>i_{2},$
then since $(i_{2}+1)e_{t_{2}+1}+(i_{1}-i_{2})e_{1}+
(n-i_{1}-1)e_{t_{2}+i_{2}+1}\in {\bf{\mathcal{P}}},$
we may assume $t_{2}+1\in C_{1},\ldots,
t_{2}+1\in C_{i_{2}}, t_{2}+1\in C_{n}.$ Then
$(i_{1}-i_{2})e_{1}+(i_{2}+1)e_{t_{2}+1}+(i_{2}+1)e_{t_{2}+
i_{2}}+(n-i_{1}-i_{2}-2)e_{t_{2}+i_{2}+1}\in
{\bf{\mathcal{P}}},$ which is false.
\end{proof}
\begin{lemma}
Let $A$ and $B$ be as above. If $i_{1}\geq 2,$
$i_{1}+1\leq t_{2} \leq n-1,$  then  the
$K-algebra$ $K[A\cap B]$ is the base ring associated to some
transversal polymatroid if and only if
$i_{2}\in \{1,\ldots,n-t_{2}\}
\cup \{n-t_{2}+i_{1},\ldots,n-2\}.$
\end{lemma}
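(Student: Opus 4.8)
The plan is to recast both algebras combinatorially and then treat the two implications by different means: the ``only if'' part through a submodularity obstruction, and the ``if'' part by exhibiting an explicit presentation, exactly in the spirit of the preceding lemma. First I would record that, since every vector of $A$ and of $B$ has modulus $n$, the descriptions of $A$ and $B$ given above yield
\[ A\cap B=\{\alpha\in\mathbb{N}^{n}\ |\ |\alpha|=n,\ \alpha([i_{1}])\le i_{1}+1,\ \alpha(D)\le i_{2}+1\}, \]
where $\alpha(X)=\sum_{k\in X}\alpha_{k}$, and $D=\{t_{2}+1,\ldots,t_{2}+i_{2}\}$ when $i_{2}+t_{2}\le n$ while $D=\{1,\ldots,i_{2}+t_{2}-n\}\cup\{t_{2}+1,\ldots,n\}$ when $i_{2}+t_{2}\ge n$. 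Because $t_{2}\ge i_{1}+1$, the block $\{t_{2}+1,\ldots,n\}$ is disjoint from $[i_{1}]$, so $[i_{1}]\cap D=\emptyset$ exactly when $i_{2}\le n-t_{2}$, $[i_{1}]\subseteq D$ exactly when $i_{2}\ge n-t_{2}+i_{1}$, and in the intermediate range $n-t_{2}+1\le i_{2}\le n-t_{2}+i_{1}-1$, with $m:=i_{2}+t_{2}-n\in\{1,\ldots,i_{1}-1\}$, the two blocks cross: $[i_{1}]\cap D=[m]$, $[i_{1}]\setminus D=\{m+1,\ldots,i_{1}\}$ and $D\setminus[i_{1}]=\{t_{2}+1,\ldots,n\}$ are all nonempty, and $[i_{1}]\cup D=[n]\setminus\{i_{1}+1,\ldots,t_{2}\}$. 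Thus the asserted condition on $i_{2}$ says precisely that $\{[i_{1}],D,[n]\}$ is a laminar family. Finally, since a base ring is standard graded with its generators in degree $1$ and the monomials $x^{\alpha}$ with $|\alpha|=n$ are linearly independent, $K[A\cap B]=K[\mathcal{C}]$ for a transversal polymatroid $\mathcal{P}$ presented by $\mathcal{C}$ is equivalent to $A\cap B=B(\mathcal{P})$.

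For ``only if'', suppose $A\cap B=B(\mathcal{P})$ for a transversal (hence discrete) polymatroid $\mathcal{P}$ and, for contradiction, that $n-t_{2}+1\le i_{2}\le n-t_{2}+i_{1}-1$, so $[i_{1}]$ and $D$ cross as above. Its rank function satisfies $\rho_{\mathcal{P}}(X)=\max\{u(X)\ |\ u\in\mathcal{P}\}=\max\{u(X)\ |\ u\in B(\mathcal{P})\}$, the last equality because every vector of $\mathcal{P}$ is dominated by a base. Optimizing $\max\{u(X)\ |\ u\in A\cap B\}$ over the polytope $\{u\ge 0\ |\ |u|=n,\ u([i_{1}])\le i_{1}+1,\ u(D)\le i_{2}+1\}$ by concentrating mass on a single coordinate of $[i_{1}]\setminus D$, of $D\setminus[i_{1}]$, of $[m]$, or on the nonempty block $\{i_{1}+1,\ldots,t_{2}\}$, one gets $\rho_{\mathcal{P}}([i_{1}])=i_{1}+1$, $\rho_{\mathcal{P}}(D)=i_{2}+1$, and $\rho_{\mathcal{P}}([m])\ge\min(i_{1}+1,i_{2}+1)$, $\rho_{\mathcal{P}}([i_{1}]\cup D)\ge\min(n,\,i_{1}+i_{2}+2)$. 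Using $i_{1}\le n-2$ and $i_{2}\le n-2$, one checks in the two cases $i_{1}+i_{2}+2\le n$ and $i_{1}+i_{2}+2>n$ that $\min(n,i_{1}+i_{2}+2)+\min(i_{1}+1,i_{2}+1)>i_{1}+i_{2}+2=\rho_{\mathcal{P}}([i_{1}])+\rho_{\mathcal{P}}(D)$, contradicting submodularity of the rank function (Proposition 1.3.4). Hence $i_{2}\in\{1,\ldots,n-t_{2}\}\cup\{n-t_{2}+i_{1},\ldots,n-2\}$.

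For ``if'', following the method of the preceding lemma, I would exhibit a presentation $\mathcal{C}=\{C_{1},\ldots,C_{n}\}$ and verify $B(\mathcal{P}_{\mathcal{C}})=A\cap B$ by comparing the generating monomials of the two base rings. When $i_{2}\le n-t_{2}$ (so $[i_{1}]\cap D=\emptyset$), set $p=\max(0,\,i_{1}+i_{2}+2-n)$; from $i_{1},i_{2}\le n-2$ one has $0\le p\le\min(i_{1}+1,i_{2}+1)$, and I take $p$ copies of $[n]$, then $i_{1}+1-p$ copies of $[n]\setminus D$, then $i_{2}+1-p$ copies of $[n]\setminus[i_{1}]$, then $n-i_{1}-i_{2}-2+p$ copies of $[n]\setminus([i_{1}]\cup D)$. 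When $i_{2}\ge n-t_{2}+i_{1}$ (so $[i_{1}]\subseteq D$), I take $i_{1}+1$ copies of $[n]$, then $i_{2}-i_{1}$ copies of $[n]\setminus[i_{1}]$, then $n-i_{2}-1$ copies of $[n]\setminus D$; all multiplicities are nonnegative because $i_{1}<i_{2}\le n-2$. In either case a direct count gives $\rho_{\mathcal{C}}([i_{1}])=i_{1}+1$, $\rho_{\mathcal{C}}(D)=i_{2}+1$, $\rho_{\mathcal{C}}([n])=n$, whence $B(\mathcal{P}_{\mathcal{C}})\subseteq A\cap B$; and since $\{[i_{1}],D,[n]\}$ is laminar, one checks that $\rho_{\mathcal{C}}(X)\ge\max\{u(X)\ |\ u\in A\cap B\}$ for every $X\subseteq[n]$, which gives the reverse inclusion. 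Concretely, exactly as in the preceding lemma, one writes the monomials of $K[\mathcal{C}]$ as a union, over the relevant parameters, of products of the variable blocks determined by $\mathcal{C}$, and matches this set term by term with $\{x^{\alpha}\ |\ \alpha\in A\cap B\}$.

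The main obstacle is this last verification in the ``if'' direction: showing that the presentation $\mathcal{C}$ imposes no constraint beyond those on $[i_{1}]$, $D$ and $[n]$, equivalently that every $\alpha$ with $|\alpha|=n$, $\alpha([i_{1}])\le i_{1}+1$ and $\alpha(D)\le i_{2}+1$ is realized as $e_{j_{1}}+\cdots+e_{j_{n}}$ with $j_{k}\in C_{k}$. This is a Hall-type distribution argument that has to be run through the two sub-cases, and within the disjoint one according to whether $i_{1}+i_{2}+2\le n$ or not, just as the generating-monomial comparison is carried out in the preceding lemma; by contrast the ``only if'' direction is essentially immediate once the four values (or lower bounds) of $\rho_{\mathcal{P}}$ above are in hand.
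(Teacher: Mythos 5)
Your argument is correct, and its two halves relate to the paper's proof in different ways. In the ``if'' direction your presentations are the same as the paper's: your parameter $p=\max(0,i_{1}+i_{2}+2-n)$ (which is $0$ or $1$ here, since $i_{2}\le n-t_{2}$ forces $i_{1}+i_{2}+2\le n+1$) merely merges the paper's Cases 1 and 2 of the disjoint situation, and your nested presentation is the paper's Case 3. What differs is the verification: the paper lists the generating monomials of $K[\mathcal{C}]$ explicitly as products of variable blocks and matches them with $A\cap B$, whereas you compare rank functions. The ``Hall-type distribution argument'' you flag as the main remaining obstacle is in fact already packaged in Theorem 1.3.13: the set of bases of the transversal polymatroid presented by $\mathcal{C}$ is exactly $\{u\in\mathbb{Z}_{+}^{n}\ |\ u(X)\le\rho_{\mathcal{C}}(X)\ \text{for all}\ X,\ |u|=n\}$, so no realization of $\alpha$ as $e_{j_{1}}+\cdots+e_{j_{n}}$ needs to be built by hand; and since $\rho_{\mathcal{C}}(X)$ depends only on which of the three atoms of the laminar family $X$ meets, the inequality $\rho_{\mathcal{C}}(X)\ge\max\{u(X)\ |\ u\in A\cap B\}$ reduces to at most seven one-line evaluations, all of which check out. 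Your ``only if'' direction is genuinely different from the paper's: the paper rules out the crossing range $n-t_{2}+1\le i_{2}\le n-t_{2}+i_{1}-1$ by an ad hoc analysis of the maximal power of $x_{1}$ occurring in a minimal generator of $K[\mathcal{C}]$, whereas you exhibit a violation of submodularity, namely $\rho([i_{1}]\cup D)+\rho([i_{1}]\cap D)\ge\min(n,i_{1}+i_{2}+2)+\min(i_{1}+1,i_{2}+1)>i_{1}+i_{2}+2=\rho([i_{1}])+\rho(D)$, the last inequality using $i_{1},i_{2}\le n-2$ (all four witnessing vectors exist because $t_{2}\ge i_{1}+1$ keeps the block $\{i_{1}+1,\ldots,t_{2}\}$ available as a sink for leftover mass). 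Your route is cleaner and proves more --- in the crossing range $A\cap B$ is not the set of bases of any discrete polymatroid, transversal or not --- at the cost of invoking the submodularity of the rank function from Section 1.3 rather than arguing directly from a putative presentation.
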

\begin{proof}
$''\Leftarrow''$ \
Let $i_{2}\in \{1,\ldots,n-t_{2}\}
\cup \{n-t_{2}+i_{1},\ldots,n-2\}.$ We will prove that
there exists a transversal polymatroid ${\bf{\mathcal{P}}}$
presented by  ${\bf{\mathcal{C}}}=\{C_{1},\ldots,C_{n}\}$
such that its associated base ring is $K[A\cap B].$
We distinguish three cases to study:\\
${\bf Case} \ 1.$ If $i_{2}+t_{2}\leq n$ and $i_{1}+1+i_{2}\neq n,$
then let ${\bf{\mathcal{P}}}$ be the transversal polymatroid
presented by  ${\bf{\mathcal{C}}}=\{C_{1},\ldots,C_{n}\},$ where
\begin{align*}
&C_{1}=\ldots =C_{i_{1}}=C_{n}=[n]\setminus \sigma^{t_{2}}[i_{2}],\\
&C_{i_{1}+1}=\ldots =C_{i_{1}+i_{2}+1}=[n]\setminus [i_{1}],\\
&C_{i_{1}+i_{2}+2}=\ldots =C_{n-1}=[n]
\setminus ([i_{1}]\cup \sigma^{t_{2}}[i_{2}]).
\end{align*}
The associated polymatroidal diagram is the following.

\unitlength 1mm 
\linethickness{0.4pt}
\ifx\plotpoint\undefined\newsavebox{\plotpoint}\fi 
\begin{picture}(80.25,55.75)(10,10)
\put(66.25,15.25){\framebox(35.75,35.25)[cc]{}}
\put(83.75,50.5){\line(0,-1){34.75}}
\put(74.75,50.25){\line(0,-1){35}}
\put(70.5,50.25){\line(0,-1){34.5}}
\multiput(70.25,16.25)(.033333,-.1){15}{\line(0,-1){.1}}
\put(83.75,16.25){\line(0,-1){1.25}}
\put(79.25,50.25){\line(0,-1){35}}
\put(92.5,50.25){\line(0,-1){35.25}}
\put(87.75,50.5){\line(0,-1){35}}
\multiput(97.75,50.25)(-.03125,.03125){8}{\line(0,1){.03125}}
\put(97,50.25){\line(0,-1){35.5}}
\put(66.5,33){\line(1,0){35.75}}
\put(66.25,41.25){\line(1,0){35.75}}
\put(66.25,45.5){\line(1,0){36}}
\put(66.25,36.75){\line(1,0){35.75}}
\put(66,28.75){\line(1,0){36}}
\put(66.25,20){\line(1,0){36}}
\put(66.25,24.25){\line(1,0){36}}
\put(66.5,20){\rule{12.5\unitlength}{16.5\unitlength}}
\put(83.75,15.5){\rule{8.75\unitlength}{8.5\unitlength}}
\multiput(83.5,50.5)(.06182796,.03360215){186}{\line(1,0){.06182796}}
\put(95,56.75){\line(-2,-5){2.5}}
\multiput(83.75,50.25)(-.0738342,.03367876){193}{\line(-1,0){.0738342}}
\multiput(69.5,56.75)(-.03365385,-.06009615){104}{\line(0,-1){.06009615}}
\multiput(66,50.5)(-.03373016,-.04563492){126}{\line(0,-1){.04563492}}
\multiput(61.75,44.75)(.03358209,-.05970149){134}{\line(0,-1){.05970149}}
\put(98,58.25){$i_{2} \ {\rm{columns}}$}
\put(61,59.75){$t_{2} \ {\rm{columns}}$}
\put(35,44.25){$i_{1} \ {\rm{rows}}$}
\put(83.75,36.75){\rule{8.75\unitlength}{13.5\unitlength}}
\end{picture}

It is easy to see that the base ring $K[{\bf{\mathcal{C}}}]$
associated to the transversal polymatroid ${\bf{\mathcal{P}}}$
presented by ${\bf{\mathcal{C}}}$ is generated by the
following set of monomials
\[\{x_{1},\ldots,x_{i_{1}}\}^{i_{1}+1-k}
\{x_{t_{2}+1},\ldots,x_{t_{2}+i_{2}}\}^{i_{2}+1-s}
\{x_{i_{1}+1},\ldots,x_{t_{2}},x_{t_{2}+i_{2}+1},\ldots,x_{n}\}
^{n-i_{1}-i_{2}-2+k+s}\] for any $0\leq k\leq i_{1}+1$ and
$0\leq s\leq i_{2}+1.$
If $x^{\alpha}\in K[{\bf{\mathcal{C}}}],$
$\alpha=(\alpha_{1},\ldots,\alpha_{n})\in {\mathbb{N}}^{n},$
then there exist $0\leq k \leq i_{1}+1$ and $0\leq s \leq i_{2}+1$
such that
\[\alpha_{t_{2}+1}+\ldots + \alpha_{t_{2}+i_{2}}= i_{2}+1-s
\ , \ \ \alpha_{1}+\ldots + \alpha_{i_{1}}= i_{1}+1-k\]
and thus, $K[{\bf{\mathcal{C}}}]\subset K[A\cap B].$
Conversely, \ if $\alpha \in A\cap B$ then
$\alpha_{t_{2}+1}+\ldots + \alpha_{t_{2}+i_{2}}\leq i_{2}+1
\ , \ \ \alpha_{1}+\ldots + \alpha_{i_{1}}\leq i_{1}+1;$
thus there exist $0\leq k \leq i_{1}+1$ and $0\leq s
\leq i_{2}+1$ such that
\[\alpha_{t_{2}+1}+\ldots + \alpha_{t_{2}+i_{2}}= i_{2}+1-s
\ , \ \ \alpha_{1}+\ldots + \alpha_{i_{1}}= i_{1}+1-k\] and
since $ | \ \alpha \ | = n$ it follows that
 \[x^{\alpha}\in \{x_{1},\ldots,
x_{i_{1}}\}^{i_{1}+1-k}
\{x_{t_{2}+1},\ldots,x_{t_{2}+i_{2}}\}^{i_{2}+1-s}
\{x_{i_{1}+1},\ldots,x_{t_{2}},x_{t_{2}+i_{2}+1},\ldots,x_{n}\}
^{n-i_{1}-i_{2}-2+k+s}.\]
Thus, $K[{\bf{\mathcal{C}}}]\supset K[A\cap B]$ and so
$K[{\bf{\mathcal{C}}}]= K[A\cap B].$\\
${\bf Case} \ 2.$ If $i_{2}+t_{2}\leq n$ and $i_{1}+1+i_{2}=n,$ then
$t_{2}=i_{1}+1$ and let ${\bf{\mathcal{P}}}$
be the transversal polymatroid presented by
${\bf{\mathcal{C}}}=\{C_{1},\ldots,C_{n}\},$ where
\begin{align*}
&C_{1}=\ldots =C_{i_{1}}=[n]\setminus \sigma^{t_{2}}[i_{2}],\\
&C_{i_{1}+1}= \ldots =C_{n-1}=[n]\setminus [i_{1}],\\
&C_{n}=[n].
\end{align*}
The associated polymatroidal diagram is the following.

\unitlength 1mm 
\linethickness{0.4pt}
\ifx\plotpoint\undefined\newsavebox{\plotpoint}\fi 
\begin{picture}(89.75,49.75)(-7,10)
\put(49,12.5){\framebox(33.75,33.5)[cc]{}}
\put(63.5,45.75){\line(0,-1){32.75}}
\put(73.25,45.75){\line(0,-1){33}}
\put(68.25,45.75){\line(0,-1){33}}
\put(78,45.75){\line(0,-1){33}}
\put(53.75,46){\line(0,-1){33.25}}
\put(58.75,45.75){\line(0,-1){33.25}}
\put(63.5,13.5){\line(0,-1){1}}
\put(49,31.5){\line(1,0){33.75}}
\put(49,22.25){\line(1,0){34}}
\put(49.25,26.75){\line(1,0){33.5}}
\put(48.75,40.25){\line(1,0){34}}
\put(48.75,35.75){\line(1,0){33.75}}
\put(48.75,17.5){\line(1,0){34.25}}
\put(49,17.5){\rule{9.5\unitlength}{18\unitlength}}
\put(63.25,35.5){\rule{19.5\unitlength}{10.25\unitlength}}
\multiput(63.5,46.25)(.1057047,.03355705){149}{\line(1,0){.1057047}}
\multiput(79.25,51.25)(.03350515,-.04896907){97}{\line(0,-1){.04896907}}
\multiput(49,45.75)(.0336538,.0865385){52}{\line(0,1){.0865385}}
\multiput(50.75,50.25)(.11383929,-.03348214){112}{\line(1,0){.11383929}}
\multiput(49,35.75)(-.03947368,.03362573){171}{\line(-1,0){.03947368}}
\multiput(42.25,41.5)(.05223881,.03358209){134}{\line(1,0){.05223881}}
\put(21.25,42){$i_{1} \ {\rm{rows}}$}
\put(41.25,54.75){$t_{2} \ {\rm{columns}}$}
\put(85.75,54.25){$i_{2} \ {\rm{columns}}$}
\end{picture}

It is easy to see that the base ring $K[{\bf{\mathcal{C}}}]$
associated to the transversal polymatroid ${\bf{\mathcal{P}}}$
presented by ${\bf{\mathcal{C}}}$ is generated by the
following set of monomials
\[\{x_{1},\ldots,x_{i_{1}}\}^{i_{1}+1-k}x_{i_{1}+1}^{n-i_{1}-1+k-s}
\{x_{i_{1}+2},\ldots,x_{n}\}^{s}\] for any
$0\leq k\leq i_{1}+1$ and $0\leq s\leq n-i_{1}.$
If $x^{\alpha}\in K[{\bf{\mathcal{C}}}],$ $\alpha=
(\alpha_{1},\ldots,\alpha_{n})\in {\mathbb{N}}^{n},$
then there exist $0\leq k \leq i_{1}+1$ and
$0\leq s \leq i_{2}+1(=n-i_{1})$ such that
\[\alpha_{t_{2}+1}+\ldots + \alpha_{t_{2}+i_{2}}=
\alpha_{i_{1}+2}+\ldots+x_{n}=s\leq i_{2}+1 \ , \  \
\alpha_{1}+\ldots + \alpha_{i_{1}}= i_{1}+1-k\] and thus,
$K[{\bf{\mathcal{C}}}]\subset K[A\cap B].$
Conversely, \ if $\alpha \in A\cap B$ then
$\alpha_{t_{2}+1}+\ldots + \alpha_{t_{2}+i_{2}}=
\alpha_{i_{1}+2}+\ldots+x_{n}\leq i_{2}+1 \ {\rm{and}} \
\alpha_{1}+\ldots + \alpha_{i_{1}}\leq i_{1}+1;$
thus there exist $0\leq k \leq i_{1}+1$ and
$0\leq s \leq i_{2}+1$ such that \[x^{\alpha}\in
\{x_{1},\ldots,x_{i_{1}}\}^{i_{1}+1-k}x_{i_{1}+1}^
{n-i_{1}-1+k-s}\{x_{i_{1}+2},\ldots,x_{n}\}^{s}.\]
Thus, $K[{\bf{\mathcal{C}}}]\supset K[A\cap B]$ and
so $K[{\bf{\mathcal{C}}}]= K[A\cap B].$\\
${\bf Case} \ 3.$ If $i_{2}+t_{2}>n,$ then let ${\bf{\mathcal{P}}}$
be the transversal polymatroid presented by
${\bf{\mathcal{C}}}=\{C_{1},\ldots,C_{n}\},$ where
\begin{align*}
&C_{1}=\ldots =C_{i_{1}}=C_{n}=[n],\\
&C_{i_{1}+1}=
\ldots =C_{i_{1}+n-i_{2}-1}=[n]\setminus \sigma^{t_{2}}[i_{2}],\\
&C_{i_{1}+n-i_{2}}=\ldots=C_{n-1}=[n]\setminus[i_{1}].
\end{align*}

The associated polymatroidal diagram is the following:

\unitlength 1mm 
\linethickness{0.4pt}
\ifx\plotpoint\undefined\newsavebox{\plotpoint}\fi 
\begin{picture}(0,30)(-20,45)
\put(32.5,14.25){\framebox(38.5,38)[cc]{}}
\put(36.5,52.5){\line(0,-1){38.25}}
\put(40.5,52){\line(0,-1){37.75}}
\put(45,52){\line(0,-1){38}}
\put(50,52){\line(0,-1){37.25}}
\put(54.75,52.25){\line(0,-1){37.5}}
\put(59.75,52.25){\line(0,-1){37.75}}
\put(65.25,52.5){\line(0,-1){38.25}}
\put(32.75,47.25){\line(1,0){38.25}}
\put(32.5,43){\line(1,0){39.25}}
\put(32.25,37.5){\line(1,0){38.75}}
\put(32.25,32.5){\line(1,0){38.75}}
\put(32.5,28.25){\line(1,0){38.5}}
\put(32.5,23.25){\line(1,0){38.25}}
\put(32.5,18.75){\line(1,0){38.25}}
\put(32.5,18.5){\rule{8.25\unitlength}{24.25\unitlength}}
\put(40.5,32.75){\rule{4.25\unitlength}{10\unitlength}}
\put(59.75,32.5){\rule{11\unitlength}{10.25\unitlength}}
\multiput(32.25,43)(-.03373016,.04761905){126}{\line(0,1){.04761905}}
\multiput(28,49)(.04381443,.03350515){97}{\line(1,0){.04381443}}
\multiput(32.75,52)(.08586957,.03369565){230}{\line(1,0){.08586957}}
\multiput(52.5,59.75)(.03358209,-.03482587){201}{\line(0,-1){.03482587}}
\put(71,43){\line(3,-4){4.5}}
\multiput(75.5,37)(-.03358209,-.03544776){134}{\line(0,-1){.03544776}}
\put(47.5,63.5){$t_{2} \ {\rm{columns}}$}
\put(10,49.75){$i_{1} \ {\rm{rows}}$}
\put(79,38){$n-i_{2}-1 \ {\rm{rows}}$}
\end{picture}
\[\] \[\] \[\] \[\] \[\] \[\] 
Since $i_{2}+t_{2}>n,$ it follows that $i_{2}+t_{2}\geq n+i_{1}$
and so $i_{2}-i_{1}\geq n-t_{2}\geq 1.$
It is easy to see that the base ring $K[{\bf{\mathcal{C}}}]$
associated to the transversal polymatroid ${\bf{\mathcal{P}}}$
presented by ${\bf{\mathcal{C}}}$ is generated by the
following set of monomials
\[\{x_{1},\ldots,x_{i_{1}}\}^{i_{1}+1-k}
\{x_{i_{1}+1},\ldots,x_{i_{2}+t_{2}-n},x_{t_{2}+1},\ldots,x_{n}\}
^{i_{2}-i_{1}+k-s}
\{x_{i_{2}+t_{2}-n+1},\ldots,x_{t_{2}}\}^{n-i_{2}-1+s}\] for any
$0\leq k \leq i_{1}+1$ and $0\leq s \leq i_{2}-i_{1}+k.$ If
$x^{\alpha}\in K[{\bf{\mathcal{C}}}],$
$\alpha=(\alpha_{1},\ldots,\alpha_{n})\in {\mathbb{N}}^{n},$ then
there exist $0\leq k \leq i_{1}+1$ and $0\leq s \leq i_{2}-i_{1}+k$
such that
\[\alpha_{1}+\ldots + \alpha_{i_{1}}= i_{1}+1-k \ , \
\ \alpha_{1}+\ldots+\alpha_{i_{2}+t_{2}-n}+
\alpha_{t_{2}+1}+\ldots+\alpha_{n}= i_{2}+1-s\] and thus,
$K[{\bf{\mathcal{C}}}]\subset K[A\cap B].$ Conversely, \ if $\alpha
\in A\cap B$ then $\alpha_{1}+ \ldots + \alpha_{i_{1}}\leq i_{1}+1 \
{\rm{and}} \ \alpha_{1}+ \ldots+\alpha_{i_{2}+t_{2}-n}+\alpha_{t_{2}+1}+
\ldots+\alpha_{n}\leq i_{2}+1,$ then there exist $0\leq k \leq
i_{1}+1$ and $0\leq s \leq i_{2}-i_{1}+k$ such that
\[\alpha_{1}+\ldots + \alpha_{i_{1}}= i_{1}+1-k \ , \ \
\alpha_{1}+\ldots+\alpha_{i_{2}+t_{2}-n}+
\alpha_{t_{2}+1}+\ldots+\alpha_{n}= i_{2}+1-s\] and since $|\ \alpha
\ |=n$ it follows that
 \[x^{\alpha}\in \{x_{1},\ldots,x_{i_{1}}\}^{i_{1}+1-k}
\{x_{i_{1}+1},\ldots,x_{i_{2}+t_{2}-n},x_{t_{2}+1},\ldots,x_{n}\}
^{i_{2}-i_{1}+k-s}
\{x_{i_{2}+t_{2}-n+1},\ldots,x_{t_{2}}\}^{n-i_{2}-1+s}.\]
Thus, $K[{\bf{\mathcal{C}}}]\supset K[A\cap B]$
and so $K[{\bf{\mathcal{C}}}]= K[A\cap B].$

$''\Rightarrow''$ \ Now suppose that there exists a transversal
polymatroid ${\bf{\mathcal{P}}}$ presented by
${\bf{\mathcal{C}}}=\{C_{1},\ldots,C_{n}\}$ such that its associated
base ring  is $K[A\cap B]$. We will prove that $i_{2}\in
\{1,\ldots,n-t_{2}\} \cup \{n-t_{2}+i_{1},\ldots,n-2\}.$ Suppose, on
the contrary, that $i_{2}\in \{n-t_{2}+1,\ldots,n-t_{2}+i_{1}-1\}.$
We may assume that $1\notin C_{i_{2}+t_{2}-n+1}, \ldots, 1\notin
C_{i_{1}}, 1\notin C_{i_{1}+1}, \ldots, 1\notin C_{n-1}.$ If $i_{1}<
i_{2},$ then $x_{1}^{i_{1}+1}x_{i_{1}+1}^{n-i_{1}-1}\in K[A\cap B]$.
But $x_{1}^{i_{1}+1}x_{i_{1}+1}^{n-i_{1}-1} \notin
K[{\bf{\mathcal{C}}}]$ because the maximal power of $x_1$ in a
minimal generator of $K[{\bf{\mathcal{C}}}]$ is $\leq i_2+t_2-n+1\leq i_1$, which is
false. If $i_{1}\geq i_{2},$ then
$x_{1}^{i_{2}+1}x_{i_{1}+1}^{n-i_{2}-1}\in K[A\cap B]$. But
$x_{1}^{i_{2}+1}x_{i_{1}+1}^{n-i_{2}-1} \notin
K[{\bf{\mathcal{C}}}]$ because the maximal power of $x_1$ in a
minimal generator of $K[{\bf{\mathcal{C}}}]$ is $\leq i_2+t_2-n+1\leq i_2$, which is
false. Thus, $i_{2}\in \{1,\ldots,n-t_{2}\} \cup
\{n-t_{2}+i_{1},\ldots,n-2\}.$
\end{proof}

\begin{lemma}
Let $A$ and $B$ as above. If $i_{1}=1$ and  $0\leq t_{2}
\leq n-1,$ or $i_{1}\geq 2$ and $t_{2}=i_{1},$ or $i_{1}\geq 2$ and
$t_{2}=0,$   then  the $K-algebra$ $K[A\cap B]$ is the base ring
associated to some transversal polymatroid.
\end{lemma}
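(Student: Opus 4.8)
The plan is to handle the three listed configurations by exhibiting, in each case, an explicit presentation ${\bf{\mathcal{C}}}=\{C_1,\ldots,C_n\}$ of a transversal polymatroid whose base ring equals $K[A\cap B]$, exactly as was done for the more delicate cases in Lemmas 3.2.3 and 3.2.4. First I would record the description of $A\cap B$ coming from the text: $A\cap B$ consists of all $\alpha\in\mathbb{N}^n$ with $|\alpha|=n$ satisfying $\alpha_1+\cdots+\alpha_{i_1}\le i_1+1$ together with the second linear constraint attached to $B$ (either $\alpha_{t_2+1}+\cdots+\alpha_{t_2+i_2}\le i_2+1$ if $i_2+t_2\le n$, or $\sum_{s=1}^{i_2+t_2-n}\alpha_s+\sum_{s=t_2+1}^{n}\alpha_s\le i_2+1$ if $i_2+t_2\ge n$). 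The key observation making these three cases easy is that the ``$i_1$-block'' $\{1,\ldots,i_1\}$ and the ``$i_2$-block'' $\sigma^{t_2}[i_2]$ are in a nested or disjoint (rather than genuinely overlapping) position, so the two inequalities decouple combinatorially.

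Second I would treat the case $i_1=1$, $0\le t_2\le n-1$: here the first constraint is $\alpha_1\le 2$, which by itself already defines a base ring of a transversal polymatroid (take all $A_i=[n]$ except two of them equal to $[n]\setminus\{1\}$). The intersection with $B$ then amounts to intersecting two transversal base rings of the type covered by Lemma 3.2.3 or 3.2.4 with the roles relabeled, OR — cleaner — one directly writes down ${\bf{\mathcal{C}}}$ by combining the ``defect-one'' slot for coordinate $1$ with the slots that produce the $B$-constraint; the verification that $K[{\bf{\mathcal{C}}}]=K[A\cap B]$ is the same two-inclusion monomial argument used repeatedly above (every generator of $K[{\bf{\mathcal{C}}}]$ satisfies both inequalities, and conversely every $\alpha\in A\cap B$ factors into the prescribed product of variable-sets). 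For $i_1\ge 2$, $t_2=0$: then the $B$-block is $[i_2]$, nested inside or containing $[i_1]$, so the two constraints are on nested coordinate sets $\{1,\ldots,\min(i_1,i_2)\}\subset\{1,\ldots,\max(i_1,i_2)\}$, and one builds ${\bf{\mathcal{C}}}$ with $C_k=[n]$ for $k$ in the inner block and the outer $\{n\}$, $C_k=[n]\setminus[\min]$ on the annulus between the two blocks, and $C_k=[n]\setminus[\max]$ beyond. For $i_1\ge 2$, $t_2=i_1$: the $B$-block is $\{i_1+1,\ldots,i_1+i_2\}$, disjoint from $\{1,\ldots,i_1\}$ but adjacent to it, so one takes $C_1=\cdots=C_{i_1}=C_n=[n]$, $C_{i_1+1}=\cdots=C_{i_1+i_2}=[n]\setminus\{\text{both blocks overlap position}\}$ — more precisely one mimics Case 1 of Lemma 3.2.3 with the relabeling, using $[n]$ on the two blocks and $[n]\setminus([i_1]\cup\{i_1+1,\ldots,i_1+i_2\})$ elsewhere — and checks the two inclusions.

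In all three subcases the verification is the standard bookkeeping: a monomial in $K[{\bf{\mathcal{C}}}]$ is a product $\prod_k x_{j_k}$ with $j_k\in C_k$, and by counting how many factors can land in each block one shows $\alpha_1+\cdots+\alpha_{i_1}\le i_1+1$ and the $B$-inequality automatically; conversely, given $\alpha\in A\cap B$ with $|\alpha|=n$, one distributes the $n$ units of $\alpha$ among the slots $C_k$ greedily (first fill the slots that are forced to draw from a block, using that the deficits $i_1+1-(\alpha_1+\cdots+\alpha_{i_1})\ge 0$ and $i_2+1-(\cdots)\ge 0$ leave enough room), producing a factorization witnessing $x^\alpha\in K[{\bf{\mathcal{C}}}]$. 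The main obstacle — such as it is — is purely notational: getting the block boundaries right when $i_1$ and $i_2$ are in the opposite size order, and when the $B$-block wraps around (the $i_2+t_2\ge n$ regime), so that the claimed generating set of $K[{\bf{\mathcal{C}}}]$ is stated correctly. Since $t_2\in\{0,i_1\}$ (or $i_1=1$) none of the genuinely overlapping ``Case $2'$'' obstructions from Lemma 3.2.3 or 3.2.4 can occur, so no nonexistence argument is needed here and the lemma is purely a construction-plus-check. I would organize the write-up as three short paragraphs, one per configuration, each ending with the two-inclusion argument abbreviated as ``as in the proof of Lemma 3.2.3.''
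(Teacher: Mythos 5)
Your proposal follows essentially the same route as the paper: for each of the three configurations one writes down an explicit presentation $\mathcal{C}$ and identifies the resulting polymatroid (and hence its base ring) with one already verified in Lemma 3.2.2 or 3.2.3, the paper doing exactly this by splitting into subcases according to whether $i_2+t_2$ exceeds $n$, whether $i_1+1+i_2=n$, and whether $i_2\le i_1$, with only the final subcase ($t_2=0$, $i_2>i_1$) checked from scratch by the two-inclusion monomial argument. The one caveat is that your claim that the two blocks are always nested or disjoint fails in the wrap-around regime $t_2=i_1$, $i_2+t_2>n$ (and its $i_1=1$ analogue), where $\sigma^{t_2}[i_2]$ genuinely meets $[i_1]$; this costs nothing, since the needed presentation is exactly the one from Case 2 of Lemma 3.2.2, but it is an additional subcase your sketch must absorb rather than dismiss.
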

\begin{proof}
We have three cases to study:\\
${\bf Case} \ 1.$ \ $i_{1}=1$ and  $0\leq t_{2} \leq n-1.$ \
Then we distinguish five subcases:\\
${\bf Subcase} \ 1.a.$ If $t_{2}=0,$ then we find a transversal polymatroid
${\bf{\mathcal{P}}}$
like in the Subcase $3.b.$ when $i_{1}=1.$\\
${\bf Subcase} \ 1.b.$ If $t_{2}>0 \ {\rm{and}} \ t_{2}+i_{2} \leq n \ {\rm{with}} \
i_{2}\neq n-2, \ n-3,$ then
let ${\bf{\mathcal{P}}}$ be the transversal polymatroid
presented by  ${\bf{\mathcal{C}}}=\{C_{1},\ldots,C_{n}\},$
where 
\begin{align*}
&C_{1}=C_{n}=[n]\setminus \sigma^{t_{2}}[i_{2}],\\
&C_{2}=\ldots =C_{i_{2}+2}=[n]\setminus[1], \\
&C_{i_{2}+3}=\ldots=C_{n-1}=[n]\setminus \{\{1\}\cup
\sigma^{t_{2}}[i_{2}]\}.
\end{align*}
It is easy to see that the polymatroid ${\bf{\mathcal{P}}}$
is the same as in Lemma  3.2.3.
when $i_{2}+t_{2}\leq n \ {\rm{and}} \ i_{1}+1+i_{2}\neq n.$
Thus $K[A\cap B]=K[\bf{\mathcal{C}}].$\\
${\bf Subcase} \ 1.c.$ If $t_{2}>0 \ {\rm{and}} \ t_{2}+i_{2} \leq n \
{\rm{with}} \ i_{2}= n-2,$ then
let ${\bf{\mathcal{P}}}$ be the transversal polymatroid
presented by
${\bf{\mathcal{C}}}=\{C_{1},\ldots,C_{n}\},$ where
\begin{align*}
&C_{1}=[n]\setminus \sigma^{t_{2}}[n-2]\ , \ C_{n}=[n], \ \ \ \ \ \ \ \ \ \ \\
&C_{2}=\ldots =C_{n-1}=[n]\setminus[1].
\end{align*}
It is easy to see that the polymatroid ${\bf{\mathcal{P}}}$
is the same as in Lemma  3.2.3.
when $i_{2}+t_{2}\leq n \ {\rm{and}} \ i_{1}+1+i_{2} = n.$
Thus $K[A\cap B]=K[\bf{\mathcal{C}}].$\\
${\bf Subcase} \ 1.d.$ If $t_{2}>0 \ {\rm{and}} \ t_{2}+i_{2} \leq n \ {\rm{with}}
\ i_{2}= n-3,$ then let ${\bf{\mathcal{P}}}$ be the transversal
polymatroid presented by
${\bf{\mathcal{C}}}=\{C_{1},\ldots,C_{n}\},$ where 
\begin{align*}
&C_{1}=C_{n}=[n]\setminus \sigma^{t_2}[n-3],\ \ \ \ \ \ \ \ \ \ \ \ \ \ \ \ \\
&C_{2}=\ldots =C_{n-1}=[n]\setminus[1].
\end{align*}
It is easy to see that the polymatroid
${\bf{\mathcal{P}}}$ is the same as in Lemma 3.2.3. when
$i_{2}+t_{2}\leq n \ {\rm{and}} \ i_{1}+1+i_{2}\not = n.$
Thus $K[A\cap B]=K[\bf{\mathcal{C}}].$\\
${\bf Subcase} \ 1.e.$ If $t_{2}>0$ and $t_{2}+i_{2}>n,$ then
let ${\bf{\mathcal{P}}}$ be the transversal polymatroid
presented by 
${\bf{\mathcal{C}}}=\{C_{1},\ldots,C_{n}\},$ where
\begin{align*}
&C_{1}=C_{n}=[n],\ \ \ \ \ \ \ \ \ \ \ \ \ \ \ \ \ \ \ \ \ \ \ \ \ \ \ \ \ \ \\
&C_{2}=\ldots =C_{n-i_{2}}=[n]\setminus \sigma^{t_{2}}
[i_{2}],\\
&C_{n-i_{2}+1}=\ldots=C_{n-1}=[n]\setminus \{1\}.
\end{align*}
It is easy to see that the polymatroid ${\bf{\mathcal{P}}}$
is the same as in Lemma 3.2.3.
when $i_{2}+t_{2}> n \ {\rm{and}} \ i_{1}=1.$ Thus $K[A\cap B]=K[\bf{\mathcal{C}}].$\\
${\bf Case} \ 2.$ \ $i_{1}\geq 2$ and  $t_{2}=i_{1}.$\
Then we distinguish three subcases:\\
${\bf Subcase} \ 2.a.$ If $i_{2}+t_{2}<n-1,$ then
let ${\bf{\mathcal{P}}}$ be the transversal polymatroid presented by \\
${\bf{\mathcal{C}}}=\{C_{1},\ldots,C_{n}\},$ where
\begin{align*}
& \ \ \ C_{1}=\ldots=C_{i_{1}}=C_{n}=[n]\setminus \sigma^{t_{2}}[i_{2}],\\
& \ \ \ C_{i_{1}+1}=\ldots =C_{i_{1}+i_{2}+1}=[n]\setminus [i_{1}],\\
& \ \ \ C_{i_{1}+i_{2}+2}=\ldots=C_{n-1}=[n]\setminus[i_{1}+i_{2}]. 
\end{align*}
It is easy to see that the polymatroid ${\bf{\mathcal{P}}}$
is the same as in Lemma 3.2.3.
when $i_{2}+t_{2}\leq n \ {\rm{and}} \ i_{1}+1 +i_{2}\neq n.$
Thus $K[A\cap B]=K[\bf{\mathcal{C}}].$\\
${\bf Subcase} \ 2.b.$ If $i_{2}+t_{2}=n-1,$ then
let ${\bf{\mathcal{P}}}$ be the transversal polymatroid presented by
${\bf{\mathcal{C}}}=\{C_{1},\ldots,C_{n}\},$ where
\begin{align*}
&C_{1}=\ldots=C_{i_{1}}=[n]\setminus \sigma^{t_{2}}[i_{2}], \ \\
&C_{i_{1}+1}=\ldots =C_{n-1}=[n]\setminus [i_{1}], \ \\
&C_{n}=[n]. \ 
\end{align*}
It is easy to see that the polymatroid ${\bf{\mathcal{P}}}$
is the same as in Lemma 3.2.3.
when $i_{2}+t_{2}\leq n \ {\rm{and}} \ i_{1}+1 +i_{2}= n.$
Thus $K[A\cap B]=K[\bf{\mathcal{C}}].$\\
${\bf Subcase} \ 2.c.$ If $i_{2}+t_{2}\geq n,$ then
let ${\bf{\mathcal{P}}}$ be the transversal polymatroid presented by
${\bf{\mathcal{C}}}=\{C_{1},\ldots,C_{n}\},$ where
\begin{align*}
& \ \ \ \ \ \ C_{1}=\ldots=C_{n-i_{2}-1}=[n]\setminus
\sigma^{t_{2}}[i_{2}],\\
& \ \ \ \ \ \ C_{n-i_{2}}=\ldots =C_{i_{1}}=C_{n}=[n],\\
& \ \ \ \ \ \ C_{i_{1}+1}=\ldots=C_{n-1}=[n]\setminus[i_{1}].
\end{align*}
It is easy to see that the polymatroid ${\bf{\mathcal{P}}}$
is the same as in Lemma 3.2.2.
when $i_{2}+t_{2}>i_{1}.$ Thus $K[A\cap B]=K[\bf{\mathcal{C}}].$\\
${\bf Case} \ 3.$ \ $i_{1}\geq 2$ and  $t_{2}=0.$\ 
Then we distinguish two subcases:\\
${\bf Subcase} \ 3.a.$ If $i_{2}\leq i_{1},$ then
let ${\bf{\mathcal{P}}}$ be the transversal polymatroid presented by
${\bf{\mathcal{C}}}=\{C_{1},\ldots,C_{n}\},$ where
\begin{align*}
& \ \ \ \ C_{1}=\ldots=C_{i_{2}}=C_{n}=[n],\\
& \ \ \ \ C_{i_{2}+1}=\ldots =C_{i_{1}}=[n]\setminus [i_{2}], \\
& \ \ \ \ C_{i_{1}+1}=\ldots=C_{n-1}=[n]\setminus[i_{1}].
\end{align*}
It is easy to see that the polymatroid ${\bf{\mathcal{P}}}$ is
the same as in Lemma 3.2.2.
when $i_{2}+t_{2}\leq i_{1}.$ Thus $K[A\cap B]=K[\bf{\mathcal{C}}].$\\
${\bf Subcase} \ 3.b.$ If $i_{2}> i_{1},$ then
let ${\bf{\mathcal{P}}}$ be the transversal polymatroid presented by
${\bf{\mathcal{C}}}=\{C_{1},\ldots,C_{n}\},$ where
\begin{align*}
& \ \ \ \ \ \ C_{1}=\ldots=C_{i_{1}}=C_{n}=[n], \\
& \ \ \ \ \ \ C_{i_{1}+1}=\ldots =C_{i_{2}}=[n]\setminus [i_{1}],\\
& \ \ \ \ \ \ C_{i_{2}+1}=\ldots=C_{n-1}=[n]\setminus[i_{2}].
\end{align*}
The associated polymatroidal diagram is the following.

\unitlength 1mm 
\linethickness{0.4pt}
\ifx\plotpoint\undefined\newsavebox{\plotpoint}\fi 
\begin{picture}(0,70)(0,0)
\put(54,18.25){\framebox(40.75,37.75)[cc]{}}
\put(59.5,55.5){\line(0,-1){37}}
\put(65.25,56){\line(0,-1){37.25}}
\put(71,56){\line(0,-1){37.5}}
\put(76.75,55.75){\line(0,-1){37}}
\put(82.5,55.75){\line(0,-1){37.5}}
\put(88.5,56){\line(0,-1){37.75}}
\put(54,50.5){\line(1,0){41}}
\put(54.25,44.75){\line(1,0){40.5}}
\put(54,39.75){\line(1,0){40.75}}
\put(54,34){\line(1,0){40.75}}
\put(53.75,29.25){\line(1,0){41}}
\put(54,23.5){\line(1,0){40.5}}
\put(53.75,23.75){\rule{.5\unitlength}{1\unitlength}}
\multiput(70.75,45)(.03125,-.03125){8}{\line(0,-1){.03125}}
\put(74.25,45){\rule{.25\unitlength}{.25\unitlength}}
\put(53.75,23.75){\rule{11.25\unitlength}{20.75\unitlength}}
\multiput(54,56)(-.03651685,-.03370787){178}{\line(-1,0){.03651685}}
\put(47.5,50){\line(5,-4){6.25}}
\put(30,49.25){$i_{1} \ {\rm{rows}}$}
\put(71,44.25){\rule{.25\unitlength}{.75\unitlength}}
\multiput(54,55.5)(-.033636364,-.062727273){275}{\line(0,-1){.062727273}}
\multiput(44.75,38.25)(.07352941,-.03361345){119}{\line(1,0){.07352941}}
\put(30,33){$i_{2} \ {\rm{rows}}$}
\put(65,23.25){\rule{11.5\unitlength}{10.5\unitlength}}
\end{picture}

It is easy to see that the base ring $K[{\bf{\mathcal{C}}}]$
associated to the transversal polymatroid ${\bf{\mathcal{P}}}$
presented by ${\bf{\mathcal{C}}}$ is generated by the following
set of monomials
\[\{x_{1},\ldots,x_{i_{1}}\}^{i_{1}+1-k}
\{x_{i_{1}+1},\ldots,x_{i_{2}}\}^{i_{2}-i_{1}+k-s}
\{x_{i_{2}+1},\ldots,x_{n}\}^{n-i_{2}+s-1}\] for any $0\leq k\leq
i_{1}+1$ and $0\leq s\leq i_{2}-i_{1}+k.$ If $x^{\alpha}\in
K[{\bf{\mathcal{C}}}],$ $\alpha=(\alpha_{1},\ldots,\alpha_{n})\in
{\mathbb{N}}^{n},$ then there exist $0\leq k \leq i_{1}+1$ and
$0\leq s \leq i_{2}-i_{1}+k$ such that
\[\alpha_{1}+\ldots + \alpha_{i_{2}}= i_{2}+1-s \ {\rm{and}} \
\alpha_{1}+\ldots + \alpha_{i_{1}}= i_{1}+1-k\] and thus,
$K[{\bf{\mathcal{C}}}]\subset K[A\cap B].$ Conversely, \ if $\alpha
\in A\cap B$ then $\alpha_{1}+ \ldots + \alpha_{i_{2}}\leq i_{2}+1 \
{\rm{and}} \ \alpha_{1}+\ldots + \alpha_{i_{1}}\leq i_{1}+1$ and so there
exist $0\leq k \leq i_{1}+1$ and $0\leq s \leq i_{2}-i_{1}+k$ such
that
\[\alpha_{1}+\ldots + \alpha_{i_{2}}= i_{2}+1-s \ {\rm{and}} \
\alpha_{1}+\ldots + \alpha_{i_{1}}= i_{1}+1-k\] and since
$| \ \alpha \ |=n$ it follows that
\[\{x_{1},\ldots,x_{i_{1}}\}^{i_{1}+1-k}
\{x_{i_{1}+1},\ldots,x_{i_{2}}\}^{i_{2}-i_{1}+k-s}
\{x_{i_{2}+1},\ldots,x_{n}\}^{n-i_{2}+s-1}.\] Thus,
$K[{\bf{\mathcal{C}}}]\supset K[A\cap B]$
and so $K[{\bf{\mathcal{C}}}]= K[A\cap B].$\\
\end{proof}

\chapter{A remark on the Hilbert series of transversal polymatroids}
\[\]

In this chapter we study when the transversal polymatroids presented by\\
${\bf{\mathcal{A}}}=\{A_1, A_2,\ldots, A_m\},$ with all sets $A_{i}$ having two
elements,  
have the base ring $K[{\bf{\mathcal{A}}}]$ Gorenstein. Using Worpitzky
identity, we prove that the numerator of Hilbert series has the coefficients
Eulerian numbers and from \cite{BE} it follows that the Hilbert series is unimodal.

\section{Segre product and the base ring associated to a transversal polymatroid.}

Let $K$ be an infinite field, $n$ and $m$ be positive integers,
$A_{i}$ be some subsets of $[n]$ for $1\leq i\leq m$, 
${\bf{\mathcal{A}}}=\{A_{1},A_{2},\ldots,A_{m}\}.$ Let
\[K[{\bf{\mathcal{A}}}]=K[x_{i_{1}}x_{i_{2}}\ldots x_{i_{m}} \ | \ i_{j}\in A_{j},1\leq
j\leq m]\]and
\[C=K[x_{i}y_{j} \ | \ i\in A_{j},1\leq j\leq m].\]
Obviously $C\subseteq S$, where S is the Segre product of polynomial
rings in $n$, respectively $m,$ indeterminates
\[S:=K[x_{1},x_{2},\ldots ,x_{n}]*K[y_{1},y_{2},\ldots
,y_{m}]=K[x_{i}y_{j} \ | \ 1\leq i\leq n,1\leq j\leq m]. \]

We consider the variables $t_{ij},$ $1\leq i\leq n,$ $1\leq j\leq
m,$\ and we define \[T=K[t_{ij} \ | \ 1\leq i\leq n , 1\leq j\leq m ], \]
\[T({\bf{\mathcal{A}}})= K[t_{ij} \ | \ 1\leq j\leq m,i\in A_j]. \]
We also consider the presentations $\phi:T\longrightarrow S$ and $\phi^{^{\prime}}:
T({\bf{\mathcal{A}}}
)\longrightarrow C$ defined by $t_{ij}\longrightarrow x_{i}y_{j}.$
By \cite[Proposition 9.1.2]{V} we know that $\ker(\phi)$ is the ideal 
$I_{2}(t)$ of the $2$--minors of the $n\times m$ matrix $t=(t_{ij})$
via the map $\phi$. The algebras $C$, $T({\bf{\mathcal{A}}})$, $S$ and $T$ are
$\mathbb{Z}^{m}$ -graded by setting $\deg(x_{i}y_{j})=
\deg(t_{ij})=e_{j}\in \mathbb{Z}^{m}$
where $e_{j}$, $1\leq j\leq m,$ denote the vectors of the canonical basis of 
${\mathbb{Z}}^{m}.$
By \cite[Propositions 4.11 and 8.11]{S} or \cite[Proposition
8.1.10]{V} we know that the cycles of the complete bipartite graph
$K_{n,m}$ give a universal Gr\"{o}bner basis of $I_{2}(t).$
A cycle of the complete bipartite graph is described by a pair
$(I,J)$ of sequences of integers, say
\[I=i_{1},i_{2},\ldots,i_{s},\ J=j_{1},j_{2},\ldots,j_{s},\] with
$2\leq s\leq \min(m,n)$, $1\leq i_{k}\leq m$, $1\leq j_{k}\leq n,$ and
such that the $i_{k}$ are distinct and the $j_{k}$ are distinct.
Associated with any such a pair we have a polynomial
$F_{(I,J)}=t_{i_{1}j_{1}}\ldots t_{i_{s}j_{s}}-t_{i_{2}j_{1}}\ldots
t_{i_{s}j_{s-1}}t_{i_{1}j_{s}}$ which is in $I_{2}(t)$.

For a ${\mathbb{Z}}^{m}$-graded algebra $E$ we denote by $E_{\Delta }$
the direct sum of the graded components of degree $(a,a,\dots ,a)\in
\mathbb{Z}^{m}.$ Similarly, for a $\mathbb{Z}^{m}-$graded $E-$module $M$,
we denote by $M_{\Delta }$ the direct sum of the graded components
of $M$ of degree $(a,a,\dots, a)\in
\mathbb{Z}^{m}.$ Clearly $E_{\Delta }$ is a $\mathbb{Z}-$graded algebra and $%
M_{\Delta }$ is a $\mathbb{Z}-$graded $E_{\Delta }$ module. Furthermore $%
-_{\Delta }$ is exact as a functor on the category of $\mathbb
{Z}^{m}-$ graded $E-$modules with maps of degree $0.$ Now $C_{\Delta
}$ is the $K$-algebra
generated by the elements $x_{i_{1}}y_{1}\dots x_{i_{m}}y_{m}$ with $%
i_{j}\in A_{j}$. Therefore $K[{\bf{\mathcal{A}}}]$ is isomorphic 
to the algebra $C_{\Delta }$ and we have the presentation

\[0\longrightarrow J\longrightarrow T({\bf{\mathcal{A}}})_{\Delta }
\longrightarrow\ K[{\bf{\mathcal{A}}}]\longrightarrow 0,\]
where $J=I_{2}(t)\cap T({\bf{\mathcal{A}}})_{\Delta } .$

$T({\bf{\mathcal{A}}})_{\Delta }$ is the $K-$algebra generated by the monomials $%
t_{1i_{1}}t_{2i_{2}}\ldots t_{mi_{m}},$ with $i_{k}\in A_{k}$, that is, $T(%
{\bf{\mathcal{A}}})_{\Delta }$ is the Segre product $T_{1}*T_{2}*\ldots *T_{m}$
of the
polynomial rings $T_{i}=K[t_{ij} \ | \ j\in A_i].$ Now we consider the variables $%
s_{\alpha}$ with $\alpha \in A :=A_{1}\times A_{2}\times \ldots
\times A_{m}.$ Then we get the presentation of the Segre product
$T({\bf{\mathcal{A}}})_{\Delta }$ as
a quotient of $K[s_{\alpha} \ | \ \alpha \in A]$ 
by mapping $s_(j_{1},\ldots ,j_{m})$ to $%
t_{1j_{1}}t_{2j_{2}}\ldots t_{mj_{m}}.$

From \cite{H} the defining ideal of $T({\bf{\mathcal{A}}})_{\Delta }$ is
generated by the so-called Hibi relations

\[
s_{\alpha }s_{\beta }-s_{({\alpha }\vee {\beta })}s_{{(\alpha }\wedge {\beta }%
)},
\]
where
\[
{\alpha }\vee {\beta }=(\max(\alpha_{1},\beta_{1}),\ldots,
\max(\alpha_{m},\beta_{m})),
\]
and
\[
{\alpha }\wedge {\beta }=(\min(\alpha_{1},\beta_{1}),\ldots,
\min(\alpha_{m},\beta_{m})).
\]

\begin{example}
Let $n=3$ and $\mathcal{A}=\{A_{1}=\{1,2\},A_{2}=\{2,3\},A_{3}=\{3,4\}\}.$ 
Then $C$
is the quotient of $K[t_{11}, t_{12}, t_{22}, t_{23}, t_{33},
t_{34}]$ by zero ideal ($J=0$ because we don't have cycles) and then
$K[{\bf{\mathcal{A}}}]=K[x_{1}x_{2}x_{3},x_{1}x_{2}x_{4},x_{1}x_{3}^{2},
x_{1}x_{3}x_{4},x_{2}^{2}x_{3},x_{2}^{2}x_{4},x_{2}x_{3}^{2},x_{2}x_{3}x_{4}]$
is the quotient of
$K[s_{123},s_{124},s_{133},s_{134},s_{223},s_{224},s_{233},s_{234}]$
modulo the ideal generated by the Hibi relations:
\begin{center}
{\em $s_{123}s_{134}-s_{124}s_{133}$ ,
$s_{123}s_{224}-s_{124}s_{223},$ }

{\em $s_{123}s_{234}-s_{124}s_{233}$ ,
$s_{123}s_{233}-s_{133}s_{223},$ }

{\em $s_{123}s_{234}-s_{133}s_{224}$ ,
$s_{123}s_{234}-s_{134}s_{223},$ }

{\em $s_{124}s_{234}-s_{134}s_{224}$ ,
$s_{133}s_{234}-s_{134}s_{233},$ }

{\em $s_{223}s_{234}-s_{224}s_{233}.$ }
\end{center}
Since $K[t_{11}, t_{12}]*K[t_{22}, t_{23}]*K[t_{33}, t_{34}]$ is a
Gorenstein ring ({\cite[Example 7.4]{HH}}), $B_{3}$ is a
Gorenstein ring .
\end{example}
\begin{example}
Let $n=3$ and $\mathcal{A}=\{A_{1}=\{1,2\},A_{2}=\{2,3\},A_{3}=\{3,1\}\}.$ Then $C$
is the quotient of $K[t_{11}, t_{12}, t_{22}, t_{23}, t_{33},
t_{31}]$ by the polynomial $t_{11}t_{22}t_{33}-t_{12}t_{23}t_{31}$ 
(we have one 6-cycle) and then
$K[{\bf{\mathcal{A}}}]=K[x_{1}x_{2}x_{3},x_{1}^{2}x_{2},x_{1}x_{3}^{2},
x_{1}^{2}x_{3},x_{2}^{2}x_{3},x_{1}x_{2}^{2},x_{2}x_{3}^{2}]$
is the quotient of
$K[s_{123},s_{121},s_{133},s_{131},s_{223},s_{221},s_{233},s_{231}]$
modulo the ideal generated by the Hibi relations:
\begin{center}
{\em $s_{221}s_{233}-s_{223}s_{231}$,
$s_{131}s_{233}-s_{133}s_{231},$}

{\em $s_{121}s_{233}-s_{123}s_{231}$,
$s_{131}s_{221}-s_{121}s_{231},$}

{\em $s_{133}s_{221}-s_{123}s_{231}$,
$s_{131}s_{223}-s_{123}s_{231},$}

{\em $s_{133}s_{223}-s_{233}s_{231}$,
$s_{121}s_{223}-s_{221}s_{231},$}

{\em $s_{121}s_{133}-s_{131}s_{231},$}
\end{center}
and by the linear relation {\em $$s_{123}-s_{231}.$$}
 Since $K[t_{11}, t_{12}]*K[t_{22}, t_{23}]*K[t_{33}, t_{31}]$ is a Gorenstein ring and
 $t_{11}t_{22}t_{33}-t_{12}t_{23}t_{31}$ is a regular element in $K[t_{11}, t_{12}]*
 K[t_{22}, t_{23}]*K[t_{33}, t_{31}]$, $\frac{K[t_{11}, t_{12}]*K[t_{22}, t_{23}]
 *K[t_{33}, t_{31}]}{(t_{11}t_{22}t_{33}-t_{12}t_{23}t_{31})}$
 $ \cong K[{\bf{\mathcal{A}}}]$ is a Gorenstein ring.
\end{example}

\section{Hilbert series}

\begin{definition}
Let $R=K[x_{1},x_{2},\ldots ,x_{n}]$ be a polynomial ring over a
field $K$. If $M$ is a finitely generated $\mathbb{N}-$graded $R$-module,
the numerical function
\[
H(M,-):{\mathbb{N}\longrightarrow \mathbb{N}}
\]
with $H(M,n)=\dim_{K}(M_{n}),$ for all $n\in \mathbb{N},$ is the Hilbert
function and \[H_{M}(t)=\sum_{n\in \mathbb{N}}H(M,n)t^{n}\] is the
Hilbert series of $M.$
\end{definition}
\begin{definition}
A sequence $(h_{i})_{i\geq 0}$ is \emph{log-concave} if 
$h_{i}^{2}\geq h_{i-1}h_{i+1}$ for all $i\geq 1$.
\end{definition}
\begin{definition}
A sequence $(h_{i})_{i\geq 0}$ is \emph{unimodal} if there exists an
index $j\geq 2$ such that $h_{i}\leq h_{i+1}$ for $i\leq j-1$ and 
$h_{i}\geq h_{i+1}$ for $i\geq j$.
\end{definition}

Log-concavity is easily shown to imply unimodality.

Let $n$, $m$ be  positive integers, $A_{i}$ be some subsets of $[n]$
such that $|A_{i}|=l$ for $1\leq i\leq m$, 
$\mathcal{A}=\{A_{1},A_{2},\ldots,A_{m}\},$
\[B_{m}=K[x_{i_{1}}x_{i_{2}}\ldots x_{i_{m}} \ | \ i_{j}\in A_{j},1\leq
j\leq m]\]and
\[C=K[x_{i}y_{j} \ | \ i\in A_{j},1\leq j\leq m].\]

From section above we know that $B_{m}$ is isomorphic to the algebra
$C_{\Delta }$ and we have the presentation

\[0\longrightarrow J\longrightarrow T(\mathcal{A})_{\Delta }\longrightarrow\ B_{m}\longrightarrow 0,\]
where $J=I_{2}(t)\cap T(\mathcal{A})_{\Delta } .$

Now we are interested when $J=(0)$.

\begin{remark}
If $J=(0)$ then $B_{m}$ is isomorphic to the algebra 
$T(\mathcal{A})_{\Delta }$.
\end{remark}
The ideal $J$ is zero if and only if when the bipartite graph
presented by $\mathcal{A}$  \ $( $V$_{1}=\{1,2,\ldots,m \}, V_{2}=A_{1}\cup
A_{2}\cup \ldots \cup A_{m}$ and the edges from $V_{1}$ to $V_{2}$ 
are the following:  join
$i\in\ V_{1}$ with $i_{j}\in \ V_{2} \Leftrightarrow\ i_{j} \in
A_{i} )$ has no cycles.
If $|A_{i}|=l$ for $1\leq i\leq m$, $|A_{i}\cap A_{i+1}| \leq 1$
\ and\ $A_{j}\cap A_{i}=\varnothing$ for $2\leq i\leq m$,
 $j< i-1$ then the bipartite graph
presented by $\mathcal{A}$ has no cycles , thus the ideal $J$ is
zero.\\
Since $J=(0)$, then $B_{m}$ is the Segre product of $m$ polynomial
rings, each of them in $l$ indeterminates, that is, $B_{m}$ is a
Gorenstein ring (see \cite[Example 7.4]{HH});
$\dim_{K}(B_{m})_{i}={\binom{i +l-1}{i}}^{m}.$
In the case $m=2$ it is known (see \cite[proposition 9.1.3]{V}) that
the Hilbert series of $B_{2}$ is
\[
H_{B_{2}}(t)=\frac{\sum_{k=0}^{l-1}{\binom{l-1}{k}}^{2}t^{k}}{(1-t)^{2l-1}}
; H(B_{2},i)=\dim_{K}(B_{2})_{i}={\binom{i +l-1}{i}}^{2}.\]
It results that the Krull dimension of $
B_{2} $ is $\dim_{K}{B_{2}}=2l-1$ and the number of generators of
the defining ideal of $B_{2}$ (the number of Hibi-relations of
$B_{2}$) is
\[
\mu=\binom{H(B_{2},1)+1}{2}-H(B_{2},2)=\binom{l^{2}+1}{2}-\binom{l+1}{2}^2=\binom{l}{2}^2.
\]

\begin{proposition}
We have the following relation between the Hilbert series of $B_{m+1}$ and
$B_{m}$

\[
H_{B_{m+1}}(t)=\frac{1}{(l-1)!}\frac{d^{(l-1)}}{dt^{l-1}}%
(t^{l-1}H_{B_{m}}(t)).
\]
\end{proposition}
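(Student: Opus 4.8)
The plan is to reduce the identity to the explicit Hilbert function of an iterated Segre product followed by a one-line differentiation. Under the standing hypothesis $J=(0)$, the ring $B_{m}$ is the $m$-fold Segre product $T_{1}*\cdots*T_{m}$ of polynomial rings in $l$ indeterminates, so $\dim_{K}(B_{m})_{i}=\binom{i+l-1}{l-1}^{m}$ for every $i\geq 0$, and likewise $\dim_{K}(B_{m+1})_{i}=\binom{i+l-1}{l-1}^{m+1}$; hence
\[
H_{B_{m}}(t)=\sum_{i\geq 0}\binom{i+l-1}{l-1}^{m}t^{i},\qquad
H_{B_{m+1}}(t)=\sum_{i\geq 0}\binom{i+l-1}{l-1}^{m+1}t^{i}.
\]
So it is enough to show that the operator $D_{l}(f):=\frac{1}{(l-1)!}\,\frac{d^{(l-1)}}{dt^{l-1}}\bigl(t^{l-1}f(t)\bigr)$ multiplies the $i$-th coefficient of a formal power series $f$ by $\binom{i+l-1}{l-1}$; applying it to $f=H_{B_{m}}$ will then produce $H_{B_{m+1}}$.

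First I would compute the action of $D_{l}$ on monomials. For each $i\geq 0$ one has $t^{l-1}\cdot t^{i}=t^{i+l-1}$, and
\[
\frac{d^{(l-1)}}{dt^{l-1}}\,t^{i+l-1}=(i+l-1)(i+l-2)\cdots(i+1)\,t^{i}=\frac{(i+l-1)!}{i!}\,t^{i},
\]
so that $D_{l}(t^{i})=\dfrac{(i+l-1)!}{i!\,(l-1)!}\,t^{i}=\binom{i+l-1}{l-1}t^{i}$. Since $D_{l}$ is $K$-linear and acts term by term on a formal power series, for $f(t)=\sum_{i\geq 0}a_{i}t^{i}$ we get $D_{l}(f)(t)=\sum_{i\geq 0}\binom{i+l-1}{l-1}a_{i}t^{i}$.

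Finally I would specialize to $a_{i}=\binom{i+l-1}{l-1}^{m}$, i.e. $f=H_{B_{m}}$, obtaining $D_{l}(H_{B_{m}})(t)=\sum_{i\geq 0}\binom{i+l-1}{l-1}^{m+1}t^{i}=H_{B_{m+1}}(t)$, which is the asserted formula. (Equivalently, this says that $H_{B_{m+1}}$ is the Hadamard product of $H_{B_{m}}$ with $H_{B_{1}}(t)=1/(1-t)^{l}=\sum_{i}\binom{i+l-1}{l-1}t^{i}$, and that $D_{l}$ is exactly Hadamard multiplication by $1/(1-t)^{l}$; as a check, the case $m=1$ recovers $D_{l}\bigl(1/(1-t)^{l}\bigr)=H_{B_{2}}(t)$, in agreement with the formula for $H_{B_{2}}(t)$ recorded above.) There is no serious obstacle in this argument: the only mildly delicate points are justifying that the differential operator may be applied coefficientwise to the power series — legitimate since the claim lives entirely at the level of the coefficient sequences — and the routine identification of the falling factorial $(i+l-1)!/i!$ with $(l-1)!\binom{i+l-1}{l-1}$.
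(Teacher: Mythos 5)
Your proof is correct and follows essentially the same route as the paper: both start from the explicit Hilbert function $\dim_{K}(B_{m})_{i}=\binom{i+l-1}{i}^{m}$ and verify that the operator $\frac{1}{(l-1)!}\frac{d^{(l-1)}}{dt^{l-1}}(t^{l-1}\cdot\,)$ multiplies the $i$-th coefficient by $\binom{i+l-1}{l-1}$. The only difference is organizational — you compute the action on a single monomial $t^{i}$ in one stroke and invoke linearity, whereas the paper peels off the $l-1$ derivatives one at a time via the product rule — so your write-up is, if anything, a cleaner rendering of the same argument.
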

\begin{proof}
Since 
\begin{align*}
&H_{B_{m}}(t)=\sum_{i\geq 0}{\binom{i+l-1}{i}^{m}}t^{i}, \ 
{\rm{we \ have}}\\
&\frac{1}{(l-1)!}\frac{d^{(l-1)}}{dt^{l-1}}(t^{l-1}H_{B_{m}}(t))=
\frac{1}{(l-1)!}\frac{d^{(l-1)}}{dt^{l-1}}(t^{l-1} \sum_{i\geq
0}{\binom{i +l-1}{i}}^{m}t^{i})\\
&=\frac{1}{(l-1)!}\frac{d^{(l-2)}}{dt^{l-2}}{(\frac{d}{dt}}(t^{l-1}
\sum_{i\geq 0}{\binom{i
+l-1}{i}}^{m}{t^{i}))}\\
&=\frac{1}{(l-1)!}\frac{d^{(l-2)}}{dt^{l-2}}{((l-1)t^{l-2}}
\sum_{i\geq 0}{\binom{i+l-1}{i}}^{m}t^{i}+t^{l-2}\sum_{i\geq
0}{\ {i} {\binom{i
+l-1}{i}}^{m}}t^{i})\\
&=\frac{1}{(l-1)!}\frac{d^{(l-2)}}{dt^{l-2}}{(t^{l-2}\sum_{i\geq
0}{\binom{i +l-1}{i}}}^{m}
{(i+l-1)}t^{i})\\
&=\frac{1}{(l-1)!}\frac{d^{(l-3)}}{dt^{l-3}}{(\frac{d}{dt}}(t^{l-2}
\sum_{i\geq 0}{\binom{i +l-1}{i}}^{m}{(i+l-1)}{t^{i}))}
\end{align*}
\begin{align*}
&=\frac{1}{(l-1)!}\frac{d^{(l-3)}}{dt^{l-3}}{((l-2)t^{l-3}\sum_{i
\geq 0}{\binom{i +l-1}{i}}^{m}}(i+l-1)t^{i}+ \\
&+ t^{l-3}\sum_{i\geq 0}{\ {i} {\binom{i +l-1}{i}}^{m}}
{(i+l-1)}{t^{i})}\\
&=\frac{1}{(l-1)!}\frac{d^{(l-3)}}{dt^{l-3}}{(t^{l-3}\sum_{i\geq
0}{\binom{i +l-1}{i}}^{m}{(i+l-1)}{(i+l-2)}t^{i})=\cdots}\\
&=\frac{1}{(l-1)!}\sum_{i\geq 0}{\binom{i
+l-1}{i}}^{m}{(i+l-1)}{(i+l-2)}\cdots{(i+2)}{(i+1)}t^{i}\\
&=\sum_{i\geq 0}{\binom{i +l-1}{i}}^{m+1}t^{i} =\
H_{B_{m+1}}(t).
\end{align*}
\end{proof}

\begin{definition}({\cite{RV}})
Let $A(t):=\sum_{i}{a_{i}t^{i}}$ and $B(t):=\sum_{i}{b_{i}t^{i}}$ be
two power series in $\mathbb {Z}[[t]]$. Then we define the Hadamard
product of $A$ and $B$ and we denote it by
$Had(A,B):=\sum_{i}(a_{i}b_{i})t^{i}.$
\end{definition}
\begin{definition}({\cite{RV}})
Let $A(t)$ be the Hilbert series of a standard $K-$algebra $S$.Then
we denote by $ri(A)$(or $ri(S)$) the regularity index of $A$ (or of
$S$), i.e. the first integer  $r$ such that for every $s\geq r$ the
Hilbert function of $S$ takes the same values as the Hilbert
polynomial of $S$.
\end{definition}
\begin{remark}
$ri(S)=a(S)+1,$ where $a(S)$ is the $a-$invariant of $S$.
\end{remark}
\begin{proposition}$({\cite{RV}})$
Let $A(t):=\frac{P(t)}{(1-t)^{a}}$ and
$B(t):=\frac{Q(t)}{(1-t)^{b}}$, where $p:=deg(P)$, $q:=deg(Q)$,
$P(1)\neq 0$, $Q(1)\neq 0$, and assume that $A(t)$ and $B(t)$ are
the Hilbert series of standard $K-$algebras. Then:
\begin{enumerate}
\item[1)]$ri(A)=p-a+1$ and $ri(B)=q-b+1$;
\item[2)]$ri(Had(A,B))\leq \max(ri(A),ri(B))$;
\item[3)]$Had(A,B)=\frac{R(t)}{(1-t)^{a+b-1}}$ with $R(1)\neq 0$;
\item[4)]$\deg(R)\leq \max(ri(A),ri(B))$ + $(a+b-1)-1$.
\end{enumerate}
\end{proposition}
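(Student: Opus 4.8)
The idea is that 1) is the standard computation of the regularity index from a rational presentation of the Hilbert series, 2) is the elementary fact that a product of two eventually polynomial sequences is eventually polynomial, and 3)--4) follow formally from 1)--2). First I would prove 1): expanding $1/(1-t)^{a}=\sum_{n\ge 0}\binom{n+a-1}{a-1}t^{n}$ and $P(t)=\sum_{i=0}^{p}c_{i}t^{i}$ with $c_{p}\neq 0$ gives $H(S,n)=\sum_{i=0}^{\min(n,p)}c_{i}\binom{n-i+a-1}{a-1}$, which I compare with the polynomial $HP_{S}(n):=\sum_{i=0}^{p}c_{i}\binom{n-i+a-1}{a-1}$ (binomials read as polynomials of degree $a-1$). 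Since the integer $\binom{m}{a-1}$ equals this polynomial value for every integer $m\ge 0$ and the polynomial $\binom{m}{a-1}$ vanishes for $m\in\{0,\dots,a-2\}$, all terms by which $HP_{S}(n)$ and $H(S,n)$ could differ vanish precisely when $n\ge p-a+1$, while for $n=p-a$ only the term $i=p$ survives, contributing $c_{p}\binom{-1}{a-1}=(-1)^{a-1}c_{p}\neq 0$; hence $ri(A)=p-a+1$, and likewise for $B$. The hypothesis $P(1)\neq 0$ is what forces $HP_{S}$ to have degree exactly $a-1$ with leading coefficient $P(1)/(a-1)!$, a fact I keep for later.

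For 2) I would first note that $Had(A,B)$ is genuinely the Hilbert series of a standard graded $K$-algebra: if $A=H_{S}$ and $B=H_{T}$, the Segre product $S*T=\bigoplus_{n}S_{n}\otimes_{K}T_{n}$ is standard graded and satisfies $H(S*T,n)=H(S,n)H(T,n)$. For $n\ge m_{0}:=\max(ri(A),ri(B))$ we then have $H(S*T,n)=HP_{S}(n)HP_{T}(n)$, a fixed polynomial in $n$; by uniqueness of the Hilbert polynomial this polynomial is $HP_{S*T}$, and since $H(S*T,n)$ already agrees with it for every $n\ge m_{0}$, we get $ri(Had(A,B))\le\max(ri(A),ri(B))$.

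For 3), put $f(n):=HP_{S}(n)HP_{T}(n)$; by the leading-coefficient remark in 1) this is a polynomial of degree exactly $a+b-2$ with leading coefficient $P(1)Q(1)/((a-1)!(b-1)!)\neq 0$, and $H(S*T,n)=f(n)$ for $n\ge m_{0}$. Writing
\[
Had(A,B)(t)=\sum_{n=0}^{m_{0}-1}\bigl(H(S*T,n)-f(n)\bigr)t^{n}+\sum_{n\ge 0}f(n)t^{n},
\]
the first summand is a polynomial in $t$ and, expanding $f$ in the basis $\{\binom{X}{j}\}_{j}$ and using $\sum_{n\ge 0}\binom{n}{j}t^{n}=t^{j}/(1-t)^{j+1}$, the second is $g(t)/(1-t)^{a+b-1}$ with $g(1)=(a+b-2)!\cdot(\text{leading coefficient of }f)\neq 0$. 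Clearing denominators yields $Had(A,B)(t)=R(t)/(1-t)^{a+b-1}$ with $R(t)=g(t)+(1-t)^{a+b-1}\cdot(\text{polynomial})$, so $R(1)=g(1)\neq 0$. Finally 4) is immediate: applying 1) to $Had(A,B)=R(t)/(1-t)^{a+b-1}$, legitimate by 2) and by $R(1)\neq 0$, gives $ri(Had(A,B))=\deg R-(a+b-1)+1$, and combining with 2) gives $\deg R\le\max(ri(A),ri(B))+(a+b-1)-1$.

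The only genuine work is the bookkeeping in 1) and 3): pinning down the threshold $p-a+1$ exactly, and checking that the finite correction $\sum_{n<m_{0}}(H(S*T,n)-f(n))t^{n}$ cannot cancel $R(1)$, which is handled by tracking the leading coefficient of $f$ through the identity $g(1)=(a+b-2)!\cdot(\text{lead }f)$. Degenerate low-dimensional cases ($a$ or $b$ equal to $0$) should be excluded or checked separately; in the situation of interest they do not arise.
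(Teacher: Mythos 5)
The paper does not prove this proposition at all: it is quoted verbatim from Robbiano--Valla \cite{RV} and used as a black box, so there is no in-text argument to compare yours against. Your reconstruction is correct and is the standard one. Part 1) is handled properly: the only delicate point is identifying the exact threshold, and your observation that the polynomial $\binom{m}{a-1}$ vanishes for $m\in\{0,\dots,a-2\}$ but equals $(-1)^{a-1}\neq 0$ at $m=-1$ pins down both the inequality at $n=p-a$ and the equality for $n\geq p-a+1$. For 2) you correctly realize $Had(A,B)$ as the Hilbert series of the Segre product $S*T$ so that the regularity index is even defined; note that this identification is exactly the content of the theorem the paper states immediately after the proposition (also from \cite{RV}), so you are not assuming anything circular. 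The leading-coefficient bookkeeping in 3) --- $HP_S$ has degree exactly $a-1$ with leading coefficient $P(1)/(a-1)!$, hence $f=HP_S\cdot HP_T$ has degree exactly $a+b-2$, hence $g(1)=d_{a+b-2}\neq 0$ and the finite polynomial correction cannot destroy $R(1)\neq 0$ --- is the one place where $P(1)\neq 0$, $Q(1)\neq 0$ are genuinely used, and you use them correctly. Your caveat about $a=0$ or $b=0$ is warranted (for $a=0$, $b\geq 2$ part 3) genuinely fails), and in the paper's application ($a,b\geq 2$, Segre products of polynomial rings) these cases do not occur. The only cosmetic gap is that the paper's definition of $ri$ permits negative values (e.g.\ $ri(H_{T_i})=-1$ for $1/(1-t)^2$), which requires the convention $H(S,n)=0$ for $n<0$; your computation extends to that case without change.
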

\begin{theorem}$({\cite{RV}})$
Let $S_{1}$ and $S_{2}$ be two standard $K-$algebras and assume that
we know their Hilbert series, $H_{S_{1}}$ and $H_{S_{2}}$. Then the
Hilbert series of the Segre product of $S_{1}$ and $S_{2}$ is
$H_{S_{1}*S_{2}}$ = $Had(H_{S_{1}},H_{S_{2}})$.
\end{theorem}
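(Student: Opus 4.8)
The plan is to unwind the definitions, since the statement is essentially definitional once one knows how the graded pieces of a Segre product behave. Recall that for standard graded $K$-algebras $S_1=\bigoplus_{n\geq 0}(S_1)_n$ and $S_2=\bigoplus_{n\geq 0}(S_2)_n$, their Segre product is the $\mathbb{N}$-graded $K$-algebra
\[
S_1*S_2=\bigoplus_{n\geq 0}(S_1)_n\otimes_K(S_2)_n,
\]
with multiplication induced componentwise; equivalently, presenting each $S_i$ as a standard graded quotient of a polynomial ring, $S_1*S_2$ is the subalgebra of $S_1\otimes_K S_2$ generated in degree $1$ by the elements $u\otimes v$ with $u\in(S_1)_1$ and $v\in(S_2)_1$. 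First I would check that these two descriptions agree: since $S_1$ and $S_2$ are standard, $(S_i)_n=((S_i)_1)^n$, so the degree-$n$ component of the subalgebra is exactly the image of $((S_1)_1)^{\otimes n}\otimes((S_2)_1)^{\otimes n}$, namely $(S_1)_n\otimes_K(S_2)_n$. Thus in either description the $n$-th graded component of $S_1*S_2$ is $(S_1)_n\otimes_K(S_2)_n$.

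Next I would pass to Hilbert functions. Each $(S_i)_n$ is a finite-dimensional $K$-vector space, and $\dim_K$ is multiplicative on tensor products of vector spaces, so
\[
H(S_1*S_2,n)=\dim_K\bigl((S_1)_n\otimes_K(S_2)_n\bigr)=\dim_K(S_1)_n\cdot\dim_K(S_2)_n=H(S_1,n)\,H(S_2,n)
\]
for every $n\in\mathbb{N}$. Summing against $t^n$ gives
\[
H_{S_1*S_2}(t)=\sum_{n\geq 0}H(S_1,n)\,H(S_2,n)\,t^n,
\]
which is precisely $Had(H_{S_1},H_{S_2})$ by the definition of the Hadamard product. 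This completes the argument.

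The only point requiring any care is the identification of the graded components, i.e. that $S_1*S_2$ is generated in degree one and that its degree-$n$ part is the \emph{full} tensor product $(S_1)_n\otimes_K(S_2)_n$ rather than a proper subspace; this is exactly where the hypothesis that $S_1$ and $S_2$ are standard is used. Everything else is routine additivity of vector-space dimension, so there is no substantial obstacle.
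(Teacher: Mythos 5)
Your argument is correct and complete: the paper itself gives no proof of this statement, simply citing Robbiano--Valla \cite{RV}, so there is nothing to compare against. Your unwinding of the definition — identifying $(S_1*S_2)_n$ with $(S_1)_n\otimes_K(S_2)_n$ via standardness and then using multiplicativity of $\dim_K$ on tensor products — is exactly the standard proof of this fact, and you correctly isolate the one point where the hypothesis that the algebras are standard graded is needed.
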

\begin{definition}
Let $R=K[x_{1},x_{2},\ldots ,x_{n}]$ be a polynomial ring over a
field $K$, $M$ is a finitely generated $\mathbb{N}-$ graded $R$-module.
The ${\it difference\ operator\ {\Delta }}$ on the set of numerical
functions $H(M,-)$ is \[({\Delta }H(M,-))(n)=H(M,n+1)-H(M,n),\]
where $H(M,-)$ is the Hilbert function of $M$.

The $m-times\ iterated\ {\Delta}\ operator$ $\ (''m-difference\ of\
H(M,n)'')$  will be denoted by ${\Delta}^{m}$.
\end{definition}
\begin{proposition}
If $|A_{i}|=2$ for $1\leq i\leq m$, $|A_{i}\cap A_{i+1}|\leq 1$\
and \ $A_{j}\cap A_{i}=\varnothing$ for $1\leq i\leq m-1$,
 $1\leq j< i-1$ then the Hilbert series of $B_{m}$ is
\[
H_{B_{m}}(t)=\frac{\sum_{k=0}^{m-1}{\ A(m,k+1)}t^{k}}{(1-t)^{m+1}},
\] where
\[
A(m,k)\ =\ kA(m-1,k)+(m-k+1)A(m-1,k-1),
\]
with $A(m,1)=A(m,m)=1$\ and $2\leq k\leq m-1.$
\end{proposition}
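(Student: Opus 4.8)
The plan is to proceed by induction on $m$, using the recursion relating $H_{B_{m+1}}$ to $H_{B_m}$ established in Proposition 4.2.5. The base case is $m=2$: here the hypotheses force $A_1, A_2$ to meet in at most one element, so $|A_1 \cup A_2| \in \{3,4\}$; in either case $B_2$ is the Segre product $K[x,y]*K[x',y']$ of two polynomial rings in $\ell = 2$ indeterminates, whose Hilbert series is $H_{B_2}(t) = (1+t)/(1-t)^3$ by the displayed formula for $H_{B_2}$. Since the Eulerian numbers satisfy $A(2,1) = A(2,2) = 1$, the numerator $\sum_{k=0}^{1} A(2,k+1) t^k = 1 + t$ matches, and the denominator $(1-t)^{3} = (1-t)^{m+1}$ is correct. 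So the base case holds.

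For the inductive step, I would assume $H_{B_m}(t) = \bigl(\sum_{k=0}^{m-1} A(m,k+1) t^k\bigr)/(1-t)^{m+1}$ and apply Proposition 4.2.5 with $\ell = 2$, which gives
\[
H_{B_{m+1}}(t) = \frac{d}{dt}\bigl(t \cdot H_{B_m}(t)\bigr) = \frac{d}{dt}\left(\frac{t\sum_{k=0}^{m-1} A(m,k+1) t^k}{(1-t)^{m+1}}\right) = \frac{d}{dt}\left(\frac{\sum_{k=1}^{m} A(m,k) t^k}{(1-t)^{m+1}}\right).
\]
Carrying out the differentiation via the quotient rule, writing $P(t) = \sum_{k=1}^m A(m,k) t^k$, yields
\[
H_{B_{m+1}}(t) = \frac{(1-t) P'(t) + (m+1) P(t)}{(1-t)^{m+2}},
\]
so the new numerator is $Q(t) = (1-t)P'(t) + (m+1)P(t)$. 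Expanding the coefficient of $t^k$ in $Q$: the contribution from $(m+1)P(t)$ is $(m+1) A(m,k)$; from $P'(t)$ it is $k A(m,k)$; and from $-t P'(t)$ it is $-(k-1) A(m,k-1)$. Summing, the coefficient of $t^k$ in $Q$ equals $(m+1+k) A(m,k) - (k-1) A(m,k-1)$. I would then need to reconcile this with the claimed Eulerian recursion $A(m+1, k+1) = (k+1) A(m,k+1) + (m+1-k) A(m,k)$; after re-indexing (the numerator of $H_{B_{m+1}}$ should be $\sum_{k=0}^{m} A(m+1,k+1) t^k$, i.e. the coefficient of $t^k$ is $A(m+1, k+1)$), one checks that the two expressions agree by invoking the standard Worpitzky/Eulerian recurrence, together with the boundary values $A(m+1,1) = A(m+1,m+1) = 1$ — which follow from the explicit forms of the endpoint coefficients of $Q$.

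The main obstacle I anticipate is the bookkeeping in this last reconciliation: the recursion for $A(m,k)$ quoted in the statement is written in the form $A(m,k) = k A(m-1,k) + (m-k+1) A(m-1,k-1)$, and one must carefully match indices so that the coefficient extracted from $Q(t) = (1-t)P'(t) + (m+1)P(t)$ genuinely equals $A(m+1, k+1)$ for each $k$. A clean way to avoid sign errors is to verify the identity at the level of the combinatorial recurrence directly: show that the operator $f(t) \mapsto \frac{d}{dt}(t f(t))$ sends the Eulerian polynomial $A_m(t) = \sum_k A(m,k) t^{k-1}$ (suitably normalized) to $A_{m+1}(t)$, which is precisely the classical derivative formula for Eulerian polynomials. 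Once this is in place, the statement about unimodality of $H_{B_m}$ follows immediately: Eulerian numbers are log-concave, log-concavity implies unimodality (as noted just after Definition 4.2.3), and the citation \cite{BE} then applies. I would close by remarking that the hypotheses $|A_i| = 2$, $|A_i \cap A_{i+1}| \le 1$, and $A_j \cap A_i = \varnothing$ for $j < i-1$ are used only to guarantee (via Remark 4.2.4 and the discussion preceding Proposition 4.2.8) that the bipartite graph of $\mathcal{A}$ is acyclic, hence $J = (0)$ and $B_m$ is the iterated Segre product to which Proposition 4.2.5 applies.
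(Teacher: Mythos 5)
Your strategy is sound and genuinely different from the paper's. The paper does not induct on $m$ via the derivative formula of Proposition 4.2.5 at all: instead it first pins down the shape $H_{B_m}(t)=R(t)/(1-t)^{m+1}$ with $\deg R\le m-1$ using the Robbiano--Valla results on Hadamard products of Hilbert series, and then identifies the coefficients of $R$ with Eulerian numbers by computing the $(m+1)$-fold iterated difference of the candidate numerator (the quantities $A_t(m,k)=\sum_{s=1}^{k}A(m,s)\binom{t+k-s-1}{k-s}$) and showing via the Worpitzky identity that $A_{m+1}(m,k)=k^{m}=\dim_K(B_m)_{k-1}$. Your route --- induction on $m$ through $H_{B_{m+1}}(t)=\frac{d}{dt}\bigl(tH_{B_m}(t)\bigr)$, reducing everything to the classical derivative recurrence for Eulerian polynomials --- is shorter and makes the appearance of the Eulerian recursion transparent, at the cost of leaning on Proposition 4.2.5 (which the paper proves but then does not reuse here); the paper's argument is more computational but establishes the denominator and degree bound as a by-product of the Segre/Hadamard machinery. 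One concrete slip to fix in your write-up: with $P(t)=\sum_{k=1}^{m}A(m,k)t^{k}$, the coefficient of $t^{k}$ in $Q(t)=(1-t)P'(t)+(m+1)P(t)$ is $(k+1)A(m,k+1)+(m+1-k)A(m,k)$, not $(m+1+k)A(m,k)-(k-1)A(m,k-1)$; the correct expression is already exactly $A(m+1,k+1)$ by the recurrence $A(m+1,j)=jA(m,j)+(m+2-j)A(m,j-1)$ evaluated at $j=k+1$, so no further reconciliation is needed and the endpoint values $A(m+1,1)=A(m+1,m+1)=1$ come out of the same formula at $k=0$ and $k=m$. With that correction, and your closing observation that the intersection hypotheses force the bipartite graph to be acyclic so that $J=(0)$ and $B_m$ is the iterated Segre product, the argument is complete.
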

\begin{proof}
We know that  \ $B_{m}=\ T_{1}*T_{2}*\ldots *T_{m}$,\ where\
$T_{i}=\ K[t_{ij} \ | \ j\in A_{i}]$ is the Segre product of $m$ polynomial
rings in two indeterminates and $\dim_{K}(B_{m})_{i}={\binom{\imath
+2-1}{i}}^{m}=(i+1)^{m}$.

We will show that $B_{m}$ has Krull dimension dim $B_{m}= m+1$ and
the Hilbert series,
$H_{B_{m}}(t)=\frac{R(t)}{(1-t)^{m+1}}$ with $\deg(R) \leq {m-1}.$\\
We proceed by induction on $m\geq 1$. If $m=1$ it is  clear.
Suppose $m \geq 2$.
For every $1\leq i \leq m$ we have $ri(H_{T_{i}})=-1$, thus
$ri(H_{B_{m}})=-1$.
Since $B_{m+1}=B_{m}*T_{m+1},$ we have
\[H_{B_{m+1}}(t)=Had(H_{B_{m}},T_{m+1})=\frac{R(t)}{(1-t)^{(m+1)+
2-1}}=\frac{R(t)}{(1-t)^{m+2}};\] \[\deg(R) \leq
\max(ri(H_{B_{m}}),ri(H_{T_{m+1}}))+ ((m+1)+ 2-1)-1=m.\]

Now we will find the coefficients $r_{i}'s$ of the Hilbert series 
$H_{B_{m}}(t)=\frac{R(t)}{(1-t)^{m+1}},$ where 
$R(t):=\sum_{k=0}^{m-1}{r_{i}t^{i}}$. We may compute the first $m$ values of
$H(B_{m},i)$. Then it suffices to take the $(m+1)^{st}$ difference
of these first $m$ values and we get the required $r_{i}'s$. For
this it suffices to go backward in the algorithm which determines
the numerators of the Hilbert series and to obtain
$H(B_{m},i)=dim_{K}(B_{m})_{i}$ for all $i$.

We define
\[A_{0}(m,k)=r_{k}=A(m,k), \]
\[A_{i}(m,1)=1, A_{i}(m,k)=A_{i}(m,k-1)+A_{i-1}(m,k),\] for $i\geq 1$ and $2\leq
k\leq m.$

For $m\geq 2$ and $2\leq k\leq m$ fixed we want to prove that
\[ A_{t}(m,k)=\sum_{s=1}^{k}{A(m,s)\binom{t+k-s-1}{k-s}}\]
for any $t\geq 1$ .\\
We proceed by induction on $t\geq 1$.\\
{\it Case $t=1$}.
Since for any $m\geq 2$ and $2\leq k\leq m$ fixed we have
\begin{align*}
&A_{1}(m,k)=A_{1}(m,k-1)+A(m,k),\\
&A_{1}(m,k-1)=A_{1}(m,k-2)+A(m,k-1),\\
&A_{1}(m,k-2)=A_{1}(m,k-3)+A(m,k-2),\\
&\ldots\ldots\ldots\ldots\ldots\ldots\ldots\ldots\\
&A_{1}(m,3)=A_{1}(m,2)+A(m,3),\\
&A_{1}(m,2)=A_{1}(m,1)+A(m,2),\\
&A_{1}(m,1)=1=A(m,1),\\
\end{align*}
we obtain \[A_{1}(m,k)=\sum_{s=1}^{k}{A(m,s)}.\]
{\it Case $t>1$}.\\
From
\begin{align*}
&A_{t}(m,k+1)=A_{t}(m,k)+A_{t-1}(m,k+1),\\
&A_{t-1}(m,k+1)=A_{t-1}(m,k)+A_{t-2}(m,k+1),\\
&A_{t-2}(m,k+1)=A_{t-2}(m,k)+A_{t-3}(m,k+1),\\
\end{align*}
\begin{align*}
&\ldots\ldots\ldots\ldots\ldots\ldots\ldots\ldots\ldots\\
&A_{3}(m,k+1)=A_{3}(m,k)+A_{2}(m,k+1),\\
&A_{2}(m,k+1)=A_{2}(m,k)+A_{1}(m,k+1),\\
&A_{1}(m,k+1)=A_{1}(m,k)+A(m,k+1),\\
\end{align*}
we obtain \[A_{t}(m,k+1)=\sum_{j=1}^{t}{A_{j}(m,k)}+A(m,k+1).\]
For $t>1$
\begin{align*}
&A_{t}(m,k+1)=\sum_{j=1}^{t}{A_{j}(m,k)}+A(m,k+1) \\
&=\sum_{j=1}^{t}\left(\sum_{s=1}^{k}A(m,s)\binom{j+k-s-1}{k-s}\right)+A(m,k+1)\\
&=\sum_{s=1}^{k}\left(\sum_{j=1}^{t}\binom{j+k-s-1}{k-s}\right)A(m,s)+A(m,k+1)\\
&=\sum_{s=1}^{k}\binom{t+k-s}{k-s+1}A(m,s)+A(m,k+1)\\
&=\sum_{s=1}^{k+1}{A(m,s)\binom{t+k-s}{k-s+1}},\\
\end{align*}
since \[\sum_{j=1}^{t}\binom{j+k-s-1}{k-s}=\binom{t+k-s}{k-s+1}.\]
Now we want to prove that $A_{m+1}(m,k)={k}^m.$\\
 From \cite{C1} or \cite{W} we mention the Worpitzky identity
$$k^{m}=\sum_{s=1}^{m}{\ A(m,s)}{\binom{k+s-1}{m}}.$$
We know that
\[
A_{m+1}(m,k)=\sum_{s=1}^{k}{A(m,s)\binom{m+k-s}{k-s}}=\sum_{s=1}^{k}{A(m,s)\binom{m+k-s}{m}}.
\]
Thus
\begin{align*}
&k^{m}=\sum_{s=1}^{m}{\ A(m,s)}\binom{k+s-1}{m}=\
A(m,m)\binom{k+m-1}{m}+\ A(m,m-1)\\
&\times \binom{k+m-1-1}{m}+\ldots +\ A(m,m-k+2)\binom{m+1}{m}+\
A(m,m-k+1)\binom{m}{m}\\
&=A(m,1)\binom{k+m-1}{m}+ \ A(m,2)\binom{k+m-2}{m}+\ldots +\
A(m,k-1)\binom{k+m-k+1}{m}\\
&+\ A(m,k)\binom{k+m-k}{m}=\sum_{s=1}^{k}{A(m,s)}\binom{m+k-s}{m}=\
A_{m+1}(m,k).\\
\end{align*}

Thus we have $r_{k}=A(m,k+1)$ for $0\leq k\leq m-1 .$
\end{proof}

\begin{corollary}
The sequence in $k$, $A(m,k)$ with $1\leq k\leq m$,  is symmetric
for any $m\geq 2.$
\end{corollary}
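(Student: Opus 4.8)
The statement to prove is that the sequence $A(m,k)$, $1 \le k \le m$, is symmetric, meaning $A(m,k) = A(m, m+1-k)$. These are precisely the Eulerian numbers (as the recurrence $A(m,k) = k\,A(m-1,k) + (m-k+1)\,A(m-1,k-1)$ with boundary $A(m,1) = A(m,m) = 1$ is the standard one), so the symmetry is the classical identity $A(m,k) = A(m,m+1-k)$.

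\begin{proof}
The plan is to argue by induction on $m \ge 2$, using the defining recurrence
\[
A(m,k) = k\,A(m-1,k) + (m-k+1)\,A(m-1,k-1), \qquad 2 \le k \le m-1,
\]
together with $A(m,1) = A(m,m) = 1$. For the base case $m = 2$ we have $A(2,1) = A(2,2) = 1$, which is trivially symmetric.

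For the inductive step, assume $A(m-1,j) = A(m-1,m-j)$ for all $1 \le j \le m-1$. Fix $k$ with $2 \le k \le m-1$ and set $k' = m+1-k$, so that $2 \le k' \le m-1$ as well. Applying the recurrence to $A(m,k')$ gives
\[
A(m,k') = k'\,A(m-1,k') + (m - k' + 1)\,A(m-1,k'-1) = (m+1-k)\,A(m-1,m+1-k) + k\,A(m-1,m-k).
\]
Now I would use the induction hypothesis to rewrite the shifted Eulerian numbers of order $m-1$: since $A(m-1,j) = A(m-1,m-j)$, we get $A(m-1,m+1-k) = A(m-1,k-1)$ and $A(m-1,m-k) = A(m-1,k)$. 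Substituting these in yields
\[
A(m,k') = (m+1-k)\,A(m-1,k-1) + k\,A(m-1,k) = A(m,k),
\]
which is exactly the recurrence applied to $A(m,k)$. This handles all interior indices, and the boundary indices $k = 1$ and $k = m$ are covered since $A(m,1) = 1 = A(m,m)$. Hence $A(m,k) = A(m,m+1-k)$ for all $1 \le k \le m$.
\end{proof}

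Alternatively, and perhaps more in keeping with the spirit of the preceding material, one could avoid induction entirely: the Worpitzky identity $k^m = \sum_{s=1}^m A(m,s)\binom{k+s-1}{m}$ proved above identifies $A(m,s)$ with the Hilbert-series numerator coefficients of the Gorenstein ring $B_m$, and the numerator of the Hilbert series of a Gorenstein graded algebra is palindromic (a standard consequence of the self-duality of the canonical module). The main thing to check in that route is that $B_m$ is genuinely standard graded and Gorenstein under the stated hypotheses on the $A_i$, which is established in the section above via the Segre-product description and \cite[Example 7.4]{HH}; the palindromicity of the $h$-vector of a Cohen--Macaulay graded domain that is Gorenstein then gives $r_k = r_{m-1-k}$, i.e. $A(m,k+1) = A(m,m-k)$, which is the claimed symmetry. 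I would expect the only mild obstacle in the inductive proof to be bookkeeping the index shifts correctly at the two ends of the range; in the Gorenstein route the only real content is invoking palindromicity of the $h$-vector, which is standard.
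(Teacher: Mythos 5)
Your inductive argument is correct and is essentially the paper's own proof: the paper likewise inducts on $m$, applies the recurrence, substitutes the symmetry of $A(m-1,\cdot)$, and recognizes the result as the recurrence for $A(m,m-k+1)$. Your alternative sketch via palindromicity of the $h$-vector of the Gorenstein ring $B_m$ is a reasonable aside, but the main proof matches the paper's route.
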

\begin{proof}
If $m=2$ then $A(2,1)$=$A(2,2)$=1.\\
If $m>2$ then $A(m,k)\ =\ k\ A(m-1,k)+(m-k+1)\ A(m-1,k-1)=\
kA(m-1,m-k)+(m-k+1)A(m-1,m-k+1)=\ A(m,m-k+1).$
\end{proof}
\begin{corollary}
The number of generators of the defining ideal of $B_{m}$ $(the \
number \ of \ Hibi-relations \ of \ B_{m})$ is
\[
\mu=\binom{H(B_{m},1)+1}{2}-H(B_{m},2)=\binom{2^{m}+1}{2}-3^{m}=2^{2m-1}+2^{m-1}-3^{m}.\]
\end{corollary}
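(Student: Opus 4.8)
The plan is to identify $\mu$ with the number of linearly independent quadratic relations, that is, with $\dim_K P_2 - \dim_K (B_m)_2$ for the natural presenting polynomial ring $P$, and then simply to read off the two needed values of the Hilbert function of $B_m$. First I would recall that, under the stated hypotheses on $\mathcal{A}$, the bipartite graph presented by $\mathcal{A}$ is acyclic, so $J = I_2(t)\cap T(\mathcal{A})_\Delta = (0)$ and hence $B_m \cong T(\mathcal{A})_\Delta = T_1*T_2*\ldots*T_m$, the Segre product of $m$ polynomial rings in two indeterminates. Setting $A = A_1\times\cdots\times A_m$, so that $|A| = 2^m$, and presenting this algebra as the quotient of $P = K[s_\alpha \mid \alpha\in A]$ by the kernel $I$ of the map $s_{(j_1,\dots,j_m)}\mapsto t_{1j_1}\cdots t_{mj_m}$, the description of the toric ideal of a Hibi ring recalled above (from \cite{H}) tells us that $I$ is generated by the quadratic Hibi relations; in particular $I$ is generated in degree $2$.

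Next I would show that the minimal number of homogeneous generators of $I$ equals $\dim_K I_2$. Since the $2^m$ monomials $t_{1j_1}\cdots t_{mj_m}$, $j_k\in A_k$, are pairwise distinct, the presentation map is injective in degree $1$, so $I_1 = 0$; together with $I_0 = 0$ this forces $(\mathfrak{m}_P I)_2 = 0$, where $\mathfrak{m}_P$ denotes the irrelevant maximal ideal of $P$. Because $I$ is generated in degree $2$, the graded vector space $I/\mathfrak{m}_P I$ is concentrated in degree $2$, where it coincides with $I_2$; hence $\mu = \dim_K(I/\mathfrak{m}_P I) = \dim_K I_2 = \dim_K P_2 - \dim_K (B_m)_2 = \binom{H(B_m,1)+1}{2} - H(B_m,2)$, the last equality because $P$ has $H(B_m,1) = |A| = 2^m$ variables so that $\dim_K P_2 = \binom{H(B_m,1)+1}{2}$.

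Finally I would substitute the Hilbert function values. From the discussion above (the case $l = 2$ of the Segre product formula), $\dim_K (B_m)_i = \binom{i+1}{i}^m = (i+1)^m$, so $H(B_m,1) = 2^m$ and $H(B_m,2) = 3^m$. Thus $\mu = \binom{2^m+1}{2} - 3^m$, and expanding $\binom{2^m+1}{2} = \tfrac{2^m(2^m+1)}{2} = 2^{2m-1}+2^{m-1}$ yields $\mu = 2^{2m-1}+2^{m-1}-3^m$, as claimed. The only step that is not routine bookkeeping is the assertion that $I$ is generated in degree $2$ (equivalently, that the quadratic Hibi relations generate the whole toric ideal), which is precisely the cited structural fact about Hibi rings; a useful consistency check is $m=3$, where the formula gives $\mu = 32+4-27 = 9$, matching the nine Hibi relations exhibited in Example~4.2.10.
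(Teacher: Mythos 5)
Your proposal is correct and follows exactly the route the paper intends: the paper states this corollary without a separate proof, relying (as in its earlier computation of $\mu$ for $B_{2}$) on the fact that the defining ideal is generated by quadratic Hibi relations, so that $\mu=\dim_{K}P_{2}-\dim_{K}(B_{m})_{2}$ with $H(B_{m},i)=(i+1)^{m}$. Your additional observation that $I_{0}=I_{1}=0$ forces the minimal number of generators to equal $\dim_{K}I_{2}$, together with the $m=3$ consistency check against the nine listed relations, just makes explicit what the paper leaves implicit.
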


\begin{corollary}
The h-vector of the Hilbert series associated to the transversal
polymatroid presented by $\mathcal{A}=\{A_1, A_2,\ldots, A_m\},$ such
that $|A_i|=2$ for $1\leq i\leq m$, $|A_i\cap A_{i+1}|\leq 1$ and\
$A_{j}\cap A_{i}={\varnothing}$ for $1\leq i\leq m-1$, $1\leq
j< i-1$, is unimodal .
\end{corollary}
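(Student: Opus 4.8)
The plan is to identify the $h$-vector explicitly and then reduce unimodality to a known log-concavity property of the Eulerian numbers. By Proposition 4.2.15, under the stated hypotheses on $\mathcal{A}$ the ideal $J$ vanishes, so $B_m = K[\mathcal{A}] \cong T(\mathcal{A})_{\Delta}$ is the Segre product of $m$ polynomial rings in two variables and
\[
H_{B_m}(t) = \frac{\sum_{k=0}^{m-1} A(m,k+1)\, t^k}{(1-t)^{m+1}} ,
\]
so the $h$-vector of the transversal polymatroid presented by $\mathcal{A}$ is exactly the $m$-th row $(A(m,1), A(m,2), \ldots, A(m,m))$ of Eulerian numbers. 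The claim therefore reduces to showing that this finite sequence is unimodal, and by the remark following Definition 4.2.3 it is enough to show it is log-concave, i.e. $A(m,k)^2 \geq A(m,k-1)\,A(m,k+1)$ for $2 \leq k \leq m-1$ (the inequality is trivial at the ends since $A(m,m+1)=0$).

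The next step is to reduce log-concavity to real-rootedness of the Eulerian polynomial. Put $P_m(t) = \sum_{k=1}^m A(m,k)\,t^k$, so $P_1(t)=t$. Substituting the recurrence $A(m,k) = k\,A(m-1,k) + (m-k+1)\,A(m-1,k-1)$ of Proposition 4.2.15 and reindexing (exactly as in the computation in that proof) yields the differential recurrence
\[
P_m(t) = t(1-t)\,P_{m-1}'(t) + m\,t\,P_{m-1}(t) = t\,g_m(t), \qquad g_m(t):=(1-t)P_{m-1}'(t)+m\,P_{m-1}(t).
\]
I would then prove by induction on $m$ that $P_m$ has $m$ distinct real roots, all in $(-\infty,0]$ (the classical theorem of Frobenius). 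Assuming $P_{m-1}$ has simple nonpositive roots $0=r_0>r_1>\cdots>r_{m-2}$ with positive leading coefficient, at each $r_j$ one has $g_m(r_j)=(1-r_j)P_{m-1}'(r_j)$ with $1-r_j>0$ and $\operatorname{sign} P_{m-1}'(r_j)=(-1)^j$, so $g_m$ alternates sign on $r_0,\ldots,r_{m-2}$, producing $m-2$ roots in $(r_{m-2},0)$; comparing the sign of $g_m(r_{m-2})$ with that of its leading term (which is $t^{m-1}$ with positive coefficient) supplies one further root below $r_{m-2}$. Since $g_m(0)=P_{m-1}'(0)\neq 0$, the $m$ roots $0, \,\text{(roots of }g_m)$ of $P_m$ are distinct and nonpositive.

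Finally, because $P_m$ has only real roots and nonnegative coefficients, Newton's inequalities give
\[
A(m,k)^2 \;\geq\; \frac{(k+1)(m-k+1)}{k(m-k)}\,A(m,k-1)A(m,k+1) \;\geq\; A(m,k-1)A(m,k+1),
\]
so the $h$-vector is log-concave, hence unimodal. (Together with the symmetry established in Corollary 4.2.16 this determines the shape of the $h$-vector completely.) Alternatively, since the $h$-vector is the Eulerian sequence, one may simply cite \cite{BE} for its log-concavity and conclude unimodality directly from the remark after Definition 4.2.3. The only real work is the Frobenius induction — carrying out the sign-alternation count carefully — and that is the step I expect to be the main obstacle, though it is standard and can be bypassed by the citation.
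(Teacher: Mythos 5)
Your proof is correct, but it takes a genuinely different route from the paper's. The paper's entire argument is a one-line citation: it invokes \cite{BE} (B\'ona--Ehrenborg) for the log-concavity of the Eulerian numbers $A(m,k)$ in $k$ and then concludes unimodality from the remark after Definition 4.2.3 --- exactly the ``alternative'' you mention in your last sentence. Your main argument instead makes the result self-contained: you re-derive the differential recurrence $P_m(t)=t(1-t)P_{m-1}'(t)+mtP_{m-1}(t)$ from the Eulerian recurrence, run the Frobenius sign-alternation induction to show $P_m$ has $m$ distinct nonpositive real roots, and then apply Newton's inequalities to get log-concavity (the constant $\tfrac{(k+1)(m-k+1)}{k(m-k)}\geq 1$ makes the final inequality immediate). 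The details check out: the leading coefficient of $g_m$ is positive, $g_m(0)=P_{m-1}'(0)=A(m-1,1)\neq 0$, and the sign count does produce $m-1$ negative roots of $g_m$. What your approach buys is independence from the external reference and a stronger intermediate statement (real-rootedness, hence the full Newton inequalities rather than bare log-concavity); what it costs is length, since the paper is content to outsource the only nontrivial combinatorial fact. Both arguments rest on the same identification of the $h$-vector with the $m$-th row of Eulerian numbers via Proposition 4.2.15, which you state explicitly and the paper leaves implicit.
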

\begin{proof}
From \cite{BE},\ we know that $A(m,k)$ is a log-concave sequence in
$k,$ for all $m$,\ thus it is unimodal.
\end{proof}

\vspace{2mm} \noindent {\footnotesize
\begin{minipage}[b]{10cm}
Alin \c{S}tefan, Assistant Professor\\
"Petroleum and Gas" University of Ploie\c{s}ti\\
Ploie\c{s}ti, Romania\\
E-mail:nastefan@upg-ploiesti.ro
\end{minipage}}


\begin{thebibliography}{99}

\bibitem{BE}  M.Bona, R.Ehrenborg, {\it{Combinatorial Proof of the Log-Concavity of
the Numbers of Permutations with $k$ Runs}}, Journal of Combinatorial
Theory, Series A 90, (2000), 293--303.

\bibitem{B} A. Br{\o}ndsted, {\it Introduction to Convex Polytopes},
 Graduate Texts in Mathematics 90, Springer-Verlag, 1983.
\bibitem{BH}  W. Bruns, J. Herzog, {\it Cohen-Macaulay rings,} Revised
Edition, Cambridge, 1997.

\bibitem{BG} W. Bruns, J. Gubeladze, {\it Polytopes, rings and
K-theory,} preprint.

\bibitem{BK} W. Bruns, R. Koch, \textit{Normaliz--a program for
 computing
normalizations of affine semigroups}, 1998. Available via anonymous
ftp from: ftp.mathematik.Uni-Osnabrueck.DE/pub/osm/kommalg/software.

\bibitem{C} A. Conca, {\it Linear Spaces, Transversal Polymatroids and
ASL Domains}, Journal of Algebraic Combinatorics, Volume 25,  Issue 1, (2007) 
25--41.

\bibitem{E1}  D.\ Eisenbud {\it Commutative algebra with a view toward 
algebraic geometry}, Springer, 1994.

\bibitem{GPS} G.-M.Greuel, G.Pfister, H.Sch\"{o}nemann. SINGULAR
2.0. {\it{A Computer Algebra System for Polynomial Computations}}. Centre
for Computer Algebra, University of
Kaiserslautern(2001). http://www.singular.uni-kl.de.

\bibitem{H} T. Hibi, {\it Algebraic Combinatorics on Convex Polytopes}, Carslaw
Publications, Glebe, N.S.W., Australia, 1992.

\bibitem{E} J. Edmonds,  {\it{Submodular functions, matroids, and certain
polyedra}}, in {\it Combinatorial Structures and Their Applications}, (R.
Guy, H. Hanani, N. Sauer, J. Schonheim, Eds.), Gordon and
Breach, New York, 1970.

\bibitem{C1}  L. Comtet. {\it{Advanced Combinatorics: The Art of Finite and
Infinite Expansions}}, rev. enl. ed. Dordrecht, Netherlands: Reidel,
1974.

\bibitem{RV}  L. Robbiano, G. Valla, {\it Hilbert-Poincar$\acute{e}$ Series of
Bigraded Algebra,} Bollettino U.M.I.(8), (1998), 521--540.

\bibitem{RW}  V. Reiner and V. Welker, {\it On the Charney-Davis and
Neggers-Stanley conjectures.} Avaible at www.math.umn.edu/~reiner.

\bibitem{O} J. Oxley, {\it Matroid Theory}, Oxford University Press,
Oxford, 1992.

\bibitem{H} T. Hibi, {\it Algebraic Combinatorics on Convex Polytopes},
Carslaw Publication, Glebe, N.S.W., Australia, 1992.

\bibitem{HH} J. Herzog, T. Hibi, {\it Discrete polymatroids},   J.
 Algebraic Combin.,  {16}(2002), no. 3, 239--268.
 
\bibitem{HN} E. De Negri, T Hibi, {\it Gorenstein algebras of Veronese type},
J. Algebra 193, (1997), 629--639.
 
\bibitem{HHV} J. Herzog, T. Hibi, M. Vl\u{a}doiu,  {\it{Ideals of fiber type
and polymatroids}}, Osaka J. Math. 42 (2005), 807--829.

\bibitem{S1}  R.\ P.\ Stanley, {\it Combinatorics and Commutative Algebra}, 
Birkh\"auser, 1983.

\bibitem{S}  B. Sturmfels, {\it Gr\"{o}bner bases and convex polytopes,}
Amer. Math. Soc., 1996.

\bibitem{SA} A. \c{S}tefan,  {\it A class of transversal polymatroids with
Gorenstein base ring,} Bull. Math. Soc. Sci. Math. Roumanie Tome 51(99)
No. 1, (2008), 67--79.

\bibitem{SA1} A. \c{S}tefan, {\it Intersections of base rings associated to 
transversal polymatroids}, to appear in Bull. Math. Soc. Sci. Math. Roumanie
Tome 51(99) No. 4, (2008), http://arXiv.org/pdf/:0805.2729.

\bibitem{SA2} A. \c{S}tefan, {\it The type of the base ring associated to 
a transversal polymatroid},  submitted, http://arXiv.org/pdf/:0807.2371.

\bibitem{SA3} A. \c{S}tefan, {\it A remark on the Hilbert series of 
transversal polymatroids} , Analele \c{S}tiin\c{t}ifice ale Universita\c{t}ii 
''Ovidius'' Constan\c{t}a Seria Matematica volumul XIV (2006), fascicola 2, 85--96.

\bibitem{SA4} A. \c{S}tefan, {\it The Facets Cone Associated To Some Classes
 Of  Transversal Polymatroids}, Analele \c{S}tiin\c{t}ifice ale Universita\c{t}ii 
''Ovidius'' Constan\c{t}a Seria Matematica volumul XV (2007), fascicola 1, 139--158.

\bibitem{SA5} A. \c{S}tefan, {\it Some examples of transversal polymatroids 
with Gorenstein base ring}, Preprint 2006.

\bibitem{MS} E. Miller, B.  Sturmfels, {\it Combinatorial commutative
algebra}, Graduate Texts in Mathematics 227, Springer-Verlag,
New-York, 2005.

\bibitem{V}  R. Villarreal, {\it Monomial Algebras,} Marcel Dekker,
 New-York, 2001.
 
\bibitem{V1} M. Vl\u{a}doiu, {\it{Discrete polymatroids}},  An. \c{S}t. Univ.
Ovidius, Constan\c ta, 14 (2006), 89--112.

\bibitem{V2} M. Vl\u{a}doiu, {\it{Equidimensional and unmixed ideals of
Veronese type}}, to appear in Communications in Alg., arXiv:math.
AC/0611326.

\bibitem{Z} G. M. Ziegler, {\it{Lecture on Polytopes}}, Graduate Texts
in Mathematics 152, Springer-Verlag, New-York, 1995.

\bibitem{W2} R. Webster, {\it{Convexity}}, Oxford University Press,
 Oxford, 1994.

\bibitem{W1} D. Welsh, {\it{Matroid Theory}}, Academic Press, London,
1976.

\bibitem{W3} N. White, {\it{The basis monomial ring of a matroid}}, Adv. in Math. 
24(1977) 292--294.

\bibitem{W} J. Worpitzky, {\it{Studien\"{u}ber die Bernoullischen und Eulerschen Zahlen.}}
J. reine angew. Math. 94, (1883), 203--232.

 
\end{thebibliography}
\end{document}